\def\titlerunning#1{\gdef\titrun{#1}}
\def\author#1{\gdef\autrun{\def\and{\unskip, }#1}\gdef\@author{#1}}
\def\address#1{{\def\and{\\\hspace*{18pt}}\renewcommand{\thefootnote}{}%
\footnote {#1}}%
\markboth{\autrun}{\titrun}}
\def\email#1{e-mail: #1}
\def\subjclass#1{{\renewcommand{\thefootnote}{}%
\footnote{\emph{Mathematics Subject Classification (2010):} #1}}}
\def\keywords#1{\par\medskip
\noindent\textbf{Keywords.} #1}
\numberwithin{equation}{section}
\definecolor{dgreen}{rgb}{0.1,0.7,0.1}
\definecolor{purple}{rgb}{0.49, 0.06, 0.51}
\def\N{\mathbb{N}}
\def\Z{\mathbb{Z}}
\def\CF{{\cal F}}
\def\CT{{\cal T}}
\newcommand{\CC}{\mathscr{C}}
\newcommand{\CP}{\mathscr{P}}
\newcommand{\CQ}{\mathscr{Q}}
\newcommand{\CM}{\mathscr{M}}
\newcommand{\cB}{\mathcal{B}}
\newcommand{\mm}{\mathrm{max}}
\def\la{\langle}
\def\ra{\rangle}
\def\s{\sigma}
\newcommand{\patch}{\textup{\textrm{patch}}}
\newcommand{\vt}{\vartheta}
\newcommand{\ox}{\otimes}
\newcommand{\x}{\times}
\newcommand{\sm}{\setminus}
\newcommand{\smz}{\setminus \{0\}}
\newcommand{\Nil}{\mathrm{Nil}}
\newcommand{\wt}{\widetilde}
\newcommand{\id}{\mathrm{id}}
\newcommand{\vf}{\varphi}
\newcommand{\too}{\longrightarrow}
\newcommand{\bbar}{\overline{\phantom{x}}}
\DeclareMathOperator{\im}{Im}
\DeclareMathOperator{\sign}{sign}
\DeclareMathOperator{\Sym}{Sym}
\DeclareMathOperator{\Skew}{Skew}
\DeclareMathOperator{\rk}{rank}
\DeclareMathOperator{\tr}{tr}
\DeclareMathOperator{\Int}{Int}
\DeclareMathOperator{\Trd}{Trd}
\DeclareMathOperator{\Tr}{\mathrm{Tr}}
\DeclareMathOperator{\diag}{diag}
\DeclareMathOperator{\Diag}{Diag}
\DeclareMathOperator{\GL}{GL}
\DeclareMathOperator{\PSD}{PSD}
\DeclareMathOperator{\Span}{Span}
\renewcommand{\geq}{\geqslant}
\renewcommand{\leq}{\leqslant}
\renewcommand{\ge}{\geqslant}
\renewcommand{\le}{\leqslant}
\newcommand{\swap}{\widehat{\phantom{x}}}
\newcommand{\df}{\emph}
\newcommand{\ve}{\varepsilon}
\newcommand{\ad}{\mathrm{ad}}
\newcommand{\das}{D_{(A,\s)}}
\newcommand{\pf}[1]{\langle\!\langle #1\rangle\!\rangle}
\newcommand{\qf}[1]{\langle #1\rangle}
\newcommand{\ns}{\mathrm{ns}}
\newcommand{\qnd}[1]{\langle #1\rangle^{\ns}}
\newcommand{\mor}{\mathfrak{m}}
\DeclareMathOperator{\Herm}{\mathfrak{Herm}}
\newcommand{\tu}{\textup}
\newtheorem{thm}{Theorem}[section]
\newtheorem{prop}[thm]{Proposition}
\newtheorem{cor}[thm]{Corollary}
\newtheorem{lemma}[thm]{Lemma}
\theoremstyle{definition}
\newtheorem{defi}[thm]{Definition}
\newtheorem{fact}[thm]{Fact}
\newtheorem{remark}[thm]{Remark}
\newtheorem{ex}[thm]{Example}
\begin{document}

\titlerunning{Positive cones on algebras with involution}

\title{Positive cones on algebras with involution\footnote{Accepted manuscript \copyright 2019,
made available under the  CC-BY-NC-ND 4.0 license 
\url{http://creativecommons.org/licenses/by-nc-nd/4.0/}.\newline 
Published article: Advances in Mathematics 361 (2020) 106954.\newline 
DOI: \href{https://doi.org/10.1016/j.aim.2019.106954}{10.1016/j.aim.2019.106954}}}

\author{Vincent Astier
\and 
Thomas Unger}

\date{}


\maketitle

\centerline{\footnotesize{\emph{Dedicated to Manfred Knebusch on the occasion of his eightieth birthday}}}

\address{V. Astier, T. Unger (Corresponding author):
School of Mathematics and Statistics, University College Dublin, Belfield, Dublin~4, Ireland;
\email{vincent.astier@ucd.ie, thomas.unger@ucd.ie}}

\subjclass{13J30, 16W10, 06F25, 16K20, 11E39}

\begin{abstract}
We introduce positive cones on algebras with involution. These allow us to prove analogues of
Artin's solution to Hilbert's 17th problem, the Artin-Schreier theorem characterizing formally real
fields, and to define signatures with respect to positive cones. We consider the space of positive
cones of an algebra with involution and investigate its topological properties, showing in
particular that it is a spectral space. As an application we solve the problem of the existence of
positive involutions.

\keywords{Real algebra, algebras with involution, orderings, hermitian forms}

\end{abstract}

\tableofcontents


\section{Introduction}

In a series of papers \cite{A-U-Kneb}, \cite{A-U-prime}, \cite{A-U-PS}, \cite{A-U-stab} we initiated an investigation of 
central simple algebras with involution from a real algebraic point of view,
inspired by the classical correspondences between
signatures of quadratic forms over a field $F$, morphisms from the Witt ring $W(F)$ into $\Z$,  
prime ideals of  $W(F)$, and orderings on  $F$,
which form one of the foundations of real algebra.

More precisely, in \cite{A-U-Kneb} we defined signatures of hermitian forms over   algebras with involution $(A,\s)$,  
and in \cite{A-U-prime} we showed 
that these provide  the desired natural correspondences with morphisms from the Witt group $W(A,\s)$ into $\Z$ and with ``prime
ideals'' of $W(A,\s)$. 

In the present paper we show that these correspondences can be extended to include a notion of partial ordering on 
algebras with involution, which we call (pre-)positive cone (Definition~\ref{def-preordering}). 
Prepositive cones  are inspired by both positive semidefinite
matrices and one-dimensional hermitian forms of maximal signature, as well as by Prestel's notion of quadratic semiorderings \cite{Prestel73},
and can  be interpreted as those orderings on the 
base field that extend to the algebra (Proposition~\ref{easy}). In addition to the above correspondences,  positive cones 
allow us to obtain analogues of the Artin-Schreier theorem on the 
characterization of formally real fields (Theorem~\ref{fr}) and 
Artin's solution of Hilbert's 17th problem (Theorem~\ref{intersection}, Corollary~\ref{Artin}). 
We also define signatures with respect to positive cones (Section~\ref{subsec:signp}) and
show that the space of  all positive cones  is a spectral space (Theorem~\ref{thm:spectral}),  whose topology is linked to the topology of the space of orderings of the base field (Proposition~\ref{pi-homeom} and Corollary~\ref{double}). 

Finally, as an application, we answer the question of the existence of positive involutions (Theorem~\ref{main_pos}),  a question
that does not seem to have been treated before in full generality.

\section{Preliminaries}\label{prelims}

We present the notation and main tools  used in this paper and refer to the standard references 
\cite{Knus}, \cite{BOI}, \cite{Lam} and \cite{Sch} as well as to \cite{A-U-Kneb},  \cite{A-U-prime}
and \cite{A-U-PS} for the details.

Let $F$ be a field of characteristic different from $2$. We denote by $W(F)$ the Witt ring of $F$, by $X_F$  the space of orderings of $F$,
and by $F_P$ a real closure of $F$ at an ordering $P\in X_F$. 
We allow for the possibility that $F$ is not formally real, i.e. that $X_F=\varnothing$.
By an 
\emph{$F$-algebra with involution} we mean a pair $(A,\s)$ where $A$ is a 
finite-dimensional simple $F$-algebra with centre a field  $K=Z(A)$, equipped with an involution  $\s:A\to A$, such that $F = K \cap \Sym(A,\s)$, where $\Sym(A,\s):=\{a\in A \mid \s(a)=a\}$.  We also define $\Skew(A,\s):=\{a\in A \mid \s(a)=-a\}$.
Observe that $\dim_F K 
\le 2$.
We say that 
$\s$ is \df{of the first kind} if $K=F$ and \df{of the second kind} (or \emph{of unitary type}) otherwise. 
Involutions of the first kind can be further subdivided into those of \emph{orthogonal type} and those of \emph{symplectic 
type}, depending on the dimension of $\Sym(A,\s)$.
We let $\iota=\s|_{K}$ and note that $\iota =\id_F$ if $\s$ is of the first kind. 

The class of $F$-algebras with involution is often enlarged to include algebras with unitary involution with 
centre $F\x F$, cf.  \cite[2.B]{BOI} since it then becomes stable under scalar extension. We 
are not interested in such algebras  in 
this paper, since the objects we are interested in are trivial in this situation, cf. Remark~\ref{impromptu1}.

If $A$ is a division algebra, we call $(A,\s)$ an \df{$F$-division algebra with involution}.
We denote Brauer equivalence by~$\sim$ and isomorphism by $\cong$. Quadratic and hermitian forms are often just 
called \emph{forms}. The notation $\vf \leq \psi$ indicates that $\vf$ is a subform of $\psi$ and $\vf \simeq \psi$ indicates
that $\vf$ and $\psi$ are isometric.

Let $(A, \s)$ be an $F$-algebra with involution.
We denote by $W(A,\s)$ the Witt group  of Witt equivalence classes of nonsingular hermitian forms over $(A,\s)$, defined on 
finitely generated right $A$-modules. Note that $W(A,\s)$ is a $W(F)$-module.

For $a_1, \ldots, a_k \in F$ the notation $\qf{a_1,\ldots, a_k}$ stands for the quadratic form $(x_1,\ldots, x_k) \in F^k \mapsto  \sum_{i=1}^k a_i x_i^2 \in F$, as usual, whereas for $a_1, \ldots, a_k$ in $\Sym(A,\s)$ the notation $\qf{a_1,\ldots, a_k}_\s$ stands for the diagonal hermitian form  
\[ \bigl(  (x_1,\ldots, x_k), (y_1,\ldots, y_k)  \bigr) \in A^k \x A^k \mapsto   \sum_{i=1}^k \s(x_i) a_i y_i \in A.\]
In each case, we call $k$ the \emph{dimension} of the form. 

Let $h:M\x M\to A$ be a hermitian form over $(A,\s)$. We sometimes write $(M,h)$ instead of $h$.
The \emph{rank} of $h$, $\rk(h)$, is the rank of the $A$-module $M$. The set of elements represented by $h$ is denoted by
\[\das(h): =\{ u \in \Sym(A,\s) \mid  \exists x\in M\text{ such that } h(x,x)=u\}.\]
We say that $h$ is \emph{weakly isotropic} if there exists $\ell\in \N$ such that $\ell\x h$ is
isotropic, and  \emph{strongly anisotropic} otherwise. Similarly, we say that  $h$ is \emph{weakly 
hyperbolic} if there exists $\ell\in \N$ such that $\ell\x h$ is hyperbolic.

We denote by $\Int(u)$ the inner automorphism determined by $u \in A^\x$, where  $\Int(u)(x):= uxu^{-1}$ for $x\in A$.

It follows from the structure theory of $F$-algebras with involution  that $A$ is isomorphic to a full matrix algebra $M_\ell(D)$ for a unique 
$\ell\in \N$  (called the \emph{matrix size of $A$}) and an $F$-division algebra $D$
(unique up to isomorphism) that is equipped with an involution $\vt$ of the same kind as $\s$, cf. 
\cite[Theorem~3.1]{BOI}. 
For $B=(b_{ij})\in M_\ell(D)$ we let $\vt^t(B)=(\vt(b_{ji}))$.
By \cite[4.A]{BOI}, there exists $\ve \in \{-1,1\}$ and an invertible matrix $\Phi \in M_\ell(D)$ such that 
$\vt(\Phi)^t=\ve \Phi$ and 
$(A,\s)\cong (M_{\ell}(D), \ad_\Phi)$,
where $\ad_\Phi = \Int(\Phi) \circ \vt^t$. (In fact, $\Phi$ is the Gram matrix of an $\ve$-hermitian form over $(D,\vt)$.)
Note that $\Phi$ is only defined up to a factor in $F^\x$, with 
 $\ad_\Phi=\ad_{\lambda\Phi}$ for all $\lambda \in F^\x$ and that $\ve=1$ when $\s$ and $\vt$ are of the same type, cf. \cite[Theorem~4.2]{BOI}.  
We fix an isomorphism of $F$-algebras with involution
 $f:(A,\s)\to (M_{\ell}(D), \ad_\Phi)$.

Given an $F$-algebra with involution $(B,\tau)$ we denote by 
$\Herm_\ve(B,\tau)$ the category of (possibly singular) $\ve$-hermitian 
 forms over $(B, \tau)$, cf. \cite[p.~12]{Knus}. 
 We drop the subscript $\ve$ when $\ve=1$.
 The isomorphism $f$ trivially induces an equivalence of categories
 $f_*: \Herm(A,\s) \too \Herm(M_{\ell}(D), \ad_\Phi)$. Furthermore, the $F$-algebras with involution $(A,\s)$ and
 $(D,\vt)$ are Morita equivalent, cf. \cite[Chapter~I, Theorem~9.3.5]{Knus}. In this paper we make repeated use of a particular Morita 
 equivalence  between $(A,\s)$ and $(D,\vt)$, following the  approach in \cite{LU2} (see also \cite[\S2.4]{A-U-Kneb} 
 for the case of nonsingular forms and \cite[Proposition~3.4]{A-U-Kneb} for a justification of why
 using this equivalence is  as good as using any other equivalence for the purpose of computing signatures), namely:
\begin{equation}\label{diagram}
\xymatrix{
\Herm(A,\s)\ar[r]^--{f_*} &    \Herm(M_{\ell}(D),\ad_\Phi)\ar[r]^--{s}  &  \Herm_\ve(M_\ell(D), \vt^t) 
\ar[r]^--{g} & \Herm_\ve(D,\vt),}
\end{equation}
where  $s$ is the \emph{scaling by $\Phi^{-1}$} Morita equivalence, given by $(M,h)\mapsto (M, \Phi^{-1}h)$  
and
$g$ is the \emph{collapsing} Morita equivalence, given by $(M,h)\mapsto (D^k,b)$, 
where $k$ is the rank of $M$ as $M_\ell(D)$-module and $b$ is defined as follows: fixing an isomorphism $M \cong (D^\ell)^k$, $h$ can be identified with the form  $(M_{k,\ell}(D), \qf{B}_{\vt^t})$ for some matrix $B\in M_k(D)$ that satisfies $\vt^t(B)=\ve B$ and we take 
for $b$ the $\ve$-hermitian form whose Gram matrix is $B$. In particular the $\ve$-hermitian form  
$\qf{\diag(d_1,\ldots, d_\ell)}_{\vt^t}$ is mapped to $\qf{d_1,\ldots, d_\ell}_\vt$ by $g$.
Note that $\qf{B}_{\vt^t}(X,Y):= \vt(X)^t B Y$ for all $X,Y \in M_{k,\ell}(D)$.

Given an ordering $P\in X_F$ we defined a signature map
\[\sign_P^\eta: W(A,\s) \to \Z\]
via scalar extension to $F_P$ and Morita theory in  \cite{A-U-Kneb}. This map has many properties in common
with the usual Sylvester signature $\sign_P$ of quadratic forms (cf. \cite{A-U-Kneb}  and \cite[\S 2]{A-U-prime})
and reduces to $\pm \sign_P$ when $(A,\s)=(F,\id_F)$. See also \cite{A-U-PS} for a concise presentation as well as for the notation that we will use in this paper. In particular, recall that 
$\eta$ denotes a \emph{tuple of reference forms} for $(A,\s)$ and  that a Morita equivalence between $F$-algebras with involution
of the same type sends a tuple of reference forms to a tuple of reference forms, cf. the proof of 
\cite[Theorem~4.2]{A-U-prime}. 
Furthermore
\[\Nil[A,\s]:= \{P \in X_F \mid \sign^\eta_P=0\}\]
denotes the set of \emph{nil orderings} for $(A,\s)$, and depends only on the Brauer class of $A$ and the type of $\s$. Finally,
let
\[\wt X_F:=X_F \sm \Nil[A,\s],\]
which does not indicate the dependence on $(A,\s)$ in order to avoid cumbersome notation. 
By \cite[Corollary~6.5]{A-U-Kneb}, $\wt X_F$ is clopen in $X_F$ for the Harrison topology $\CT_H$,
and is therefore compact. We denote the restriction of the Harrison topology to $\wt X_F$ also
by $\CT_H$.

\begin{defi}
  Let $h \in \Herm(A,\s)$. We say that $h$ is \emph{universal} if $D_{(A,\s)} h =
  \Sym(A,\s)$.
\end{defi}

We denote the set of invertible elements in $\Sym(A,\s)$ by  $\Sym(A,\s)^\x$.

\begin{lemma}\label{morita-diag}
  Let $h$ be a hermitian form of rank $k$ over $(A,\s)$. There are $a_1, \ldots, a_k \in
  \Sym(A,\s)^\x \cup\{0\}$ such that $\ell \x h \simeq \qf{a_1,\ldots, a_k}_\s$.
  If $(A,\s) = (M_\ell(D),\vt^t)$ we can take $a_1, \ldots, a_k \in \Sym(D,\vt)
  \cdot I_\ell$.
\end{lemma}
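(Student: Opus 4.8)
The statement has two parts: (1) any rank-$k$ hermitian form $h$ over $(A,\s)$ becomes diagonalizable after multiplying by $\ell$ (i.e. taking $\ell$ orthogonal copies), with diagonal entries invertible-or-zero; (2) in the split case $(A,\s)=(M_\ell(D),\vt^t)$ we can moreover take the diagonal entries to be scalar matrices $d_i I_\ell$ with $d_i\in\Sym(D,\vt)$. The natural strategy is to transport everything through the Morita equivalence chain~\eqref{diagram} to the division algebra $(D,\vt)$, diagonalize there, and transport back.

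First I would recall that over an $F$-division algebra with involution $(D,\vt)$ every $\ve$-hermitian form is diagonalizable: this is the standard Gram–Schmidt/orthogonal-splitting argument, which works because a nonzero form always represents an invertible element (if $h(x,x)=0$ for the whole space one is in the degenerate part, which splits off as a zero summand). So the image $g(s(f_*(h)))$ of $h$ under the composite of~\eqref{diagram} is an $\ve$-hermitian form over $(D,\vt)$ isometric to some $\qf{d_1,\dots,d_m}_\vt$ with $d_i\in\Sym(D,\vt)\cup\{0\}$ invertible or zero. Here $m$ is the rank of $h$ as an $M_\ell(D)$-module form after applying $f_*$; tracking ranks through $s$ (which does not change the underlying module) and $g$ (which replaces the rank-$k$ module $M$ over $M_\ell(D)$ by $D^k$) shows $m=k$. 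Now I would push $\qf{d_1,\dots,d_k}_\vt$ back: by the displayed property of $g$ in the excerpt, $g$ sends $\qf{\diag(d_1,\dots,d_\ell)}_{\vt^t}$ to $\qf{d_1,\dots,d_\ell}_\vt$; applying this blockwise, the inverse of $g$ sends $\qf{d_1,\dots,d_k}_\vt$ to $\qf{d_1 I_\ell,\dots,d_k I_\ell}_{\vt^t}$ (a $k$-dimensional diagonal $\ve$-hermitian form over $(M_\ell(D),\vt^t)$ whose entries are the scalar matrices $d_iI_\ell$).

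For part~(2) this is essentially the end: since $f$ is the identity there, $h\simeq\qf{d_1I_\ell,\dots,d_kI_\ell}_{\vt^t}$ already, with $d_i\in\Sym(D,\vt)$, and each $d_iI_\ell$ is invertible in $\Sym(M_\ell(D),\vt^t)$ precisely when $d_i\ne0$; so no extra factor of $\ell$ is even needed in this case (the statement only claims it for the general case). For part~(1), I would undo the scaling $s$ and the isomorphism $f$. Undoing $s$ multiplies the Gram matrix by $\Phi$, producing a form over $(M_\ell(D),\ad_\Phi)$; undoing $f_*$ transports it back to $(A,\s)$. The subtlety is that $s$ and $f_*$ are Morita equivalences, not isometries, so they need not preserve diagonal shape or dimension of a single form — and indeed this is exactly why the factor $\ell$ appears. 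The clean way to handle this: a Morita equivalence multiplies rank by a fixed constant and, more importantly, it sends $k\x(\text{rank-1 form})$ appropriately; the standard fact (used throughout \cite{A-U-Kneb}) is that under such an equivalence $\ell$ copies of a form over $(A,\s)$ correspond to forms that are again diagonalizable over $(A,\s)$. Concretely, I would argue that $\ell\x h$ corresponds under~\eqref{diagram} to $\ell\x\qf{d_1,\dots,d_k}_\vt$ over $(D,\vt)$, and that $g^{-1}s^{-1}f_*^{-1}$ applied to $\ell\x\qf{d_i}_\vt=\qf{d_i,\dots,d_i}_\vt$ ($\ell$ entries) yields a diagonal hermitian form $\qf{a_i}_\s$ over $(A,\s)$ for a single $a_i\in\Sym(A,\s)^\x\cup\{0\}$ — this is because $\ell$ copies of the rank-one $D$-form correspond, under collapsing, to the rank-one $M_\ell(D)$-form with Gram matrix $\diag(d_i,\dots,d_i)$, which after scaling by $\Phi$ and applying $f^{-1}$ is a rank-one form over $(A,\s)$, hence $\qf{a_i}_\s$ with $a_i=0$ iff $d_i=0$. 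Summing over $i$ gives $\ell\x h\simeq\qf{a_1,\dots,a_k}_\s$.

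The main obstacle is the bookkeeping around the Morita equivalences $s$ and $f_*$: one must be careful that these do not preserve isometry type of individual forms, track how rank and "number of diagonal entries" transform (the collapsing step $g$ divides the $M_\ell(D)$-rank interpretation by $\ell$ in the sense that a rank-$1$ $M_\ell(D)$-module becomes $D^1$, which is why taking $\ell\x h$ is exactly what is needed to get a clean diagonalization back downstairs), and verify that an element of $\Sym(A,\s)$ is invertible iff its image in $\Sym(D,\vt)$ under the Morita correspondence is invertible — i.e. the non-degenerate/zero dichotomy is preserved. Everything else (diagonalizing over the division algebra, the explicit behaviour of $g$ on diagonal forms) is either standard or already recorded in the excerpt.
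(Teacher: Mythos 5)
Your overall strategy coincides with the paper's: push $h$ through the chain \eqref{diagram} to $(D,\vt)$, diagonalize it there as $\qf{d_1,\ldots,d_k}_\vt$, and pull back $\ell$ copies of each rank-one piece; indeed your ``concrete'' paragraph for part (1) is essentially the paper's proof. However, two of your intermediate claims are false. The main one: $g^{-1}(\qf{d_1,\ldots,d_k}_\vt)$ is \emph{not} $\qf{d_1I_\ell,\ldots,d_kI_\ell}_{\vt^t}$. By the displayed property of $g$, $g(\qf{d_iI_\ell}_{\vt^t})=\ell\x\qf{d_i}_\vt$, so applying it blockwise gives $g^{-1}(\ell\x\qf{d_1,\ldots,d_k}_\vt)\simeq\qf{d_1I_\ell,\ldots,d_kI_\ell}_{\vt^t}$; the factor $\ell$ cannot be dropped. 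In fact, for $k<\ell$ the form $g^{-1}(\qf{d_1,\ldots,d_k}_\vt)$ lives on a non-free $M_\ell(D)$-module and cannot be written as a diagonal form at all. Consequently your split-case conclusion --- that $h\simeq\qf{d_1I_\ell,\ldots,d_kI_\ell}_{\vt^t}$ ``already'' and that no factor $\ell$ is needed when $(A,\s)=(M_\ell(D),\vt^t)$ --- is wrong (compare underlying modules: with the paper's rank convention the right-hand side has rank $k\ell$ while $h$ has rank $k$), and it also misreads the statement: the second sentence of the lemma still concerns $\ell\x h$, only adding that the $a_i$ may be chosen in $\Sym(D,\vt)\cdot I_\ell$. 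That correct split-case statement does follow from your own general computation by taking $f=\id$ and $\Phi=I_\ell$, so the gap is repairable, but as written this step fails.

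A smaller point: your remark that $s$ and $f_*$ ``need not preserve diagonal shape or dimension of a single form'' has the bookkeeping backwards. Both functors preserve diagonal forms, their dimension, and the invertible-or-zero dichotomy of the entries ($s$ simply replaces an entry $a$ by $\Phi^{-1}a$), and this is precisely how the paper reduces in one line to the case $(A,\s)=(M_\ell(D),\vt^t)$. The only functor in \eqref{diagram} that rescales rank is the collapsing equivalence $g$, and it is the sole source of the factor $\ell$ in the lemma.
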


\begin{proof} Since $f_*$ and $s$ preserve diagonal forms and isometries, we may assume  that $(A,\s) =
  (M_\ell(D),\vt^t)$. Let $d_1, \ldots,
  d_k \in \Sym(D,\vt)$ be such that $g(h) = \qf{d_1,\ldots, d_k}_\vt$. Then
  $g(\ell \x h) \simeq (\ell \x \qf{d_1}_\vt) \perp \cdots \perp (\ell \x
  \qf{d_k}_\vt)$. Therefore
  \begin{align*}
    \ell \x h &\simeq g^{-1}(\ell \x \qf{d_1}_\vt) \perp \cdots \perp g^{-1}(\ell \x
      \qf{d_k}_\vt)\\
      &\simeq \qf{\diag(d_1,\ldots,d_1)}_{\vt^t} \perp \cdots \perp
        \qf{\diag(d_k,\ldots,d_k)}_{\vt^t} \\
      &\simeq \qf{\diag(d_1,\ldots,d_1), \cdots,\diag(d_k,\ldots,d_k)}_{\vt^t}.\qedhere
  \end{align*}
\end{proof}

\begin{lemma}\label{isot-univ}
  Let $h$ be a nonsingular isotropic hermitian form over $(A,\s)$. Then $\ell \x h$ is
  universal.
\end{lemma}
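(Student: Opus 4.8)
The plan is to transport the question to the division algebra $(D,\vt)$ through the Morita equivalence \eqref{diagram}, where an isotropic form visibly contains a hyperbolic plane, and then to observe that the multiplier $\ell$ is exactly what is needed to recover universality upon passing back to $M_\ell(D)$. First, $\ell\x h$ is nonsingular, being a direct sum of nonsingular forms, and isotropic, since it contains the isotropic form $h$ as an orthogonal summand. The functors $f_*$ and $s$ are equivalences compatible with orthogonal sums, isometries, nonsingularity and isotropy; moreover, since $f$ is an isomorphism of $F$-algebras with involution and $s$ is scaling by $\Phi^{-1}$, and since $f$ carries $\Sym(A,\s)$ isomorphically onto $\Sym(M_\ell(D),\ad_\Phi)=\Phi\cdot\{S\in M_\ell(D)\mid\vt^t(S)=\ve S\}$, a form $\psi$ over $(A,\s)$ is universal if and only if the $\ve$-hermitian form $s(f_*\psi)$ over $(M_\ell(D),\vt^t)$ represents every $S\in M_\ell(D)$ with $\vt^t(S)=\ve S$. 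Applying this to $\ell\x h$ and putting $h':=s(f_*h)$ (again nonsingular and isotropic), it suffices to show: if $h'$ is a nonsingular isotropic $\ve$-hermitian form over $(M_\ell(D),\vt^t)$, then $\ell\x h'$ represents every $S\in M_\ell(D)$ with $\vt^t(S)=\ve S$.

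Now apply the collapsing equivalence $g$ and set $\psi:=g(h')$, a nonsingular isotropic $\ve$-hermitian form over the \emph{division} algebra with involution $(D,\vt)$. Over a division algebra a nonzero isotropic vector extends to a hyperbolic pair: if $\psi(x,x)=0$ with $x\ne 0$, then nonsingularity provides $y$ with $\psi(x,y)=1$, and since $2\in F^\x$ we may replace $y$ by $y-\tfrac{\ve}{2}\,x\,\psi(y,y)$ to arrange $\psi(y,y)=0$; the span of $x$ and $y$, on which $\psi$ is nonsingular, is then an orthogonal summand of $\psi$ with Gram matrix $\m{0&1\\\ve&0}$. Pulling this summand back by $g^{-1}$ (an equivalence compatible with $\perp$) shows that $h'$ contains $\qf{\m{0&1\\\ve&0}}_{\vt^t}$ on $M_{2,\ell}(D)$ as an orthogonal summand, hence that $\ell\x h'$ contains $\ell\x\qf{\m{0&1\\\ve&0}}_{\vt^t}$ as one. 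It therefore suffices to show that $\ell\x\qf{\m{0&1\\\ve&0}}_{\vt^t}$ already represents every $S$ with $\vt^t(S)=\ve S$.

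This last step is the heart of the matter, and also explains why $\ell$ cannot be dropped. For $X=\m{x_1\\x_2}\in M_{2,\ell}(D)$ with rows $x_1,x_2\in M_{1,\ell}(D)$, a direct computation gives
\[\qf{\m{0&1\\\ve&0}}_{\vt^t}(X,X)=\vt(x_1)^t x_2+\ve\,\vt(x_2)^t x_1=N+\ve\,\vt^t(N),\qquad N:=\vt(x_1)^t x_2\in M_\ell(D),\]
and as $x_1,x_2$ range over $M_{1,\ell}(D)$ the matrix $N$ ranges over all matrices in $M_\ell(D)$ of rank $\le 1$. Hence $\ell\x\qf{\m{0&1\\\ve&0}}_{\vt^t}$ represents precisely the elements $M+\ve\,\vt^t(M)$ where $M$ is a sum of $\ell$ matrices of rank $\le 1$, that is, where $M$ is an arbitrary element of $M_\ell(D)$ (every $\ell\times\ell$ matrix being a sum of $\ell$ matrices of rank $\le 1$). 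Finally, given $S$ with $\vt^t(S)=\ve S$ we have $S=\tfrac12 S+\ve\,\vt^t(\tfrac12 S)$ (using $2\in F^\x$ and $\ve^2=1$), so $S$ is represented. This proves the displayed reduction and hence the lemma. Note that the single $\ve$-hyperbolic plane $\qf{\m{0&1\\\ve&0}}_{\vt^t}$ over $(M_\ell(D),\vt^t)$ is itself \emph{not} universal once $\ell\ge 2$, as it represents only the $M+\ve\,\vt^t(M)$ with $M$ of rank $\le 1$; exactly $\ell$ orthogonal copies are needed for such sums to exhaust $M_\ell(D)$, which is why the factor $\ell$ appears. The reductions via Morita theory and the splitting off of a hyperbolic plane are routine; the one genuine computation is the identity above together with the rank-counting observation.
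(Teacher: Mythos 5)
Your proof is correct, and it follows the same broad strategy as the paper (pass through the Morita chain \eqref{diagram} to the division algebra, exploit isotropy there, and use $\ell$ copies to return to $(A,\s)$), but the endgame is genuinely different. The paper diagonalizes the collapsed isotropic form as $\qf{-1,1,d_3,\ldots,d_k}_\vt$, pulls back $\ell$ copies as in Lemma~\ref{morita-diag} to get $\ell\x h\simeq\qf{-a,a}_\s\perp h'$ with $a$ \emph{invertible} in $\Sym(A,\s)$, and then invokes the standard universality of $\qf{1,-1}_\s$ (the identity $s=\s\bigl(\tfrac{1+s}{2}\bigr)\tfrac{1+s}{2}-\s\bigl(\tfrac{1-s}{2}\bigr)\tfrac{1-s}{2}$). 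You instead split off the hyperbolic plane with Gram matrix $\m{0&1\\\ve&0}$ over $(D,\vt)$, keep it in matrix form as the rank-two form $\qf{\m{0&1\\\ve&0}}_{\vt^t}$ on $M_{2,\ell}(D)$, and prove directly that $\ell$ copies of it represent every $S$ with $\vt^t(S)=\ve S$ via the computation $\qf{\m{0&1\\\ve&0}}_{\vt^t}(X,X)=N+\ve\,\vt^t(N)$ with $N$ of rank at most one, together with the column decomposition of an arbitrary matrix into $\ell$ rank-one pieces and $S=\tfrac12 S+\ve\,\vt^t(\tfrac12 S)$. What your route buys: it is uniform in $\ve$ and avoids diagonalizing the isotropic $\ve$-hermitian form (so the alternating case $(D,\vt,\ve)=(F,\id,-1)$, which the lemma must still cover at this stage of the paper, needs no separate comment), and it makes transparent why the factor $\ell$ suffices; what it costs is the explicit bookkeeping with the scaled symmetric elements $\Phi\cdot\{S\mid\vt^t(S)=\ve S\}$, which the paper's argument hides inside the statement that $\qf{1,-1}_\s$ is universal over any $(A,\s)$.

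One small caveat: your closing aside, that a single copy of $\qf{\m{0&1\\\ve&0}}_{\vt^t}$ is never universal once $\ell\ge 2$, is not needed for the proof and is not correct over every base field (for instance for $\ell=2$, $\ve=1$ and $(D,\vt)=(F,\id)$ with $-1\in F^{\x2}$, congruence-invariance of $\{N+\vt^t(N)\mid \mathrm{rank}\,N\le 1\}$ together with the representability of $I_2$ shows that many, sometimes all, nonsingular symmetric matrices are already represented). Since the lemma only requires that $\ell$ copies suffice, this does not affect your argument, but the remark should either be dropped or restricted to the cases where the rank obstruction genuinely applies.
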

\begin{proof}
By Morita theory $g(s(f_*(h)))$ is a nonsingular isotropic
 hermitian
  form over $(D,\vt)$, and thus there are $d_3, \ldots, d_k \in \Sym(D,\vt)$ such that
  $g(s(f_*(h))) \simeq \qf{-1,1,d_3,\ldots,d_k}_\vt$. Using the same method as in
  the proof of Lemma \ref{morita-diag} we obtain
  \[\ell \x s(f_*(h)) \simeq \qf{- I_\ell, I_\ell, d_3\cdot I_\ell ,\ldots,
  d_k\cdot I_\ell}_{\vt^t},\]
  and so
\[\ell \x h \simeq  \qf{- a,a}_\s \perp  h',\]  
for some  $a \in \Sym(A,\s)^\x$ and some  form $h'$ over $(A,\s)$.  Since $a$ is invertible, we have
$\qf{-a,a}_\s \simeq \qf{-1,1}_\s$ and a standard  argument shows that the hermitian form
  $\qf{-1,1}_\s$ over $(A,\s)$ is universal. The result follows.
\end{proof}

\begin{lemma}\label{isot-univ2} 
Let $a \in \Sym(A,\s) \sm \{0\}$. Then $\ell \x \qf{a}_\s$ represents an element in $\Sym(A,\s)^\x$
and $\ell \x \qf{a,-a}_\s$ is universal.
\end{lemma}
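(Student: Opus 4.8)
The plan is to reduce everything --- via Lemma~\ref{morita-diag} --- to a diagonal form with \emph{invertible} entries, inside which one then recognises a hyperbolic plane. For the first assertion, apply Lemma~\ref{morita-diag} to $\qf{a}_\s$: it produces $a_1,\ldots,a_m \in \Sym(A,\s)^\x \cup \{0\}$ with $\ell \x \qf{a}_\s \simeq \qf{a_1,\ldots,a_m}_\s$. Since $a \neq 0$, the summand $\qf{a}_\s$ of $\ell \x \qf{a}_\s$ already represents $a$ (evaluate at $1 \in A$), so $\ell \x \qf{a}_\s$ is not the zero form; hence some $a_i \neq 0$, i.e.\ $a_i \in \Sym(A,\s)^\x$, and $\qf{a_1,\ldots,a_m}_\s$ --- hence $\ell \x \qf{a}_\s$ --- represents this $a_i$.

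For the second assertion, observe that $\ell \x \qf{-a}_\s = -(\ell \x \qf{a}_\s) \simeq -\qf{a_1,\ldots,a_m}_\s = \qf{-a_1,\ldots,-a_m}_\s$, so
\[ \ell \x \qf{a,-a}_\s = (\ell \x \qf{a}_\s) \perp (\ell \x \qf{-a}_\s) \simeq \qf{a_1,\ldots,a_m,-a_1,\ldots,-a_m}_\s, \]
which contains $\qf{a_i,-a_i}_\s$ as a subform for the index $i$ found above. As $a_i$ is invertible, $\qf{a_i,-a_i}_\s$ is nonsingular and isometric to $\qf{1,-1}_\s$ (a nonsingular form orthogonal to its negative is hyperbolic, and hyperbolic forms of equal rank are isometric --- the isometry already invoked in the proof of Lemma~\ref{isot-univ}), and $\qf{1,-1}_\s$ is universal by the standard argument recalled there (for $u \in \Sym(A,\s)$, the elements $x = \tfrac12(1+u)$ and $y = \tfrac12(1-u)$ satisfy $\s(x)x - \s(y)y = u$). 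Since any hermitian form containing a universal subform is itself universal, $\ell \x \qf{a,-a}_\s$ is universal.

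The only delicate point is the claim that not all the $a_i$ vanish; this is exactly where the hypothesis $a \neq 0$ is used, and it is genuinely needed, since for $\ell \geq 2$ and $a$ of small matrix rank neither $\qf{a}_\s$ nor $\qf{a,-a}_\s$ is universal on its own. It is precisely the $\ell$-fold multiple --- through Lemma~\ref{morita-diag} --- that forces an invertible diagonal entry to appear, after which the hyperbolic-plane argument finishes the job; the remaining steps are routine diagonalization and subform bookkeeping.
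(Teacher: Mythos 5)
Your proof is correct and follows essentially the same route as the paper's: apply Lemma~\ref{morita-diag} to diagonalize $\ell \x \qf{a}_\s$ with entries in $\Sym(A,\s)^\x \cup \{0\}$, use $a \neq 0$ to extract an invertible entry $a_i$, and then conclude via the subform $\qf{a_i,-a_i}_\s \simeq \qf{1,-1}_\s$ and its universality, exactly as in the proof of Lemma~\ref{isot-univ}. The extra details you supply (the explicit witnesses $x=\tfrac12(1+u)$, $y=\tfrac12(1-u)$ for universality of $\qf{1,-1}_\s$) are correct and merely make explicit what the paper calls the standard argument.
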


\begin{proof} By Lemma~\ref{morita-diag}, $\ell\x \qf{a}_\s \simeq \qf{a_1,\ldots, a_\ell}_\s$ for some $a_1,\ldots, a_\ell
\in \Sym(A,\s)^\x \cup\{0\}$. At least one $a_i$, say $a_1$, is nonzero since $\das (\ell\x \qf{a}_\s)$ contains $a\not=0$. Also, 
$\qf{a_1,-a_1}_\s \leq \ell\x \qf{a,-a}_\s$ and we conclude as in the proof of Lemma~\ref{isot-univ} since $a_1$ is invertible. 
\end{proof}

\begin{lemma}\label{diag-constant}
  There exist a nonsingular diagonal hermitian form $h_0$ over $(A,\s)$ and $k_0 \in \N$
  such that $\sign^\eta_P h_0 = k_0$ for every $P \in \wt X_F$.
\end{lemma}
\begin{proof}
  Let $\vf_0$ be a nonsingular diagonal hermitian form over $(A,\s)$ such that
  $\sign^\eta_P \vf_0 \not = 0$ for every $P \in \wt X_F$, cf. \cite[Proposition~3.2 and the remark following it]{A-U-prime}. Let $k_1, \ldots, k_s$ be the different values that $\sign^\eta_P
  \vf_0$ takes when $P$ varies in $\wt X_F$, and let $U_i$ be the clopen  set 
	 $\{P \in \wt X_F \mid \sign^\eta_P \vf_0 = k_i\}$ for
  $i=1, \ldots, s$. ($U_i$ is clopen since $\sign^\eta \vf_0$ is 
	continuous by \cite[Theorem~7.2]{A-U-Kneb} and $\wt X_F$ is clopen.)
	The result is obtained by induction on $s$, using the
  following fact. 

  Fact: There is a diagonal hermitian form $\vf'$ over $(A,\s)$ such that
  $\sign^\eta \vf'$ has constant non-zero values on $U_1 \cup U_2, U_3, \ldots,
  U_s$.\\
  Proof of the fact: Let $q$ be a quadratic form over $F$  and let $r \in \N$ be such that $\sign q$ is
  equal to $2^r k_2$ on $U_1$, to $2^rk_1$ on $U_2$ and to $2^r$ on $U_3 \cup
  \cdots \cup U_s$, cf. \cite[Chapter~VIII, Lemma~6.10]{Lam}. Then $q\ox\vf_0$ is a
  hermitian form over $(A,\s)$ and $\sign^\eta (q\ox\vf_0)$ is
  equal to $2^rk_2k_1$ on $U_1$, to $2^rk_1k_2$ on $U_2$ and to $2^rk_i$ on
  $U_i$ for $i=3, \ldots, s$.
\end{proof}

Let
$\CT_0$ be the coarsest topology on $\wt X_F$ that makes  $\sign^\eta h$
continuous, for all nonsingular hermitian forms $h$ over $(A,\s)$, and $\CT_1$
the coarsest topology on $\wt X_F$ that makes all $\sign^\eta \qf{a}_\s$
continuous, for $a \in \Sym(A,\s)^\x$.

\begin{prop}
  The topologies $\CT_H$, $\CT_0$ and $\CT_1$ on $\wt X_F$ are equal.
\end{prop}
\begin{proof}
  Clearly $\CT_1 \subseteq \CT_0$. 
  We also know that the maps $\sign^\eta h$ are all continuous from $\wt X_F$ with the topology 
	$\CT_H$ to
  $\Z$ by \cite[Theorem~7.2]{A-U-Kneb}, so $\CT_0 \subseteq \CT_H$. The result will follow if we check that
  $\CT_H \subseteq \CT_1$.
  
  Let $h_0 = \qf{a_1, \ldots, a_n}_\s$ be the
  hermitian form from Lemma~\ref{diag-constant}. Observe that since $h_0$ is
  nonsingular we have $a_1, \ldots, a_n \in \Sym(A,\s)^\x$.

  We consider the open set $H(u) \cap \wt X_F$ in $\CT_H$. Let $q$ be a nonsingular
  diagonal quadratic form over $F$ such that $\sign q$ is equal to $2^r$ on
  $H(u) \cap \wt X_F$ and $0$ on $\wt X_F \setminus H(u)$ (cf.
  \cite[Chapter~VIII, Lemma~6.10]{Lam} and since $\wt X_F$ is clopen in $X_F$). 
	The hermitian form $q \ox h$ is a
  diagonal form with invertible coefficients, so the map $\sign^\eta q \ox h$ is
  continuous on $\wt X_F$ equipped with $\CT_1$.
  
  By choice of $h_0$ and $q$,  $\sign^\eta q\ox h_0$ is equal to $2^rk_0$ on
  $H(u) \cap \wt X_F$ and to $0$ on $\wt X_F \setminus H(u)$.  Then $H(u) \cap
  \wt X_F = (\sign^\eta q\ox h_0)^{-1}(2^rk_0) \in \CT_1$.
\end{proof}

\section{Prepositive cones and positive cones}\label{sec:pos_cone}

The objective of this section is to introduce a notion of ordering on
  algebras with involution that corresponds to non-zero signatures of hermitian
  forms. The precise statement of this correspondence will appear later in the paper, in
  Corollary~\ref{bij}.

For the remainder of the paper we fix some   field $F$ of characteristic not $2$ and some $F$-algebra with involution $(A,\s)$. Let $(D,\vt)$, $\ad_\Phi$, $\ell$ and $\ve$ be as in Section~\ref{prelims}. 
We make the convention that orderings in $X_F$ always contain $0$.

\begin{defi}\label{def-preordering}
A \emph{prepositive cone $\CP$ on }$(A,\s)$ is a
  subset $\CP$ of $\Sym(A,\s)$  such that 
  \begin{enumerate}[(P1)]
    \item $\CP \not = \varnothing$;
    \item $\CP + \CP \subseteq \CP$; 
    \item $\s(a) \cdot \CP \cdot a \subseteq \CP$ for every $a \in A$;
    \item $\CP_F := \{u \in F \mid u\CP \subseteq \CP\}$ is an ordering on $F$.
    \item $\CP \cap -\CP = \{0\}$ (we say that $\CP$ is \emph{proper}).
  \end{enumerate}
  A prepositive cone $\CP$ is \emph{over $P\in X_F$} if $\CP_F=P$. 
  A \emph{positive cone} is a prepositive cone that is  maximal with respect to inclusion. 
  \end{defi}

Observe that $\{0\}$ is never a prepositive cone on $(A,\s)$ by (P4).

\begin{defi} \label{def-fr}  
We
  denote by $Y_{(A,\s)}$ the set of all prepositive cones on $(A,\s)$,  and by $X_{(A,\s)}$ the set of all  positive
  cones on $(A,\s)$.
  We say that $(A,\s)$ is \emph{formally real} if it has at least one prepositive
  cone.
\end{defi}

\begin{remark}\mbox{}
	\begin{enumerate}[(1)]
		\item We will show later in the paper that positive cones only exist over non-nil orderings, 
		and that for any such ordering there are exactly two positive cones over it,
		cf. Proposition~\ref{nn} and Theorem~\ref{positive=max}.

		\item There are prepositive cones that are not positive cones. A natural example can be
		constructed using the theory of Tignol-Wadsworth gauges, cf. \cite{T-W-2011}, as well as their
		links with positive cones, cf. \cite{A-U-gauges}, and will be presented in a forthcoming
		 paper.
	\end{enumerate}
\end{remark}

\begin{remark}\label{impromptu1} The reason we do not consider the case where  $Z(A)= F\x F$ in
this paper is as follows: in this case assume that $\CP$ is a prepositive cone on $(A,\s)$ and 
take $a\in \CP\sm\{0\}$.  Consider the hermitian form $\qf{a}_\s$. Then $\das(k\x\qf{a}_\s) \subseteq \CP$
for every $k\in \N$ by (P2) and (P3) and it follows from  \cite[Propositions~A.3 and B.8]{A-U-PS} that 
$\CP=\Sym(A,\s)$, contradicting (P5).
\end{remark}

The following proposition justifies why we use the terminology ``proper'' for (P5).

\begin{prop}\label{impromptu2}
  Let $\CP \subseteq \Sym(A,\s)$ satisfy properties \textup{(P1)} up to \textup{(P3)} and
  assume that $\CP$ does not satisfy \textup{(P5)}. Then $\CP = \Sym(A,\s)$.
\end{prop}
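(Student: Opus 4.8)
The plan is to show that once (P5) fails, the cone $\CP$ must contain a nonzero element $a$ together with $-a$, and then use the module-type properties (P2) and (P3) to propagate this to all of $\Sym(A,\s)$. First I would pick $a \in \CP \cap -\CP$ with $a \neq 0$; such an $a$ exists precisely because (P5) fails and $0 \in \CP$ always (note $0 \in \CP$ follows from (P1), (P2) and (P3), taking $a = 0$ in (P3)). So both $a$ and $-a$ lie in $\CP$.

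Next I would invoke Lemma~\ref{isot-univ2}: since $a \in \Sym(A,\s) \sm \{0\}$, the form $\ell \times \qf{a,-a}_\s$ is universal, i.e.\ $\das(\ell \times \qf{a,-a}_\s) = \Sym(A,\s)$. The point is that every value of this form can be written as a finite sum of terms of the shape $\s(x_i) a x_i$ and $\s(y_j)(-a) y_j$ with $x_i, y_j \in A$. By (P3) applied to $a \in \CP$ and to $-a \in \CP$, each such term lies in $\CP$, and by (P2) (closure under addition) the whole sum lies in $\CP$. Hence $\das(\ell \times \qf{a,-a}_\s) \subseteq \CP$, which gives $\Sym(A,\s) \subseteq \CP$, and the reverse inclusion is trivial, so $\CP = \Sym(A,\s)$.

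The only slightly delicate point is making sure that "$\ell \times \qf{a,-a}_\s$ is universal" really does express every element of $\Sym(A,\s)$ as a $\CP$-admissible combination of $\s(x)ax$ and $\s(y)(-a)y$ terms: concretely, the form $\ell \times \qf{a,-a}_\s$ has underlying module $A^{2\ell}$ and sends a vector $((x_i),(y_j))$ to $\sum_{i=1}^{\ell}\s(x_i) a x_i + \sum_{j=1}^{\ell}\s(y_j)(-a) y_j$, so each summand is of the form covered by (P3) and the total is covered by (P2). I expect this bookkeeping to be the main (and only real) obstacle, and it is routine. Everything else is a direct application of the defining axioms together with Lemma~\ref{isot-univ2}.
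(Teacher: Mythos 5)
Your proposal is correct and follows essentially the same route as the paper: take a nonzero $a$ with $a,-a\in\CP$, note that (P2) and (P3) force $\das(\ell\x\qf{a,-a}_\s)\subseteq\CP$, and invoke Lemma~\ref{isot-univ2} to conclude that this representation set is all of $\Sym(A,\s)$. Your extra remark that $0\in\CP$ (so that failure of (P5) indeed yields a \emph{nonzero} element of $\CP\cap-\CP$) is a small bookkeeping point the paper leaves implicit, but otherwise the arguments coincide.
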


\begin{proof}
  By hypothesis there is $a \in \CP \setminus \{0\}$ such that $a, -a \in \CP$.
  Therefore $D_{(A,\s)} (k \x \qf{a,-a}_\s) \subseteq \CP$ for every $k \in \N$ by (P2) and (P3).
  Since $a\not=0$, Lemma~\ref{isot-univ2} tells us that
  $\ell \x \qf{-a,a}_\s$ is universal and the conclusion follows.
\end{proof}

\begin{lemma}\label{inv_pos}  
Let $\CP$ be a prepositive cone on $(A,\s)$. Then $\CP$ contains an invertible element.
\end{lemma}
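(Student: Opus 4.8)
The plan is to take an arbitrary element of $\CP$ and perturb it slightly within $\CP$ to make it invertible. First I would use Lemma~\ref{isot-univ2}: start with some $a \in \CP \setminus \{0\}$ (nonempty by (P1), and not all of $\CP$ is $\{0\}$ since (P4) forces $1 \in \CP_F$, hence $\CP \neq \{0\}$ — actually more simply, if $\CP = \{0\}$ then $\CP_F = F$ is not an ordering). By Lemma~\ref{morita-diag} applied to $\qf{a}_\s$, we get $\ell \x \qf{a}_\s \simeq \qf{a_1, \ldots, a_\ell}_\s$ with at least one $a_i$, say $a_1$, in $\Sym(A,\s)^\x$, since $a \in \das(\ell \x \qf{a}_\s)$ is nonzero. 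By (P2) and (P3), $\das(\ell \x \qf{a}_\s) \subseteq \CP$.

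The remaining issue is that $a_1$ itself may not obviously lie in $\CP$; we only know that the \emph{whole form} $\ell \x \qf{a}_\s$ represents elements of $\CP$, not that each diagonal entry $a_i$ is in $\CP$. To fix this, I would instead argue: $\ell \x \qf{a}_\s \simeq \qf{a_1, \ldots, a_\ell}_\s$ represents $a_1 + (a_2 + \cdots + a_\ell)$ after a hyperbolic-type adjustment — more carefully, since $\qf{a_2, \ldots, a_\ell}_\s$ is a hermitian form over $(A,\s)$, it represents some element $b \in \Sym(A,\s)$ (take $x = (0, 1, \ldots, 1)$, giving $b = a_2 + \cdots + a_\ell$, or even $b = 0$ if we evaluate at $0$), so $\qf{a_1, \ldots, a_\ell}_\s$ represents $a_1 + b$ for suitable $b$. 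This does not immediately give an invertible element either. The cleaner route: observe that $\ell \x \qf{a}_\s$ represents $a_1$ itself — evaluate the isometric form $\qf{a_1, \ldots, a_\ell}_\s$ at $(1, 0, \ldots, 0)$ to get exactly $a_1$. Hence $a_1 \in \das(\ell \x \qf{a}_\s) \subseteq \CP$ by (P2), (P3), and $a_1 \in \Sym(A,\s)^\x$.

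So the key steps in order are: (i) note $\CP \neq \{0\}$ using (P4), so pick $a \in \CP \setminus \{0\}$; (ii) apply Lemma~\ref{morita-diag} to diagonalize $\ell \x \qf{a}_\s \simeq \qf{a_1, \ldots, a_\ell}_\s$ with all $a_i \in \Sym(A,\s)^\x \cup \{0\}$; (iii) observe that some $a_i$ is invertible because $\das(\ell \x \qf{a}_\s) \ni a \neq 0$ rules out all $a_i$ being $0$ (if every $a_i = 0$, the form is zero and represents only $0$); (iv) conclude $a_i \in \CP$ by evaluating $\qf{a_1, \ldots, a_\ell}_\s$ at the $i$-th standard basis vector and using that $\das(\ell \x \qf{a}_\s) = \das(k \x \qf{a}_\s \text{ type expressions}) \subseteq \CP$ via (P2) and (P3) (here $\ell \x \qf{a}_\s$ is the orthogonal sum of $\ell$ copies of $\qf{a}_\s$, each summand represents elements $\s(x) a x \in \CP$, and sums land in $\CP$). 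The main obstacle is step (iii)--(iv): making sure the invertible diagonal entry is genuinely forced to lie in $\CP$ and not merely in $\Sym(A,\s)$ — this is handled by the standard-basis-vector evaluation, which shows $a_i$ is literally a value of the form $\ell \x \qf{a}_\s$, and that set of values is contained in $\CP$ by closure under (P2) and (P3).
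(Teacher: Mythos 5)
Your proof is correct and follows essentially the same route as the paper: the paper picks $a\in\CP\sm\{0\}$, invokes Lemma~\ref{isot-univ2} (whose proof is exactly your Lemma~\ref{morita-diag} diagonalization argument) to see that $\ell\x\qf{a}_\s$ represents an invertible element, and concludes that element lies in $\CP$ since $\das(\ell\x\qf{a}_\s)\subseteq\CP$ by (P2) and (P3). The only difference is that you inline the proof of Lemma~\ref{isot-univ2} rather than citing it, and your standard-basis-vector evaluation correctly settles the point you worried about.
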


\begin{proof} Consider the hermitian form $\qf{a}_\s$, where $a\in\CP\sm\{0\}$. By 
 Lemma~\ref{isot-univ2}, $\ell\x \qf{a}_\s$ represents an element $b \in \Sym(A,\s)^\x$. 
 Then $b\in\CP$ since by (P2) and (P3), 
 $\das(\ell\x\qf{a}_\s) \subseteq \CP$. 
\end{proof}

\begin{prop}\label{not_torsion} 
If $(A,\s)$ is formally real, then $W(A,\s)$ is not torsion.
\end{prop}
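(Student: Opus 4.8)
The plan is to exhibit a single nonsingular hermitian form over $(A,\s)$ whose class in $W(A,\s)$ has infinite order; this of course shows that $W(A,\s)$ is not torsion. Since $(A,\s)$ is formally real, I first fix a prepositive cone $\CP$ on $(A,\s)$, and then use Lemma~\ref{inv_pos} to pick an invertible element $a\in\CP$. The candidate form is then the rank-one nonsingular hermitian form $\qf a_\s$, and I claim that $n\x\qf a_\s\neq 0$ in $W(A,\s)$ for every $n\ge 1$.

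Suppose, for contradiction, that $n\x\qf a_\s=0$ in $W(A,\s)$ for some $n\ge 1$. Being nonsingular and Witt-trivial, $n\x\qf a_\s$ is hyperbolic; in particular it has even rank, and since its rank is $n$, this forces $n$ even and $n\ge 2$, so $n\x\qf a_\s$ is isotropic. By Lemma~\ref{isot-univ}, the form $\ell\x(n\x\qf a_\s)=(\ell n)\x\qf a_\s$ is then universal, i.e. $\das\bigl((\ell n)\x\qf a_\s\bigr)=\Sym(A,\s)$. On the other hand, every element represented by $(\ell n)\x\qf a_\s$ is a sum of terms of the form $\s(x)\,a\,x$ with $x\in A$; since $a\in\CP$, each such term lies in $\CP$ by (P3), and hence so does their sum by (P2). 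Therefore $\das\bigl((\ell n)\x\qf a_\s\bigr)\subseteq\CP$, and combining the two inclusions gives $\CP=\Sym(A,\s)$. But $1\in\Sym(A,\s)$ with $1\neq 0$, so both $1$ and $-1$ lie in $\CP$, contradicting (P5). Hence $\qf a_\s$ has infinite order in $W(A,\s)$, and the proposition follows.

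I do not expect a genuine obstacle here: once Lemma~\ref{inv_pos} supplies an invertible element of $\CP$ and Lemma~\ref{isot-univ} converts isotropy into universality, the cone axioms (P2), (P3), (P5) finish the argument. The only mildly delicate point is the step from ``$n\x\qf a_\s=0$ in $W(A,\s)$'' to ``$n\x\qf a_\s$ is isotropic'', which is handled by recalling that a nonsingular form representing $0$ in the Witt group is hyperbolic, hence of even positive rank and therefore isotropic.
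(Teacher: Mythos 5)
Your proof is correct and is essentially the paper's own argument: fix a prepositive cone, take an invertible $a\in\CP$ via Lemma~\ref{inv_pos}, observe that if some multiple of $\qf{a}_\s$ were Witt-trivial it would be hyperbolic and hence isotropic, apply Lemma~\ref{isot-univ} to make a further multiple universal, and then (P2), (P3) give $\das$ of that multiple inside $\CP$, contradicting (P5). The only quibble is the parenthetical step ``hyperbolic forces even rank, so $n$ is even'': over $A\cong M_\ell(D)$ a hyperbolic form need not have even $A$-module rank (the Lagrangian need not be free), but this is harmless since all you use is that a nonzero hyperbolic form is isotropic, which holds regardless.
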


\begin{proof} 
Let $\CP$ be a prepositive cone on $(A,\s)$.
 Suppose $W(A,\s)$ is torsion and consider the hermitian form $\qf{a}_\s$, where $a\in\CP$ is invertible
 (cf. Lemma~\ref{inv_pos}). Then there 
exists $k\in \N$ such that $k\x \qf{a}_\s$ is hyperbolic. By Lemma~\ref{isot-univ} there exists $r\in\N$ such that 
$r\x \qf{a}_\s$ is universal. By (P2) and (P3), $\das(r\x\qf{a}_\s) \subseteq \CP$, contradicting (P5).
\end{proof}

The converse to the previous proposition also holds, as we will show in Proposition~\ref{tor_fr}.

\begin{cor}\label{cor:1} 
If $(A,\s)$ is formally real, then the involution $\vt$ on $D$ can be chosen
such that $\ve=1$.
\end{cor}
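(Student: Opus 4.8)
The claim is that if $(A,\s)$ is formally real, then we may choose the division algebra data so that $\ve = 1$. Recall from the preliminaries that $\ve = 1$ automatically when $\s$ and $\vt$ are of the same type, so the only case requiring work is when $\s$ is of the first kind and $\s, \vt$ have different types (orthogonal versus symplectic), which forces $\ve = -1$ for the given choice of $\vt$ and $\Phi$. So the plan is: assume $\s$ is of the first kind, $K = F$, and suppose for contradiction that \emph{every} involution of the first kind on $D$ of the same kind as $\s$ produces $\ve = -1$; equivalently, $D$ carries an involution $\vt$ of the opposite type to $\s$ (symplectic if $\s$ is orthogonal, or vice versa), and $(A,\s) \cong (M_\ell(D), \ad_\Phi)$ with $\Phi$ the Gram matrix of a $(-1)$-hermitian form over $(D,\vt)$.

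The key observation I would use is that in this situation $W(A,\s)$ is torsion, which contradicts Proposition~\ref{not_torsion}. The reason: when $\s$ is of the first kind but $D$ only admits involutions of the kind producing $\ve = -1$, one is in the case where $D$ is, say, a quaternion algebra with its canonical (symplectic) involution while $\s$ is orthogonal — more precisely, the Morita-equivalent $\ve$-hermitian forms over $(D,\vt)$ with $\ve = -1$ live in a Witt group that is torsion. Concretely, skew-hermitian forms over a quaternion division algebra with its canonical involution (the relevant case where types are forced to differ) form a torsion Witt group: every such form becomes hyperbolic after scalar extension to a real closure, because over a real closed field the only division algebra with involution in play is $(\mathbb{H}, \bbar)$ (Hamilton quaternions with conjugation), and skew-hermitian forms over $(\mathbb{H},\bbar)$ are hyperbolic. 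Hence $\sign^\eta_P$ vanishes identically, every ordering is nil, and a standard argument (Pfister's local-global principle applied via the Morita equivalence in diagram~\eqref{diagram}) shows $W(A,\s)$ is torsion.

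So the steps, in order, would be: (1) reduce to $\s$ of the first kind with $\s, \vt$ of different types, the only case where $\ve = -1$ is forced; (2) identify that this forces $D$ to be (Brauer-equivalent to) a quaternion algebra and $\vt$ its canonical involution, with the hermitian forms over $(A,\s)$ corresponding via~\eqref{diagram} to skew-hermitian forms over $(D,\vt)$; (3) show that skew-hermitian forms over a quaternion division algebra with canonical involution have torsion Witt group — either by the scalar-extension-to-real-closure argument, or by citing the signature computations of \cite{A-U-Kneb} showing all signatures are zero in this case; (4) conclude $W(A,\s)$ is torsion, contradicting Proposition~\ref{not_torsion}, so in fact $D$ must admit an involution $\vt$ with $\ve = 1$.

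I expect the main obstacle to be step~(3): making precise and rigorous the assertion that skew-hermitian forms over $(D,\vt)$ have torsion Witt group in exactly the configuration forced here. One must be careful that "different types" genuinely pins down $D$ as quaternionic (using $\dim_F \Sym(A,\s)$ and the classification of involutions, e.g. \cite[Theorem~4.2]{BOI}), and then invoke the known fact — provable by extending scalars to each real closure $F_P$, where $(D,\vt) \otimes F_P$ becomes $(\mathbb{H}, \bbar)$ over which skew-hermitian forms are hyperbolic, combined with Pfister's local-global principle for torsion — to get that every form in $W(A,\s)$ is torsion. Everything else is bookkeeping with the structure theory already set up in Section~\ref{prelims}.
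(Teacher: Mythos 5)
Your overall strategy---assume $\ve=1$ cannot be arranged, show that then all signatures vanish so that $W(A,\s)$ is torsion, and contradict Proposition~\ref{not_torsion}---is exactly the paper's strategy. But your case analysis in step~(2) is wrong, and this is a genuine gap. If $D$ is a quaternion \emph{division} algebra (or any division algebra of degree $\ge 2$ carrying an involution of the first kind), then both $\Sym(D,\vt_0)$ and $\Skew(D,\vt_0)$ are nonzero and every nonzero element of $D$ is invertible, so $D$ admits involutions of the first kind of \emph{both} orthogonal and symplectic type; hence in the quaternionic situation you describe one can simply replace the canonical involution by an orthogonal one and get $\ve=1$. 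The configuration you set out to refute (``$\s$ orthogonal, $D$ quaternion, only the canonical involution available'') never occurs, so your contradiction has nothing to bite on. The only case in which $\ve=1$ is genuinely unattainable is the split one: $D=F$, whose unique involution of the first kind is $\id_F$ (orthogonal), with $\s$ symplectic, forcing $(D,\vt,\ve)=(F,\id_F,-1)$; this is the content of the lemma of \cite{A-U-PS} that the paper cites. In that case diagram~\eqref{diagram} identifies $W(A,\s)$ with $W_{-1}(F,\id_F)$, the Witt group of alternating bilinear forms, which is zero, and this already contradicts Proposition~\ref{not_torsion}---no quaternions, no real closures, and no appeal to Pfister's local--global principle are needed.

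A secondary inaccuracy in your step~(3): it is not true that skew-hermitian forms over $((-1,-1)_{F_P},\bbar)$ are hyperbolic; the Witt group $W_{-1}((-1,-1)_{F_P},\bbar)$ is isomorphic to $\Z/2\Z$, so odd-rank forms are not hyperbolic. What is true (and what you actually need) is that all signatures vanish there---this is precisely the nil case of Remark~\ref{nil-and-non}---after which torsionness of the Witt group over $F$ follows from Pfister's local--global principle. As it stands, though, this repair is moot, because the case it addresses cannot arise; the proof should instead run through the split symplectic case as above.
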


\begin{proof} If it is not possible to choose $\vt$ as indicated, then $(D,\vt,\ve)=(F,\id_F, -1)$ by 
\cite[Lemma~2.3]{A-U-PS}. Diagram~\eqref{diagram} then induces an isomorphism of Witt groups between $W(A,\s)$ 
and $W_{-1}(F,\id_F)$, the Witt
group of skew-symmetric bilinear 
forms over $F$, which is well-known to be zero, contradicting Proposition~\ref{not_torsion}.
\end{proof}

\noindent\textbf{Assumption for the remainder of the paper:} In view of this corollary we may and will assume  without loss of generality that the involution  $\vt$ on $D$  is chosen to be of the same type as $\s$  (i.e. such  that $\ve=1$) 
whenever $(A,\s)$ is formally real. 
\medskip

We collect some simple, but useful, properties of prepositive cones in the following proposition.

\begin{prop}\label{easy} 
Let $\CP$ be a prepositive cone on $(A,\s)$.
\begin{enumerate}[$(1)$]

\item Assume that $1\in \CP$. Then $\CP_F= \CP\cap F$. 
\item Let $\alpha \in \CP_F\setminus \{0\}$. Then $\alpha \CP =\CP$.
\end{enumerate}
\end{prop}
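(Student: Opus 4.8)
The plan is to prove the two parts separately, using the defining properties (P1)--(P5) together with the fact (Lemma~\ref{inv_pos}) that $\CP$ contains an invertible element, and the characterisation of $\CP_F$ as $\{u \in F \mid u\CP \subseteq \CP\}$.

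For part (1), assume $1 \in \CP$. The inclusion $\CP_F \subseteq \CP \cap F$ is immediate: if $u \in \CP_F$ then $u\CP \subseteq \CP$, and since $1 \in \CP$ we get $u = u\cdot 1 \in \CP$; also $u \in F$ by definition of $\CP_F$, so $u \in \CP \cap F$. For the reverse inclusion, take $u \in \CP \cap F$. I want to show $u\CP \subseteq \CP$, i.e. $u \in \CP_F$. Since $\CP_F$ is an ordering on $F$ (property (P4)), either $u \in \CP_F$ or $-u \in \CP_F$. In the first case we are done. In the second case, $-u \in \CP_F \subseteq \CP$ (using $1 \in \CP$ as above), so $u \in \CP$ and $-u \in \CP$; hence $u \in \CP \cap (-\CP) = \{0\}$ by (P5), so $u = 0 \in \CP_F$. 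This settles part (1).

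For part (2), let $\alpha \in \CP_F \setminus \{0\}$. Since $\CP_F$ is an ordering on $F$, it is closed under multiplication and contains all squares; in particular $\alpha^{-1} = \alpha \cdot (\alpha^{-1})^2 \in \CP_F$, so $\alpha^{-1} \in \CP_F \setminus \{0\}$ as well. By the definition of $\CP_F$, we have $\alpha \CP \subseteq \CP$ and likewise $\alpha^{-1}\CP \subseteq \CP$. Applying $\alpha^{-1}\CP \subseteq \CP$ and multiplying through by $\alpha$ (which maps $\CP$ into $\CP$) gives $\CP = \alpha \alpha^{-1} \CP \subseteq \alpha \CP \subseteq \CP$, whence $\alpha\CP = \CP$.

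I do not anticipate a genuine obstacle here; both statements are short manipulations of the axioms. The only point requiring a little care is the use of (P4) in part (1): one must invoke that $\CP_F$ is a full ordering (so $u$ or $-u$ lies in it) rather than merely a preordering, and then combine this with properness (P5) to rule out the bad case. In part (2) the key small observation is that an ordering is closed under inverses, so that both $\alpha$ and $\alpha^{-1}$ act on $\CP$.
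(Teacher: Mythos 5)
Your proposal is correct and follows essentially the same route as the paper: part (1) is the paper's argument (use $u = u\cdot 1 \in \CP$ for one inclusion, and for the other combine $1\in\CP$ with (P5) to rule out the case $-u\in\CP_F$, $u\neq 0$). The only cosmetic difference is in part (2): the paper gets $\alpha^{-1}\CP\subseteq\CP$ by applying (P3) with $a=\alpha^{-1}$ (so $\alpha^{-2}\CP\subseteq\CP$, hence $\alpha^{-1}\CP\subseteq\alpha\CP\subseteq\CP$), whereas you observe that the ordering $\CP_F$ is closed under inverses, so $\alpha^{-1}\in\CP_F$ and the same inclusion follows directly from the definition of $\CP_F$; both arguments then conclude identically by multiplying through by $\alpha$.
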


\begin{proof} (1) If $\alpha \in \CP_F$, then $\alpha = \alpha \cdot 1 \in \CP\cap F$. Conversely, assume that $\alpha\in
\CP\cap F$. If $\alpha \not\in \CP_F$, then $-\alpha \in \CP_F\sm\{0\}$. Therefore $-\alpha = -\alpha\cdot 1 \in \CP$, contradicting (P5) since $\alpha \in \CP$.

(2) We know that $\alpha \CP \subseteq \CP$. It follows from (P3) that $\alpha^{-1}\CP \alpha^{-1} \subseteq \CP$
and so $\alpha^{-1}\CP\subseteq \CP\alpha = \alpha\CP \subseteq \CP$. The result follows.
\end{proof}

Note that a characterization of the condition $1\in \CP$ is given in Corollary~\ref{1-pos}.

\begin{ex}
  Let $(A,\s)$  be an $F$-algebra with involution and let $P \in X_F$.  Let $h$
  be a hermitian form over $(A,\s)$ such that $\qf{\bar{u}} \ox h$ is
  anisotropic for every finite tuple $\bar{u}$ of elements of $P$. Then
  \[\CP:= \bigcup_{\bar{u} \in P^k,\ k\in \N} \das (\qf{\bar{u}} \ox h)\]
  is a prepositive cone on  $(A,\s)$.
  
 Also, if $\CP$ is a prepositive cone on $(A,\s)$ over $P\in X_F$, then for any diagonal hermitian form $h$ over $(A,\s)$
  with coefficients in $\CP$  we have
  \[\das (\qf{\bar{u}}\ox h)  \subseteq \CP  \]
  for every finite tuple $\bar{u}$ of elements of $P$. In particular the hyperbolic plane is never a subform of $\qf{\bar{u}}\ox h$. If  $h$ is in addition nonsingular, then  $h$ is strongly anisotropic  by Lemma~\ref{isot-univ}. 
\end{ex}

Most of the motivation behind the definition of prepositive cones comes from Examples~\ref{ex-psd} and \ref{mp} below.

\begin{ex}\label{ex-psd} 
Let $(A,\s) = (M_n(F),t)$, where $t$ denotes the transpose
  involution.   Let $P\in X_F$ and let $\CP$
  consist of those symmetric matrices in $M_n(F)$ that are  positive semidefinite
  with respect to $P$.  Then $\CP$ is a prepositive cone on $(M_n(F),t)$ over $P$.
  Note that if $M\in \CP$, then there exists $T \in \GL_n(F)$ such that $T^t MT$
  is  a diagonal matrix whose elements all belong to $P$. In particular, if $M$
  is invertible, $M$ is positive definite, and therefore $\sign_P^{\qf{I_n}_t} \qf{M}_t = n$. (Note that
  $\qf{I_n}_t$ is a reference form for $(M_n(F),t)$). We will observe later in Example~\ref{psd-nsd} that 
  $\CP$ and $-\CP$ are the only prepositive cones on $(M_n(F), t)$ over $P$.
\end{ex}

\begin{defi}\label{mp0} 
Let $(A,\s)$ be an $F$-algebra with involution and 
let $P \in X_F$. We define 
\[m_P(A,\s) := \max\{ \sign_P^\eta \qf{a}_\s \mid a\in \Sym(A,\s)^\x \}\]
(note that $m_P(A,\s)$ does not depend on the choice of $\eta$) and
\[\CM_P^\eta(A,\s):=\{ a \in \Sym(A,\s)^\x \mid  \sign^\eta_P \qf{a}_\s = m_P(A,\s)\}
  \cup \{0\}.\]
\end{defi}

\begin{ex}\label{mp} 
For every $F$-division algebra with involution $(D,\vt)$,  every tuple
  of reference forms $\eta$ for $(D,\vt)$ and every $P\in X_F\setminus \Nil[D,\vt]$, the set $\CM_P^\eta(D,\vt)$ is a
  prepositive cone on $(D,\vt)$ over $P$. Properties (P1) and (P4) are clear (for (P4) note that $\sign_P^\eta (q\ox h)=
  \sign_P q \cdot \sign_P^\eta h$, cf. \cite[Theorem~2.6]{A-U-prime}). We
  justify the others. 

  (P2): Let $a,b \in \CM_P^\eta(D,\vt) \setminus \{0\}$. Then $a+b$  is represented
  by $\qf{a,b}_\vt$.  Observe that since $P \in X_F\setminus \Nil[D,\vt]$, $m_P(D,\vt) > 0$ and thus $a
  \not = -b$ (because $a=-b$ would imply $m_P(D,\vt) = \sign^\eta_P \qf{a}_\s = -\sign^\eta_P
  \qf{b}_\s = -m_P(D,\vt)$, impossible).  Since $D$ is a  division algebra, $a + b$ is
  invertible and $\qf{a,b}_\vt \simeq \qf{a+b,c}_\vt$ for some $c \in
  \Sym(D,\vt)^\x$. Comparing signatures and using the fact that  $a, b \in
  \CM_P^\eta(D,\vt)$ we obtain $a+b \in \CM_P^\eta(D,\vt)$.

  (P3) follows from the fact that for any $a \in D^\x$ and any $b\in
  \Sym(D,\vt)^\x$ the forms $\qf{b}_\vt$ and $\qf{ \vt(a) b  a}_\vt$ are
  isometric. 

  (P5) follows from the fact that $P\not\in \Nil[D,\vt]$ and thus $m_P(D,\vt)>0$.
  
  We will see later, in Proposition~\ref{description}, that $\CM_P^\eta(D,\vt)$ is in fact a positive cone on $(D,\vt)$ over $P$.
  \end{ex}

\begin{remark} Observe that $\CP$ is a prepositive cone on $(A,\s)$ if and only if
  $-\CP$ is a prepositive cone on $(A,\s)$. This property of prepositive cones corresponds to the fact that
  if $\sign_P^\eta: W(A,\s) \to \Z$ is a signature, then so is $-\sign_P^\eta = \sign_P^{-\eta}$.  
    We will return to this observation
  later, cf. Proposition~\ref{pi-homeom}. 
\end{remark}

\begin{remark} The reason for requiring $\CP_F$ to be an ordering on $F$ in
  (P4) instead of just a preordering is as follows: in case $(A,\s)=
  (M_n(F),t)$, we want the prepositive cone $\CP$  on $(M_n(F),t)$ to be a subset
  of the set of positive definite (or negative definite) matrices (see
  Proposition \ref{split-positive} where we prove this result). 
   Having this
  requirement while  only asking that $\CP_F$ is a preordering forces $F$
  to be a SAP field   as we explain
	in the remainder of this remark.
	
	We first recall that a field $F$ has the Strong Approximation Property (SAP) if for any two
	disjoint closed subsets (in the Harrison topology) $A$ and $B$ of $X_F$, 
	there exists $a\in F$ such
	that $A\subseteq H(a)$ and $B\subseteq H(-a)$, cf. \cite[p.~66]{Prestel84}. This is equivalent to
	$F$ satisfying the Weak Hasse Principle, which asserts that a quadratic form over $F$ is weakly
	isotropic whenever it is indefinite with respect to all orderings of $F$, cf.
	\cite[Section~9]{Prestel84}.

  We thus assume for the remainder of this remark that axiom (P4) only requires
  $\CP_F$ to be a preordering. Let $q=\la a_1, \ldots, a_n\ra$ be a quadratic
  form over $F$ that is strongly anisotropic. Let $M=\diag(a_1, \ldots, a_n) \in M_n(F)$. 
  Then $M \in \Sym(M_n(F),t)$.  Let
  $\CP$ be the smallest subset of $M_n(F)$ containing $M$, that is closed under
  (P2) and (P3). In other words,  $\CP$ consists of all the elements in $M_n(F)$
  that are weakly represented by  the hermitian form $\qf{M}_t$. Since
  $\qf{M}_t$ is Morita equivalent to $q$ (via the map $g$),  $\qf{M}_t$ is strongly anisotropic.
  It follows that $\CP$ is a prepositive cone on $(M_n(F),t)$ with 
  $\CP_F$ a preordering on $F$.
  If $\CP$ is to only contain positive definite (or negative definite) matrices
  with respect to some $P\in X_F$,
  all the $a_i$ must have the same sign at $P$  and thus $q$
  must be definite at $P$. This shows that $F$ must satisfy the Weak
  Hasse Principle, i.e., that  $F$ is SAP, as recalled above.
\end{remark}

\begin{lemma}\label{same-base-ordering}
  Let $\CP \subseteq \CQ$ be prepositive cones in $(A,\s)$. Then $\CP_F = \CQ_F$.
\end{lemma}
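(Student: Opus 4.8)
The statement to prove is: if $\CP \subseteq \CQ$ are prepositive cones on $(A,\s)$, then $\CP_F = \CQ_F$. The inclusion $\CP \subseteq \CQ$ immediately gives that if $u\CP \subseteq \CP$ fails to imply $u\CQ \subseteq \CQ$ in general, so the inclusion $\CP_F \subseteq \CQ_F$ is \emph{not} automatic; what is clear is only the reverse-type containment is also not obvious. So both inclusions need an argument, and the natural tool is that $\CP_F$ and $\CQ_F$ are both \emph{orderings} on $F$ (axiom (P4)), and two orderings on $F$ are equal as soon as one is contained in the other. Hence it suffices to prove a single containment, say $\CQ_F \subseteq \CP_F$, or equivalently (taking negatives) to rule out that some $u \in F^\x$ lies in $\CP_F$ while $-u$ lies in $\CQ_F$.

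First I would pick an invertible element $a \in \CP$ (this exists by Lemma~\ref{inv_pos}). Since $\CP \subseteq \CQ$, we have $a \in \CQ$ as well. Now suppose, for contradiction, that $\CP_F \neq \CQ_F$; as both are orderings of $F$, neither contains the other, so there exists $u \in F^\x$ with $u \in \CP_F$ and $-u \in \CQ_F$ (after possibly swapping the roles of the two orderings — but note $\CP_F$ and $\CQ_F$ are on the same field and are orderings, so $u \in \CP_F \setminus \CQ_F$ forces $-u \in \CQ_F$, and then $-u \notin \CP_F$ since $u \in \CP_F$). Then $ua \in \CP \subseteq \CQ$ because $u \in \CP_F$, and also $(-u)a \in \CQ$ because $-u \in \CQ_F$ and $a \in \CQ$. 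Hence both $ua$ and $-ua = -(ua)$ lie in $\CQ$, with $ua$ invertible and in particular nonzero. This contradicts (P5) for $\CQ$, namely $\CQ \cap -\CQ = \{0\}$.

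So the whole proof is essentially: invoke Lemma~\ref{inv_pos} to get an invertible $a \in \CP$, observe it lies in $\CQ$ too, and then note that any $F$-scalar discrepancy between the two orderings would let us scale $a$ into both $\CQ$ and $-\CQ$, violating properness. I expect the only subtlety — the ``main obstacle'', such as it is — to be the clean bookkeeping of the fact that two orderings on a field satisfy: one contained in the other $\Rightarrow$ equal, and more precisely that if they are unequal then there is $u$ with $u$ positive for one and negative for the other. This is standard real-algebra ($X_F$ consists of maximal proper preorderings, so no proper preorder strictly contains an ordering), but I would state it explicitly since the paper's convention that orderings contain $0$ and the sign conventions need to line up. No delicate Morita-theoretic or signature computation is needed here; the lemma is a short consequence of Lemma~\ref{inv_pos} and axioms (P3), (P4), (P5).

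\begin{proof}[Proof sketch to be expanded]
By Lemma~\ref{inv_pos} there is an invertible $a \in \CP \subseteq \CQ$. Both $\CP_F$ and $\CQ_F$ are orderings on $F$ by (P4). If they were not equal, then since no ordering on a field is strictly contained in a proper preordering, there would exist $u \in F^\x$ with $u \in \CP_F$ and $-u \in \CQ_F$. Then $ua \in \CP \subseteq \CQ$ (as $u\CP \subseteq \CP$) and $-ua \in \CQ$ (as $(-u)\CQ \subseteq \CQ$ and $a \in \CQ$), so $ua \in \CQ \cap (-\CQ)$ with $ua \neq 0$, contradicting (P5). Hence $\CP_F = \CQ_F$.
\end{proof}
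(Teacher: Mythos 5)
Your proof is correct and takes essentially the same route as the paper's: assuming the base orderings disagree, you scale an element of $\CP$ (which also lies in $\CQ$) by the sign-discrepant scalar to land in both $\CQ$ and $-\CQ$, contradicting (P5), and then conclude equality because both $\CP_F$ and $\CQ_F$ are orderings. The only cosmetic differences are that the paper applies Proposition~\ref{easy}(2) to all of $\CP$ (getting $\CP,-\CP\subseteq\CQ$) rather than to a single element, and that your appeal to Lemma~\ref{inv_pos} is slightly more than needed, since any nonzero $a\in\CP$ already gives $ua\neq 0$ for $u\in F^\x$.
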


\begin{proof}
  Let $\alpha \in \CQ_F$. Assume that $\alpha \not \in \CP_F$. Then $-\alpha \in
  \CP_F$ and thus $\alpha \CP = -\CP$, cf.  Proposition~\ref{easy}. Using that
  $\CP \subseteq \CQ$ we also obtain $\alpha \CP \subseteq \alpha \CQ \subseteq
  \CQ$. So $\CP, -\CP \subseteq \CQ$, contradicting that $\CQ$ is proper since
  $\CP \not = \{0\}$ (as observed after Definition~\ref{def-fr}). Therefore, $\CQ_F \subseteq \CP_F$ and
  the equality follows since they are both orderings on $F$. 
\end{proof}

\begin{defi}\label{extension}  
Let $(A,\s)$ be an $F$-algebra with involution and 
let $P \in X_F$. For a subset $S$ of $\Sym(A,\s)$, we define
  \[\CC_P(S) := \Bigl\{\sum_{i=1}^k u_i \s(x_i)s_ix_i \,\Big|\, k \in \N, \ u_i \in P,\ x_i
  \in A,\ s_i \in S\Bigr\}.\]
For a prepositive cone $\CP$  on $(A,\s)$ over $P$ and $a \in \Sym(A,\s)$, we define
\begin{align*}
\CP[a] &:=  \Bigl\{p + \sum_{i=1}^k u_i\s(x_i)ax_i \,\Big|\, p \in \CP,\ k \in
      \N, \ u_i \in P, \ x_i \in A \Bigr\}\\  
      &\phantom{:} = \CP + \CC_P(\{a\}).
\end{align*}      
\end{defi}
It is easy to see that both $\CC_P(S)$ and $\CP[a]$ are prepositive cones on
$(A,\s)$ over $P$ if and only if they are proper.

Prepositive cones on $(A,\s)$ only give rise to partial orderings on $\Sym(A,\s)$. The following result gives an 
approximation, for a positive cone $\CP$, to the property $F=P\cup -P$ for any ordering $P\in X_F$:

\begin{lemma}\label{complement}
  Let $\CP$ be a positive cone on $(A,\s)$ over $P \in X_F$
  and $a \in \Sym(A,\s) \setminus \CP$. Then there are $k
  \in \N$, $u_1, \ldots, u_k \in P \setminus \{0\}$ and $x_1, \ldots, x_k \in A
  \smz$ such that
  \[\sum_{i=1}^k u_i \s(x_i)ax_i \in -\CP.\]
\end{lemma}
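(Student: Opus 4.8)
The plan is to argue by contradiction using the maximality of $\CP$. Suppose that for every $k\in\N$, every choice of $u_1,\dots,u_k\in P\smz$ and $x_1,\dots,x_k\in A\smz$ we have $\sum_{i=1}^k u_i\s(x_i)ax_i\notin-\CP$. I want to show that then $\CP[a]$ is proper, so by the remark after Definition~\ref{extension} it is a prepositive cone on $(A,\s)$ over $P$ strictly containing $\CP$ (strictly, since $a\in\CP[a]\sm\CP$, taking $p=0$, $k=1$, $u_1=1$, $x_1=1$), contradicting maximality.

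So the heart of the matter is to verify that $\CP[a]$ is proper, i.e. $\CP[a]\cap-\CP[a]=\{0\}$. Suppose $0\neq z\in\CP[a]\cap-\CP[a]$. Write $z = p + c$ and $-z = q + c'$ with $p,q\in\CP$ and $c,c'\in\CC_P(\{a\})$. Adding, $0 = (p+q) + (c+c')$, hence $p+q = -(c+c')$. Now $p+q\in\CP$ by (P2), while $c+c'\in\CC_P(\{a\})$ is again of the form $\sum u_i\s(x_i)ax_i$ (collecting terms; terms with $u_i=0$ or $x_i=0$ contribute $0$ and can be dropped). Thus $-(c+c') = p+q\in\CP$. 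If $c+c'$ is a genuine nonempty sum with all $u_i\neq 0$ and $x_i\neq 0$, this says $\sum u_i\s(x_i)ax_i\in-\CP$, contradicting our assumption. The remaining case is $c+c' = 0$, i.e. $p+q=0$ with $p,q\in\CP$; then $p=-q\in\CP\cap-\CP=\{0\}$ by (P5), so $p=q=0$, giving $z=c\in\CC_P(\{a\})$ and $-z=c'\in\CC_P(\{a\})$, i.e. $z,-z$ are both nonzero sums of the required form — again contradicting the assumption (applied to $z$ or to $-z=c'$). Either way we reach a contradiction, so $\CP[a]$ is proper.

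The main subtlety — and the step I would be most careful about — is the bookkeeping that lets me assume the sums $\sum u_i\s(x_i)ax_i$ involved have all $u_i\in P\smz$ and all $x_i\in A\smz$, so that the negated hypothesis genuinely applies; this is routine (drop zero terms) but needs the explicit observation that a sum with $u_i=0$ or $x_i=0$ contributes nothing, and that after dropping zero terms one either gets a nonempty "good" sum or the empty sum $0$, which is the case handled via (P5). One should also double-check that $\CC_P(\{a\})$ is closed under addition — immediate from its definition as a set of finite sums — so that $c+c'$ is again of that shape. With the properness of $\CP[a]$ established, the contradiction with maximality of $\CP$ completes the proof.
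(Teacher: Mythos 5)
Your overall route is exactly the paper's: assume the conclusion fails, show that $\CP[a]$ is then proper, hence a prepositive cone strictly containing $\CP$ (it contains $a$), contradicting maximality. Most of your bookkeeping is fine, but the justification of your final subcase is not. When $c+c'=0$ and hence $p=q=0$, you end up with $z=c$ and $-z=c'$ both nonzero elements of $\CC_P(\{a\})$, and you claim this "contradicts the assumption (applied to $z$ or to $-z=c'$)". The negated conclusion only forbids a nonempty good sum from lying in $-\CP$; to apply it to $z$ you would need $z\in-\CP$, and to apply it to $c'$ you would need $c'\in-\CP$. Neither is known: all you have is $z\in-\CC_P(\{a\})$, and $\CC_P(\{a\})$ is not contained in $\CP\cup-\CP$ (it contains $a$, which is not in $\CP$ by hypothesis). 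So as written this step fails.

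The gap is small and easy to close: concatenate the (nonempty, zero-terms-dropped) representations of $c$ and $c'$ to obtain a nonempty sum $\sum u_i\s(x_i)ax_i$ with all $u_i\in P\smz$, $x_i\in A\smz$, whose value is $c+c'=0$; since $0\in\CP\cap-\CP$, in particular $0\in-\CP$, this sum does lie in $-\CP$ and contradicts the assumption. Note that the paper's proof sidesteps the case split altogether: from $p+\sum u_i\s(x_i)ax_i=-(q+\sum v_j\s(y_j)ay_j)$ with neither side zero, it moves $p+q$ across to get $\sum u_i\s(x_i)ax_i+\sum v_j\s(y_j)ay_j=-(p+q)\in-\CP$; the only degenerate situation is when both $a$-sums are empty, which is killed directly by (P5) for $\CP$, and your problematic subcase (combined sum nonempty but equal to $0$) is covered automatically precisely because $0\in-\CP$. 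With that one-line repair your argument is correct and essentially identical to the paper's.
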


\begin{proof}
  Assume for the sake of contradiction that the conclusion of the lemma does not
  hold, and consider $\CP[a]$. 
  It is easy to check that $\CP[a]$ satisfies axioms (P1) to (P4), and obviously it
  properly contains $\CP$ since it contains $a$. We now check that $\CP[a]$ is
  proper, thus reaching a contradiction. If $\CP[a]$ were not proper, we would have
  \[p + \sum_{i=1}^k u_i \s(x_i)ax_i = -(q + \sum_{j=1}^r v_j
  \s(y_j)ay_j),\]
  for some $k,r \in \N \cup \{0\}, u_i,v_j \in P\sm\{0\}$ and $x_i,y_j \in A \smz$ (and where neither side is $0$). Therefore
  \[\sum_{i=1}^k u_i \s(x_i)ax_i + \sum_{j=1}^r v_i \s(y_j)ay_j \in
  -\CP,\]
  a contradiction.
\end{proof}

\begin{remark}
Several notions of orderings have previously been considered in the special case
of division algebras with involution, most notably Baer orderings (see the
survey \cite{Craven-1995}).  The essential difference between our notion of (pre-)positive cone
and (for instance) Baer orderings, is that our definition is designed to correspond to
a pre-existing algebraic object: the notion of signature of hermitian forms (see
in particular axiom (P4) which reflects  the fact that the signature
is a morphism of modules, cf. \cite[Theorem~2.6(iii)]{A-U-prime}, and Example~\ref{mp}).

In order to achieve this, it was necessary to accept that the ordering defined
by a positive cone on the set of symmetric elements is in general only a partial ordering. It
gives positive cones a behaviour similar to the set of positive semidefinite
matrices in the algebra with involution $(M_n(F),t)$ (which provides, as seen above,
one of the main examples of (pre-)positive cones, cf. Example~\ref{ex-psd}).
\end{remark}

\section{Prepositive cones under Morita equivalence}\label{scaling}

For a subset $S$ of $\Sym(A,\s)$, we denote by $\Diag(S)$ the set of all
diagonal hermitian forms with coefficients in $S$.
We also recall: 

\begin{prop}[{\cite[Proposition~A.3]{A-U-PS}}]\label{ns-again}
Let $h$ be a hermitian form over
$(A,\s)$. Then
\[h\simeq h^\ns \perp 0, \]
where $h^\ns$ is a nonsingular hermitian form \tu{(}uniquely determined up to isometry\tu{)} and $0$ is the
zero form of suitable
rank. 
\end{prop}

\begin{thm}\label{morco}
  Let $(A,\s)$ and $(B,\tau)$ be two Morita equivalent 
  $F$-algebras with involution with $\s$ and $\tau$ of the same type, and fix   a Morita equivalence
  $\mor : \Herm(A,\s) \rightarrow \Herm(B,\tau)$.
  The  map
  \[\mor_*: Y_{(A,\s)} \rightarrow Y_{(B,\tau)},\]
  defined by
  \[\mor_*(\CP) := \bigcup\{D_{(B,\tau)} (\mor(h)) \mid h \in \Diag(\CP)\}\]
  is an inclusion-preserving bijection from $Y_{(A,\s)}$ to $Y_{(B,\tau)}$ that thus
  restricts to a bijection from $X_{(A,\s)}$ to $X_{(B,\tau)}$. Furthermore, if $\CP$ is over $P\in X_F$, then $\mor_*(\CP)$
  is also over $P$. 
\end{thm}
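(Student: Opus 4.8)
\textbf{Proof plan for Theorem~\ref{morco}.}

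The plan is to use the Morita equivalence $\mor$ together with the fact that ``multiplying hermitian forms by $\ell$'' (passing from $h$ to $\ell\times h$) does not change the set of represented elements in any essential way, thanks to Lemma~\ref{morita-diag} and Lemma~\ref{isot-univ2}. First I would check that $\mor_*(\CP)$, as defined, is actually a prepositive cone on $(B,\tau)$ over the same ordering $P$. Property (P1) is immediate since $\CP\neq\varnothing$ and $\CP$ contains an invertible element by Lemma~\ref{inv_pos}, so some $\mor(h)$ is nonzero. For (P2) and (P3): if $u = D_{(B,\tau)}(\mor(h_1))$ and $v\in D_{(B,\tau)}(\mor(h_2))$ with $h_1,h_2\in\Diag(\CP)$, then $u+v$ is represented by $\mor(h_1)\perp\mor(h_2)\simeq\mor(h_1\perp h_2)$, and $h_1\perp h_2\in\Diag(\CP)$ since $\CP$ is closed under (P2); similarly $\tau(b)(u+v)b$ considerations and (P3) follow because $\mor$ is an equivalence of hermitian categories, hence commutes with the transfer/congruence operations — more carefully, one reduces to the diagonal case via Lemma~\ref{morita-diag} applied on the $(B,\tau)$ side and uses that $\mor^{-1}$ of a diagonal form with coefficients in $\mor_*(\CP)$ represents elements landing back in $\CP$ after scaling by $\ell$. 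For (P4) I would invoke that $\mor$ is $W(F)$-linear (a Morita equivalence of algebras with involution of the same type respects the $W(F)$-module structure), so $\mor(q\otimes h)\simeq q\otimes\mor(h)$ for quadratic $q$ over $F$; this gives $(\mor_*\CP)_F\supseteq \CP_F=P$, and then Lemma~\ref{same-base-ordering}-type reasoning (or directly the definition) forces equality. Property (P5), properness, is the crux and I postpone it.

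For properness of $\mor_*(\CP)$: suppose $0\neq w\in\mor_*(\CP)\cap-\mor_*(\CP)$. Then $w\in D_{(B,\tau)}(\mor(h_1))$ and $-w\in D_{(B,\tau)}(\mor(h_2))$ with $h_1,h_2\in\Diag(\CP)$. Then the form $\mor(h_1)\perp\mor(h_2)\simeq\mor(h_1\perp h_2)$ represents both $w$ and $-w$, hence (using a diagonalization on the $B$-side and the argument of Lemma~\ref{isot-univ2}) $\ell_B\times\mor(h_1\perp h_2)$ is isotropic, so by Lemma~\ref{isot-univ} some multiple of it is universal; pulling back through $\mor^{-1}$ and using Lemma~\ref{morita-diag} and Lemma~\ref{isot-univ2} again, a multiple of $h_1\perp h_2$ becomes universal over $(A,\s)$, and since all represented elements of multiples of $h_1\perp h_2$ lie in $\CP$ by (P2)--(P3) with $h_1\perp h_2\in\Diag(\CP)$, we would get $\CP=\Sym(A,\s)$, contradicting (P5) for $\CP$. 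So $\mor_*(\CP)$ is proper, hence a prepositive cone over $P$.

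Next I would show $\mor_*$ is inclusion-preserving (immediate from the definition: $\CP\subseteq\CP'$ gives $\Diag(\CP)\subseteq\Diag(\CP')$) and construct its inverse. The natural candidate is $(\mor^{-1})_*$, built from the inverse Morita equivalence $\mor^{-1}:\Herm(B,\tau)\to\Herm(A,\s)$. The key identity to establish is $(\mor^{-1})_*\circ\mor_* = \id$ on $Y_{(A,\s)}$, and symmetrically. For this one shows that for a prepositive cone $\CP$, an element $a\in\CP$ lies in $(\mor^{-1})_*(\mor_*(\CP))$ — this holds because $\qf{a}_\s\in\Diag(\CP)$, $\mor(\qf{a}_\s)$ represents some elements spanning part of $\mor_*(\CP)$, and applying $\mor^{-1}$ to a diagonalization (via Lemma~\ref{morita-diag}) of a form built from $\mor(\qf{a}_\s)$ recovers a multiple $\ell\times\qf{a}_\s$, whose represented elements contain $a$ up to the closure under (P2)--(P3); conversely an element of $(\mor^{-1})_*(\mor_*(\CP))$ is represented by $\mor^{-1}$ of a diagonal form with coefficients in $\mor_*(\CP)$, each coefficient represented by $\mor(h_i)$, $h_i\in\Diag(\CP)$, and chasing through shows it lies in the closure of $\CP$ under (P2)--(P3), which is $\CP$ itself. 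By symmetry $\mor_*\circ(\mor^{-1})_* = \id$, so $\mor_*$ is a bijection; being inclusion-preserving with inclusion-preserving inverse, it carries maximal elements to maximal elements, i.e. restricts to a bijection $X_{(A,\s)}\to X_{(B,\tau)}$. The statement about the base ordering was already handled in the first paragraph.

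\textbf{Main obstacle.} I expect the genuinely delicate point to be the bookkeeping around the factors of $\ell$ (the matrix size) when passing back and forth through $\mor$ and $\mor^{-1}$: one never quite recovers $h$ from $\mor^{-1}(\mor(h))$ on the nose in terms of \emph{represented elements} without inserting a multiplication by $\ell$ (as in Lemma~\ref{morita-diag}), so every verification of (P2)--(P5) and of $(\mor^{-1})_*\mor_*=\id$ must be routed through ``$\ell\times h$ represents $\Leftrightarrow$ $h$ weakly represents'' and the universality lemmas. Making sure these scalings are consistent — and that the closure of $\Diag(\CP)$-represented elements under (P2)--(P3) is exactly $\CP$, not something larger — is where the argument needs care; everything else is formal manipulation with the hermitian categories.
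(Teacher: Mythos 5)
Your plan is essentially the paper's own proof: the same direct verification of (P1)--(P3), the same (P5) argument via Lemma~\ref{isot-univ2} (splitting off an invertible represented element), pulling the nonsingular isotropic form back through $\mor^{-1}$ and invoking Lemma~\ref{isot-univ}, the same $W(F)$-linearity-plus-properness argument for (P4), and the same construction of $(\mor^{-1})_*$ as inverse using Lemma~\ref{morita-diag}. The only place to be careful when writing it up is the step you call ``chasing through'' for $(\mor^{-1})_*\circ\mor_*=\id$ with possibly non-invertible coefficients $b_i$ (the paper handles this by citing \cite[Proposition~2.8]{A-U-PS} to realize each $\qf{b_i}_\tau$ as a subform of sufficiently many copies of $\mor(\qf{a_1,\ldots,a_s}_\s)$), and the fact that Lemma~\ref{isot-univ} needs a \emph{nonsingular} isotropic form, so one must isolate the subform $\qf{b_1,-b_1}_\tau$ rather than apply it to the possibly singular $\mor(h_1\perp h_2)$ itself, exactly as you indicate.
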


\begin{proof} We first show that $\mor_*(\CP)$ is a prepositive cone over $(B,\tau)$.
  By definition, $\mor_*(\CP)$ satisfies properties (P1), (P2) and (P3).
  We now show (P5): Assume there is $b \in \mor_*(\CP) \cap -\mor_*(\CP)$ such that $b\not=0$. Then there are diagonal hermitian forms
  $h_1$ and $h_2$   in $\Diag(\CP)$
  such that $b \in D_{(B,\tau)} (\mor(h_1))$ and $-b \in
  D_{(B,\tau)} (\mor(h_2))$. 
 By Lemma~\ref{isot-univ2} there is $k \in \N$ and $b_1 \in 
\Sym(B,\tau)^\x$ such that
$b_1 \in D_{(B,\tau)} ( \mor(k \x h_1))$ and $-b_1 \in D_{(B,\tau)}( \mor (k \x 
h_2))$. Since
$b_1$ is invertible we deduce (as for quadratic forms) that 
$\qf{b_1}_\tau \le \mor (k \x h_1)$
and $\qf{-b_1}_\tau \le \mor(k \x h_2)$. Therefore $\qf{b_1,-b_1}_\tau \le 
\mor (k \x (h_1 \perp h_2))$
and $\qf{b_1,-b_1}_\tau$ is nonsingular and clearly isotropic. So
$\mor^{-1}(\qf{b_1,-b_1}_\tau) \le k \x (h_1 \perp h_2)$ and 
$\mor^{-1}(\qf{b_1,-b_1}_\tau)$
is also nonsingular and isotropic by Morita theory. By Lemma~\ref{isot-univ} there is $k' \in 
\N$ such that
$kk' \x (h_1 \perp h_2)$ is universal, a contradiction to (P5) since
$\das (kk' \x (h_1 \perp h_2)) \subseteq \CP$.

  We prove
  (P4) by showing that $(\mor_*(\CP))_F = \CP_F$. Let $u \in \CP_F$ and let $b \in
  \mor_*(\CP)$. Then there is a diagonal hermitian form $h$ over $(A,\s)$ with coefficients in $\CP$ such that
  $b \in D_{(B,\tau)} (\mor(h))$. Since the map $W(A,\s) \to W(B,\tau)$ induced by $\mor$ is a morphism of $W(F)$-modules, the forms $\qf{u} \ox \mor(h)$ and $\mor(\qf{u}\ox h)$ are isometric and thus
 $ub \in D_{(B,\tau)} (\qf{u} \ox \mor(h)) =  D_{(B,\tau)} (\mor(\qf{u}\ox  h))$.
  This shows that $ub \in \mor_*(\CP)$ since $ \qf{u}\ox h \in \Diag(\CP)$
   and thus
  $\CP_F \subseteq (\mor_*(\CP))_F$. 
  
  Assume now that $\CP_F \subsetneqq (\mor_*(\CP))_F$, so
  that there is $u \in -\CP_F \setminus \{0\}$ such that $u \in (\mor_*(\CP))_F$.
  Let $b \in \mor_*(\CP) \setminus \{0\}$. Then by choice of $u$, $ub \in
  \mor_*(\CP)$, and since $-u \in \CP_F \subseteq (\mor_*(\CP))_F$ we also obtain $-ub
  \in \mor_*(\CP)$. Since $u \not =0$ we have $ub \not = 0$ and so obtain a contradiction to
  property (P5).

Finally, we show that $\mor_*$ is a bijection by showing that
 $(\mor^{-1})_*$ is a left inverse of $\mor_*$. The invertibility of
  $\mor_*$ then follows by swapping $\mor$ and $\mor^{-1}$. Let $\CP \in
  Y_{(A,\s)}$ and let $c \in (\mor^{-1})_*(\mor_*(\CP))$. Then there are elements $b_1, \ldots,
  b_r \in \mor_*(\CP)$ such that $c \in D_{(A,\s)} \mor^{-1}(\qf{b_1,\ldots,
  b_r}_\tau) = D_{(A,\s)} \mor^{-1}(\qnd{b_1,\ldots,   b_r}_\tau)$, and there
  are $a_1, \ldots, a_s \in \CP$ such that $b_1, \ldots, b_r \in
  D_{(B,\tau)}\mor(\qf{a_1,\ldots, a_s}_\s)$.
  Observe that by \cite[Proposition~2.10]{A-U-PS} each form $\qnd{b_i}_\tau$ is a subform of sufficiently many
  copies of $\mor(\qf{a_1,\ldots, a_s}_\s)$.  
  Hence there exists $r' \in \N$ such that
 $\qnd{b_1,\ldots,
  b_r}_\tau \le r' \x \mor(\qf{a_1,\ldots, a_s}_\s)$.   It follows that
  \begin{align*}
\mor^{-1}(\qnd{b_1,\ldots,
  b_r}_\tau) &= 
 \bigl(\mor^{-1}(\qf{b_1,\ldots,
  b_r}_\tau)\bigr)^\ns\\ 
  &\le \mor^{-1}[r' \x \mor(\qf{a_1,\ldots, a_s}_\s)] = r' \x
  \qf{a_1,\ldots, a_s}_\s.
  \end{align*} 
  Therefore, and since $D_{(A,\s)} \mor^{-1}(\qnd{b_1,\ldots,
  b_r}_\tau) = D_{(A,\s)} \mor^{-1}(\qf{b_1,\ldots,   b_r}_\tau)$, we obtain  
   $c \in D_{(A,\s)} (r' \x
  \qf{a_1,\ldots, a_s}_\s) \subseteq \CP$.
  
  Let now $a \in \CP$ and let $h = \mor(\qf{a}_\s)$. By Lemma~\ref{morita-diag}
  there are $t,\ell' \in \N$ and $b_1, \ldots, b_t \in D_{(B,\tau)} (\ell' \x h)$ such
  that $\ell' \x
  h \simeq \qf{b_1,\ldots,b_t}_\tau$. In particular $b_1, \ldots, b_t \in \mor_*(\CP)$
  and $h \le \qf{b_1,\ldots, b_t}_\tau$. Then $\qf{a}_\s = \mor^{-1}(h) \le
  \mor^{-1}(\qf{b_1,\ldots, b_t}_\tau)$, which implies $a \in (\mor^{-1})_*(\mor_*(\CP))$. 
Therefore $\CP = (\mor^{-1})_*(\mor_*(\CP))$.
\end{proof}

We can refine the description of the map $\mor_*$:
\begin{prop}\label{morco2}
  Let $\CP \in
  Y_{(A,\s)}$. Then, with the same hypotheses and notation as in Theorem~\ref{morco},
  \[\mor_*(\CP) := \bigcup\{D_{(B,\tau)} \mor(h) \mid h \in \Diag(\CP\cap A^\x)\}.\]
\end{prop}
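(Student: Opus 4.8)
The plan is to show that the two descriptions of $\mor_*(\CP)$ agree, i.e. that
\[
\bigcup\{D_{(B,\tau)}\mor(h) \mid h \in \Diag(\CP)\}
= \bigcup\{D_{(B,\tau)}\mor(h) \mid h \in \Diag(\CP\cap A^\x)\}.
\]
The inclusion $\supseteq$ is immediate since $\Diag(\CP\cap A^\x) \subseteq \Diag(\CP)$. So the real content is the reverse inclusion: given $h = \qf{a_1,\ldots,a_k}_\s$ with all $a_i \in \CP$, and given $b \in D_{(B,\tau)}\mor(h)$, I need to produce a diagonal form $h'$ with coefficients in $\CP\cap A^\x$ such that $b \in D_{(B,\tau)}\mor(h')$.

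First I would reduce to the case $k = 1$: since $\mor(h) \simeq \mor(\qf{a_1}_\s) \perp \cdots \perp \mor(\qf{a_k}_\s)$ and $b$ is represented by this orthogonal sum, it suffices (after another application of subform arguments, or just by handling each summand and taking the union) to replace each individual $\qf{a_i}_\s$ by a diagonal form with invertible coefficients in $\CP$. So the key step is: for $a \in \CP$, find invertible elements $a_1,\ldots,a_m \in \CP$ with $\qf{a}_\s \simeq$ a subform of $\qf{a_1,\ldots,a_m}_\s$, or at least with $D_{(A,\s)}\qf{a}_\s$ contained in something built from them. Here Lemma~\ref{morita-diag} is exactly the tool: it gives $\ell \x \qf{a}_\s \simeq \qf{a_1,\ldots,a_\ell}_\s$ with each $a_i \in \Sym(A,\s)^\x \cup \{0\}$, and by (P2) and (P3) each nonzero $a_i$ lies in $D_{(A,\s)}(\ell\x\qf{a}_\s) \subseteq \CP$, hence in $\CP \cap A^\x$. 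Discarding the zero coefficients, set $h_a := \qf{a_i : a_i \neq 0}_\s \in \Diag(\CP\cap A^\x)$; then $\qf{a}_\s \leq \ell \x \qf{a}_\s \simeq h_a \perp (\text{zero form})$, so $\mor(\qf{a}_\s) \leq \mor(\ell \x \qf{a}_\s)$ after passing through $\mor$ (which preserves subforms and isometries, being induced by an equivalence of categories), giving $D_{(B,\tau)}\mor(\qf{a}_\s) \subseteq D_{(B,\tau)}\mor(\ell\x\qf{a}_\s) = D_{(B,\tau)}(\ell \x \mor(\qf{a}_\s))$.

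To assemble this into a proof: given $b \in D_{(B,\tau)}\mor(\qf{a_1,\ldots,a_k}_\s)$, I first note $\mor(\qf{a_1,\ldots,a_k}_\s) \leq \ell \x \mor(\qf{a_1,\ldots,a_k}_\s) \simeq \mor(\ell\x\qf{a_1}_\s) \perp \cdots \perp \mor(\ell\x\qf{a_k}_\s)$, and by Lemma~\ref{morita-diag} the form $\ell\x\qf{a_1,\ldots,a_k}_\s \simeq \qf{c_1,\ldots,c_{k\ell}}_\s$ with each $c_j \in \Sym(A,\s)^\x\cup\{0\}$ and, by (P2)–(P3), each nonzero $c_j \in \CP\cap A^\x$. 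Letting $h' := \qf{c_j : c_j \neq 0}_\s \in \Diag(\CP\cap A^\x)$, we have $\qf{a_1,\ldots,a_k}_\s \leq \ell\x\qf{a_1,\ldots,a_k}_\s \simeq h' \perp 0$, hence $\mor(\qf{a_1,\ldots,a_k}_\s) \leq \mor(h') \perp 0$ and $b \in D_{(B,\tau)}\mor(\qf{a_1,\ldots,a_k}_\s) \subseteq D_{(B,\tau)}\mor(h')$. This proves $\subseteq$, and with the trivial reverse inclusion we are done.

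The main obstacle is a small bookkeeping one rather than a conceptual one: making sure that dropping the zero coefficients from $\qf{c_1,\ldots,c_{k\ell}}_\s$ is harmless, i.e. that $\mor$ behaves well with respect to adding/removing the (possibly singular) zero form and with respect to the relation $\leq$. This is legitimate because $\mor$ is induced by an equivalence of categories $\Herm(A,\s) \to \Herm(B,\tau)$ (not merely of the Witt groups), so it sends orthogonal sums to orthogonal sums, isometries to isometries, and subforms to subforms — exactly as used already in the proof of Theorem~\ref{morco}. One should also double-check that at least one $c_j$ is nonzero whenever $b \neq 0$ (so that $h'$ is a genuine form); if $b = 0$ the statement is trivial since $0 \in D_{(B,\tau)}\mor(h')$ for any $h'$, and $\Diag(\CP \cap A^\x) \neq \varnothing$ by Lemma~\ref{inv_pos}.
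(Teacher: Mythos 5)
Your argument is correct and is essentially the paper's own proof: both inclusions are handled the same way, with Lemma~\ref{morita-diag} producing $\ell \x \qf{a_1,\ldots,a_k}_\s \simeq \qf{c_1,\ldots,c_s,0,\ldots,0}_\s$, the nonzero $c_j$ landing in $\CP\cap A^\x$ by (P2)--(P3), and the zero entries discarded because $\mor$ preserves zero forms, subforms and isometries. Your extra bookkeeping (the case $b=0$ and nonemptiness of $\Diag(\CP\cap A^\x)$ via Lemma~\ref{inv_pos}) is harmless and only makes explicit what the paper leaves implicit.
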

\begin{proof}
  The inclusion from right to left is obvious. Let $b \in \mor_*(\CP)$. Then
  there exist $a_1, \ldots, a_r \in \CP$ such that $b \in D_{(B,\tau)}
  \mor(\qf{a_1,\ldots, a_r}_\s)$. By Lemma~\ref{morita-diag} there are $c_1,
  \ldots, c_{s} \in \Sym(A,\s)^\x$ such that $\ell \x \qf{a_1,\ldots,a_r}_\s
  \simeq \qf{c_1,\ldots,c_{s}, 0,\ldots,0}_\s$. It follows from this isometry that
  $c_1, \ldots, c_{s} \in \CP$, and thus 
  \[b \in D_{(B,\tau)}
  \mor(\qf{c_1,\ldots,c_{s},0,\ldots,0}_\s)= D_{(B,\tau)}
  \mor(\qf{c_1,\ldots,c_{s}}_\s),\] 
  since $\mor$ preserves forms with constant value zero.
 This proves the other inclusion.
\end{proof}

In addition to the general result, Theorem~\ref{morco}, we now give explicit descriptions of transferring  prepositive cones between $(A,\s) \cong (M_\ell(D), \Int(\Phi)\circ \vt^t)$ and  $(M_\ell(D),\vt^t)$ (scaling) and between $(M_\ell(D),\vt^t)$
and  $(D,\vt)$ (going up and going down).

\begin{prop}\label{prop-scaling}
  Let $\CP$ be a prepositive cone on $(A,\s)$ over $P$. Let $a \in \Sym(A,\s)^\x$. Then
  $a\CP$ is a prepositive cone on $(A, \Int(a) \circ \s)$ over $P$. This defines a
  natural inclusion-preserving bijection between $Y_{(A,\s)}$ and
  $Y_{(A,\Int(a) \circ \s)}$ and between $X_{(A,\s)}$ and $X_{(A,\Int(a) \circ \s)}$.
\end{prop}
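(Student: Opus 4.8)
The statement to prove is Proposition~\ref{prop-scaling}: that scaling by an invertible symmetric element $a$ sends prepositive cones on $(A,\s)$ over $P$ to prepositive cones on $(A,\Int(a)\circ\s)$ over $P$, and this is an inclusion-preserving bijection on the level of $Y$'s and $X$'s. First I would fix notation: write $\s' := \Int(a)\circ\s$, so $\s'(x) = a\s(x)a^{-1}$, and recall that $\s'$ is indeed an involution (because $a\in\Sym(A,\s)^\x$, so $\s'\circ\s'(x) = a\s(a\s(x)a^{-1})a^{-1} = a\s(a)^{-1}\s(a^{-1})\s^2(x)\s(a)a^{-1}\cdot\ldots$ — routine, using $\s(a)=a$). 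A cleaner bookkeeping fact is that $\Sym(A,\s') = a\cdot\Sym(A,\s) = \Sym(A,\s)\cdot a$: indeed $\s'(ax) = a\s(ax)a^{-1} = a\s(x)\s(a)a^{-1} = a\s(x)$, so $ax\in\Sym(A,\s')$ iff $a\s(x) = ax$ iff $x\in\Sym(A,\s)$. Similarly one identifies the hermitian forms: $\qf{ab}_{\s'}$ on $(A,\s')$ is Morita/scaling-equivalent to $\qf{b}_\s$ on $(A,\s)$ via the scaling-by-$a^{-1}$ equivalence from diagram~\eqref{diagram} (this is exactly the map $s$ of the Preliminaries, in the special case $\ell=1$), and in particular $\das[\s']\qf{ab}_{\s'} = a\cdot\das[\s]\qf{b}_\s$. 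This last identity is the engine for everything.

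Next I would verify the five axioms for $a\CP$ as a subset of $\Sym(A,\s')$. (P1) is immediate since $\CP\neq\varnothing$. For (P2), $a\CP + a\CP = a(\CP+\CP)\subseteq a\CP$. For (P3), given $x\in A$ and $p\in\CP$, we need $\s'(x)(ap)x \in a\CP$; compute $\s'(x)\,ap\,x = a\s(x)a^{-1}\,ap\,x = a\,\s(x)px \in a\CP$ by (P3) for $\CP$. For (P5), $a\CP\cap -a\CP = a(\CP\cap-\CP) = a\{0\} = \{0\}$ since $a$ is invertible. For (P4), I claim $(a\CP)_F = \CP_F = P$: for $u\in F$, $u(a\CP)\subseteq a\CP$ iff $a(u\CP)\subseteq a\CP$ iff $u\CP\subseteq\CP$ (again using invertibility of $a$), so $(a\CP)_F = \CP_F$, which is the ordering $P$. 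Hence $a\CP$ is a prepositive cone on $(A,\s')$ over $P$.

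For the bijection: the inverse operation is scaling by $a^{-1}$, which carries $\CP'\subseteq\Sym(A,\s')$ to $a^{-1}\CP'\subseteq\Sym(A,\s)$; one checks $a^{-1}\in\Sym(A,\s')^\x$ (since $a\in\Sym(A,\s)^\x$, and more directly $\s'(a^{-1}) = a\s(a^{-1})a^{-1} = a\,a^{-1}\,a^{-1} = a^{-1}$), so by the first part $a^{-1}\CP'$ is a prepositive cone on $(A,\Int(a^{-1})\circ\s') = (A,\s)$ over the same ordering — here I use $\Int(a^{-1})\circ\Int(a)\circ\s = \s$. The two maps $\CP\mapsto a\CP$ and $\CP'\mapsto a^{-1}\CP'$ are mutually inverse by associativity of multiplication, and both manifestly preserve inclusions; therefore the map restricts to a bijection between the subsets of maximal elements, i.e. between $X_{(A,\s)}$ and $X_{(A,\s')}$, and it preserves the base ordering. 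Naturality is clear since the construction only involves left multiplication by $a$.

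I do not foresee a genuine obstacle here; the proposition is essentially a bookkeeping exercise once one has the identities $\Sym(A,\s') = a\Sym(A,\s)$ and (P3)-compatibility under conjugation. The only mild subtlety worth stating carefully is that $(a\CP)_F = \CP_F$ (so that (P4) transfers and the cone is ``over the same $P$''), and that $\Int(a^{-1})\circ(\Int(a)\circ\s)$ really is $\s$ on the nose, so that applying the construction twice returns to the original setting rather than to some conjugate of it. One could alternatively deduce the whole statement as a special case of Theorem~\ref{morco} applied to the scaling Morita equivalence $s$, checking that $s_*$ coincides with $\CP\mapsto a\CP$ via Proposition~\ref{morco2}; but the direct verification above is shorter and self-contained, so that is the route I would take.
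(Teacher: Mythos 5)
Your proof is correct and follows essentially the same route as the paper: a direct verification of (P1)--(P5) for $a\CP$, with the identical computation $(\Int(a)\circ\s)(x)\,ap\,x = a\s(x)px$ for (P3), and the observation that scaling by $a^{-1}$ gives the inverse, inclusion-preserving map. You simply spell out details the paper declares ``clear'' (e.g.\ $\Sym(A,\Int(a)\circ\s)=a\Sym(A,\s)$, the transfer of (P4) and (P5)), which is fine.
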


\begin{proof}
  Properties (P1), (P2), (P4) and (P5) are clear. Let $b \in \CP$ and let $x \in A$. Then
  $(\Int(a) \circ \s)(x) ab x = a\s(x)a^{-1}abx = a\s(x)bx \in a\CP$, which proves
  (P3). The fact that the map $\CP \mapsto a\CP$  is a bijection, and preserves being proper as well as inclusions
   is clear.
\end{proof}

In the remainder of this section we describe the going up and going down correspondences, which are reminiscent of  the behaviour of positive semidefinite matrices.

\subsection{Going up}\label{up}
Let $\CP$ be a prepositive cone  on $(D,\vt)$ over $P \in X_F$.  We define
\[\PSD_\ell(\CP): = \{B \in \Sym(M_\ell(D), \vt^t) \mid \forall X \in D^\ell \quad \vt(X)^t B
X \in \CP\}.\]
The following result is straightforward, since any matrix in
$\Sym(M_\ell(D),\vt^t)$ is the matrix of a hermitian form over $(D,\vt)$ and thus can be diagonalized by congruences. 

\begin{lemma}\label{equiv-up}
  For $B \in \Sym(M_\ell(D), \vt^t)$ the following are equivalent:
  \begin{enumerate}[$(1)$]
    \item $B \in \PSD_\ell(\CP)$.
    \item There is $G \in \GL_\ell(D)$ such that $\vt(G)^t B G$ is diagonal with
      diagonal elements in $\CP$.
    \item For every $G \in \GL_\ell(D)$ such that $\vt(G)^t B G$ is diagonal,
      the diagonal elements are in $\CP$.
  \end{enumerate}
\end{lemma}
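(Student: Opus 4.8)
The plan is to prove the equivalence $(1)\Leftrightarrow(2)\Leftrightarrow(3)$ by first establishing the easy implication $(3)\Rightarrow(2)$ (which is immediate, since any symmetric matrix over $(D,\vt)$ admits a diagonalizing congruence: $B$ is the Gram matrix of a hermitian form over $(D,\vt)$, which can be diagonalized) and then closing the cycle with $(2)\Rightarrow(1)$ and $(1)\Rightarrow(3)$. For $(2)\Rightarrow(1)$, suppose $\vt(G)^t B G = \diag(c_1,\ldots,c_\ell)$ with all $c_i\in\CP$. Given an arbitrary $X\in D^\ell$, write $X = GY$ for $Y = G^{-1}X\in D^\ell$; then $\vt(X)^t B X = \vt(Y)^t \vt(G)^t B G\, Y = \sum_{i=1}^\ell \vt(y_i) c_i y_i$, and each summand lies in $\CP$ by (P3), hence the sum lies in $\CP$ by (P2). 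This shows $B\in\PSD_\ell(\CP)$.

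For $(1)\Rightarrow(3)$, let $G\in\GL_\ell(D)$ be arbitrary with $\vt(G)^t B G = \diag(c_1,\ldots,c_\ell)$; I must show each $c_i\in\CP$. Take $X = G e_i$ where $e_i$ is the $i$-th standard basis vector of $D^\ell$; then $\vt(X)^t B X = \vt(e_i)^t \vt(G)^t B G\, e_i = c_i$, which lies in $\CP$ because $B\in\PSD_\ell(\CP)$. This completes the cycle. The only mild subtlety is to make sure $\diag(c_1,\ldots,c_\ell)$ really is a symmetric matrix for $\vt^t$, i.e.\ that the $c_i$ lie in $\Sym(D,\vt)$: this holds because $\vt(G)^t B G$ is $\vt^t$-symmetric whenever $B$ is (as $\vt^t(\vt(G)^t B G) = \vt(G)^t \vt^t(B) G = \vt(G)^t B G$), and a diagonal $\vt^t$-symmetric matrix has diagonal entries fixed by $\vt$.

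I do not anticipate a genuine obstacle here; the statement is essentially bookkeeping with the congruence action and axioms (P2), (P3). The one point worth stating carefully is the existence in $(2)$ and $(3)$ of a diagonalizing congruence $G\in\GL_\ell(D)$, which is exactly the remark preceding the lemma: every element of $\Sym(M_\ell(D),\vt^t)$ is the Gram matrix of a hermitian form over $(D,\vt)$, and such forms diagonalize. With that in hand, the proof is the three short computations above, and I would present it in the order $(2)\Rightarrow(1)$, $(1)\Rightarrow(3)$, $(3)\Rightarrow(2)$, remarking at the outset that the diagonalization fact makes the hypotheses of $(2)$ and $(3)$ non-vacuous.
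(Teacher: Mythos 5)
Your proof is correct and is exactly the "straightforward" argument the paper has in mind: it gives no detailed proof, only the remark that any element of $\Sym(M_\ell(D),\vt^t)$ is the Gram matrix of a hermitian form over $(D,\vt)$ and hence diagonalizable by congruence, which is precisely your key point, with the three implications then being the routine computations with (P2) and (P3) that you carry out.
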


\begin{lemma}
  $\PSD_\ell(\CP)$ is the closure of $\CP \cdot I_\ell$ under sums and the operation
$Z \mapsto \vt(X)^t ZX$, for $X \in M_\ell(D)$.
\end{lemma}

\begin{proof}
  That $\PSD_\ell(\CP)$ is closed under these operations is clear. Let $B \in
  \PSD_\ell(\CP)$. By Lemma \ref{equiv-up} there is $G \in \GL_\ell(D)$ such that
  $\vt(G)^t B G = \diag(a_1,\ldots, a_\ell)$ with $a_1, \ldots, a_\ell \in \CP$.
  Observe that, with $C= \diag(0,\ldots,0,1,0,\ldots,0)$,
  \[\diag(0,\ldots,0,a_i,0,\ldots,0) =
    \vt(C)^t(a_i \cdot I_\ell)C,\]
  and the result follows using sums of matrices. 
\end{proof}

\begin{prop}\label{PSD} 
Let $\CP$ be a prepositive cone on $(D,\vt)$ over $P\in X_F$. 
  \begin{enumerate}[$(1)$]
    \item $\PSD_\ell(\CP)$ is a prepositive cone on $(M_\ell(D), \vt^t)$ over $P$.
    \item If $\CP$ is a positive cone, then so is $\PSD_\ell(\CP)$.
  \end{enumerate}
\end{prop}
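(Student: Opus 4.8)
The plan is to prove (1) first and then derive (2) using the going-down correspondence together with Theorem~\ref{morco}.

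For part (1), I would verify the axioms (P1)--(P5) for $\PSD_\ell(\CP)$ directly. Axiom (P1) is immediate since $\CP \cdot I_\ell \subseteq \PSD_\ell(\CP)$ and $\CP \neq \varnothing$. Axioms (P2) and (P3) are essentially the closure properties $(i)$ and $(iii)$ from the previous lemma (for (P3), given $B \in \PSD_\ell(\CP)$ and any $X \in M_\ell(D)$, we have $\vt(Y)^t(\vt(X)^t B X)Y = \vt(XY)^t B (XY) \in \CP$ for all $Y \in D^\ell$, so $\vt^t(X)BX \in \PSD_\ell(\CP)$). For (P4): I claim $(\PSD_\ell(\CP))_F = P$. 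If $u \in P$, then for $B \in \PSD_\ell(\CP)$ and $X \in D^\ell$ we get $\vt(X)^t(uB)X = u(\vt(X)^t B X) \in \CP$ since $\CP_F = P$, so $uB \in \PSD_\ell(\CP)$; hence $P \subseteq (\PSD_\ell(\CP))_F$. Conversely, since $I_\ell \in \PSD_\ell(\CP)$ is symmetric and $\PSD_\ell(\CP)$ is proper (once (P5) is known), the group $(\PSD_\ell(\CP))_F$ cannot strictly contain $P$ (if $-u \in (\PSD_\ell(\CP))_F$ for some $u \in P \setminus\{0\}$, then $-u I_\ell$ and $u I_\ell$ would both lie in $\PSD_\ell(\CP)$, contradicting (P5)); so equality holds and $(\PSD_\ell(\CP))_F = P$ is an ordering.

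The one axiom requiring real work is (P5), and I expect this to be the main obstacle. Suppose $B \in \PSD_\ell(\CP) \cap -\PSD_\ell(\CP)$ with $B \neq 0$. Diagonalize $B$ by congruence: there is $G \in \GL_\ell(D)$ with $\vt(G)^t B G = \diag(a_1,\ldots,a_\ell)$. By Lemma~\ref{equiv-up} applied to both $B$ and $-B$, each $a_i \in \CP$ and each $-a_i \in \CP$; by (P5) for $\CP$, all $a_i = 0$, so $B = 0$, a contradiction. This settles (1).

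For part (2), assume $\CP$ is a positive cone on $(D,\vt)$ and suppose $\CQ \supseteq \PSD_\ell(\CP)$ is a prepositive cone on $(M_\ell(D),\vt^t)$. By Lemma~\ref{same-base-ordering}, $\CQ_F = P$. I would now use the going-down direction: define $\CP' := \{d \in \Sym(D,\vt) \mid d \cdot I_\ell \in \CQ\}$ (or, equivalently, apply the Morita bijection of Theorem~\ref{morco} for the collapsing equivalence $g$ to $\CQ$), and check that $\CP'$ is a prepositive cone on $(D,\vt)$ over $P$ containing $\CP$; by maximality of $\CP$, $\CP' = \CP$. Then I would show $\CQ \subseteq \PSD_\ell(\CP)$: given $B \in \CQ$, diagonalize it as $\vt(G)^t B G = \diag(a_1,\ldots,a_\ell)$ with $G \in \GL_\ell(D)$; since $\CQ$ is closed under congruence (P3) and under restriction to coordinate blocks via the matrices $C$ as in the previous lemma's proof, each $a_i I_\ell \in \CQ$, so $a_i \in \CP' = \CP$, whence $B \in \PSD_\ell(\CP)$ by Lemma~\ref{equiv-up}. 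Thus $\CQ = \PSD_\ell(\CP)$ and $\PSD_\ell(\CP)$ is maximal. The care needed here is in making the passage between $\CQ$ and $\CP'$ rigorous — ideally by invoking Theorem~\ref{morco} for the equivalence $g$ rather than reproving the correspondence by hand — and in checking that the coordinate-projection matrices $C$ do the job of extracting diagonal entries inside $\CQ$.
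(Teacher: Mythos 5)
Your part (1) follows the paper's line (the paper also disposes of (P1)--(P4) quickly and proves (P5) by evaluating a nonzero $B\in\PSD_\ell(\CP)\cap-\PSD_\ell(\CP)$ at a suitable vector), but one step as written is false: you claim $I_\ell\in\PSD_\ell(\CP)$. This fails in general, since a prepositive cone need not contain hermitian squares $\vt(x)x$; e.g.\ for $(D,\vt)=(F,\id)$ and $\CP=-P$, the set $\PSD_\ell(\CP)$ consists of the negative semidefinite matrices and does not contain $I_\ell$ (indeed $1\in\CP$ is a genuine condition, cf.\ Corollary~\ref{1-pos}). The repair is immediate: in the argument that $(\PSD_\ell(\CP))_F$ cannot meet $-P\setminus\{0\}$, replace $I_\ell$ by $cI_\ell$ for any $c\in\CP\setminus\{0\}$ (or by any nonzero element of $\PSD_\ell(\CP)$); then $u\in P$ and $-u\in(\PSD_\ell(\CP))_F$ would put both $ucI_\ell$ and $-ucI_\ell$ in $\PSD_\ell(\CP)$, contradicting (P5). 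With that change, (1) is correct and essentially the paper's proof.

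Your part (2) is correct but takes a genuinely different route from the paper. The paper argues by contradiction with the extension $\PSD_\ell(\CP)[B]$: it picks $X_0$ with $b_0=\vt(X_0)^tBX_0\notin\CP$, manufactures from $B$ the matrices $B_0$ (all entries $b_0$) and $B_1=\diag(b_0,\ldots,b_0)$ inside $\PSD_\ell(\CP)[B]$, and shows that properness of $\PSD_\ell(\CP)[B_1]$ would force $\CP[b_0]$ to be proper, contradicting maximality of $\CP$. You instead take an arbitrary prepositive cone $\CQ\supseteq\PSD_\ell(\CP)$, form $\CP':=\{d\in\Sym(D,\vt)\mid dI_\ell\in\CQ\}$, check it is a prepositive cone over $P$ containing $\CP$ (all verifications go through using (P2), (P3) for $\CQ$ and Lemma~\ref{same-base-ordering}), conclude $\CP'=\CP$ by maximality, and then show $\CQ\subseteq\PSD_\ell(\CP)$ by diagonalizing any $B\in\CQ$ and extracting each diagonal entry as $a_iI_\ell\in\CQ$ via the matrices $E_i$ and permutation matrices, finishing with Lemma~\ref{equiv-up}. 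This is in effect the going-down construction $\Tr_\ell(\CQ)$ and the $\PSD_\ell(\Tr_\ell(\CQ))=\CQ$ half of Proposition~\ref{correspondences}, proved by hand before those results appear; since you do not invoke the later statements, there is no circularity, and your argument has the mild advantage of identifying every prepositive overcone of $\PSD_\ell(\CP)$ explicitly, whereas the paper's avoids introducing $\CP'$ at the cost of the more delicate element-pushing computation. Drop the parenthetical ``equivalently, apply Theorem~\ref{morco} for $g$'': at this stage it is not known that the Morita bijection sends $\CP$ to $\PSD_\ell(\CP)$, so that shortcut would itself need proof, while your direct definition of $\CP'$ is all you need.
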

\begin{proof} (1)  The properties   (P1) to (P4) are easily verified.  
For (P5), we assume that $\PSD_\ell(\CP)$ is not proper, i.e. there is $B
  \in \Sym(M_\ell(D),\vt^t) \setminus \{0\}$ such that $B \in \PSD_\ell(\CP) \cap
  -\PSD_\ell(\CP)$. Let $X \in D^\ell$ be such that $\vt(X)^tBX \not = 0$ 
  (such an  $X$ exists  since we may assume that $B$ is diagonal). Then $\vt(X)^t B X \in \CP \cap
  -\CP$, contradicting that $\CP$ is proper.

  For (2), assume that there is $B \in \Sym(M_\ell(D), \vt^t)$
  such that $\PSD_\ell(\CP) \subsetneqq \PSD_\ell(\CP)[B]$ and $\PSD_\ell(\CP)[B]$ is proper (cf. Definition~\ref{extension} for the notation). 
  Since $B \not \in \PSD_\ell(\CP)$ there is $X_0 \in D^\ell$ such that $b_0:=\vt(X_0)^t
  B X_0 \not \in \CP$. Let $Z \in M_\ell(D)$ be the matrix  whose columns are
  all $X_0$. Then $B_0 := \vt(Z)^t B Z \in \Sym(M_\ell(D),\vt^t)$
    is the matrix with $b_0$ everywhere. We have $B_0 \in \PSD_\ell(\CP)[B]$, so
  $\PSD_\ell(\CP)[B_0]$ is proper (since $\PSD_\ell(\CP)[B]$ is proper by assumption).
  Let $E_i := \diag(0,\ldots,0,1,0,\ldots,0)$ (with $1$ in position $i$). Then $B_1 := \sum_{i=1}^\ell \vt(E_i)^tB_0E_i = \diag(b_0,\ldots,b_0) \in \PSD_\ell(\CP)[B_0]$, so $\PSD(\CP)[B_1]$ is proper.

  We claim that $\CP[b_0]$ is proper,  contradicting that $\CP$ is maximal:  otherwise we would
  have $p + \sum_{i=1}^k u_i \vt(x_i)b_0x_i = -(q + \sum_{j=1}^r v_j
  \vt(y_j)b_0y_j) \not = 0$ for some $p,q\in \CP, k,r \in \N, u_i, v_j \in P$ and $x_i, y_j \in D$. Then if $X_i = \diag(x_i, \ldots, x_i)$ and $Y_j
  = \diag(y_j,\ldots, y_j)$, we have
  \[p I_n + \sum_{i=1}^k u_i \vt(X_i)B_1X_i = -(qI_n + \sum_{j=1}^r v_j
  \vt(Y_j)B_1Y_j) \not = 0,\]
  contradicting that $\PSD_\ell(\CP)[B_1]$ is proper.
\end{proof}

\subsection{Going down}\label{down}
Let $\CP$ be a prepositive cone over $P$ on $(M_\ell(D), \vt^t)$. Denoting the matrix trace by $\tr$, we define
\[\Tr_\ell(\CP) :=  \{\tr(B) \mid B \in \CP\}.\]

\begin{lemma}\label{equiv-down}
  For $d \in \Sym(D,\vt)^\x$ the following are equivalent:
  \begin{enumerate}[$(1)$]
    \item $d \in \Tr_\ell(\CP)$.
    \item $d \in \{\vt(X)^t B X \mid B \in \CP, \ X \in D^\ell\}$.
    \item $\diag(d,d_2,\ldots,d_\ell) \in \CP$ for some $d_2,\ldots, d_\ell \in \Sym(D,\vt)$.
     \item $\diag(d,0,\ldots,0) \in \CP$. 
  \end{enumerate}
\end{lemma}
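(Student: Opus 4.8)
The plan is to prove the chain of implications $(4)\Rightarrow(1)\Rightarrow(3)\Rightarrow(2)\Rightarrow(4)$, exploiting that every symmetric matrix over $(M_\ell(D),\vt^t)$ is congruent to a diagonal one and that $\CP$ is closed under the operations (P2), (P3). The implications $(4)\Rightarrow(3)$ and $(1)\Rightarrow(3)$ will be essentially immediate once the right congruences are written down, so the content is concentrated in two places: getting from the trace of an \emph{arbitrary} $B\in\CP$ down to a diagonal entry, and getting back up from a single diagonal entry to the trace.

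First, for $(3)\Rightarrow(2)$: if $\diag(d,d_2,\dots,d_\ell)\in\CP$, then taking $X=e_1\in D^\ell$ the first standard basis vector, $\vt(X)^t\diag(d,d_2,\dots,d_\ell)X=d$, so $d$ lies in the set of (P3)-type values; this also gives $(4)\Rightarrow(2)$ as the special case $d_2=\dots=d_\ell=0$. For $(2)\Rightarrow(4)$: suppose $d=\vt(X)^tBX$ with $B\in\CP$ and $X\in D^\ell$. Let $Z\in M_\ell(D)$ be the matrix whose first column is $X$ and whose other columns are zero; then $\vt(Z)^tBZ\in\CP$ by (P3), and this matrix is $\diag(d,0,\dots,0)$ (all other entries vanish since the other columns of $Z$ are zero), giving $(4)$. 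For $(4)\Rightarrow(1)$: $\tr(\diag(d,0,\dots,0))=d$, so $d\in\Tr_\ell(\CP)$ directly. The one remaining implication is $(1)\Rightarrow(3)$ (or, composing, $(1)\Rightarrow(4)$), and this is the only step that uses $d$ invertible and uses a genuine diagonalization argument.

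So the main work is $(1)\Rightarrow(3)$. Suppose $d=\tr(B)$ with $B\in\CP$; I want $d$ to appear as a diagonal entry of some element of $\CP$ (after a congruence). Here is where I expect the real obstacle: $\tr(B)$ is a sum of diagonal entries of $B$, and after diagonalizing $B$ by a congruence $G$ its diagonal entries change, so one cannot naively read off $d$ from $B$. The natural move is to use that $\qf{B}_{\vt^t}$ is (via the collapsing equivalence $g$ of Diagram~\eqref{diagram}, or directly by diagonalization of hermitian forms over $(D,\vt)$) isometric to $\qf{c_1,\dots,c_\ell}_{\vt^t}$ with $\diag(c_1,\dots,c_\ell)\in\CP$ by Lemma~\ref{equiv-up}-type reasoning — but $\tr(B)=d$ is not in general equal to $c_1+\dots+c_\ell$, since the trace is not a congruence invariant. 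The correct approach is instead to observe that $d=\tr(B)=\sum_i \vt(e_i)^t B e_i$ where $e_i$ runs over the standard basis, so $d=\sum_i b_{ii}$ with each $b_{ii}\in\Sym(D,\vt)$, and each $b_{ii}=\vt(e_i)^tBe_i$ lies in $\Tr_\ell(\CP)$ already handled by $(4)$-type arguments; but to conclude I actually want to realize $d$ itself, not the $b_{ii}$ separately. So the clean route is: $d=\tr(B)=\vt(\mathbf{1})^t B\,\mathbf{1}$ is \emph{false} in general (that gives $\sum_{i,j}b_{ij}$, not $\tr B$), so one must genuinely use a block argument — embed $d$ as $\vt(C)^t(\text{something in }\CP\cdot I_\ell)C$. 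I would look to mirror the computation in the proof of Proposition~\ref{PSD}(2), where the matrix $E_i=\diag(0,\dots,1,\dots,0)$ is used to extract and reassemble diagonal blocks: writing $B'=\sum_i \vt(E_i)^t B E_i=\diag(b_{11},\dots,b_{\ell\ell})\in\CP$, and then noting $\tr(B')=\tr(B)=d$, reduces to the \emph{diagonal} case, where the claim $\diag(d_1,\dots,d_\ell)\in\CP\Rightarrow d_1+\dots+d_\ell$ appears as a diagonal entry of some element of $\CP$ must be proved — and there one can use a single congruence by a suitable matrix (essentially a rotation/sum construction over $D$, or more simply the hermitian-form identity $\qf{d_1,\dots,d_\ell}_\vt$ represents $d_1+\dots+d_\ell$, i.e. there is $X$ with $\vt(X)^t\diag(d_1,\dots,d_\ell)X=\sum d_i$) to land $d$ in position one, and invertibility of $d$ guarantees this representation extends to a full congruence so the resulting matrix is genuinely in $\CP$. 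I expect the invertibility hypothesis on $d$ to be used precisely to turn "$d$ is represented by the form" into "$d$ is a diagonal coefficient of a diagonalization", via the standard fact (already invoked repeatedly, e.g. in Lemma~\ref{isot-univ}) that an invertible represented value splits off as a diagonal entry.
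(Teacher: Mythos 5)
Your proposal is correct and follows essentially the same route as the paper's proof: you extract the diagonal of $B$ with the matrices $E_i$ and reassemble it via (P2)--(P3), realize $d=\tr(B)$ as a represented value using the all-ones vector, and split off the invertible represented element $d$ as the first entry of a diagonalization over the division algebra $D$, exactly as in the paper. The only difference is the arrangement of the cycle of implications (your $(1)\Rightarrow(3)$ combines the paper's $(1)\Rightarrow(2)$ and $(2)\Rightarrow(3)$, and your direct $(2)\Rightarrow(4)$ via the matrix $Z$ with single nonzero column is the same column-killing congruence the paper uses for $(3)\Rightarrow(4)$), which is immaterial.
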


\begin{proof} $(1)\Rightarrow (2)$:  Let $d=\tr(B)$ for some $B =(b_{ij}) \in \CP$   and  consider 
the matrix $E_i := \diag(0,\ldots,0,1,0,\ldots,0)$ (where  the element $1$ is in position $i$). 
Then $B_i := \vt(E_i)^t B E_i = \diag(0,\ldots,0,b_{ii},0,\ldots,0) \in \CP$ and so $\diag(b_{11},\ldots, b_{\ell\ell}) = B_1 + \cdots + B_\ell \in \CP$. Thus $d = \vt(X)^t (B_1 + \cdots + B_\ell) X$, where $X=(1 \cdots 1)^t \in D^\ell$.

$(2)\Rightarrow (3)$:
Let $d =   \vt(X)^tBX$ for some $B \in \CP$ and $X \in D^\ell$. The matrix $B$ is the
matrix of a hermitian form $h$ over $(D,\vt)$ and by hypothesis $d$ is
represented by $h$. Since $D$ is a division algebra, there is a diagonalization
of $h$ that has $d$ as its first entry, i.e.  there exist 
$G \in \GL_\ell(D)$ and $d_2,\ldots, d_\ell \in \Sym(D,\vt)$ such that $\vt(G)^tBG= \diag(d,d_2,\ldots,d_\ell)$.
  
$(3)\Rightarrow (4)$:  Follows from $ \diag(d,0,\ldots, 0)= \vt(X)^t  \diag(d,d_2,\ldots,d_\ell)  X $,   where
$X=\diag(1,0,\ldots, 0)$. 
 
$(4)\Rightarrow (1)$: Clear.  
\end{proof}

For future use we note the equality
\begin{equation}\label{trace} 
\Tr_\ell(\CP) = \{\vt(X)^t B X \mid B \in \CP, \ X \in D^\ell\},
\end{equation}
trivially given by $(1) \Leftrightarrow (2)$ in Lemma~\ref{equiv-down}.

\begin{prop}\label{tr} 
Let $\CP$ be a prepositive cone  on $(M_\ell(D), \vt^t)$ over $P$. 
  \begin{enumerate}[$(1)$]
    \item $\Tr_\ell(\CP)$ is a prepositive cone on $(D,\vt)$  over $P$.

    \item If $\CP$ is a positive cone, then so is $\Tr_\ell(\CP)$.
  \end{enumerate}
\end{prop}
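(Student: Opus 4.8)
For part (1), the plan is to verify axioms (P1)--(P5) for $\Tr_\ell(\CP)$ as a subset of $\Sym(D,\vt)$, making heavy use of the characterisation in Lemma~\ref{equiv-down} and the equality~\eqref{trace}. Property (P1) is immediate since $\CP\neq\varnothing$. For (P2), given $d_1,d_2\in\Tr_\ell(\CP)$, by Lemma~\ref{equiv-down}(4) we have $\diag(d_1,0,\ldots,0),\diag(d_2,0,\ldots,0)\in\CP$, and their sum $\diag(d_1+d_2,0,\ldots,0)$ lies in $\CP$ by (P2) for $\CP$, so $d_1+d_2\in\Tr_\ell(\CP)$ again by Lemma~\ref{equiv-down}(1) or (4). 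For (P3), given $d\in\Tr_\ell(\CP)$ and $a\in D$, write $d=\vt(X)^tBX$ with $B\in\CP$, $X\in D^\ell$ via~\eqref{trace}; then $\vt(a)da = \vt(aX)^t B (aX)$ (interpreting $aX$ as scalar multiplication on the vector $X\in D^\ell$), which is again of the form in~\eqref{trace}, hence in $\Tr_\ell(\CP)$. For (P4), the inclusion $P\subseteq(\Tr_\ell(\CP))_F$ follows because for $u\in P$ and $d\in\Tr_\ell(\CP)$ with $\diag(d,0,\ldots,0)\in\CP$, we get $u\cdot\diag(d,0,\ldots,0)=\diag(ud,0,\ldots,0)\in\CP$ by (P4) for $\CP$; the reverse inclusion $(\Tr_\ell(\CP))_F\subseteq P$ is argued as in the proof of Theorem~\ref{morco} or Proposition~\ref{PSD}(1): if some $u\in -P\setminus\{0\}$ were in $(\Tr_\ell(\CP))_F$, then picking any nonzero $d\in\Tr_\ell(\CP)$ (which exists, e.g.\ by Lemma~\ref{inv_pos} applied to $\CP$ together with Lemma~\ref{equiv-down}) we would get both $ud$ and $-ud=(-u)d$ in $\Tr_\ell(\CP)$, and then $\diag(ud,0,\ldots,0)$ and $\diag(-ud,0,\ldots,0)$ both in $\CP$, contradicting (P5) for $\CP$ since $ud\neq 0$. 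Finally (P5): if $d\in\Tr_\ell(\CP)\cap-\Tr_\ell(\CP)$ with $d\neq 0$, then $\diag(d,0,\ldots,0)\in\CP$ and $\diag(-d,0,\ldots,0)=-\diag(d,0,\ldots,0)\in\CP$, contradicting that $\CP$ is proper.

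For part (2), suppose $\CP$ is a positive cone on $(M_\ell(D),\vt^t)$ but $\Tr_\ell(\CP)$ is not maximal, so there is $d\in\Sym(D,\vt)$ with $\Tr_\ell(\CP)\subsetneq\Tr_\ell(\CP)[d]$ and $\Tr_\ell(\CP)[d]$ proper. Mirroring the argument in Proposition~\ref{PSD}(2), the plan is to lift $d$ to the matrix level: set $B_1:=\diag(d,0,\ldots,0)\in\Sym(M_\ell(D),\vt^t)$ (or $\diag(d,\ldots,d)$), note $B_1\notin\CP$ since $d\notin\Tr_\ell(\CP)$, and show $\CP[B_1]$ is proper, contradicting maximality of $\CP$. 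For properness of $\CP[B_1]$: an identity $P_0+\sum_i u_i\vt(Z_i)^t B_1 Z_i = -(Q_0+\sum_j v_j\vt(W_j)^t B_1 W_j)\neq 0$ in $\Sym(M_\ell(D),\vt^t)$ with $P_0,Q_0\in\CP$, $u_i,v_j\in P$, $Z_i,W_j\in M_\ell(D)$ would, after applying the trace-type reductions of Lemma~\ref{equiv-down} (conjugating by the $E_i$ and by $X=(1\cdots 1)^t$, and using that $\tr$ is $P$-linear and compatible with the congruence operations), descend to an identity witnessing that $\Tr_\ell(\CP)[d]$ is not proper. The bookkeeping here—translating a relation among $\ell\times\ell$ matrices into a relation among elements of $D$ via the trace and the operations $(i)$–$(iii)$ underlying $\Tr_\ell$—is the step I expect to require the most care, since one must check that each term $\vt(Z_i)^t B_1 Z_i$ contributes, via trace and the reductions, a sum of terms of the shape $u\,\vt(x)dx$ with $u\in P$, $x\in D$; this is exactly the going-down analogue of the going-up computation already carried out in Proposition~\ref{PSD}(2), and is the main obstacle.

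Alternatively, and perhaps more cleanly, both parts can be deduced directly from Theorem~\ref{morco}: the Morita equivalence $g$ of~\eqref{diagram} (restricted to the case $\ve=1$, which we may assume) between $(M_\ell(D),\vt^t)$ and $(D,\vt)$ should be checked to induce precisely the map $\CP\mapsto\Tr_\ell(\CP)$—indeed $g$ sends $\qf{\diag(d_1,\ldots,d_\ell)}_{\vt^t}$ to $\qf{d_1,\ldots,d_\ell}_\vt$, so $D_{(D,\vt)}(g(h))$ for $h\in\Diag(\CP)$ matches the description of $\Tr_\ell(\CP)$ via Lemma~\ref{equiv-down} and~\eqref{trace}—whence Theorem~\ref{morco} immediately gives that $\Tr_\ell(\CP)$ is a prepositive cone over $P$, that $\mor_*=\Tr_\ell$ is an inclusion-preserving bijection $Y_{(M_\ell(D),\vt^t)}\to Y_{(D,\vt)}$, and that it restricts to a bijection on positive cones. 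I would likely present the self-contained verification for (1) and then invoke Theorem~\ref{morco} (after identifying $\Tr_\ell$ with $g_*$) for the maximality statement in (2), as this avoids redoing the matrix bookkeeping.
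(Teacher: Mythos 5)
Your proposal is correct, and for part (1) it is essentially the paper's own argument: (P1), (P2) directly, (P3) from \eqref{trace}, and (P5) via Lemma~\ref{equiv-down} by passing to $\diag(d,0,\ldots,0)$, with (P4) reduced to (P5); the only small repair needed is in your (P3), where the scaling must be on the right, $\vt(a)da=\vt(Xa)^tB(Xa)$ with $(Xa)_i=x_ia$, since $\vt(aX)^tB(aX)$ need not equal $\vt(a)da$ in the noncommutative setting (note also that Lemma~\ref{equiv-down} is stated for invertible $d$, which is harmless here because every nonzero element of $\Sym(D,\vt)$ is invertible and $d=0$ is trivial).

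For part (2) your intended route differs genuinely from the paper's. The paper argues maximality directly: it lifts $d$ to $C=\diag(d,0,\ldots,0)\notin\CP$, and shows $\CP[C]$ is proper by taking a hypothetical nonzero relation $B+\sum u_i\vt(X_i)^tCX_i=-(B'+\sum v_j\vt(Y_j)^tCY_j)$, congruence-diagonalizing the left-hand side by an invertible $J$, and then contracting with the standard basis vector $E$ at a coordinate where the diagonal entry is nonzero; since $\vt(X_iE)^tC(X_iE)=\vt(x)dx$ and $\vt(E)^tBE\in\Tr_\ell(\CP)$ by \eqref{trace}, this produces a nonzero relation violating properness of $\Tr_\ell(\CP)[d]$. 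Your Route A is exactly this plan, but it stops at the step that needs the actual work, namely guaranteeing the contracted relation is nonzero (this is what the diagonalization and the choice of $k_0$ accomplish); had you committed to Route A alone this would be a gap. Your preferred Route B, however, is complete and legitimate: once one checks $g_*(\CP)=\Tr_\ell(\CP)$ — which amounts to $D_{(D,\vt)}(g(\qf{B}_{\vt^t}))=\{\vt(X)^tBX\mid X\in D^\ell\}$ for arbitrary $B\in\CP$ together with \eqref{trace} and closure of $\Tr_\ell(\CP)$ under sums — Theorem~\ref{morco} (whose proof does not use the going-down construction, so there is no circularity, and which applies since $\vt^t$ and $\vt$ are of the same type) immediately gives that $\Tr_\ell$ is an inclusion-preserving bijection $Y_{(M_\ell(D),\vt^t)}\to Y_{(D,\vt)}$ over the same $P$, hence carries positive cones to positive cones. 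What the paper's hands-on proof buys is an explicit, self-contained description independent of the general Morita machinery (which is precisely how the paper presents the going-up/going-down maps); what your route buys is the elimination of the matrix bookkeeping in the maximality step at the cost of the (easy but necessary) identification of $\Tr_\ell$ with $g_*$.
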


\begin{proof}
(1)  Axioms (P1) and  (P2)  are straightforward, while (P3) follows from \eqref{trace}.
      We  check axiom (P5) and assume that $\Tr_\ell(\CP)$ is not proper, i.e. there
      is $a \in D \setminus \{0\}$ such that $a \in \Tr_\ell(\CP) \cap -\Tr_\ell(\CP)$. 
      By Lemma~\ref{equiv-down},  $\diag(a, 0, \ldots, 0)$ and $\diag(-a, 0, \ldots, 0)$ are in $\CP$,
  contradicting that $\CP$ is proper. Axiom (P4) now follows using (P5).
  
(2) Assume that $\Tr_\ell(\CP)$ is not maximal. Then there is $d \in
      \Sym(D,\vt)$ such that $d \not \in \Tr_\ell(\CP)$ and $\Tr_\ell(\CP) \subsetneqq
      \Tr_\ell(\CP)[d]$, where $\Tr_\ell(\CP)[d]$  is proper.
      Since $d \not \in \Tr_\ell(\CP)$, the matrix
      $C:=\diag(d,0,\ldots,0)$ is not in $\CP$  by Lemma~\ref{equiv-down}. We show that $\CP[C]$ is
      proper, which will contradict the hypothesis that $\CP$ is maximal,
      finishing the proof. 
      If $\CP[C]$ is not proper, then $\CP[C] \cap -\CP[C] \not = \{0\}$,
      so there are $B,B' \in \CP$, $k,r \in \N$, $u_i, v_j \in P$ and $X_i,
      Y_j \in M_\ell(D) \setminus\{0\}$ such that
      \[B+\sum_{i=1}^k u_i \vt(X_i)^tCX_i = -(B'+\sum_{j=1}^r v_j
      \vt(Y_j)^tCY_j) \not = 0.\]
      Up to multiplying all terms in the above equality on the left by $\vt(J)^t$
      and on the right by $J$ for some well-chosen invertible matrix $J$, we can
      assume that the matrix $B_0:=B+\sum_{i=1}^k u_i \vt(X_i)^tCX_i$ is diagonal.

      Let $k_0 \in \{1,\ldots,\ell\}$ be such that the $(k_0,k_0)$-th coordinate of
      $B_0$ is non-zero, and let $E \in D^\ell$ be the column vector with all coordinates $0$
      except for a $1$ at coordinate $k_0$. Then
      \[\vt(E)^tBE+\sum_{i=1}^k u_i \vt(X_iE)^tCX_iE =
      -(\vt(E)^tB'E+\sum_{j=1}^r v_j \vt(Y_j E)^t C Y_j E),\]
      where the left-hand side is non-zero by choice of $E$. Since both sides
      belong to $\Tr_\ell(\CP)[d]$ which is proper, we get a contradiction.
\end{proof}

\begin{prop}\label{correspondences}
  The maps 
  \[Y_{(D,\vt)} \rightarrow Y_{(M_\ell(D), \vt^t)},\ \CP \mapsto
  \PSD_\ell(\CP)\] 
  and 
  \[Y_{(M_\ell(D),\vt^t)} \rightarrow Y_{(D,\vt)},\  \CP \mapsto
  \Tr_\ell(\CP)\]
  are inverses of each other, and restrict to maps 
  \[X_{(D,\vt)}
  \rightarrow X_{(M_\ell(D), \vt^t)},\  \CP \mapsto \PSD_\ell(\CP)\] 
  and
  \[X_{(M_\ell(D),\vt^t)} \rightarrow X_{(D,\vt)},\ \CP \mapsto \Tr_\ell(\CP).\]
\end{prop}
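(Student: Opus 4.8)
The plan is to show that the maps $\PSD_\ell$ and $\Tr_\ell$ are mutually inverse bijections between $Y_{(D,\vt)}$ and $Y_{(M_\ell(D),\vt^t)}$; since Propositions~\ref{PSD} and \ref{tr} already guarantee that each map sends prepositive cones to prepositive cones and positive cones to positive cones, once the inverse property is established the restriction to the $X$'s is automatic. So I would concentrate on proving the two set-theoretic identities $\Tr_\ell(\PSD_\ell(\CP)) = \CP$ for every $\CP \in Y_{(D,\vt)}$ and $\PSD_\ell(\Tr_\ell(\CQ)) = \CQ$ for every $\CQ \in Y_{(M_\ell(D),\vt^t)}$.

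For the first identity, let $\CP$ be a prepositive cone on $(D,\vt)$. If $d \in \CP$, then $\diag(d,0,\dots,0) \in \PSD_\ell(\CP)$ by definition (its congruence diagonalizations have entries in $\CP \cup \{0\} \subseteq \CP$, using that $0 \in \CP$ by (P4)), and $\tr(\diag(d,0,\dots,0)) = d$, so $d \in \Tr_\ell(\PSD_\ell(\CP))$. Conversely, if $d \in \Tr_\ell(\PSD_\ell(\CP))$, write $d = \tr(B)$ for some $B \in \PSD_\ell(\CP)$; by Lemma~\ref{equiv-up} applied to $B$ there is $G \in \GL_\ell(D)$ with $\vt(G)^t B G = \diag(a_1,\dots,a_\ell)$ and all $a_i \in \CP$, and then $d = \tr(B)$ equals a sum $\sum_i \vt(x_i) a_i x_i$ of terms obtained from the $a_i$ via (P3)-type operations (one uses $\tr(B) = \tr(\vt(G)^{-t}\diag(a_i)G^{-1}) = \sum_i \vt((G^{-1})_{i1})\, a_i\, (G^{-1})_{i1} + \cdots$; more cleanly, invoke Lemma~\ref{equiv-down}$(1)\Leftrightarrow(4)$ to replace $B$ by a diagonal representative and use (P2),(P3) for $\CP$). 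Either way $d \in \CP$.

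For the second identity, let $\CQ$ be a prepositive cone on $(M_\ell(D),\vt^t)$. If $B \in \CQ$, then for every $X \in D^\ell$ we have $\vt(X)^t B X \in \Tr_\ell(\CQ)$ by \eqref{trace}, which says exactly that $B \in \PSD_\ell(\Tr_\ell(\CQ))$; hence $\CQ \subseteq \PSD_\ell(\Tr_\ell(\CQ))$. For the reverse inclusion, take $B \in \PSD_\ell(\Tr_\ell(\CQ))$; by Lemma~\ref{equiv-up} we may pick $G \in \GL_\ell(D)$ with $\vt(G)^t B G = \diag(a_1,\dots,a_\ell)$ and each $a_i \in \Tr_\ell(\CQ)$. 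By Lemma~\ref{equiv-down}$(1)\Rightarrow(4)$, each $\diag(0,\dots,0,a_i,0,\dots,0)$ (with $a_i$ in position $i$) lies in $\CQ$ — obtained from $\diag(a_i,0,\dots,0) \in \CQ$ by conjugating with a permutation matrix, which is allowed by (P3). Summing over $i$ and using (P2) gives $\diag(a_1,\dots,a_\ell) \in \CQ$, and then $B = \vt(G^{-1})^t \diag(a_1,\dots,a_\ell) G^{-1} \in \CQ$ by (P3) again. Thus $\PSD_\ell(\Tr_\ell(\CQ)) = \CQ$.

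The only subtle point — and the step I expect to require the most care — is the converse direction of the first identity, namely controlling $\tr(B)$ for a general (non-diagonal) $B \in \PSD_\ell(\CP)$ and checking it lands in $\CP$ rather than merely in some enlargement; the cleanest route is to avoid manipulating $\tr$ of a congruence directly and instead go through Lemma~\ref{equiv-down}, which already packages the equivalence between "$d$ is a trace of an element of $\CP$" and "$\diag(d,0,\dots,0) \in \CP$", reducing everything to the diagonal case where (P2) and (P3) apply transparently. Once both identities are in hand, combining with Propositions~\ref{PSD}(2) and \ref{tr}(2) yields the claimed restrictions to $X_{(D,\vt)}$ and $X_{(M_\ell(D),\vt^t)}$, completing the proof.
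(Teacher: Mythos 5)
Your proposal is correct and follows essentially the same route as the paper: reduce to the two identities $\Tr_\ell(\PSD_\ell(\CP))=\CP$ and $\PSD_\ell(\Tr_\ell(\CQ))=\CQ$ via Propositions~\ref{PSD} and \ref{tr}, and prove them with \eqref{trace}, Lemmas~\ref{equiv-up} and \ref{equiv-down}, permutation matrices and (P2)--(P3). Your garbled trace-of-a-congruence computation is indeed best discarded, as you yourself note: the inclusion $\Tr_\ell(\PSD_\ell(\CP))\subseteq\CP$ is immediate from \eqref{trace} applied to the prepositive cone $\PSD_\ell(\CP)$ together with the definition of $\PSD_\ell(\CP)$, which is exactly the paper's ``straightforward'' step.
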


\begin{proof}
By Propositions~\ref{PSD} and \ref{tr} we only need to show that these maps are
  inverses of each other. 
The equality $\Tr_\ell(\PSD_\ell (\CP))= \CP$  for $\CP \in Y_{(D,\vt)}$ is straightforward using \eqref{trace}. 

We now show that, given $\CP \in Y_{(M_\ell(D), \vt^t)}$, we have
$\PSD_\ell (\Tr_\ell(\CP))=\CP$.  Let $B\in \PSD_\ell(\Tr_\ell(\CP))$. Then by Lemma~\ref{equiv-up} there
exists $G\in \GL_\ell(D)$ such that $\vt(G)^t B G =\diag(d_1,\ldots, d_\ell) $ with $d_1,\ldots, d_\ell \in \Tr_\ell(\CP)$. 
By Lemma~\ref{equiv-down}, $\diag(d_i,0,\ldots,0) \in \CP$ for $i=1,\ldots, \ell$. Using (P3) with permutation
matrices, followed by (P2) we obtain that $ \diag(d_1,\ldots, d_\ell) \in \CP$ and therefore $B= \vt(G^{-1})^t 
 \diag(d_1,\ldots, d_\ell) G^{-1} \in \CP$.
 
 Conversely, let $B\in \CP$. Since $B \in \Sym(M_\ell(D), \vt^t)$ there exists $G\in \GL_\ell(D)$ such that
 $\vt(G)^t B G =\diag(d_1,\ldots, d_\ell) \in \CP$. It follows from \eqref{trace} that $d_1,\ldots, d_\ell$ are in
 $\Tr_\ell(\CP)$.  Therefore, by Lemma~\ref{equiv-up}, $\diag(d_1,\ldots, d_\ell)$ and thus $B$ are in  $\PSD_\ell(\Tr_\ell(\CP))$.
\end{proof}

\begin{ex}\label{psd-nsd} 
Let $P \in X_F$. The only two prepositive cones on $(F, \id)$ over $P$ are $P$ and $-P$. Therefore, 
by Proposition~\ref{correspondences},
the only two
prepositive cones on $(M_n(F), t)$ over $P$  
are the set of symmetric positive semidefinite   matrices and the set of symmetric negative semidefinite   matrices
with respect to $P$.
\end{ex}

\section{Prepositive cones under field extension}

\subsection{Basic results on convex cones over ordered fields}

In this section we fix some  $P\in X_F$ and write  $\ge_P$ for the order relation defined by $P$.
We recall some basic concepts from convex geometry.  

We consider the usual euclidean inner product on $F^n$,
\[\qf{x ;  y} := x_1y_1 + \cdots +x_ny_n,\]
and the topology $\CT_P$ on $F^n$ that comes from the order topology on $F$.
We say that a nonempty subset $C$ of $F^n$ is a \emph{cone over $P$} if it satisfies (C\ref{cone-one}) below and 
a \emph{convex cone over $P$} if it satisfies (C\ref{cone-one}) and (C\ref{cone-two}).
\begin{enumerate}[(C1)]
  \item\label{cone-one} For every $a \in C$ and $r \in P$, $ra \in C$;
  \item\label{cone-two} For every $a,b \in C$, $a+b \in C$.
\end{enumerate}
We say that a cone $C$ over $P$ is \emph{pointed} if $C \cap -C = \{0\}$, that it is
\emph{closed} if it is closed in the topology $\CT_P$, and that it is \emph{full-dimensional}
if $\Span(C) = F^n$.
We define the \emph{dual cone} of $C$ by
\[C^* := \{v \in F^n \mid \qf{v ;  C} \ge_P 0\}.\]
Observe that $C^*$ is always a closed convex cone over $P$.
We say that a convex cone $C$ over $P$  is \emph{finitely generated} if there are $a_1,\ldots,a_k
\in F^n$ such that
\[C = \{\lambda_1 a_1 + \cdots + \lambda_ka_k \mid \lambda_1,\ldots,\lambda_k
\in P\}.\]
We recall the following well-known result, cf.  \cite{Charnes-Cooper}, as well as some immediate consequences:

\begin{thm}[Farkas' Lemma]
  A finitely generated convex cone over $P$ is the intersection of a finite number of
  closed half-spaces \tu{(}i.e. is polyhedral\tu{)} with respect to $\geq_P$.
\end{thm}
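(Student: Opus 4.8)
The statement to prove is the classical Farkas' Lemma: a finitely generated convex cone over $P$ is polyhedral. Here is how I would approach it.

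\textbf{Approach.} The plan is to run the standard Fourier–Motzkin elimination argument, which works verbatim over any ordered field, so the fact that $F$ is not $\R$ causes no difficulty. Let $C = \{\lambda_1 a_1 + \cdots + \lambda_k a_k \mid \lambda_i \in P\}$ with $a_1,\ldots,a_k \in F^n$. Think of $C$ as the image of the polyhedral cone $P^k \subseteq F^k$ under the linear map $L\colon F^k \to F^n$, $e_i \mapsto a_i$. Equivalently, $C$ is the projection onto the last $n$ coordinates of the cone
\[
\widetilde{C} := \{(\lambda_1,\ldots,\lambda_k,y_1,\ldots,y_n) \mid \lambda_i \ge_P 0,\ y = \textstyle\sum_i \lambda_i a_i\} \subseteq F^{k+n},
\]
which is cut out by finitely many linear inequalities ($\lambda_i \ge_P 0$) and linear equations ($y_j = \sum_i \lambda_i a_{ij}$, each of which is two inequalities). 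So it suffices to show that the projection of a polyhedral cone along one coordinate is again polyhedral, and then iterate to eliminate $\lambda_1,\ldots,\lambda_k$ one at a time.

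\textbf{Key step: one-variable elimination.} Given a system of homogeneous linear inequalities $\sum_j c_{ij} z_j \ge_P 0$ ($i \in I$) in variables $z_1,\ldots,z_m$, I want to describe the set of $(z_2,\ldots,z_m)$ for which some $z_1$ makes all inequalities hold. Partition $I$ according to the sign of the coefficient of $z_1$: let $I^+$, $I^-$, $I^0$ be the indices where $c_{i1} >_P 0$, $<_P 0$, $=_P 0$ respectively. For $i \in I^0$ the inequality already lives in the remaining variables. For $i \in I^+$, the inequality says $z_1 \ge_P \ell_i(z_2,\ldots,z_m)$ for a linear form $\ell_i$; for $i \in I^-$ it says $z_1 \le_P r_i(z_2,\ldots,z_m)$. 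Such a $z_1$ exists if and only if $\ell_i(z') \le_P r_j(z')$ for all $i \in I^+$, $j \in I^-$, together with the inequalities from $I^0$ — and each of these is a homogeneous linear inequality over $P$ in $z_2,\ldots,z_m$ (here I use only that $P$ is a linear order closed under addition and multiplication by positive scalars, i.e. the ordered field axioms). This gives a finite system of linear inequalities over $\ge_P$ cutting out the projection; this is the only place any real work happens, and it is entirely elementary.

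\textbf{Conclusion.} Applying this elimination successively to $\lambda_1$, then $\lambda_2$, \ldots, then $\lambda_k$, starting from the finite system defining $\widetilde{C}$, produces after $k$ steps a finite system of homogeneous linear inequalities in $y_1,\ldots,y_n$ alone whose solution set is exactly $C$. Each such inequality $\sum_j v_j y_j \ge_P 0$ describes a closed half-space through the origin, so $C$ is the intersection of finitely many such half-spaces, i.e. polyhedral. The main (and only) obstacle is bookkeeping in the elimination step — making sure the case split on the sign of the pivot coefficient is exhaustive and that the resulting combined inequalities genuinely characterize feasibility — but there is no analytic input, so nothing obstructs the argument over a general ordered field $F$.
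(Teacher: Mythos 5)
Your argument is correct: Fourier--Motzkin elimination uses only the ordered-field axioms (totality of $\geq_P$, closure of $P$ under addition and under multiplication by elements of $P$, and the ability to divide by a nonzero pivot), so it goes through verbatim over $(F,P)$; the elimination preserves homogeneity, so the final system in $y_1,\ldots,y_n$ consists of homogeneous inequalities and exhibits $C$ as a finite intersection of closed half-spaces through the origin. The one point worth being explicit about is the feasibility claim in the elimination step when $I^+$ or $I^-$ is empty (then any sufficiently large, resp.\ small, resp.\ arbitrary $z_1$ works), but that is routine. Note, however, that the paper does not prove this statement at all: it is quoted as a known result with a reference to Charnes and Cooper, \emph{The strong Minkowski-Farkas-Weyl theorem for vector spaces over ordered fields}, precisely because the classical arguments are finitary and topology-free and hence valid over an arbitrary ordered field. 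So your proposal is not an alternative to an argument in the paper but a self-contained substitute for the citation; its benefit is exactly that self-containedness, at the cost of the bookkeeping you describe, and it establishes the statement in the form the paper needs (polyhedrality, hence closedness in $\CT_P$ and the bidual identity $(C^*)^*=C$ used later).
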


\begin{cor}
  A finitely generated convex cone over $P$ is closed in $\CT_P$.
\end{cor}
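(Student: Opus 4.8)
The statement to prove is the corollary of Farkas' Lemma: a finitely generated convex cone $C$ over $P$ is closed in $\CT_P$. The plan is to derive this directly from the polyhedrality asserted by Farkas' Lemma, by observing that each closed half-space is closed in $\CT_P$ and that a finite intersection of closed sets is closed.

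First I would invoke Farkas' Lemma to write $C = \bigcap_{i=1}^m H_i$, where each $H_i = \{x \in F^n \mid \qf{v_i ; x} \ge_P 0\}$ for suitable $v_i \in F^n$. Next I would argue that each $H_i$ is closed in $\CT_P$: the map $x \mapsto \qf{v_i ; x}$ is a polynomial (indeed linear) map $F^n \to F$, hence continuous for the topology $\CT_P$ (which is the product of the order topology on $F$, with respect to which addition and multiplication are continuous), and $H_i$ is the preimage of the set $\{t \in F \mid t \ge_P 0\} = P$, which is closed in the order topology on $F$ since its complement $\{t \mid t <_P 0\}$ is open. Therefore $H_i$ is closed. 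Finally, $C$ is a finite intersection of closed sets, hence closed in $\CT_P$.

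The main (and only mild) obstacle is making precise that $\CT_P$ behaves like a genuine topological-ring/field topology: one needs that $F$ with the order topology from $P$ is a topological field, so that linear forms $F^n \to F$ are $\CT_P$-continuous and $P \subseteq F$ is $\CT_P$-closed. This is standard and can be asserted without detailed verification, as it only uses that addition and multiplication are given by polynomials and that $\{t : t >_P 0\}$ is by definition a basic open set. No further input is required beyond Farkas' Lemma.

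\begin{proof}
  By Farkas' Lemma we may write $C = \bigcap_{i=1}^m \{x \in F^n \mid \qf{v_i ; x} \ge_P 0\}$ for some $v_1, \ldots, v_m \in F^n$. For each $i$, the map $x \mapsto \qf{v_i ; x}$ is $\CT_P$-continuous, being a linear (hence polynomial) map $F^n \to F$, and the set $\{t \in F \mid t \ge_P 0\}$ is closed in the order topology on $F$. Hence each half-space $\{x \mid \qf{v_i ; x} \ge_P 0\}$ is closed in $\CT_P$, and $C$, being a finite intersection of such sets, is closed in $\CT_P$.
\end{proof}
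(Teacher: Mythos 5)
Your proof is correct and follows exactly the route the paper intends: the corollary is stated as an immediate consequence of Farkas' Lemma, with the cone written as a finite intersection of closed half-spaces, each closed since linear forms are $\CT_P$-continuous and $P$ is closed in the order topology. Nothing further is needed.
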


\begin{cor}\label{double-dual}
  If $C$ is a finitely generated convex cone over $P$,  then $(C^{*})^{*} = C$.
\end{cor}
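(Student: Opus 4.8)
\textbf{Proof plan for Corollary~\ref{double-dual}.}

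The plan is to deduce the biduality $(C^*)^* = C$ from Farkas' Lemma, exactly as in the classical real case, being careful that all arguments are ``finitary'' and hence valid over an arbitrary ordered field $(F,P)$ rather than only over $\R$. First I would observe that the inclusion $C \subseteq (C^*)^*$ is immediate and holds for any subset $C$: if $a \in C$ then for every $v \in C^*$ we have $\qf{v;a} \ge_P 0$ by definition of $C^*$, which is precisely the statement that $a \in (C^*)^*$. So the content is the reverse inclusion $(C^*)^* \subseteq C$, and this is where finite generation of $C$ is used.

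For the reverse inclusion I would argue by contraposition: suppose $b \in F^n \setminus C$. By Farkas' Lemma, $C$ is polyhedral, i.e. $C = \bigcap_{j=1}^m H_j$ where each $H_j = \{x \in F^n \mid \qf{w_j;x} \ge_P 0\}$ for suitable $w_j \in F^n$; note each such $w_j$ lies in $C^*$, since $\qf{w_j;a}\ge_P 0$ for all $a \in C$. Because $b \notin C$, there is some index $j_0$ with $\qf{w_{j_0};b} <_P 0$. Since $w_{j_0} \in C^*$, the existence of this $w_{j_0}$ witnesses $b \notin (C^*)^*$. This shows $F^n \setminus C \subseteq F^n \setminus (C^*)^*$, i.e. $(C^*)^* \subseteq C$, completing the proof.

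The one point requiring a little care — and the only candidate for a genuine obstacle — is that Farkas' Lemma as quoted gives $C$ as an intersection of finitely many closed half-spaces through the origin; I should make sure the half-spaces may indeed be taken to pass through $0$ (equivalently, that the bounding functionals can be taken homogeneous). This is automatic because $C$ is a cone over $P$: any closed half-space containing a cone and whose bounding hyperplane does not pass through the origin can be replaced by the homogeneous half-space with the same normal direction, and the intersection is unchanged. With that remark in place the argument above goes through verbatim over $(F,P)$, since everything reduces to the single scalar inequality $\qf{w_{j_0};b} <_P 0$ in the ordered field $F$ and no completeness or limiting argument is needed.
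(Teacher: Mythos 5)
Your argument is correct and is exactly the intended deduction: the paper states Corollary~\ref{double-dual} as an immediate consequence of Farkas' Lemma with no further proof, and your contrapositive argument via the defining half-spaces (whose normals lie in $C^*$) is the standard way to make that immediacy explicit. Your side remark is also fine: since $C$ is a cone over $P$, any half-space containing it can be replaced by the homogeneous one with the same normal, and the scaling argument needed for this uses only division in the ordered field, so no Archimedean or completeness property is required.
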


\begin{lemma}\label{fd-p}
  Let  $C$ be a cone over $P$. The following are equivalent:
  \begin{enumerate}[$(1)$]
    \item\label{fd-p1} $C$ is full-dimensional.
    \item\label{fd-p2} $C^*$ is pointed.
  \end{enumerate}
\end{lemma}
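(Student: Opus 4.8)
**Proof plan for Lemma~\ref{fd-p} ($C$ full-dimensional $\iff$ $C^*$ pointed).**

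The plan is to prove both implications by contraposition, using only linear-algebra facts valid over an arbitrary ordered field $(F,P)$ (so that no completeness or topology beyond $\CT_P$ is needed). For $(1)\Rightarrow(2)$, suppose $C^*$ is not pointed, so there is $v\in C^*\cap-C^*$ with $v\neq 0$; then for every $a\in C$ we have both $\qf{v;a}\ge_P 0$ and $\qf{v;a}\le_P 0$, hence $\qf{v;a}=0$. Thus $C\subseteq v^\perp$, the hyperplane orthogonal to $v$, and therefore $\Span(C)\subseteq v^\perp\subsetneq F^n$, so $C$ is not full-dimensional. For $(2)\Rightarrow(1)$, suppose $C$ is not full-dimensional, i.e. $\Span(C)=:V$ is a proper subspace of $F^n$. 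Pick any nonzero $v$ in the orthogonal complement $V^\perp$ (nonzero since $V\neq F^n$ and the euclidean form is nondegenerate on $F^n$; a basis of $V$ can be extended and Gram–Schmidt-type reasoning over $F$ produces such a $v$, or simply solve the homogeneous linear system $\qf{v;c}=0$ for $c$ ranging over a finite spanning set of $V$, which has a nontrivial solution because $\dim V<n$). Then $\qf{v;a}=0$ for all $a\in C$, so both $v\in C^*$ and $-v\in C^*$, witnessing that $C^*$ is not pointed.

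I would write the two directions as a single short argument built around the observation that $C\subseteq v^\perp$ for some $v\neq 0$ is equivalent to $\Span(C)\neq F^n$, and that $v\in C^*\cap-C^*$ is equivalent (given $v\neq 0$) to $C\subseteq v^\perp$. Concretely: $C^*$ is not pointed $\iff$ $\exists v\neq 0$ with $\qf{v;C}\ge_P 0$ and $\qf{v;-C}\ge_P 0$ $\iff$ $\exists v\neq 0$ with $\qf{v;C}=\{0\}$ $\iff$ $\exists v\neq 0$ with $C\subseteq v^\perp$ $\iff$ $\Span(C)\subseteq v^\perp$ for some $v\neq 0$ $\iff$ $\Span(C)\neq F^n$ $\iff$ $C$ is not full-dimensional. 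Taking contrapositives gives the stated equivalence.

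The only point requiring a little care — and the one I expect to be the main (minor) obstacle — is the step ``$\Span(C)\subsetneq F^n$ implies there is a nonzero vector orthogonal to all of $C$,'' since over a general ordered field one cannot invoke inner-product-space completeness; but this is purely a statement about the standard bilinear form $\qf{\cdot;\cdot}$ on $F^n$, which is nondegenerate, so the orthogonal complement of any proper subspace is nonzero (equivalently, an underdetermined homogeneous linear system over $F$ has a nontrivial solution). Everything else is immediate from the definitions of cone, dual cone, pointed, and full-dimensional given just above the lemma, and from antisymmetry of $\ge_P$. No deeper result (Farkas, double-dual) is needed for this lemma.
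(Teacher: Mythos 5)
Your proposal is correct and follows essentially the same argument as the paper: a nonzero $v\in C^*\cap -C^*$ forces $\qf{v;C}=\{0\}$, and conversely a proper $\Span(C)$ admits a nonzero orthogonal vector lying in $C^*\cap-C^*$. The only cosmetic differences are that you argue by contraposition where the paper argues by contradiction, and you justify the existence of the orthogonal vector by nondegeneracy/dimension count where the paper writes $F^n=\Span(C)\perp W$; both are equally valid over an ordered field.
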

\begin{proof}
$\eqref{fd-p1} \Rightarrow \eqref{fd-p2}$: Assume that $C^*$ is not pointed.
Then there is $u \in F^n \setminus \{0\}$ such that $u, -u \in C^*$. By
definition of $C^*$ we have $\la u ;  x \ra \ge_P 0$ and $\la -u ;  x \ra \ge_P 0$ for
every $x \in C$, so $\la u ; x \ra = 0$ for every $x \in C$, i.e. $u \in C^\perp$.
But $C^\perp = \{0\}$ since $C$ is full-dimensional, contradiction.

$\eqref{fd-p2} \Rightarrow \eqref{fd-p1}$: Assume that $C$ is not
full-dimensional, so $F^n = \Span(C) \perp W$ for some non-zero subspace $W$.
Then $W \subseteq C^*$ and $-W \subseteq C^*$, contradicting that $C^*$ is
pointed.
\end{proof}

\subsection{Prepositive cones under field extensions}

Let $P\in X_F$ and let  $\dim_F A=m$.  We identify $A$ with $F^{m}$ as $F$-vector space, so
that we can use coordinates in $F$. Let $t\in\N$ and let
$b_1, \ldots, b_t : A \times A \rightarrow A$ be $F$-bilinear maps. We write $\bar b =
(b_1, \ldots, b_t)$ and 
define
\[C_{\bar b} := \Bigl\{\sum_{j=1}^{s} \sum_{i=1}^{t} a_{i,j}
    b_i(x_{i,j},x_{i,j}) \,\,\Big|\,\, s \in \N, \ a_{i,j} \in P,\ x_{i,j} \in
    A\Bigr\}.\]
In other words, $C_{\bar b} $ is the convex cone in $A$ over $P$ generated by the elements
$b_i(x,x)$ for $i = 1, \ldots, t$ and $x \in A$.

\begin{lemma}\label{bilin-closed}
  $C_{\bar b}$ is a finitely generated convex cone.
\end{lemma}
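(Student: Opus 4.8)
The statement is that $C_{\bar b}$, the convex cone over $P$ generated by all elements $b_i(x,x)$ for $i=1,\ldots,t$ and $x\in A\cong F^m$, is finitely generated. The plan is to exhibit a finite list of generators. The naive obstacle is that the set $\{b_i(x,x) : x\in A\}$ is typically infinite (it is the image of a quadratic map), so we cannot simply take these as generators. The key observation is that each $b_i(\cdot,\cdot)$ is $F$-bilinear, hence $b_i(x,x)$ is, in each coordinate of $A\cong F^m$, a quadratic form in the $m$ coordinates of $x$; writing $x=\sum_k x_k e_k$ in terms of the fixed $F$-basis $e_1,\ldots,e_m$ of $A$, bilinearity gives
\[
b_i(x,x) = \sum_{k,l=1}^m x_k x_l\, b_i(e_k,e_l).
\]

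The strategy is therefore to replace the single squared term $x_kx_l b_i(e_k,e_l)$ (for $k\ne l$) by a $P$-combination of honest squares. First I would use the polarization-type identity: for any $\lambda\in P$ with $\lambda\ne 0$,
\[
b_i(e_k,e_l)+b_i(e_l,e_k) = \tfrac{1}{\lambda}\Big(\lambda b_i(e_k+ \mu e_l,\, e_k+\mu e_l) - \lambda b_i(e_k,e_k) - \lambda\mu^2 b_i(e_l,e_l)\Big)/\mu
\]
—more simply, since $1\in P$, the element $b_i(e_k+e_l,e_k+e_l)-b_i(e_k,e_k)-b_i(e_l,e_l)$ equals $b_i(e_k,e_l)+b_i(e_l,e_k)$, and $b_i(e_k-e_l,e_k-e_l)-b_i(e_k,e_k)-b_i(e_l,e_l) = -(b_i(e_k,e_l)+b_i(e_l,e_k))$. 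Thus the finitely many elements $\pm(b_i(e_k,e_l)+b_i(e_l,e_k))$ all lie in the cone generated by the finite set
\[
S := \{\, b_i(e_k\pm e_l, e_k\pm e_l) \mid 1\le i\le t,\ 1\le k\le l\le m \,\},
\]
because each is an integer (hence, using $1+1+\cdots+1\in P$ and the fact that $a=1\cdot a$, a $P$-) combination of members of $S$. Consequently $b_i(e_k,e_l)$ itself may have either sign, but the relevant cone is still generated by $S$: one should actually let $S$ be the symmetric finite set $\{\pm b_i(e_k\pm e_l,e_k\pm e_l)\}$, and verify $C_{\bar b}\subseteq \{\sum \lambda_j s_j : \lambda_j\in P,\ s_j\in S\}$ by expanding an arbitrary generator $b_i(x,x)$ via the displayed formula and rewriting each monomial $x_kx_l$: when $k=l$ we get $x_k^2 b_i(e_k,e_k)$ with $x_k^2\in P$, directly a term; when $k\ne l$ we get $x_kx_l(b_i(e_k,e_l)+b_i(e_l,e_k))$, and since $4x_kx_l = (x_k+x_l)^2-(x_k-x_l)^2$ with $(x_k\pm x_l)^2\in P$, this is a $P$-combination of $\pm(b_i(e_k,e_l)+b_i(e_l,e_k))$, which in turn is a $P$-combination of elements of $S$ as noted. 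The reverse inclusion $S\subseteq C_{\bar b}$ is immediate since every element of $S$ is of the form $\pm b_i(z,z)$ and $C_{\bar b}$ is closed under multiplication by $P$ (note $4=2^2\in P$ and more simply $1\in P$, so sign issues disappear: actually $-b_i(z,z)$ need not be in $C_{\bar b}$, so I would take $S=\{b_i(e_k\pm e_l,e_k\pm e_l)\}$ without the $\pm$ sign, and carry the sign bookkeeping through the $P$-combination using that $C_{\bar b}$ is only a cone, not a subspace—this is the point requiring care).

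The main obstacle, then, is the sign bookkeeping: a term like $x_kx_l$ is not in $P$ in general, only its multiples by squares can be controlled, and $-b_i(z,z)\notin C_{\bar b}$ in general. The clean way around this is to observe $x_kx_l b_i(e_k,e_l)$ is a sum of four terms of the shape (element of $P$)$\cdot b_i(e_k+\epsilon e_l, e_k+\epsilon e_l)$ plus correction terms in $b_i(e_k,e_k)$ and $b_i(e_l,e_l)$, all with coefficients in $P$ after collecting; concretely, from $b_i(e_k+e_l,e_k+e_l) = b_i(e_k,e_k)+b_i(e_k,e_l)+b_i(e_l,e_k)+b_i(e_l,e_l)$ one solves for the cross terms and substitutes, keeping every coefficient a square times an element of $P$. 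Since there are only finitely many pairs $(i,k,l)$ and the basis is fixed, this produces a finite generating set, and I would conclude that $C_{\bar b}$ is finitely generated as a convex cone over $P$, completing the proof.
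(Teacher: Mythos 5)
Your overall plan — fix the basis $e_1,\ldots,e_m$, expand $b_i(x,x)=\sum_{k,l}x_kx_l\,b_i(e_k,e_l)$, and exhibit finitely many generators indexed by basis vectors — is in the spirit of the paper's proof, but the step you repeatedly defer as ``sign bookkeeping'' is exactly the missing content, and it cannot be carried out. You need the inclusion of $C_{\bar b}$ in the set of $P$-combinations of your finite set $S=\{b_i(e_k\pm e_l,e_k\pm e_l)\}$, and your justification rests on the claim that after substituting $b_i(e_k,e_l)+b_i(e_l,e_k)=b_i(e_k+e_l,e_k+e_l)-b_i(e_k,e_k)-b_i(e_l,e_l)$ and collecting, all coefficients lie in $P$. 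That is false: already for $x=e_1+\lambda e_2$ with $\lambda>_P1$ the collected coefficient of $b_i(e_1,e_1)$ is $x_1^2-x_1x_2=1-\lambda\notin P$. Worse, the inclusion itself fails for general bilinear maps. Take $m\ge 2$ and define, in the fixed coordinates, $b(x,y):=x_1y_1\,e_1+x_1y_2\,e_2$. Then $b(e_1+\lambda e_2,e_1+\lambda e_2)=e_1+\lambda e_2\in C_b$ for every $\lambda\in P$, while every element of your $S$ is a $P$-multiple of $e_1$, $e_1+e_2$ or $e_1-e_2$, so the $P$-combinations of $S$ are exactly the elements $c_1e_1+c_2e_2$ with $c_1-c_2\in P$ and $c_1+c_2\in P$; for $\lambda>_P1$ the element $e_1+\lambda e_2$ is not among them. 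In fact for this $b$ one computes $C_b=\{c_1e_1+c_2e_2\mid c_1\in P\sm\{0\}\}\cup\{0\}$, which is not closed in $\CT_P$, hence by the corollary to Farkas' Lemma (finitely generated cones over $P$ are closed) it is not generated by any finite set whatsoever; so no refinement of your bookkeeping, using only bilinearity and generators chosen from $C_{\bar b}$ (the only ones available to you, since, as you correctly note, $-b_i(z,z)$ and the mixed values $b_i(e_k,e_l)$ need not lie in $C_{\bar b}$), can close the gap.

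For comparison, the paper's proof takes as generators the finitely many signed mixed values $\pm b_i(e_k,e_l)$: writing each coordinate $x_k=\ve_k a_k$ with $a_k\in P$ and $\ve_k\in\{-1,1\}$, every monomial $x_kx_l\,b_i(e_k,e_l)$ is a $P$-multiple of $\pm b_i(e_k,e_l)$, so the containment of $C_{\bar b}$ in that finitely generated cone is immediate; the delicate direction there is the reverse one, i.e.\ precisely the issue you identified (these generators need not lie in $C_{\bar b}$). Your proposal, by insisting on generators of the form $b_i(z,z)$, trades the trivial containment for the hard one and then asserts it without proof; as written it does not establish the lemma.
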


\begin{proof}
  It suffices to prove the statement for $t=1$, so for a single bilinear map $b$. Let
  $\{e_1,\ldots, e_m\}$ be a basis of $F^m$. Every element of $F^m$ can be
  written as $\sum_{i=1}^m \ve_i a_i e_i$ with $\ve_i \in \{-1,1\}$
  and $a_i \in P$. It follows that the convex cone generated by the elements $b(x,x)$
  for $x \in F^m$ is
  \begin{align*}
    \Bigl\{\sum_{k=1}^q u_k b\bigl(\sum_{i=1}^m & \ve_ia_ie_i,   \sum_{j=1}^m
    \delta_jc_je_j\bigr) \,\Big|\, q \in \N, u_k,a_i,c_j \in P, \ve_i, \delta_j \in
     \{-1,1\}\Bigr\} \\
    &= \Bigl\{\sum_{k=1}^q \sum_{i,j = 1}^{m} \ve_i\delta_j u_k a_ic_j
    b(e_i,e_j) \,\Big|\, q \in \N, u_k,a_i,c_j \in P, \ve_i, \delta_j \in
    \{-1,1\}\Bigr\} \\
    &= \Bigl\{\sum_{r=1}^s v_r b_r  \,\Big|\, s \in \N, v_r \in P, b_r \in \{\pm b(e_i,e_j)
   \mid i,j=1,\ldots,n\}\Bigr\},
  \end{align*}
  which is finitely generated.
\end{proof}

\begin{lemma}\label{closed-convex}
  Let $a_1, \ldots, a_s \in \Sym(A,\s)$. Then $\CC_P(a_1,\ldots, a_s)$ is a finitely generated
  convex cone over $P$.
\end{lemma}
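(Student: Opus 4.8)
The plan is to reduce Lemma~\ref{closed-convex} to the already-established Lemma~\ref{bilin-closed}. Recall from Definition~\ref{extension} that
\[
\CC_P(a_1,\ldots,a_s) = \Bigl\{\sum_{i=1}^k u_i\,\s(x_i)s_ix_i \,\Big|\, k\in\N,\ u_i\in P,\ x_i\in A,\ s_i\in\{a_1,\ldots,a_s\}\Bigr\},
\]
so it is precisely the convex cone over $P$ generated by the elements $\s(x)a_jx$ for $j=1,\ldots,s$ and $x\in A$. The point is that for each fixed $j$ the map
\[
b_j\colon A\times A\to A,\qquad b_j(x,y):=\s(x)a_jy
\]
is $F$-bilinear, since $\s$ is $F$-linear and multiplication in $A$ is $F$-bilinear (and $a_j\in\Sym(A,\s)$ is fixed). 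Setting $\bar b=(b_1,\ldots,b_s)$, I would observe that $C_{\bar b}$, as defined just before Lemma~\ref{bilin-closed}, is by its very description the convex cone over $P$ generated by all $b_j(x,x)=\s(x)a_jx$, which is exactly $\CC_P(a_1,\ldots,a_s)$. Hence $\CC_P(a_1,\ldots,a_s)=C_{\bar b}$, and Lemma~\ref{bilin-closed} immediately gives that this is a finitely generated convex cone over $P$.

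One small point worth addressing explicitly: a priori $C_{\bar b}$ lives in $A\cong F^m$, whereas $\CC_P(a_1,\ldots,a_s)$ is meant to be a convex cone inside $\Sym(A,\s)\cong F^n$ for some $n\le m$. But since each $a_j$ is symmetric, $\s(\s(x)a_jx)=\s(x)\s(a_j)x=\s(x)a_jx$, so every generator lies in $\Sym(A,\s)$; being a sum of $P$-multiples of such elements, every element of $C_{\bar b}$ lies in $\Sym(A,\s)$ as well. Thus the identification $\CC_P(a_1,\ldots,a_s)=C_{\bar b}\subseteq\Sym(A,\s)$ is legitimate, and a finite generating set of $C_{\bar b}$ (which consists of symmetric elements by the same computation, or can be replaced by their symmetrizations without changing the cone) serves as a finite generating set inside $\Sym(A,\s)$.

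There is essentially no obstacle here: the lemma is a direct translation of the definition of $\CC_P$ into the language of $F$-bilinear maps, followed by an application of Lemma~\ref{bilin-closed}. The only thing requiring a line of verification is the $F$-bilinearity of each $b_j$ together with the fact that the generators are symmetric, both of which are immediate from $a_j\in\Sym(A,\s)$ and the $F$-linearity of $\s$. I would write the proof in three or four sentences: define the $b_j$, note $F$-bilinearity, identify $\CC_P(a_1,\ldots,a_s)$ with $C_{\bar b}$, and invoke Lemma~\ref{bilin-closed}.
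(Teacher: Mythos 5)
Your proof is correct and is essentially identical to the paper's: the paper also defines $b_i(x,y)=\s(x)a_iy$, notes $F$-bilinearity, identifies $\CC_P(a_1,\ldots,a_s)$ with $C_{\bar b}$, and applies Lemma~\ref{bilin-closed}. Your extra remark about the cone lying in $\Sym(A,\s)$ is harmless but unnecessary, since the paper simply regards it as a cone in $A\cong F^m$.
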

\begin{proof}
  We have
  \[\CC_P(a_1,\ldots,a_s) = \Bigl\{\sum_{i=1}^k u_i \s(x_i)c_ix_i \,\,\Big|\,\, k \in \N,\
  u_i \in P,\ x_i \in A,\ c_i \in \{a_1,\ldots,a_s\}\Bigr\}.\]
  Define $b_i : A \x A \rightarrow A$ by $b_i(x,y) = \s(x)a_iy$. The map $b_i$ is
  $F$-bilinear and
  \begin{align*}
    \CC_P(a_1,\ldots,a_s) &= \Bigl\{\sum_{j=1}^r \sum_{i=1}^s u_{i,j}
      b_i(x_{i,j},x_{i,j}) \,\,\Big|\,\, r \in \N,\ u_{i,j} \in P,\ x_{i,j} \in A\Bigr\} \\
      &= C_{\bar b}.
  \end{align*}
  So by Lemma~\ref{bilin-closed}, the cone $\CC_P(a_1,\ldots,a_s) = C_{\bar
  b}$ is finitely generated.
\end{proof}

\begin{lemma}\label{closed-sigma}
  Let $(L,Q)$ be an ordered field extension of $(F,P)$. Let $\CF$ be a set
  of $F$-linear forms on $A$ such that $\bigcap_{f \in \CF} f^{-1}(P)$ is closed
  under $x \mapsto \s(y) x y$ for every $y \in A$. Let $a \in \bigcap_{f \in
  \CF} f^{-1}(P)$. Then
  \[(\s \ox \id)(z)\cdot (a \ox 1)\cdot z \in \bigcap_{f \in \CF} (f \ox \id)^{-1}(Q)\]
  for every $z \in A \ox_F L$.
\end{lemma}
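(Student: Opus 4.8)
The plan is to reduce the statement to a finite, explicit claim by extending scalars and then expressing everything in coordinates. Write $L \otimes_F A$ with the $L$-linear extensions $\s \otimes \id$ and $f \otimes \id$ for $f \in \CF$. Fix a basis $e_1,\ldots,e_m$ of $A$ over $F$, so that an arbitrary $z \in A \otimes_F L$ can be written $z = \sum_{k=1}^m e_k \otimes \lambda_k$ with $\lambda_k \in L$. Then, since $(\s \otimes \id)(z)(a \otimes 1)z$ is $L$-bilinear in the pair $(z,z)$, we can expand
\[
(\s \otimes \id)(z)\cdot(a \otimes 1)\cdot z = \sum_{k,l=1}^m \bigl(\s(e_k)\,a\,e_l\bigr)\otimes \lambda_k\lambda_l.
\]
Applying $f \otimes \id$ (which is $L$-linear) gives an element of $L$ that is a quadratic form in $\lambda_1,\ldots,\lambda_m$ with coefficients $f(\s(e_k)\,a\,e_l) \in F$. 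So the statement to prove is: this quadratic form over $F$, evaluated at any point of $L^m$, lands in $Q$. Equivalently, the matrix $M_f := \bigl(f(\s(e_k)\,a\,e_l)\bigr)_{k,l}$, viewed as a symmetric bilinear form over $F$ after symmetrizing, is positive semidefinite with respect to $P$.

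The key observation is that by hypothesis $a \in \bigcap_{f\in\CF} f^{-1}(P)$ and this intersection is closed under $x \mapsto \s(y)xy$ for all $y \in A$; hence for every $x \in A$ and every $f \in \CF$ we have $f(\s(x)\,a\,x) \ge_P 0$. Taking $x = \sum_k c_k e_k$ with $c_k \in F$ shows that the quadratic form $q_f(c_1,\ldots,c_m) := \sum_{k,l} f(\s(e_k)\,a\,e_l)\,c_kc_l$ satisfies $q_f(c) \ge_P 0$ for all $c \in F^m$, i.e. $q_f$ is positive semidefinite over $(F,P)$. Now I invoke the standard fact that a quadratic form over $F$ which is positive semidefinite at the ordering $P$ is diagonalizable over $F$ as $\la d_1,\ldots,d_r,0,\ldots,0\ra$ with all $d_i \in P$; this diagonalization is achieved by a base change $T \in \GL_m(F)$ which remains valid over any extension field $L$. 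Consequently $q_f$ stays positive semidefinite over $(L,Q)$: writing $\mu = T^{-1}\lambda$ we get $q_f(\lambda) = \sum_i d_i \mu_i^2 \in Q$ because each $d_i \in P \subseteq Q$ and $Q$ is an ordering on $L$. Since this holds for every $f \in \CF$, we conclude $(\s\otimes\id)(z)\cdot(a\otimes 1)\cdot z \in \bigcap_{f\in\CF}(f\otimes\id)^{-1}(Q)$, as desired.

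The main obstacle is the passage from "$q_f$ takes values in $P$ at all $F$-points" to "$q_f$ takes values in $Q$ at all $L$-points"; this is exactly where one must be careful that positive semidefiniteness is witnessed by an algebraic identity (a congruence over $F$) rather than by a mere pointwise condition over $F$, since $F$-points need not be dense in $L^m$. The resolution is the diagonalization fact above: over a field of characteristic $\ne 2$, a form that is psd at $P$ becomes, after an $F$-rational change of variables, a nonnegative combination of squares with coefficients in $P$, and such an identity transfers verbatim to $L$. One should also note the harmless symmetrization step: replacing $M_f$ by $\tfrac12(M_f + M_f^t)$ does not change the associated quadratic form, so the diagonalization argument applies even though $M_f$ need not be symmetric a priori (though in fact $\s$-symmetry of $a$ together with $f$ being a linear form and the structure of $\s$ make it essentially symmetric). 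Everything else is bookkeeping with the tensor product and the $L$-linearity of the extended maps.
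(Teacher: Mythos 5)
Your proof is correct and follows essentially the same route as the paper: expand $z$ in an $F$-basis of $A$, observe that $(f\ox\id)$ of the element in question is the quadratic form $x\mapsto f(\s(x)ax)$ evaluated at an $L$-point, diagonalize that form over $F$ (the paper picks an orthogonal basis of $A$ directly, you symmetrize the Gram matrix — same thing), and note the diagonal entries lie in $P\subseteq Q$ by the closure hypothesis. The "standard fact" you invoke is exactly this: any $F$-diagonalization has entries that are values of $q_f$ at $F$-points, hence in $P$, so no extra ingredient is needed.
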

\begin{proof}
  Let $z = \sum_{i=1}^k x_i \ox \alpha_i \in A \ox_F L$ and $f\in \CF$. Then
  \begin{equation}\label{lin-f}
    (f \ox \id)[(\s \ox \id)(z)\cdot (a \ox 1) \cdot z] = \sum_{i,j=1}^k f(\s(x_i)ax_j)
    \alpha_i\alpha_j.
  \end{equation}
  The map $q_f : A \rightarrow F$, $x \mapsto f(\s(x)ax)$ is a quadratic form over
  $F$, so there is an orthogonal basis $\{e_1,\ldots,e_{m}\}$ of $A$ for
  $q_f$. Therefore, writing $z = \sum_{i=1}^{m} (e_i \ox 1) \beta_i$  
   with
  $\beta_i \in L$, we obtain in \eqref{lin-f}:
  \begin{align*}
    (f \ox \id)[(\s \ox \id)(z)\cdot (a \ox 1) \cdot z] &= \sum_{i,j=1}^m f(\s(e_i)ae_j)
      \beta_i \beta_j \\
      &= \sum_{i=1}^{m} f(\s(e_i)ae_i) \beta_i^2.
  \end{align*}
  Since $f(\s(e_i)ae_i) \in P$ by choice of $a$, we obtain that $(f \ox \id)[(\s
  \ox \id)(z)(a \ox 1) z] \in Q$, proving the result.
\end{proof}

\begin{prop}\label{extension2}
  Let $(L,Q)$ be an ordered field extension of $(F,P)$ and let $\CP$ be a prepositive cone on
  $(A,\s)$ over $P$. Then $\CP \ox 1 := \{a \ox 1 \mid a \in \CP\}$ is
  contained in a prepositive cone on $(A \ox_F L, \s \ox \id)$ over $Q$.
\end{prop}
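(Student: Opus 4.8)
The plan is to show that the scalar extension $\CP \otimes 1$ sits inside a \emph{proper} subset of $\Sym(A\otimes_F L,\s\otimes\id)$ closed under (P2), (P3) and with base ordering $Q$; maximality-based existence of a prepositive cone containing it then follows by Zorn's lemma (once we know the candidate is proper, since being proper is preserved when we enlarge only as far as staying proper, and the union of a chain of proper cones over $Q$ is again one). Concretely, I would first reduce to the case where $\CP$ is \emph{finitely generated}: for any finite tuple $\bar a = (a_1,\dots,a_s)$ of elements of $\CP$, the set $\CC_P(a_1,\dots,a_s) \subseteq \CP$ is a finitely generated convex cone over $P$ by Lemma~\ref{closed-convex}, and $\CP = \bigcup_{\bar a} \CC_P(\bar a)$. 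Since $\CP$ is proper, each $\CC_P(\bar a)$ is pointed, hence by Lemma~\ref{fd-p} its dual cone $\CC_P(\bar a)^*$ (taken inside $\Sym(A,\s)$, or all of $A\cong F^m$) is full-dimensional, and conversely — the point being that properness of $\CC_P(\bar a)$ is detected by the existence of enough separating linear forms.

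The heart of the argument is then: for a fixed finite tuple $\bar a$, let $\CF$ be the set of $F$-linear forms $f$ on $A$ with $f|_{\CC_P(\bar a)} \ge_P 0$, i.e.\ $\CF = \CC_P(\bar a)^*$ viewed as linear forms. Because $\CC_P(\bar a)$ is finitely generated and pointed, Farkas' Lemma / Corollary~\ref{double-dual} gives $(\CC_P(\bar a)^*)^* = \CC_P(\bar a)$, so $\bigcap_{f\in\CF} f^{-1}(P) = \CC_P(\bar a)$, which is in particular closed under $x\mapsto \s(y)xy$ for all $y\in A$ (it is a $\CC_P$-type cone). Now Lemma~\ref{closed-sigma} applies: for each $a_i$ and each $z \in A\otimes_F L$, the element $(\s\otimes\id)(z)(a_i\otimes1)z$ lies in $\bigcap_{f\in\CF}(f\otimes\id)^{-1}(Q)$. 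Summing and scaling by elements of $Q$ shows that the convex cone over $Q$ generated by all $\s(z)(a_i\otimes1)z$ — call it $\CC_Q(\bar a \otimes 1)$ — is contained in $\bigcap_{f\in\CF}(f\otimes\id)^{-1}(Q)$, and the latter is proper precisely because $\CF$ contains a separating family: if some nonzero $b$ and $-b$ both lay in it, every $f\in\CF$ would vanish on $b$, contradicting that $\CC_P(\bar a)$ is full-dimensional in its span in a way that its dual separates points (this is where pointedness of $\CC_P(\bar a)$ over $P$ translates, via Lemma~\ref{fd-p} applied to $\CC_P(\bar a)^*$, into a genuine separation statement after base change, using that an $F$-linear form extended to $L$ still detects nonvanishing).

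Having shown each $\CC_Q(\bar a\otimes 1)$ is proper, I would assemble these: the union $\CP' := \bigcup_{\bar a} \CC_Q(\bar a\otimes 1)$ over all finite tuples $\bar a$ from $\CP$ contains $\CP\otimes 1$, satisfies (P1), (P2), (P3) by construction, and has $(\CP')_F \supseteq P$; it is a directed union (given two tuples, take their concatenation), so to see $\CP'$ is proper it suffices that each member is proper and that properness is a ``finite'' condition — if $b, -b \in \CP'$ with $b\ne 0$ then both lie in a common $\CC_Q(\bar a\otimes 1)$, contradiction. Finally (P4): $(\CP')_F$ contains $P$ and is closed under addition and multiplication by squares; one checks it is all of $Q$ (or extends to $Q$ after a further enlargement), and since $\CP'$ is proper, $(\CP')_F\cap -(\CP')_F = \{0\}$, forcing $(\CP')_F$ to be an ordering, necessarily $Q$. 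Then $\CP'$ — or any maximal proper cone containing it obtained by Zorn, which automatically has base ordering $Q$ by Lemma~\ref{same-base-ordering} — is the desired prepositive cone. The main obstacle I anticipate is the properness of $\CC_Q(\bar a\otimes1)$: one must carefully track that the $F$-rational separating linear forms from Farkas' Lemma remain separating after tensoring with $L$, i.e.\ that $\bigcap_{f\in\CF}(f\otimes\id)^{-1}(Q)$ is pointed, which is exactly the content needed and is supplied by combining Lemma~\ref{closed-sigma} with the full-dimensionality/pointedness duality of Lemma~\ref{fd-p} — the routine but delicate part is organizing the quantifiers over the finite tuples $\bar a$ so the argument is uniform.
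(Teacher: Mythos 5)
Your proposal is correct and follows essentially the same route as the paper's proof: reduce to the finitely generated cones $\CC_P(\bar a)$, combine Lemma~\ref{closed-convex}, Corollary~\ref{double-dual} and Lemma~\ref{fd-p} with Lemma~\ref{closed-sigma} to see that $\CC_Q(\bar a \ox 1)$ lies in the pointed dual cone $(C^*_Q)^*$, and then take the directed union, which is exactly the paper's $\CC_Q(\CP\ox 1)$. The only cosmetic remarks are that the final Zorn/maximality step is unnecessary (containment in a prepositive cone is all that is asked, and $\CC_Q(\CP\ox1)$ itself works once proper), and the base-ordering set should be taken in $L$, i.e.\ $\{u\in L\mid u\CP'\subseteq\CP'\}=Q$, rather than written as $(\CP')_F$.
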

\begin{proof}
  It suffices to show that $\CC_Q(\CP \ox 1)$ is a prepositive cone on $(A \ox_F L,
  \s \ox \id)$ over $Q$, i.e. that it is proper. This is equivalent to showing that it is pointed as a convex cone, and for this it suffices to
  show that $\CC_Q(\{a_1 \ox 1, \ldots, a_r \ox 1\})$ is pointed, for every $a_1,
  \ldots, a_r \in \CP$ and $r\in \N$.

  Since $\CP$ is a prepositive cone over $P$, we know that $C:=\CC_P(\{a_1,\ldots, a_r\})$ is
  pointed, therefore by Corollary~\ref{double-dual} and Lemma~\ref{fd-p}, $C^*$ is full-dimensional. It follows
  that $C^*_Q$, the convex cone generated  by $\{b \ox 1 \mid b \in C^*\}$  over $Q$ is
  full-dimensional in $A \ox_F L$, and thus,  by Lemma~\ref{fd-p}, that its
  dual $(C^*_Q)^*$  is pointed.

  Claim: \begin{enumerate}[(1)]
    \item $(C^*_Q)^*$ contains $a_i \ox 1$ for $i=1,\ldots,r$;
    \item $(C^*_Q)^*$ contains $(\s \ox \id)(z)\cdot (a_i \ox 1) \cdot z$ for $i=1,\ldots,r$ and
      $z \in A \ox_F L$;
    \item $(C^*_Q)^*$ contains $\CC_Q(\{a_1 \ox 1, \ldots, a_r \ox 1\})$, the prepositive
      cone on $(A \ox_F L, \s \ox \id)$ over $Q$ generated by $a_1 \ox 1,
      \ldots, a_r \ox 1$.
  \end{enumerate}
  Proof of the claim: We first observe that
  \[(C^*_Q)^* = \bigcap_{b \in C^*} \{w \in A \ox_F L \mid \qf{b \ox 1 ; w} \in Q\}.\]
 
 (1) By definition of $C^*$ we have, for every $b \in C^*$, $\qf{b ; a_i} \in
      P$, i.e. $\alpha := \sum_{j=1}^{m} b_j \cdot (a_i)_j \in P$.  Therefore $\alpha
      \ox 1 = \sum_{j=1}^{m} (b_j \ox 1) ((a_i)_j \ox 1) \in Q$, i.e. $\qf{b
      \ox 1 ; a_i \ox 1} \in Q$, proving that $a_i \ox 1 \in (C^*_Q)^*$.
 
 (2)  By Lemma~\ref{closed-convex} we know that $C$ is a finitely generated convex cone
      over $P$, so by Corollary~\ref{double-dual}, $(C^{*})^{*} = C$, i.e.
      \[C = \bigcap_{b \in C^*} (f_b)^{-1}(P),\]
      where $f_b(x) := \qf{b;x}$. By Lemma~\ref{closed-sigma} it follows that
      \[\bigcap_{b \in C^*} (f_b \ox 1)^{-1}(Q)\]
      contains $(\s \ox \id)(z)\cdot (a_i \ox 1)\cdot z$ for every $i=1,\ldots,r$ and $z \in A
      \ox_F L$. The result follows since a direct verification shows that $(f_b
      \ox \id)(z) = \qf{b \ox 1; z}$ for every $z \in A \ox_F L$.

(3) By construction $(C^*_Q)^*$ is a convex cone over $Q$, so is closed under
      sum and multiplication by elements of $Q$. Using the second item in the
      claim, it follows at once that it contains $\CC_Q(\{a_1 \ox 1, \ldots,
      a_r \ox 1\})$.
     This finishes the proof of the Claim.

 It follows from (3) that $\CC_Q(\{a_1 \ox 1, \ldots, a_r \ox 1\})$  is pointed,
  which proves the result.
\end{proof}

\section{Existence of positive involutions}

The notion of positive involution  seems to go back to Albert, see for instance \cite{Albert55}, and Weil  \cite{Weil} 
and plays a central role in the paper \cite{P-S} by Procesi and Schacher.

Denote the reduced trace of $A$ by $\Trd_A$ and let $P\in X_F$.
Recall from \cite[Definition~1.1]{P-S}
that the involution $\s$ is called \emph{positive at $P$} whenever the form 
$A\x A\to K,\ (x,y)\mapsto \Trd_A(\s(x)y)$ is positive semidefinite at $P$ (hence positive definite at $P$ since it is 
nonsingular). For more details we refer to \cite[Section~4]{A-U-PS}.

In this section we will show that for any given $P \in \wt X_F$ there exists an involution on $A$ that is positive at $P$,
cf. Theorem~\ref{main_pos}.

Let $P\in X_F$.  If $\s$ is an involution of the first kind, $Z(A)=F$ and 
$A\ox_F F_P$ is isomorphic to a matrix algebra over $F_P$ or $(-1,-1)_{F_P}$. If $\s$ is of the second kind, $Z(A)$ 
is a quadratic extension of $F$  and $A\ox_F F_P$ is isomorphic to a matrix algebra over $F_P(\sqrt{-1})$, $F_P \x F_P$ 
or $(-1,-1)_{F_P} \x     (-1,-1)_{F_P}$, where the last two cases occur whenever $Z(A)\ox_F F_P\cong F_P\x F_P$, in which case $A\ox_F F_P$ is semisimple. Hence there exists a unique integer $n_P$ such that 
\begin{equation}\label{def:np}
A\ox_F F_P\cong M_{n_P}(D_P),
\end{equation}
where 
\begin{equation}\label{def:dp}
D_P \in \{F_P,  (-1,-1)_{F_P}, F_P(\sqrt{-1}),   F_P\x F_P, (-1,-1)_{F_P} \x     (-1,-1)_{F_P}\}.
\end{equation}
In light of this discussion we
define the following subsets of $X_F$:
\begin{equation}\label{def:subsets}
\begin{aligned}
    X_{\textup{rcf}} & := \{P \in X_F \mid D_P = F_P\} \\
    X_{\textup{quat}} &:= \{P \in X_F \mid D_P = (-1,-1)_{F_P}\} \\
    X_{\textup{acf}} &:= \{P \in X_F \mid D_P =  F_P(\sqrt{-1})\} \\
    X_{\textup{d-rcf}} &:= \{P \in X_F \mid D_P = F_P \x F_P\} \\
    X_{\textup{d-quat}}& := \{P \in X_F \mid D_P = (-1,-1)_{F_P} \x
    (-1,-1)_{F_P}\}.
  \end{aligned}
\end{equation}  
Note that the value of $n_P$ is constant on each of these sets, since it only depends on $\dim_{F_P} D_P$ by
\eqref{def:np}.

\begin{lemma}\label{type-of-splitting}
 Each of the five sets in \eqref{def:subsets} is a clopen subset of $X_F$.
\end{lemma}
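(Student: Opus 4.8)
The strategy is to realize each of the five sets as a Boolean combination of clopen subsets of $X_F$ coming from signatures of suitable quadratic forms over $F$, since sets of the form $\{P \in X_F \mid \sign_P q = d\}$ are clopen in the Harrison topology on $X_F$ for any $F$-quadratic form $q$ and integer $d$. The key observation is that all the data distinguishing the five cases --- whether $Z(A) \ox_F F_P$ is a field or $F_P \times F_P$, and whether the division algebra $D_P$ is split or is the quaternion algebra $(-1,-1)_{F_P}$ --- is detected by invariants that vary continuously (indeed locally constantly) with $P$.

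First I would split according to the kind of $\s$. If $\s$ is of the second kind, write $Z(A) = F(\sqrt{d})$ for some $d \in F^\times$; then $Z(A) \ox_F F_P \cong F_P \times F_P$ precisely when $d >_P 0$, and this condition $\{P \mid d >_P 0\}$ is clopen (it is a Harrison subbasic set). On the complement $Z(A)\ox_F F_P$ is a field, namely $F_P(\sqrt{-1})$, so on that clopen piece $P \in X_{\textup{acf}}$; there is nothing further to check there since a central simple algebra over $F_P(\sqrt{-1})$ (an algebraically closed field) is automatically split. On the clopen piece where $d >_P 0$, the algebra $A \ox_F F_P$ decomposes as a product of two central simple $F_P$-algebras interchanged by the involution, each Brauer-equivalent to $A \ox_F F_P$ viewed appropriately; each factor is either split or $(-1,-1)_{F_P}$, and these two subcases are distinguished by whether the underlying division algebra is a quaternion algebra. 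If $\s$ is of the first kind, $Z(A) = F$ and we only need to separate $X_{\textup{rcf}}$ from $X_{\textup{quat}}$.

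The heart of the matter is thus: show that $\{P \in X_F \mid A \ox_F F_P \text{ is split}\}$ (equivalently $D_P$ is not a quaternion division algebra) is clopen. For this I would use that $A$, up to Brauer equivalence, is controlled by a finite amount of data over $F$; concretely, one can pick a finite-dimensional $F$-form and use that $A \ox_F F_P$ is split if and only if a certain quadratic form (e.g. a norm form, or the trace form restricted appropriately, or more robustly: $\sign_P$ of a reference form for $(A,\s)$ equals its maximal possible value $\pm \rk$) behaves in the split way. Cleanest is to invoke the signature machinery already set up in the paper: by the results of \cite{A-U-Kneb}, $P \mapsto \sign_P^\eta$ of a fixed hermitian form is locally constant on $X_F$, and whether $D_P$ is split versus quaternion is read off from $n_P = \sqrt{\dim_{F_P}(A\ox_F F_P)/\dim_{F_P} D_P}$, which in turn is determined by the value of an appropriate signature (the signature of the reference form $\eta$ evaluated at $P$ determines $n_P$ up to the sign ambiguity, and $\dim_{F_P} D_P \in \{1,2,4\}$ is then forced). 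Since the relevant signature function is continuous (locally constant) into $\Z$, its level sets are clopen, and intersecting/complementing these finitely many clopen sets with the clopen set $\{d >_P 0\}$ (in the second-kind case) yields each of the five sets as clopen.

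\textbf{Main obstacle.} The delicate point is phrasing ``$D_P$ is split vs.\ quaternion'' as a level set of an honest $F$-defined continuous function on $X_F$, rather than something defined case-by-case after passing to $F_P$. The natural fix is to use a single fixed quadratic (or hermitian) form over $F$ --- for instance the trace form $T_{(A,\s)}$ or a reference form $\eta$ --- whose $P$-signature is known from \cite{A-U-Kneb}, \cite{A-U-prime} to be locally constant in $P$, and to record the elementary fact that the absolute value of this signature pins down $\dim_{F_P} D_P$ (hence which of the five types $P$ has). Once that dictionary between the signature value and the type is established, the clopenness is immediate from continuity of signatures plus clopenness of the Harrison set $\{P \mid d >_P 0\}$, and a short case analysis over the at most five possible values finishes the proof.
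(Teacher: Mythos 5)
There is a genuine gap, and it sits exactly at what you call the heart of the matter. Your proposed dictionary --- that the value of $\sign_P^\eta$ of a fixed (reference or other) hermitian form at $P$ pins down $\dim_{F_P}D_P$, hence the type of $P$ --- does not exist. First, every signature of a hermitian form over $(A,\s)$ vanishes identically on $\Nil[A,\s]$, and in the unitary case the two sets $X_{\textup{d-rcf}}$ and $X_{\textup{d-quat}}$ are \emph{both} contained in $\Nil[A,\s]$ (these are precisely the orderings that extend to $Z(A)$, cf.\ Proposition~\ref{Z}); so no function built from signatures of hermitian forms over $(A,\s)$ can separate them, which is exactly the separation your argument still needs after splitting off the Harrison set $\{P\mid d>_P0\}$. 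Second, even at non-nil orderings the value of a fixed form's signature does not determine $n_P$: a fixed form need not have maximal signature at $P$, and it can well have the same value (e.g.\ $0$) at orderings of different types. What does determine $n_P$ is the maximum $m_P(A,\s)$, but the identification $m_P(A,\s)=n_P$ is Proposition~\ref{m_P=n_P}, proved later with heavier machinery, and the local constancy of $P\mapsto m_P(A,\s)$ is Lemma~\ref{m_P-cont}, whose proof \emph{uses} the present lemma --- so that route is circular. Your alternative suggestion of a trace form over $F$ (e.g.\ $x\mapsto\Trd_A(x^2)$) could in principle detect split versus quaternion even at nil orderings, since it is a quadratic form over $F$ rather than a hermitian form over $(A,\s)$, but then you would have to prove the classical signature formulas relating $\sign_P$ of that form to the local index of $A\ox_F F_P$; you do not supply this, and it is not part of the machinery set up in the paper.

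The paper's proof avoids signatures altogether and is much more elementary: since the five sets partition $X_F$, it suffices to show each is open. Given $P$ in one of them, the isomorphism $A\ox_F F_P\cong M_{n_P}(D_P)$ already holds over some finite subextension $L\subseteq F_P$ of $F$ (finitely many structure constants), and the set $U$ of orderings $Q\in X_F$ that extend to $L$ is clopen because $Q$ extends to $L$ if and only if $\sign_Q(\Tr_{L/F}^*\qf{1})>0$ (Knebusch trace formula). For every $Q\in U$ one has $L\subseteq F_Q$, hence $A\ox_F F_Q\cong (A\ox_F L)\ox_L F_Q$ has the same presentation, so $U$ is a neighbourhood of $P$ inside the same type-set. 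If you want to salvage your approach, the cleanest fix is to replace the signature dictionary by this descent-to-a-finite-extension argument (or, at minimum, to prove the trace-form formulas you allude to); as written, the key step fails.
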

\begin{proof}
  Since $X_F$ is the disjoint union of these five sets, it suffices to show that
  they are all open. We only do this for $X_{\textup{rcf}}$, the other cases are similar.
 Let $P \in X_{\textup{rcf}}$. Then
  there is a finite field extension $L$ of $F$ such that $L \subseteq F_P$ and
  $A \ox_F L  \cong M_{n_P}(L)$. Since an ordering $Q \in X_F$ extends to $L$ if and only
  if $\sign_Q ((\Tr_{L/F})_* \qf{1}) > 0$ (see \cite[Chapter~3, Theorem~4.4]{Sch}),
  the set $U := \{Q \in X_F \mid Q \text{ extends to } L\}$ is clopen in $X_F$
  and contains $P$.  Then for $Q \in U$ we have $L \subseteq F_Q$ and thus $A \ox_F
  F_Q \cong (A \ox_F L) \ox_L F_Q \cong M_{n_P}( L) \ox_L F_Q \cong M_{n_P}( F_Q)$, so $U \subseteq X_{\textup{rcf}}$.
\end{proof}

\begin{remark}\label{nil-and-non}
The algebra $D_P$ carries an involution $\vt_P$ of the same kind as $\s$, and $\s\ox\id_{F_P}$ is adjoint to an 
$\ve_P$-hermitian form over $(D_P,\vt_P)$ with $\ve_P \in \{-1,1\}$. Note that if $\s$ and $\vt_P$ are of the first kind, 
they have the same type if 
$\ve_P=1$.
Recall from \cite[Section~3.2]{A-U-Kneb}
that  $P\in \Nil[A,\s]$ if we can choose $\ve_P$ and $\vt_P$ such that $\ve_P=-1$ and 
\begin{equation*}
(D_P,\vt_P) \in \{(F_P, \id),  ((-1,-1)_{F_P}, \bbar),   (F_P\x F_P,\swap) , ((-1,-1)_{F_P} \x     (-1,-1)_{F_P}, \swap)\}
\end{equation*}
and that $P\in \wt X_F = X_F \sm \Nil[A,\s]$ if we can choose $\ve_P$ and $\vt_P$ such that $\ve_P=1$ and 
\begin{equation}\label{non-nilcases}
(D_P,\vt_P) \in \{ (F_P, \id),  ((-1,-1)_{F_P}, \bbar), (F_P(\sqrt{-1}), \bbar)\}.
\end{equation}
Here $\id$ denotes the (orthogonal) identity involution on $F_P$,
$\bbar$ denotes the (unitary) conjugation involution or the (symplectic) quaternion conjugation involution 
and $\swap$ denotes the (unitary) exchange involution.

Note that the list in  \cite[Section~3.2]{A-U-Kneb} is missing the double quaternion case, an omission corrected in
\cite[Section~2]{A-U-prime}.
\end{remark}

\subsection{Prepositive cones in the almost split case}\label{almost-split}

For convenience  we record the following trivial fact:
\begin{fact}  \label{subreal0}
Let $B = \diag(b_1, \ldots, b_n)$ be a diagonal matrix in $M_n(F)$.
\begin{enumerate}[(1)]
\item   Let $\pi
  \in S_n$ and denote the associated permutation matrix by $P_\pi$. Then $P_\pi B
  {P_\pi}^t = \diag(b_{\pi(1)}, \ldots, b_{\pi(n)})$.
  
\item    Let $i,j
  \in \{1, \ldots, n\}$. Then there is $E \in M_n(F)$ such that $E B E^t$ is the
  matrix with zeroes everywhere, except for $b_i$ at coordinates $(j,j)$.
\end{enumerate}
\end{fact}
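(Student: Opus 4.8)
The plan is to exhibit the required matrices explicitly and verify the claimed identities by a one-line computation, using that congruence by a matrix $M \in M_n(F)$ acts on a diagonal matrix $B = \sum_{i=1}^n b_i e_i e_i^t$ (where $e_1, \ldots, e_n$ denotes the standard basis of $F^n$, written as column vectors) by the formula $MBM^t = \sum_{i=1}^n b_i (Me_i)(Me_i)^t$, which is immediate from bilinearity.

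For part (1), I would fix the convention that the permutation matrix $P_\pi$ associated to $\pi$ is the one satisfying $P_\pi e_k = e_{\pi^{-1}(k)}$ for every $k$ (equivalently, $e_k^t P_\pi e_l = \delta_{k,\pi^{-1}(l)}$); with the opposite convention one merely replaces $\pi$ by $\pi^{-1}$ throughout, which is immaterial for the applications. Then $P_\pi B P_\pi^t = \sum_{i=1}^n b_i (P_\pi e_i)(P_\pi e_i)^t = \sum_{i=1}^n b_i e_{\pi^{-1}(i)} e_{\pi^{-1}(i)}^t = \sum_{j=1}^n b_{\pi(j)} e_j e_j^t = \diag(b_{\pi(1)}, \ldots, b_{\pi(n)})$, as claimed.

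For part (2), I would take $E := e_j e_i^t \in M_n(F)$, the matrix whose only nonzero entry is a $1$ in position $(j,i)$. Then $E B E^t = e_j e_i^t B e_i e_j^t = (e_i^t B e_i)\, e_j e_j^t = b_i\, e_j e_j^t$, which is exactly the matrix that vanishes everywhere except for the entry $b_i$ at coordinate $(j,j)$.

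I do not expect any genuine obstacle: both parts follow directly from the bilinearity of the congruence action together with the elementary behaviour of the standard basis vectors, which is precisely why the statement is recorded merely as a \emph{fact}. The only point meriting a word of care is the choice of convention for $P_\pi$ in part (1), and it is harmless.
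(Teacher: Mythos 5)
Your proof is correct: both computations check out (with the harmless permutation-convention caveat you note), and since the paper records this statement as a trivial Fact without any proof, your explicit verification via $MBM^t=\sum_i b_i(Me_i)(Me_i)^t$ and $E=e_je_i^t$ is exactly the intended, standard argument. Nothing further is needed.
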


\begin{lemma}\label{subreal2}
  Let $P \in X_F$ and let $S \in M_n(F)$ be a diagonal matrix with at least two nonzero entries of different sign
  with respect to $P$. Let $\bar{
  \varepsilon} \in \{-1,0,1\}^n$. Then there is a diagonal matrix $S' \in M_n(F)$ such
  that
  \begin{enumerate}[$(1)$]
    \item $S'$ is weakly represented by $\la S \ra_{t}$ in $(M_n(F),t)$;
    \item $\sign_P S'_{ii} = \varepsilon_i$ for $i=1, \ldots, n$.
  \end{enumerate}
\end{lemma}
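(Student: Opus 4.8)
The hermitian form $\la S\ra_t$ over $(M_n(F),t)$ corresponds, via the collapsing Morita equivalence $g$, to the quadratic form $\la b_1,\dots,b_n\ra$ over $F$, where $S=\diag(b_1,\dots,b_n)$. Weak representation by $\la S\ra_t$ in $(M_n(F),t)$ is generated by the operations in axioms (P2) and (P3), so a diagonal matrix $S'$ is weakly represented by $\la S\ra_t$ exactly when each diagonal entry of $S'$ is a sum of elements of the form $\tr(E S E^t)$-type expressions; by Fact~\ref{subreal0}(2), each single nonzero value $b_i$ (placed in any diagonal slot) is weakly represented, and hence so is $\diag(u_1,\dots,u_n)$ whenever each $u_k$ is a sum $\sum_j c_j^2 b_{i(j)}$ of scaled copies of the $b_i$. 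So the problem reduces to: given that among $b_1,\dots,b_n$ there are two entries of opposite sign at $P$, realize any prescribed sign pattern $\bar\varepsilon\in\{-1,0,1\}^n$ on the diagonal using such sums.

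\textbf{Key steps.} First I would fix indices $p,q$ with $b_p>_P0$ and $b_q<_P0$. For a slot $k$ with $\varepsilon_k=1$: take $S'_{kk}$ to be (a congruence-copy of) $b_p$ itself, which is weakly represented and has sign $+1$. For $\varepsilon_k=-1$: take $S'_{kk}=b_q$, of sign $-1$. For $\varepsilon_k=0$: here I need a weakly represented element that is zero; the form $\la S\ra_t$ is isotropic since it represents values of both signs (over $F$, $\la b_p,b_q\ra$ with $b_pb_q<_P0$ need not be isotropic over $F$ — that is only automatic over a real closure), so I cannot directly get $0$ as a nonzero combination. Instead, recall that weak representation is closed under sums and that $0$ is trivially weakly represented (the empty sum, or $0\cdot b_p$), so simply set $S'_{kk}=0$; property (2) then reads $\sign_P 0 = 0 = \varepsilon_k$, which holds with the convention $\sign_P 0=0$. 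Assembling these choices slot by slot, using Fact~\ref{subreal0}(1) to permute the realized values into the correct positions and Fact~\ref{subreal0}(2) to place each individual $b_p$ or $b_q$ into the required coordinate while zeroing out the rest, I obtain the desired diagonal matrix $S'$.

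\textbf{Main obstacle.} The one genuine subtlety is the $\varepsilon_k=0$ case and, more importantly, making sure the construction produces a \emph{single} diagonal matrix $S'$ weakly represented by $\la S\ra_t$ rather than separate matrices for each coordinate. This is handled by the fact that weak representation (equivalently, the prepositive-cone-style closure under (P2), (P3)) is closed under the sum operation on $\Sym(M_n(F),t)$: each basic building block $\diag(0,\dots,b_i,\dots,0)$ (Fact~\ref{subreal0}(2)) is weakly represented, and a sum of $n$ such blocks with the $b_i$ chosen as above (and $b_i$ replaced by $0$, i.e. an omitted summand, where $\varepsilon_k=0$) is again weakly represented and has exactly the prescribed sign vector. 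No deep input is needed beyond Fact~\ref{subreal0} and the hypothesis that $S$ has two diagonal entries of opposite sign at $P$; the bulk of the argument is bookkeeping of permutation and coordinate-transfer matrices.
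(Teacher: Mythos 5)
Your proof is correct and is essentially the paper's argument: place a positive entry $b_p$ or negative entry $b_q$ of $S$ into each required diagonal slot via Fact~\ref{subreal0}(2), leave the $\varepsilon_k=0$ slots untouched, and sum the resulting blocks $E_iSE_i^t$, which is weakly represented by $\qf{S}_t$ by closure under (P2)/(P3); the paper does exactly this after first permuting the nonzero $\varepsilon_i$ to the front with Fact~\ref{subreal0}(1). The parenthetical detour about isotropy of $\qf{S}_t$ is unnecessary (and a bit garbled), but you correctly discard it, since the zero slots are handled simply by omitting summands.
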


\begin{proof}
  Without loss of generality (by Fact~\ref{subreal0}(1)) we may reorder the elements in $\bar
  \varepsilon$ such that  its non-zero entries  are $\varepsilon_1, \ldots, \varepsilon_k$.
  For $i=1, \ldots, k$, by Fact~\ref{subreal0}(2) there are matrices $E_1, \ldots,
  E_k$ such that $S_i:=E_i S E_i^t$ has zeroes everywhere, except
  for an element of sign $\varepsilon_i$ at coordinates $(i,i)$. Now take $S' = S_1
  + \cdots + S_k$.
\end{proof}

The next result is a step towards the proof of Proposition~\ref{m_P=n_P} and
shows that our definition of prepositive cone
corresponds to positive semidefinite matrices (or negative semidefinite
matrices) in all  cases that are relevant.

\begin{prop}\label{split-positive}
  Let $P \in X_F$. Let $\CP$ be a prepositive cone  on $(M_\ell(D),
  \vt^t)$  over $P$, where $(D,\vt)$ is one of $(F, \id)$, $(F(\sqrt{-d}), \bbar)$,
  $((-a,-b)_F, \bbar)$  with $a,b,d \in P \setminus \{0\}$. Then there is
  $\varepsilon \in \{-1,1\}$ such that
  \begin{align*}
  \CP \subseteq \{C \in \Sym(M_\ell(D), \vt^t) \mid \exists G\in \GL_\ell(D)\text{ such that } \vt(G)^t  C  G \in \Diag(\ve P)\}.
 \end{align*}
\end{prop}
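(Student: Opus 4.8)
The plan is to push the problem down to the division algebra $(D,\vt)$ via the trace (``going down'') construction, exploiting that in each of the three listed cases $\vt$ restricts to the identity on $F$ and $\Sym(D,\vt)=F$: for $(F,\id)$ this is trivial, for $(F(\sqrt{-d}),\bbar)$ the $\vt$-symmetric elements are the fixed field of conjugation, and for $((-a,-b)_F,\bbar)$ they are the centre $F$. Consequently the ``norm'' $N\colon D\to F$, $N(x):=\vt(x)x$, equals $x^2$, $x_0^2+dx_1^2$, and the value of $\qf{1,a,b,ab}$, respectively; since $a,b,d\in P\setminus\{0\}$, in every case $N(x)\in P$ for all $x$, with $N(x)=0$ only when $x=0$. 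This positivity of $N$ — precisely where the definiteness hypothesis on $(D,\vt)$ enters — is the heart of the matter.

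\textbf{Key step.} I would then prove: \emph{any} prepositive cone $\CQ$ on $(D,\vt)$ over $P$ equals $P$ or $-P$ (a division-algebra version of the first assertion in Example~\ref{psd-nsd}). Indeed $\CQ\subseteq\Sym(D,\vt)=F$ and $\CQ\ne\{0\}$, so pick $s\in\CQ\cap F^\times$ and let $\varepsilon\in\{1,-1\}$ be such that $\varepsilon s\in P\setminus\{0\}$. Since $s\in\CQ$ and $\CQ_F=P$, axioms (P2)--(P4) give $\CC_P(\{s\})\subseteq\CQ$; but $\vt(x)sx=sN(x)$ and the values $\sum_iu_iN(x_i)$ ($u_i\in P$, $x_i\in D$) exhaust $P$, so $\CC_P(\{s\})=Ps$, and using $\varepsilon s\in P\setminus\{0\}$ one gets $\varepsilon P\subseteq Ps\subseteq\CQ$. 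If some $t\in\CQ$ lay outside $\varepsilon P$, then $t\ne0$ and $-t\in\varepsilon P\subseteq\CQ$, violating (P5); hence $\CQ=\varepsilon P$.

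\textbf{Conclusion.} Fix $C\in\CP$. As $C\in\Sym(M_\ell(D),\vt^t)$ it is the Gram matrix of a hermitian form over $(D,\vt)$, so there is $G\in\GL_\ell(D)$ with $\vt(G)^tCG=\diag(d_1,\dots,d_\ell)$ and $d_i\in\Sym(D,\vt)=F$; by (P3) this diagonal matrix lies in $\CP$. Since $\vt$ fixes $F$, for each $i$ Fact~\ref{subreal0}(2) supplies a matrix over $F\subseteq D$ whose congruence action sends $\diag(d_1,\dots,d_\ell)$ to $\diag(d_i,0,\dots,0)$, so by (P3) again $\diag(d_i,0,\dots,0)\in\CP$, whence $d_i=\tr\bigl(\diag(d_i,0,\dots,0)\bigr)\in\Tr_\ell(\CP)$. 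By Proposition~\ref{tr}(1), $\Tr_\ell(\CP)$ is a prepositive cone on $(D,\vt)$ over $P$, so by the key step $\Tr_\ell(\CP)=\varepsilon P$ for a sign $\varepsilon$ \emph{depending only on $\CP$}. Hence every $d_i$ lies in $\varepsilon P$, i.e. $\vt(G)^tCG\in\Diag(\varepsilon P)$, which is the desired inclusion. Apart from the key step, everything is routine manipulation with (P2)--(P4) and the diagonalisability of hermitian forms over a division algebra; the one thing to watch is that $\varepsilon$ is pinned down by $\CP$ (through $\Tr_\ell(\CP)$) \emph{before} $C$ is chosen, so the same $\varepsilon$ serves for all $C\in\CP$.
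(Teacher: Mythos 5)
Your proof is correct, but it takes a genuinely different route from the paper's. The paper argues entirely inside $(M_\ell(D),\vt^t)$: after diagonalising an element of $\CP$ (the diagonal entries lie in $F$ because $\Sym(D,\vt)=F$), it uses Lemma~\ref{subreal2} to show that two nonzero entries of opposite sign at $P$ --- whether occurring in one matrix of $\CP$ or spread over two --- would force both $\diag(1,0,\ldots,0)$ and $\diag(-1,0,\ldots,0)$ into $\CP$, contradicting (P5); the uniformity of the sign $\ve$ over all of $\CP$ comes out of the same clash argument. You instead classify the prepositive cones on the definite algebras $(D,\vt)$ themselves --- they are exactly $P$ and $-P$, since $\vt(x)sx=sN(x)$ with the norm $N$ taking values in $P$, so $\CC_P(\{s\})=Ps$ --- and then transport $\CP$ down to $(D,\vt)$ via $\Tr_\ell$ (Proposition~\ref{tr}), reading off a sign $\ve$ that is pinned down by $\Tr_\ell(\CP)=\ve P$ before any element $C$ is chosen. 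Both arguments rest on $\Sym(D,\vt)=F$ and on Fact~\ref{subreal0}(2)-type congruences, and there is no circularity since Proposition~\ref{tr} precedes Proposition~\ref{split-positive} in the paper; your version makes the definiteness hypothesis $a,b,d\in P$ do explicit work through the norm form and yields as a by-product a clean division-algebra statement extending the first assertion of Example~\ref{psd-nsd}, while the paper's proof is more self-contained at the matrix level and avoids invoking the going-down machinery.
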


\begin{proof}
  Let $C \in \CP$. Since $C \in \Sym(M_\ell(D), \vt^t)$, it is the matrix of
  some hermitian form over $(D,\vt)$, so there is $G \in \GL_\ell(D)$ such
  that $\vt(G)^t  C  G$ is diagonal, and since $\CP$ is a
  prepositive cone, this new matrix is also in $\CP$. Therefore, we can assume that
  $C$ is diagonal, and thus has diagonal coefficients in $F$.  Suppose for the
  sake of contradiction that $C$ has two  non-zero diagonal elements of
  different signs with respect to $P$. By Lemma~\ref{subreal2}, there are
  $u,v \in P\sm\{0\}$ such that the matrices $\diag(u,0, \ldots, 0)$ and
  $\diag(-v,0,\ldots, 0)$ belong to $\CP$. Since $\CP$ is closed under products
  by elements of $P$, it follows that the matrices $\diag(1,0,\ldots,0)$ and
  $\diag(-1,0,\ldots,0)$ are both in $\CP$, contradicting that $\CP$ is proper.

  Assume now that we have two matrices $B$ and $C$ in $\CP$ such that 
   $\vt(G)^t  B  G\in \Diag(P)$ and $\vt(H)^t  C  H\in \Diag(-P)$
   for some $G, H \in \GL_\ell(D)$.
  By the properties of $\CP$ and Fact~\ref{subreal0}(1), we may assume that
  $B$ and $C$ are diagonal with non-zero first diagonal element. As above, it
  follows that there are $u,v \in P\sm\{0\}$ such that the matrices $\diag(u,0,\ldots,
  0)$ and $\diag(-v,0,\ldots,0)$ both belong to $\CP$, and thus that
  $\diag(1,0,\ldots,0)$ and  $\diag(-1,0,\ldots,0)$ belong to $\CP$, contradiction.
\end{proof}

\subsection{Prepositive cones and positive involutions}

\begin{prop}\label{nn} Let $P\in X_F$. The following statements are equivalent:
\begin{enumerate}[$(1)$]
\item There is a  prepositive cone on $(A,\s)$ over $P$;
\item $P\in \wt X_F$.
\end{enumerate}
\end{prop}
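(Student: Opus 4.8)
The plan is to prove both implications by reducing to the division algebra $(D,\vt)$ via the correspondences already established, and then analysing the residual division algebra after scalar extension to $F_P$.

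For the direction $(1)\Rightarrow(2)$: suppose $\CP$ is a prepositive cone on $(A,\s)$ over $P$. By Corollary~\ref{cor:1} we may assume $\ve=1$, i.e. $\vt$ and $\s$ have the same type, so the Morita correspondence of Theorem~\ref{morco} (or equivalently Proposition~\ref{prop-scaling} followed by Proposition~\ref{correspondences}) transports $\CP$ to a prepositive cone on $(D,\vt)$ over $P$. Now extend scalars to $F_P$: by Proposition~\ref{extension2}, $(D\ox_F F_P, \vt\ox\id)$ carries a prepositive cone over the unique ordering of $F_P$, hence is formally real. By Proposition~\ref{not_torsion}, $W(D\ox_F F_P, \vt\ox\id)$ is not torsion. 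On the other hand, $D\ox_F F_P$ is Brauer-equivalent to $D_P$ (notation of \eqref{def:np}, \eqref{def:dp}), and by Morita theory $W(D\ox_F F_P, \vt\ox\id) \cong W_{\ve_P}(D_P,\vt_P)$ for a suitable involution and sign. If $P \notin \wt X_F$, i.e. $P \in \Nil[A,\s]$, then by Remark~\ref{nil-and-non} we can choose $\vt_P$ and $\ve_P=-1$ so that $(D_P,\vt_P)$ is one of $(F_P,\id)$, $((-1,-1)_{F_P},\bbar)$, $(F_P\x F_P,\swap)$ or $((-1,-1)_{F_P}\x(-1,-1)_{F_P},\swap)$; in each of these cases the Witt group of $(-1)$-hermitian forms is zero (for $(F_P,\id)$ this is the skew-symmetric bilinear case used in Corollary~\ref{cor:1}; the others follow analogously, the exchange-involution cases reducing to an ordinary Witt group of $F_P$ which is torsion-free but the relevant $(-1)$-hermitian group still vanishes by the standard argument, and the quaternion case by a similar explicit computation). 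This contradicts non-torsionness, so $P\in\wt X_F$.

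For the direction $(2)\Rightarrow(1)$: suppose $P\in\wt X_F$. The cleanest route is to exhibit a prepositive cone explicitly after passing to $F_P$ and then descending. By Remark~\ref{nil-and-non}, since $P\notin\Nil[A,\s]$ we may choose $\ve_P=1$ and $(D_P,\vt_P)$ one of $(F_P,\id)$, $((-1,-1)_{F_P},\bbar)$, $(F_P(\sqrt{-1}),\bbar)$, so that $(A\ox_F F_P, \s\ox\id)$ is adjoint to a hermitian (i.e. $+1$-hermitian) form over one of these three, which by Proposition~\ref{prop-scaling} we may further replace by $(M_{n_P}(D_P),\vt_P^t)$. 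In each of these three cases $(M_{n_P}(D_P),\vt_P^t)$ has an obvious prepositive cone over the unique ordering of $F_P$: for $(F_P,\id)$ take the positive semidefinite symmetric matrices (Example~\ref{ex-psd}); for the quaternion and $F_P(\sqrt{-1})$ cases take $\PSD_{n_P}(\CM)$ where $\CM$ is the prepositive cone $\CM_{P_{F_P}}^\eta(D_P,\vt_P)$ on $(D_P,\vt_P)$ from Example~\ref{mp} (which applies precisely because $P_{F_P}\notin\Nil[D_P,\vt_P]$), using Proposition~\ref{PSD}. This shows $(A\ox_F F_P,\s\ox\id)$ is formally real over $P_{F_P}$. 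It remains to descend: one shows that a prepositive cone on $(A\ox_F F_P,\s\ox\id)$ over $P_{F_P}$ restricts to (or at least witnesses the existence of) a prepositive cone on $(A,\s)$ over $P$. Concretely, intersecting the $F_P$-cone with $\Sym(A,\s)\subseteq\Sym(A\ox_F F_P,\s\ox\id)$ yields a subset satisfying (P1)--(P3), properness is inherited, and (P4) is forced because the base field ordering can only be $P$ or $-P$ and (P3)-closure pins it down; alternatively invoke a compactness/transfer argument over the directed system of finitely generated subextensions of $F_P/F$, as in the proof of Lemma~\ref{type-of-splitting}, to get a cone already defined over $F$.

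The main obstacle I expect is the descent step in $(2)\Rightarrow(1)$: passing from a prepositive cone over $(A\ox_F F_P,\s\ox\id)$ back to one over $(A,\s)$ is not purely formal, since $F_P/F$ is typically infinite and transcendental phenomena can interfere with properness being preserved under restriction. The right tool is likely a place/specialization argument (sending $F_P$-coefficients back to $F$ along a suitable $F$-place, as in Artin--Lang style arguments), combined with the fact established in Lemma~\ref{closed-convex} and Proposition~\ref{extension2} that the relevant cones are finitely generated and hence closed, so that properness is a closed condition that survives specialization. Everything else — the Morita and scaling reductions, the explicit models over $F_P$, and the vanishing of the degenerate Witt groups — is routine given the machinery already in place.
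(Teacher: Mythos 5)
Your direction $(1)\Rightarrow(2)$ is essentially the paper's argument: extend scalars to $F_P$ via Proposition~\ref{extension2}, conclude non-torsion of the Witt group from Proposition~\ref{not_torsion}, and contradict this using Remark~\ref{nil-and-non} when $P\in\Nil[A,\s]$. One factual slip: your claim that the Witt group of $(-1)$-hermitian forms vanishes in \emph{all} the nil cases is false for $((-1,-1)_{F_P},\bbar)$ with $\ve_P=-1$, where $W_{-1}((-1,-1)_{F_P},\bbar)\cong\Z/2\Z$ (the rank-one form $\qf{i}_{\bbar}$ is anisotropic but $2\x\qf{i}_{\bbar}$ is hyperbolic). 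This does not sink the argument, since torsion is all you need to contradict Proposition~\ref{not_torsion}; the paper gets torsion uniformly from Pfister's local--global principle \cite{LU1} rather than from case-by-case vanishing, and you should do the same.

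The genuine gap is in $(2)\Rightarrow(1)$. You construct a prepositive cone on $(A\ox_F F_P,\s\ox\id)$ and then need to descend it to $(A,\s)$ over $P$, and you yourself flag this descent as the unresolved ``main obstacle''; as written it is not a proof. The restriction $\CQ\cap\Sym(A,\s)$ does satisfy (P2), (P3), (P5), and (P4) is automatic \emph{provided} this intersection is not $\{0\}$ --- but that nontriviality is exactly what is not clear: $F$ need not be dense in $F_P$, so there is no a priori reason an $F_P$-cone meets $\Sym(A,\s)$ nontrivially, and the natural way to produce such an element (an $a\in\Sym(A,\s)^\x$ of maximal signature at $P$) is Proposition~\ref{m_P=n_P}, whose proof in the paper relies on Proposition~\ref{nn} itself, so invoking it here would be circular; your alternative suggestions (directed systems of finitely generated subextensions, Artin--Lang style places) are only sketched and face the same descent problem. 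The paper avoids descent altogether: after reducing to $\ve=1$ (so $\Nil[D,\vt]=\Nil[A,\s]$), the set $\CM_P^\eta(D,\vt)$ of Example~\ref{mp} is a prepositive cone on $(D,\vt)$ over $P$ defined \emph{directly over $F$} via signatures at $P$, and Theorem~\ref{morco} transports it to $(A,\s)$. Noticing that Example~\ref{mp} applies over $F$ itself --- rather than only to $(D_P,\vt_P)$ over $F_P$, as in your construction --- is the missing idea that closes this direction.
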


\begin{proof} 
$(1) \Rightarrow (2)$: Assume that $P\in\Nil[A,\s]$. Then  the unique ordering on $F_P$
is in $\Nil[A\ox_F F_P, \s\ox\id]$ by Remark~\ref{nil-and-non}. Thus $W(A\ox_F F_P, \s\ox\id)$
is torsion by Pfister's local-global principle \cite[Theorem~4.1]{LU1}. But $(A\ox_F F_P, \s\ox\id)$
is formally real by Proposition~\ref{extension2}, which  contradicts 
Proposition~\ref{not_torsion}. 

$(2) \Rightarrow (1)$: By \cite[Remark~2.7]{A-U-PS} we may assume that $\ve=1$, i.e. that $\s$ and $\vt$ are
of the same type. It follows that $\Nil[D,\vt]=\Nil[A,\s]$ and so there exists a prepositive cone on $(D,\vt)$ over $P$ by Example~\ref{mp}. By Theorem~\ref{morco} there exists a prepositive cone on $(A,\s)$ over $P$. 
\end{proof}

\begin{prop}\label{m_P=n_P}
Let $P \in X_F$ and let $\CP$ be a prepositive cone on $(A,\s)$ over  $P$. Then 
 there exists $\ve \in \{-1, 1\}$
such that
\[\CP \cap A^\times \subseteq \{a \in \Sym(A,\s)^\x \mid \sign^{\eta}_{P} \qf{a}_\s = \ve n_P\}.\]
In particular, $m_P(A,\s) = n_P$  for every $P \in \wt X_F$.
\end{prop}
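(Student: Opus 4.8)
The plan is to reduce to the split (or almost split) situation over the real closure $F_P$ and apply Proposition~\ref{split-positive}. First I would invoke Proposition~\ref{extension2}: the prepositive cone $\CP$ on $(A,\s)$ over $P$ extends to a prepositive cone $\CP'$ on $(A\ox_F F_P, \s\ox\id)$ over the unique ordering $Q$ of $F_P$. In particular $(A\ox_F F_P,\s\ox\id)$ is formally real, so by Proposition~\ref{nn} (or directly by Remark~\ref{nil-and-non}) $P\in\wt X_F$ and we may assume $\ve_P=1$, i.e. the decomposition $A\ox_F F_P\cong M_{n_P}(D_P)$ is via an involution $\vt_P$ of the same type as $\s$ with $(D_P,\vt_P)$ one of the three cases in \eqref{non-nilcases}, namely $(F_P,\id)$, $((-1,-1)_{F_P},\bbar)$, or $(F_P(\sqrt{-1}),\bbar)$; these are exactly the algebras covered by Proposition~\ref{split-positive} (with $a=b=d=1\in Q$).

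Next I would transport $\CP'$ through the chosen isomorphism $(A\ox_F F_P,\s\ox\id)\cong (M_{n_P}(D_P),\vt_P^t)$ to a prepositive cone on $(M_{n_P}(D_P),\vt_P^t)$ over $Q$, and apply Proposition~\ref{split-positive}: there is $\ve\in\{-1,1\}$ such that every element of this cone, after diagonalisation by a $\vt_P^t$-congruence, lies in $\Diag(\ve Q)$. Now take $a\in\CP\cap A^\x$. Its image in $M_{n_P}(D_P)$ is an invertible matrix congruent to $\diag(q_1,\ldots,q_{n_P})$ with all $q_i\in\ve Q\setminus\{0\}$, i.e. a definite hermitian form over $(D_P,\vt_P)$ of rank $n_P$. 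Computing $\sign_P^\eta\qf{a}_\s$ amounts (by the definition of the signature via scalar extension to $F_P$ and Morita theory, and the fact that Morita equivalences of the same type carry reference forms to reference forms) to computing the signature of this definite form, which is $\pm n_P$; the sign is $\ve$ once the reference form $\eta$ is pinned down. Hence $\CP\cap A^\x\subseteq\{a\in\Sym(A,\s)^\x\mid \sign_P^\eta\qf{a}_\s=\ve n_P\}$.

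For the final assertion: since $|\sign_P^\eta\qf{a}_\s|\le n_P$ always (the rank of $\qf{a}_\s$ after scalar extension and Morita reduction is $n_P$), we get $m_P(A,\s)\le n_P$ for every $P\in\wt X_F$; and for $P\in\wt X_F$ there is a prepositive cone over $P$ by Proposition~\ref{nn}, which by Lemma~\ref{inv_pos} contains an invertible element $a$, so by the displayed inclusion $\sign_P^\eta\qf{a}_\s=\pm n_P$, giving $m_P(A,\s)\ge n_P$ (replacing $\eta$ by $-\eta$ if necessary, which does not change $m_P$). Thus $m_P(A,\s)=n_P$.

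\textbf{Main obstacle.} The delicate point is the signature bookkeeping in the second and third paragraphs: making precise that the signature $\sign_P^\eta$ of the rank-one form $\qf{a}_\s$ over $(A,\s)$ equals (up to the global sign fixed by $\eta$) the Sylvester-type signature of the corresponding definite hermitian form of rank $n_P$ over $(D_P,\vt_P)$ after scalar extension to $F_P$. This requires carefully tracking the normalisation of reference forms through the isomorphism $f$, the scaling equivalence $s$, and the collapsing equivalence $g$ of diagram~\eqref{diagram}, and using that a Morita equivalence between algebras with involution of the same type sends reference forms to reference forms. Everything else is a routine application of the already-established going-up/going-down and field-extension results.
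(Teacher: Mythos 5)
Your proposal is correct and follows essentially the same route as the paper: extend $\CP$ to $(A\ox_F F_P,\s\ox\id)$ via Proposition~\ref{extension2}, use Corollary~\ref{cor:1} (and Proposition~\ref{prop-scaling} to pass from $\ad_{\Phi_P}$ to $\vt_P^t$) so that $(D_P,\vt_P)$ falls under Proposition~\ref{split-positive}, conclude that invertible elements of the transported cone have Sylvester signature $\ve' n_P$, and handle the sign and the ``in particular'' clause exactly as you indicate. The bookkeeping you flag as the main obstacle is resolved in the paper precisely as you anticipate, by the fact that Morita equivalences of the same type carry reference forms to reference forms, yielding $\sign_P^\eta\qf{a}_\s=\delta\ve' n_P$ for a $\delta$ depending only on $\eta$.
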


\begin{proof}
Observe that $P\in \wt X_F$ by Proposition~\ref{nn}.
By Proposition~\ref{extension2} there exists a prepositive cone $\CQ$ on   $(A \ox_F F_P, \s \ox \id)$ such that 
$\CP\ox 1 \subseteq \CQ$. Recall from \eqref{def:np} that there is an isomorphism $f_P:A\ox_F F_P \to M_{n_P} (D_P)$. 
We now apply part of diagram~\eqref{diagram} to $(A\ox_F F_P, \s\ox\id)$, adjusting the diagram 
\emph{mutatis mutandis}:
\[
\xymatrix{
\Herm(A\ox_F F_P,\s\ox\id )\ar[r]^--{(f_P)_*} &    \Herm(M_{n_P}(D_P),\ad_{\Phi_P})\ar[r]^--{s_P}  &  \Herm_{\ve_P}
(M_{n_P}(D_P), \vt_P^t) 
} 
\]
Since $\CQ$ is a prepositive cone on $(A \ox_F F_P, \s \ox \id)$,  we may assume
that  $\ve_P=1$ by Corollary~\ref{cor:1},  i.e. that $\vt_P^t(\Phi_P)=\Phi_P$. 
 Thus the prepositive cone $\CQ$ is transported
 to the prepositive cone $\Phi_P^{-1} f_P(\CQ)$ by Proposition~\ref{prop-scaling}. Since $P\in\wt X_F$, $(D_P, \vt_P)$ is in the list \eqref{non-nilcases}. Thus, by Proposition~\ref{split-positive}, 
there exists $\ve' \in \{-1,1\}$ such that $\ve'\Phi_P^{-1} f_P(\CQ)$
only contains positive semidefinite matrices with respect to the ordering on $F_P$. In particular,
every invertible element in $\Phi_P^{-1} f_P(\CQ)$ will have Sylvester signature $\ve' n_P$. 

Let $a\in \CP$ be invertible (cf. Lemma~\ref{inv_pos}). Then there exists $\delta \in \{-1,1\}$, depending only on $\eta$,
such that
\[\sign_P^\eta \qf{a}_\s =\sign^{\eta \ox 1} \qf{a\ox 1}_{\s\ox \id} = \sign^{ s_P\circ (f_P)_*  (\eta)} \qf{\Phi_P^{-1}(f_P(a\ox 1))}_{\vt_P^t}=\delta\ve' n_P,\]
where the final equality follows from the fact that $\Phi_P^{-1}(f_P(a\ox 1))$ is an invertible element in $\Phi_P^{-1} f_P(\CQ)$.  

Finally, if $P \in \wt X_F$, there exists a prepositive cone on 
$(A,\s)$ over $P$ by Proposition~\ref{nn} and thus, by the argument above, there is at least one element with signature $n_P$.
\end{proof}

The following result solves the question of the existence of positive involutions at a given ordering, a problem that seems not to have been treated as yet despite the appearance of positive involutions in the literature.

\begin{thm}\label{main_pos} 
Let  $P \in X_F$. The following statements are equivalent:
\begin{enumerate}[$(1)$]
\item There is an involution  $\tau$ on $A$ which is positive at $P$ and of the same type as $\s$;
\item $P \in \wt X_F = X_F\setminus \Nil[A,\s]$.
\end{enumerate}
\end{thm}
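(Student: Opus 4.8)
The plan is to prove the two implications separately, using the machinery on prepositive cones assembled above together with the known behaviour of positive involutions under scalar extension to the real closure $F_P$.

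First I would handle $(2) \Rightarrow (1)$. Assume $P \in \wt X_F$. By Proposition~\ref{nn} there is a prepositive cone $\CP$ on $(A,\s)$ over $P$, and by Lemma~\ref{inv_pos} it contains an invertible element $a$. The natural candidate for the positive involution is $\tau := \Int(a) \circ \s$: since $a \in \Sym(A,\s)^\x$, this is again an involution of the same type as $\s$ (inner twisting by a symmetric unit preserves the type). To check positivity at $P$ I would pass to $F_P$: by Proposition~\ref{extension2} the cone $\CP \ox 1$ extends to a prepositive cone $\CQ$ on $(A\ox_F F_P, \s\ox\id)$ over the unique ordering of $F_P$, and by Proposition~\ref{m_P=n_P} (applied over $F_P$, where the $n_P$-part is exactly the split/almost-split description of Proposition~\ref{split-positive}) every invertible element of $\CQ$ — in particular $a \ox 1$ — has signature $\ve n_P$ for a fixed sign $\ve$. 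Replacing $a$ by $-a$ if necessary (still an invertible element of the prepositive cone $-\CP$, which is also over $P$) we may take $\ve = 1$, so $\sign_P^\eta\qf{a}_\s = n_P = m_P(A,\s)$. I would then invoke the identification, recorded in \cite[Section~4]{A-U-PS}, between $\qf{a}_\s$ having maximal signature $n_P$ at $P$ and $\Int(a)\circ\s$ being positive at $P$ (over $F_P$ this is precisely the statement that the Gram form of $x \mapsto \Trd(\tau(x)y)$ is positive semidefinite, which on the split side is the positive semidefiniteness exhibited by Proposition~\ref{split-positive}), and positivity at $P$ is detected after extension to $F_P$. This gives a positive involution $\tau$ of the same type as $\s$.

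For $(1) \Rightarrow (2)$ I would argue by contraposition: suppose $P \in \Nil[A,\s]$ and suppose, for contradiction, that $\tau$ is an involution on $A$ positive at $P$ and of the same type as $\s$. Positivity of $\tau$ at $P$ means the hermitian form $\qf{1}_\tau$ (equivalently the trace form $(x,y)\mapsto \Trd(\tau(x)y)$) is positive semidefinite at $P$; after extension to $F_P$ this forces $\sign_P^{\eta'}\qf{1}_\tau = n_P > 0$ for a suitable reference tuple, so $\qf{1}_\tau$ is a nonzero-signature form and $(A,\tau)$ is not Witt-trivial at $P$. But $\Nil[A,\s]$ depends only on the Brauer class of $A$ and the type of $\s$ (as recalled after the definition of nil orderings), and $\tau$ has the same type as $\s$; hence $P \in \Nil[A,\tau]$ as well, i.e. $\sign_P^{\eta'} \equiv 0$ on $W(A,\tau)$, contradicting $\sign_P^{\eta'}\qf{1}_\tau = n_P > 0$. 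Alternatively, and perhaps more cleanly, a $\tau$ positive at $P$ produces (via its positive-semidefinite trace form, exactly as in Example~\ref{ex-psd}) a prepositive cone on $(A,\tau)$ over $P$, hence by Proposition~\ref{nn} $P \in \wt X_F = X_F \sm \Nil[A,\tau] = X_F \sm \Nil[A,\s]$, which is the desired conclusion.

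The main obstacle is the $(2)\Rightarrow(1)$ direction, specifically pinning down the precise dictionary "invertible element of maximal signature $n_P$ in a prepositive cone" $\leftrightarrow$ "inner twist by that element is a positive involution". This is essentially local at $F_P$: one must check that the conditions "$\sign_P^\eta\qf{a}_\s$ is maximal" and "$\Int(a)\circ\s$ positive at $P$" both translate, under scalar extension to $F_P$ and the Morita reduction of diagram~\eqref{diagram}, into the single statement that $\Phi_P^{-1}f_P(a\ox 1)$ is a positive definite matrix (or scalar) in the almost-split algebra $(M_{n_P}(D_P),\vt_P^t)$ — which is exactly what Proposition~\ref{split-positive} and Proposition~\ref{m_P=n_P} deliver. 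Care is also needed with the choice of reference forms $\eta$ (the sign ambiguity $\delta$ in the proof of Proposition~\ref{m_P=n_P}) and with the normalization $\ve_P = 1$, legitimate here by Corollary~\ref{cor:1} since $(A\ox_F F_P,\s\ox\id)$ is formally real; once these bookkeeping points are settled the two implications close up.
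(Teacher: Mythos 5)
Your proposal is correct and follows essentially the same route as the paper: both directions hinge on the dictionary from \cite[Corollary~4.6]{A-U-PS} (via the relation $\sign_P^\eta \qf{1}_{\Int(b)\circ\s} = \sign_P^{b^{-1}\eta}\qf{b^{-1}}_\s$), with the required invertible element of maximal signature $n_P$ supplied by Proposition~\ref{m_P=n_P} (itself obtained from Propositions~\ref{nn}, \ref{extension2} and \ref{split-positive}, exactly the chain you unwind), and type preservation from \cite[Propositions~2.7 and 2.18]{BOI}. The only cosmetic difference is that for $(1)\Rightarrow(2)$ you argue by contraposition inside $W(A,\tau)$ using $\Nil[A,\tau]=\Nil[A,\s]$, while the paper transfers to a form over $(A,\s)$ of nonzero signature; the two are interchangeable, and your $a$ versus the paper's $b^{-1}$ is immaterial since $\qf{a}_\s\simeq\qf{a^{-1}}_\s$.
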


(Recall that for $\tau$ of the same type as $\s$, $\Nil[A,\s]= \Nil[A,\tau]$.)

\begin{proof} Let  $\tau$ be an involution on $A$ of the same type as $\s$ and let $\eta$ be a tuple of reference forms for $(A,\tau)$. Thus $\tau=\Int(b)\circ \s$ for some $b\in \Sym(A,\s)^\x$ 
and 
\begin{equation}\label{pi}
\sign_P^\eta \qf{1}_\tau = \sign_P^{b^{-1}\eta} \qf{b^{-1}}_\s, 
\end{equation}
cf. \cite[Theorem~4.2]{A-U-prime}.

$(1)\Rightarrow (2)$: If $\tau$ is positive at $P$, then 
$\qf{b^{-1}}_\s$ has nonzero signature at $P$ by  \cite[Corollary~4.6]{A-U-PS} 
and thus $P\in \wt X_F$. 

$(2)\Rightarrow (1)$: If $P\in \wt X_F$, there exists $b\in \Sym(A,\s)^\x$ such that $\sign_P^{\eta'} \qf{b^{-1}}_\s =n_P$ by Proposition~\ref{m_P=n_P}. Therefore, $\tau= \Int(b)\circ \s$ is positive at $P$  by \eqref{pi} and 
\cite[Corollary~4.6]{A-U-PS}, and of the same type as $\s$, cf. \cite[Propositions~2.7 and 2.18]{BOI}.
\end{proof}

\section{Positive cones}

We remind the reader of the notation of diagram~\eqref{diagram}, which will be used throughout
this section.

\subsection{Description of positive cones}
In this subsection, we show that there are exactly two positive cones for a given algebra with involution over any given non-nil ordering of the base field, cf. Theorem~\ref{positive=max}.

The following result is a reformulation of Proposition~\ref{m_P=n_P} for $F$-division algebras 
$(D,\vt)$ with involution of any kind
and completely describes their positive cones.

\begin{prop}\label{description}
  Let $\CP$ be a prepositive cone on $(D,\vt)$ over $P \in X_F$. Then
  $\CP \subseteq \CM^\eta_P(D,\vt)$ or $\CP \subseteq -\CM^\eta_P(D,\vt)$ \tu{(}cf. Definition~\ref{mp0}\tu{)}.

  In particular $\CM^\eta_P(D,\vt)$ and $-\CM^\eta_P(D,\vt)$ are the only
  positive cones over $P$, i.e. $X_{(D,\vt)} =
  \{-\CM^\eta_P(D,\vt), \CM^\eta_P(D,\vt) \mid P \in  X_F\sm \Nil[D,\vt] \}$.
\end{prop}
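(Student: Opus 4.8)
The plan is to reduce the statement to the matrix case already handled in Proposition~\ref{m_P=n_P} and Proposition~\ref{split-positive}, using scaling (Proposition~\ref{prop-scaling}) to dispose of the twist by $\Phi$ when passing between $(D,\vt)$ and $\ad_\Phi$-type involutions, and the going up/going down correspondence (Proposition~\ref{correspondences}) to pass between $(D,\vt)$ and $(M_\ell(D),\vt^t)$. First I would observe that by Proposition~\ref{nn} the existence of a prepositive cone $\CP$ on $(D,\vt)$ over $P$ forces $P \in \wt X_F = X_F \sm \Nil[D,\vt]$, so $P$ ranges over exactly the index set claimed. Next, recalling Example~\ref{mp}, $\CM^\eta_P(D,\vt)$ is a prepositive cone over $P$, hence (once the main inclusion is established) it will follow that it is maximal, i.e. a positive cone.

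The heart of the argument is the inclusion $\CP \subseteq \CM^\eta_P(D,\vt)$ or $\CP \subseteq -\CM^\eta_P(D,\vt)$. I would argue as follows. By Lemma~\ref{inv_pos}, $\CP$ contains an invertible element. By Proposition~\ref{m_P=n_P} applied to $(D,\vt)$ (with $\ell = 1$ so that $A = D$), there is $\ve \in \{-1,1\}$ with $\CP \cap D^\x \subseteq \{a \in \Sym(D,\vt)^\x \mid \sign^\eta_P \qf{a}_\vt = \ve n_P\}$, and $n_P = m_P(D,\vt)$; thus $\CP \cap D^\x \subseteq \ve\,\CM^\eta_P(D,\vt)$, and replacing $\CP$ by $-\CP$ if necessary we may assume $\ve = 1$, so every invertible element of $\CP$ lies in $\CM^\eta_P(D,\vt)$. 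It remains to handle the non-invertible elements: given $a \in \CP \sm \{0\}$ not invertible, $D$ being a division algebra means $a$ is invertible or zero, so in fact this case is vacuous — every nonzero element of a prepositive cone on a division algebra is automatically invertible. Hence $\CP \subseteq \CM^\eta_P(D,\vt)$ outright (including $0$ by definition of $\CM^\eta_P$).

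Finally I would conclude the maximality statement. Since $\CM^\eta_P(D,\vt)$ is itself a prepositive cone over $P$ (Example~\ref{mp}) and contains every prepositive cone over $P$ that is contained in it, and since by the dichotomy above every prepositive cone over $P$ is contained in $\CM^\eta_P(D,\vt)$ or in $-\CM^\eta_P(D,\vt)$, both $\CM^\eta_P(D,\vt)$ and $-\CM^\eta_P(D,\vt)$ are maximal, hence positive cones; conversely any positive cone over $P$ is a prepositive cone over $P$, hence by maximality equals one of these two. Ranging over $P \in X_F \sm \Nil[D,\vt]$, and noting by Lemma~\ref{same-base-ordering} that distinct base orderings give distinct cones, we get the displayed description of $X_{(D,\vt)}$.

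The main obstacle I anticipate is bookkeeping the $\ve$ (the sign ambiguity) consistently: Proposition~\ref{m_P=n_P} only gives the inclusion for $\CP \cap A^\x$ up to an overall sign $\ve$ depending on $\CP$, and one must check that this same $\ve$ controls \emph{all} of $\CP$, not just a single invertible element — which here works precisely because in a division algebra the set $\CP \sm \{0\}$ consists entirely of invertible elements, so there is nothing outside $\CP \cap D^\x$ to worry about. One should also double-check that the constant $n_P$ appearing in Proposition~\ref{m_P=n_P} indeed equals $m_P(D,\vt)$ in the sense of Definition~\ref{mp0}, which is exactly the last sentence of Proposition~\ref{m_P=n_P}, so $\CM^\eta_P(D,\vt)$ as defined via $m_P$ coincides with the set of invertible elements of signature $n_P$ together with $0$.
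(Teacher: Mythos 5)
Your proposal is correct and follows exactly the route the paper intends: the paper gives no separate proof, presenting the proposition as a ``reformulation of Proposition~\ref{m_P=n_P}'' for division algebras, and your argument is precisely that reformulation spelled out --- apply Proposition~\ref{m_P=n_P} with $A=D$ (so $\ell=1$), use that every nonzero element of $\Sym(D,\vt)$ is invertible so $\CP\sm\{0\}=\CP\cap D^\x$, and then get maximality of $\pm\CM^\eta_P(D,\vt)$ from the dichotomy together with Example~\ref{mp} and properness. The only cosmetic remark is that your opening plan (scaling and going up/down) is never actually needed, and the maximality step tacitly uses Lemma~\ref{same-base-ordering} to know a cone containing $\CM^\eta_P(D,\vt)$ is again over $P$; both are harmless.
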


\begin{lemma}\label{CPSD}
  $\CC_P(\CM^{g^{-1}(\eta)}_P(M_\ell(D),\vt^t)) = \PSD_\ell(\CM^\eta_P(D,\vt))$. 
\end{lemma}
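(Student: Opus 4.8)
The statement asserts an equality of two subsets of $\Sym(M_\ell(D),\vt^t)$: the convex cone $\CC_P$ generated (via sums, scaling by $P$, and $\vt^t$-congruences) by the positive cone $\CM^{g^{-1}(\eta)}_P(M_\ell(D),\vt^t)$, and the set $\PSD_\ell(\CM^\eta_P(D,\vt))$ of $\vt^t$-positive-semidefinite matrices over the positive cone $\CM^\eta_P(D,\vt)$ on $(D,\vt)$. The plan is to prove the two inclusions separately, using throughout that by Proposition~\ref{m_P=n_P} (and its division-algebra reformulation, which is exactly Proposition~\ref{description} whose proof this lemma is feeding into — so I should be careful and only invoke Proposition~\ref{m_P=n_P} itself, applied to $(M_\ell(D),\vt^t)$ and to $(D,\vt)$) an \emph{invertible} element of $\CM^\eta_P(D,\vt)$ is exactly a $d\in\Sym(D,\vt)^\x$ with $\sign^\eta_P\qf{d}_\vt = n_P^{(D,\vt)} = m_P(D,\vt)$, and similarly for $M_\ell(D)$. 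The key bridge is that the collapsing Morita equivalence $g$ sends $\qf{\diag(d_1,\dots,d_\ell)}_{\vt^t}$ to $\qf{d_1,\dots,d_\ell}_\vt$, so it multiplies signatures by $\ell$ and hence relates $m_P(M_\ell(D),\vt^t) = \ell\, m_P(D,\vt)$ (using $\sign^{g^{-1}(\eta)}_P(g^{-1}(q)) = \sign^\eta_P(q)$ after the Morita equivalence is normalized so reference forms correspond).

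\textbf{Inclusion $\supseteq$.} Let $B\in\PSD_\ell(\CM^\eta_P(D,\vt))$. By Lemma~\ref{equiv-up} there is $G\in\GL_\ell(D)$ with $\vt(G)^tBG = \diag(a_1,\dots,a_\ell)$ and $a_i\in\CM^\eta_P(D,\vt)$. Since $\PSD_\ell(\CM^\eta_P(D,\vt))$ is closed under $\vt^t$-congruence, it suffices to show each $\diag(a_1,\dots,a_\ell)$ lies in $\CC_P(\CM^{g^{-1}(\eta)}_P(M_\ell(D),\vt^t))$, and writing $\diag(a_1,\dots,a_\ell) = \sum_i \diag(0,\dots,a_i,\dots,0)$ and using Fact-type congruences it suffices to show $\diag(a,0,\dots,0)\in\CC_P(\CM^{g^{-1}(\eta)}_P(M_\ell(D),\vt^t))$ for $a\in\CM^\eta_P(D,\vt)$. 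For this: first, if $a$ is invertible, $g$ maps $\qf{\diag(a,1,\dots,1)}_{\vt^t}$ to $\qf{a,1,\dots,1}_\vt$; comparing $\sign^{g^{-1}(\eta)}_P$-signatures and noting $\sign^\eta_P\qf{a}_\vt = m_P(D,\vt)$ forces $\diag(a,1,\dots,1)$ (or its relevant diagonal rearrangement) to realize the maximal signature $\ell\, m_P(D,\vt) = m_P(M_\ell(D),\vt^t)$, hence $\diag(a,1,\dots,1)\in\CM^{g^{-1}(\eta)}_P(M_\ell(D),\vt^t)$; but I actually want $\diag(a,0,\dots,0)$. A cleaner route: show directly that $\CC_P(\CM^{g^{-1}(\eta)}_P(M_\ell(D),\vt^t)) = \PSD_\ell(\Tr_\ell(\CM^{g^{-1}(\eta)}_P(M_\ell(D),\vt^t)))$ via the Lemma preceding Proposition~\ref{PSD} (``$\PSD_\ell(\CQ)$ is the closure of $\CQ\cdot I_\ell$ under sum, scaling, $\vt^t$-congruence'') applied to $\CQ := \Tr_\ell(\CM^{g^{-1}(\eta)}_P(M_\ell(D),\vt^t))$, together with the fact that $\CM^{g^{-1}(\eta)}_P(M_\ell(D),\vt^t)$ is itself closed under these operations and contains $\CQ\cdot I_\ell$ (since $\diag(d,0,\dots,0)$ and permutations land in it by the signature computation) — so $\CC_P$ of it equals $\PSD_\ell(\CQ)$. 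Then it remains only to identify $\Tr_\ell(\CM^{g^{-1}(\eta)}_P(M_\ell(D),\vt^t)) = \CM^\eta_P(D,\vt)$, which is a signature bookkeeping argument: $d\in\Tr_\ell(\cdot)$ iff $\diag(d,0,\dots,0)\in\CM^{g^{-1}(\eta)}_P(M_\ell(D),\vt^t)$ by Lemma~\ref{equiv-down}, and for invertible $d$ this happens iff $\sign^\eta_P\qf{d}_\vt$ plus the contributions from the zeros... here I must handle the singular form carefully — better to pad: $\diag(d,0,\dots,0)$ is not invertible, so instead compare with $\diag(d, e_2,\dots,e_\ell)$ for $e_i\in\CM^\eta_P(D,\vt)\cap D^\times$ and use additivity of $D_{(A,\s)}$ under $\perp$ and the maximality characterization.

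\textbf{Inclusion $\subseteq$.} Conversely, $\CM^{g^{-1}(\eta)}_P(M_\ell(D),\vt^t) \subseteq \PSD_\ell(\CM^\eta_P(D,\vt))$: take $B$ in the former; diagonalize $\vt(G)^tBG = \diag(d_1,\dots,d_\ell)$; this diagonal matrix is still in $\CM^{g^{-1}(\eta)}_P(M_\ell(D),\vt^t)$ (a positive cone is closed under $\vt^t$-congruence since $\Int(u)\circ\vt^t$-type operations preserve signature), and $g$ sends $\qf{\diag(d_1,\dots,d_\ell)}_{\vt^t}$ to $\qf{d_1,\dots,d_\ell}_\vt$ with signature $\sum_i\sign^\eta_P\qf{d_i}_\vt = m_P(M_\ell(D),\vt^t) = \ell\, m_P(D,\vt)$; since each $\sign^\eta_P\qf{d_i}_\vt \le m_P(D,\vt)$ this forces each $d_i$ (if invertible) to satisfy $\sign^\eta_P\qf{d_i}_\vt = m_P(D,\vt)$, i.e.\ $d_i\in\CM^\eta_P(D,\vt)$; the possibly-singular case $d_i$ with $d_i\notin D^\times$ is ruled out because $D$ is a division algebra so $d_i\in\Sym(D,\vt)$ is either $0$ or invertible, and if some $d_i=0$ the remaining $\ell-1$ terms cannot reach $\ell\, m_P(D,\vt)$. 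Hence $\diag(d_1,\dots,d_\ell)\in\PSD_\ell(\CM^\eta_P(D,\vt))$ by Lemma~\ref{equiv-up}(2), and then $B\in\PSD_\ell(\CM^\eta_P(D,\vt))$ since the latter is closed under $\vt^t$-congruence. Then since $\PSD_\ell(\CM^\eta_P(D,\vt))$ is a convex cone over $P$ closed under $\vt^t$-congruence (Proposition~\ref{PSD}(1)), it contains $\CC_P(\CM^{g^{-1}(\eta)}_P(M_\ell(D),\vt^t))$.

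\textbf{Main obstacle.} The delicate point throughout is the normalization of reference forms under the Morita equivalence $g$: I need $\sign^{g^{-1}(\eta)}_P(g^{-1}(q)) = \sign^\eta_P(q)$ (exactly, not up to sign) and the identity $m_P(M_\ell(D),\vt^t) = \ell\, m_P(D,\vt)$, both of which rest on $g$ sending $\qf{\diag(d,\dots,d)}_{\vt^t}\mapsto \ell\times\qf{d}_\vt$ and on a Morita equivalence of same-type involutions carrying reference tuples to reference tuples (cited in the preliminaries). The second tricky point is that $\CC_P(\Cdots)$ is built from the ambient positive cone via \emph{three} operations while $\PSD_\ell$ is defined pointwise; the Lemma before Proposition~\ref{PSD} is precisely the tool to reconcile these, but applying it requires first checking that $\CM^{g^{-1}(\eta)}_P(M_\ell(D),\vt^t)$ is closed under sum, $P$-scaling and $\vt^t$-congruence — which follows from Proposition~\ref{description}/Example~\ref{mp} style arguments (it is a positive cone) — and that it contains $\CM^\eta_P(D,\vt)\cdot I_\ell$, i.e.\ $\diag(a,\dots,a)\in\CM^{g^{-1}(\eta)}_P(M_\ell(D),\vt^t)$ for $a\in\CM^\eta_P(D,\vt)$, which is again the signature computation $\sign^{g^{-1}(\eta)}_P\qf{\diag(a,\dots,a)}_{\vt^t} = \ell\,\sign^\eta_P\qf{a}_\vt = \ell\, m_P(D,\vt)$. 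Once these two bookkeeping facts are in place the equality drops out; I expect no genuinely hard analytic content, only careful tracking of signatures and of which forms are (an)isotropic.
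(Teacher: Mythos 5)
Your inclusion $\CC_P(\CM^{g^{-1}(\eta)}_P(M_\ell(D),\vt^t)) \subseteq \PSD_\ell(\CM^\eta_P(D,\vt))$ is sound in substance: diagonalize by congruence, use $\sign^{g^{-1}(\eta)}_P\qf{B}_{\vt^t}=\sum_i \sign^\eta_P\qf{d_i}_\vt$ together with $m_P(M_\ell(D),\vt^t)=\ell\, m_P(D,\vt)$ to force each diagonal entry into $\CM^\eta_P(D,\vt)$, then use that $\PSD_\ell(\CM^\eta_P(D,\vt))$ is closed under the three operations generating $\CC_P$. (Your parenthetical justification that the diagonalized matrix stays in $\CM^{g^{-1}(\eta)}_P(M_\ell(D),\vt^t)$ because that set ``is a positive cone'' is not the right reason; what you need, and what is true, is simply that congruence by an invertible $G$ preserves invertibility and signature.) The paper instead gets this direction for free: having proved the other inclusion, it quotes Propositions~\ref{description} and \ref{PSD} to say $\PSD_\ell(\CM^\eta_P(D,\vt))$ is a positive cone, i.e.\ maximal.

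The genuine gap is in the inclusion $\PSD_\ell(\CM^\eta_P(D,\vt))\subseteq\CC_P(\CM^{g^{-1}(\eta)}_P(M_\ell(D),\vt^t))$. Your ``cleaner route'' rests on the claims that $\CM^{g^{-1}(\eta)}_P(M_\ell(D),\vt^t)$ is itself closed under sums, $P$-scaling and congruence, and that $\diag(d,0,\ldots,0)$ lies in it; both are false for $\ell\ge 2$, since by Definition~\ref{mp0} this set consists of invertible elements (and $0$) only, so it contains no nonzero singular matrix and is not stable under $Z\mapsto\vt(X)^tZX$ for singular $X$. For the same reason Lemma~\ref{equiv-down} and the closure lemma preceding Proposition~\ref{PSD} cannot be applied to it (or to $\Tr_\ell$ of it): they are stated and proved for prepositive cones, and their proofs use exactly the closure properties that fail here; the identification $\Tr_\ell(\CM^{g^{-1}(\eta)}_P(M_\ell(D),\vt^t))=\CM^\eta_P(D,\vt)$ is then left as an admitted loose end (``handle the singular form carefully \dots pad \dots''). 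The missing observation, which is how the paper argues, is that the operations defining $\CC_P$ \emph{do} allow congruence by singular matrices: since $aI_\ell\in\CM^{g^{-1}(\eta)}_P(M_\ell(D),\vt^t)$ for every $a\in\CM^\eta_P(D,\vt)\sm\{0\}$ (the signature computation you already have), one gets directly
\[\diag(a_1,\ldots,a_r,0,\ldots,0)=\sum_{i=1}^r \vt(E_i)^t\,(a_iI_\ell)\,E_i\ \in\ \CC_P(\CM^{g^{-1}(\eta)}_P(M_\ell(D),\vt^t)),\]
where $E_i$ is the diagonal idempotent with $1$ in slot $i$; combined with Lemma~\ref{equiv-up} and one more congruence by $G^{-1}$ this yields the inclusion in two lines, with no need for $\diag(a,1,\ldots,1)$, for $\Tr_\ell$, or for the false closure claim.
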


\begin{proof}
  Let $B \in \PSD_\ell(\CM_P^\eta(D,\vt))$. Then by Lemma~\ref{equiv-up} there
  is $G \in \GL_\ell(D)$ such that $\vt(G)^tBG = \diag(a_1, \ldots, a_\ell)$
  with $a_1, \ldots, a_\ell \in \CM^\eta_P(D,\vt)$, and we may assume that
  \[\vt(G)^tBG = \diag(a_1, \ldots, a_r,0,\ldots,0)\]
  with $a_1, \ldots, a_r \in \CM^\eta_P(D,\vt) \sm \{0\}$. Using that $a_i I_\ell \in
  \CM^{g^{-1}(\eta)}_P(M_\ell(D),\vt^t)$, it is now easy to represent $\vt(G)^tBG$,
  and thus $B$, as an element of $\CC_P(\CM^{g^{-1}(\eta)}_P(M_\ell(D),\vt^t))$, proving
  that $\PSD_\ell(\CM^\eta_P(D,\vt)) \subseteq \CC_P(\CM^{g^{-1}(\eta)}_P(M_\ell(D),\vt^t))$.
The result follows since $\PSD_\ell(\CM^\eta_P(D,\vt))$ is a positive cone by Proposition~\ref{description}
and Proposition~\ref{PSD}.
\end{proof}

Using the notation from Proposition~\ref{ns-again}
 we recall \cite[Definition~3.1]{A-U-PS}: 

\begin{defi} Let $P\in X_F$ and let $\eta$ be a tuple of reference forms for $(A,\s)$. An element
 $u\in \Sym(A,\s)$ is called \emph{$\eta$-maximal at $P$} if for every nonsingular hermitian form $h$ of rank equal to 
 the rank of $\qnd{u}_\s$, we have $\sign_P^\eta \qnd{u}_\s \geq \sign_P^\eta h$, where $\qnd{u}_\s$ denotes the
 nonsingular part of the form $\qf{u}_\s$. 
 
 If $Y\subseteq X_F$, then we say that $u$ is \emph{$\eta$-maximal on $Y$} if $u$ is $\eta$-maximal at $P$ for all
 $P\in Y$. 
\end{defi}

Observe that 
if $u$ is invertible,  $\qnd{u}_\s =\qf{u}_\s$ and thus  $u$ is $\eta$-maximal at $P$
if and only if $\sign_P^\eta \qf{u}_\s = m_P(A,\s)$ if and only if $u \in \CM^\eta_P(A,\s)$.
If $u$ is not invertible, we have the following equivalence:

\begin{lemma}\label{bleuarg}
  Let $u \in \Sym(A,\s)$ and let $P \in \wt X_F$. Then $u \in
  \CC_P(\CM^\eta_P(A,\s))$ if and only if $u$ is $\eta$-maximal at $P$.
\end{lemma}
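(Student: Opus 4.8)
The statement is an ``if and only if'' between membership of $u$ in $\CC_P(\CM^\eta_P(A,\s))$ and $\eta$-maximality of $u$ at $P$, for a non-invertible $u \in \Sym(A,\s)$ (the invertible case having been dispatched in the remark above). The natural strategy is to reduce from $(A,\s)$ to the division algebra $(D,\vt)$ via the Morita equivalence and the going up/going down correspondences, where everything can be computed explicitly, since by Proposition~\ref{description} the positive cones over $(D,\vt)$ are precisely $\pm\CM^\eta_P(D,\vt)$ and by Proposition~\ref{m_P=n_P} the value $m_P$ is $n_P$.

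\textbf{Step 1: reduce to $(M_\ell(D),\vt^t)$.} First I would use the scaling isomorphism $f$ together with Proposition~\ref{prop-scaling} to replace $(A,\s)$ by $(M_\ell(D),\vt^t)$; scaling by an invertible symmetric element preserves diagonalizability, preserves the sets $\CM^\eta_P$ (up to the corresponding change of reference forms, by \cite[Theorem~4.2]{A-U-prime}), preserves signatures of one-dimensional forms, and commutes with the formation of $\CC_P$. So we may assume $(A,\s) = (M_\ell(D),\vt^t)$. Here $\CM^{g^{-1}(\eta)}_P(M_\ell(D),\vt^t) = \PSD_\ell(\CM^\eta_P(D,\vt))$ is the content of Lemma~\ref{CPSD}, which I would invoke directly.

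\textbf{Step 2: compute both sides.} Write $u \in \Sym(M_\ell(D),\vt^t)$; diagonalize it by congruence, $\vt(G)^t u G = \diag(d_1,\ldots,d_r,0,\ldots,0)$ with $d_i \in \Sym(D,\vt)^\x$ (note $r < \ell$ if $u$ is non-invertible, but this is not essential; what matters is the rank $r$ of $u$). On the one hand, $u \in \CC_P(\CM^{g^{-1}(\eta)}_P(M_\ell(D),\vt^t)) = \CC_P(\PSD_\ell(\CM^\eta_P(D,\vt)))$; since $\PSD_\ell(\CM^\eta_P(D,\vt))$ is already a prepositive cone (closed under the operations in Definition~\ref{extension}), membership in $\CC_P$ of it is just membership in $\PSD_\ell(\CM^\eta_P(D,\vt))$, and by Lemma~\ref{equiv-up} this is equivalent to $d_1,\ldots,d_r$ all lying in $\CM^\eta_P(D,\vt)$, i.e. to $\sign_P^\eta\qf{d_i}_\vt = m_P(D,\vt) = n_P$ for each $i$. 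On the other hand, $\eta$-maximality of $u$ at $P$ says $\sign_P^{g^{-1}(\eta)}\qnd{u}_{\vt^t} \ge \sign_P^{g^{-1}(\eta)}h$ for every nonsingular $h$ of rank equal to $\rk\qnd{u}_{\vt^t}$; via $g$ and the identities $\sign_P^{g^{-1}(\eta)}\qf{\diag(d_i,\ldots,d_i)}_{\vt^t} = \sign_P^\eta\qf{d_i}_\vt$ (the collapsing map $g$), this becomes $\sum_{i=1}^r \sign_P^\eta\qf{d_i}_\vt \ge r\cdot m_P(D,\vt)$. Since each summand is at most $m_P(D,\vt)$ by definition of $m_P$, this inequality holds if and only if every $\sign_P^\eta\qf{d_i}_\vt$ equals $m_P(D,\vt) = n_P$ --- exactly the condition on the other side. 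The two sides therefore coincide.

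\textbf{Main obstacle.} The routine-looking but genuinely delicate point is the bookkeeping of reference forms and signs under the chain of equivalences: one must check that $\eta$-maximality and membership in $\CM^\eta_P$ transport correctly (including the sign $\delta \in \{-1,1\}$ depending on $\eta$ that already appeared in the proof of Proposition~\ref{m_P=n_P}), and that the rank of the nonsingular part $\qnd{u}_\s$ is what is preserved by $g$ (this is where ``$h$ of rank equal to $\rk\qnd{u}_\s$'' must be matched with $r$-dimensional forms over $(D,\vt)$ after collapsing, using that $g$ multiplies ranks by $\ell$ while sending the nonsingular part to the nonsingular part). Once the identification $\sign_P^{g^{-1}(\eta)}\qf{\diag(d,\ldots,d)}_{\vt^t} = \sign_P^\eta\qf{d}_\vt$ and Lemma~\ref{CPSD} are in hand, the equivalence is the elementary observation that a sum of $r$ quantities each $\le m$ equals $rm$ iff each equals $m$.
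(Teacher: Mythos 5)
Your proposal is correct and follows essentially the same route as the paper's proof: transport to $(M_\ell(D),\vt^t)$ via the fixed isomorphism and scaling, identify $\CC_P(\CM^{g^{-1}(\eta)}_P(M_\ell(D),\vt^t))$ with $\PSD_\ell(\CM^\eta_P(D,\vt))$ by Lemma~\ref{CPSD}, diagonalize and apply Lemma~\ref{equiv-up}, and conclude by the observation that a sum of $r$ signatures each at most $m_P(D,\vt)$ equals $r\,m_P(D,\vt)$ exactly when each is maximal. The reference-form and rank bookkeeping you flag is handled in the paper at the same level of detail, so nothing further is needed.
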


\begin{proof}
  Let $k = \rk \qnd{u}_\s$. We have the following sequence of equivalences:
  \begin{align*}
    u \in   \CC_P & (\CM^\eta_P(A,\s))  \\
    &\Leftrightarrow f(u) \in
    \CC_P(\CM^{f_*(\eta)}_P(M_\ell(D),\Int(\Phi) \circ \vartheta^t))\\
      & \Leftrightarrow \Phi^{-1} f(u) \in
        \CC_P(\CM^{s\circ f_*(\eta)}_P(M_\ell(D),\vartheta^t))\\
      & \Leftrightarrow \Phi^{-1} f(u) \in
        \PSD_\ell(\CM^{g\circ s\circ f_*(\eta)}_P(D,\vartheta))\qquad [\text{by Lemma~\ref{CPSD}}] \\
      & \Leftrightarrow \exists G \in \GL_\ell(D) \ \exists d_1,\ldots, d_k \in
        \CM^{g \circ s \circ f_*(\eta)}_P(D,\vartheta) \setminus \{0\} \\
      &\qquad \vartheta(G)^t \Phi^{-1} f(u)G = \diag(d_1,\ldots,k_k,0,
        \ldots,0) \qquad [\text{by Lemma~\ref{equiv-up}}]     \\
      & \Leftrightarrow \exists d_1, \ldots, d_k \in \CM^{g \circ s \circ f_*(\eta)}_P(D,\vartheta) \quad 
        g \circ s \circ f_*(\qf{u}_\s) \simeq \qf{d_1, \ldots, d_k}_\vartheta
        \perp 0\\
      & \Leftrightarrow   \sign_P^\eta \qnd{u}_\s \text{ is maximal  among all
        forms of rank } k\\
     & \Leftrightarrow  \text{$u$  is  $\eta$-maximal at  $P$.}\qedhere
  \end{align*}
\end{proof}

\begin{thm}\label{positive=max}
  Let $\eta$ be a tuple of reference forms for $(A,\s)$ and let $\CP$ be a
  prepositive cone on $(A,\s)$ over $P \in X_F$. Then 
  \[\CP \subseteq \CC_P(\CM^\eta_P(A,\s)) \text{ or } \CP \subseteq
  -\CC_P(\CM^\eta_P(A,\s)).\]
  In particular
  \[X_{(A,\s)} = \{-\CC_P(\CM^\eta_P(A,\s)), \CC_P(\CM^\eta_P(A,\s)) \mid P \in  \wt X_F  \}\]
  and
  for each $\CP \in X_{(A,\s)}$, there exists $\ve\in\{-1,1\}$ such that
  $\CP \cap A^\x = \ve \CM^\eta_P(A,\s) \setminus \{0\}$.
\end{thm}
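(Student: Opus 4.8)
The plan is to reduce everything to the division algebra case already handled in Proposition~\ref{description}, using the going-up/going-down correspondence of Proposition~\ref{correspondences} together with the scaling bijection of Proposition~\ref{prop-scaling}, and then to translate the statement for $(M_\ell(D),\vt^t)$ back to $(A,\s)$ via the Morita machinery of diagram~\eqref{diagram}. First I would observe that if $(A,\s)$ has a prepositive cone over $P$, then $P\in\wt X_F$ by Proposition~\ref{nn}, so we may invoke Corollary~\ref{cor:1} and assume $\ve=1$, i.e. $\vt$ has the same type as $\s$; thus $(A,\s)$ and $(M_\ell(D),\vt^t)$ are Morita equivalent via type-preserving equivalences, and Theorem~\ref{morco} together with Propositions~\ref{prop-scaling} and \ref{correspondences} gives inclusion-preserving bijections $Y_{(A,\s)}\leftrightarrow Y_{(M_\ell(D),\vt^t)}\leftrightarrow Y_{(D,\vt)}$, all compatible with the base ordering $P$.

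The core step is the containment. Given a prepositive cone $\CP$ on $(A,\s)$ over $P$, transport it through diagram~\eqref{diagram} to a prepositive cone $\CP'$ on $(D,\vt)$ over $P$; by Proposition~\ref{description} we have $\CP'\subseteq\CM^{\eta'}_P(D,\vt)$ or $\CP'\subseteq-\CM^{\eta'}_P(D,\vt)$, where $\eta' = g\circ s\circ f_*(\eta)$. Applying the going-up map $\PSD_\ell$, which is inclusion-preserving, and then unscaling and pulling back along $f$, Lemma~\ref{CPSD} identifies $\PSD_\ell(\CM^{\eta'}_P(D,\vt))$ with $\CC_P(\CM^{g^{-1}(\eta')}_P(M_\ell(D),\vt^t))$, and the chain of equivalences in Lemma~\ref{bleuarg} (read as equalities of sets after the scaling isomorphism of Proposition~\ref{prop-scaling}) identifies the pullback of this to $(A,\s)$ with $\CC_P(\CM^\eta_P(A,\s))$. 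Hence $\CP\subseteq\CC_P(\CM^\eta_P(A,\s))$ or $\CP\subseteq-\CC_P(\CM^\eta_P(A,\s))$, using that a Morita equivalence of the same type sends a tuple of reference forms to a tuple of reference forms so that $\CM^\eta_P$ is respected up to the sign $\ve$.

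For the ``in particular'' part: $\CC_P(\CM^\eta_P(A,\s))$ is itself a prepositive cone over $P$ (it is $\CC_P(S)$ for a subset $S$ of $\Sym(A,\s)$ and it is proper, being the image of the positive cone $\CM^{\eta'}_P(D,\vt)$ under the bijections above, hence a positive cone by Propositions~\ref{PSD} and the transport results). So by the containment just proved, every positive cone over $P$ equals $\pm\CC_P(\CM^\eta_P(A,\s))$, and since every positive cone lies over some $P\in\wt X_F$ (again Proposition~\ref{nn}), this yields the displayed description of $X_{(A,\s)}$. Finally, for $\CP\in X_{(A,\s)}$ with $\CP = \ve\,\CC_P(\CM^\eta_P(A,\s))$, intersecting with $A^\x$ and using that an invertible $u$ lies in $\CC_P(\CM^\eta_P(A,\s))$ iff it is $\eta$-maximal at $P$ iff $u\in\CM^\eta_P(A,\s)$ (the remark preceding Lemma~\ref{bleuarg}, together with Lemma~\ref{bleuarg} itself) gives $\CP\cap A^\x = \ve\,\CM^\eta_P(A,\s)\setminus\{0\}$.

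The main obstacle I expect is bookkeeping of the reference tuple $\eta$ and the sign $\ve$ as they pass through the three steps $f_*$, $s$ (scaling by $\Phi^{-1}$), $g$ (collapsing) of diagram~\eqref{diagram}: one must check that $\CM^\eta_P$ transforms correctly (each step being type-preserving sends reference forms to reference forms, so $\CM_P$ is carried to $\CM_P$, but the scaling step introduces the $\Phi_P^{-1}$ twist handled by Proposition~\ref{prop-scaling}), and that the going-up operator $\PSD_\ell$ really does match $\CC_P(\cdot)$ at the level of the $\eta$-decorated sets rather than merely at the level of underlying cones — this is exactly the content of Lemma~\ref{CPSD}, so the real work is assembling these lemmas in the right order rather than any new computation.
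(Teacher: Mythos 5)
Your proposal is correct and assembles exactly the same ingredients as the paper's proof --- Proposition~\ref{description}, Propositions~\ref{prop-scaling} and \ref{correspondences}, Lemma~\ref{CPSD} and Lemma~\ref{bleuarg}, with Proposition~\ref{nn} and Corollary~\ref{cor:1} to reduce to $\ve=1$ --- so it is essentially the paper's argument. The only difference is the logical order: you establish the containment $\CP\subseteq\pm\CC_P(\CM^\eta_P(A,\s))$ directly by transporting $\CP$ down to $(D,\vt)$ and back (using that $\PSD_\ell$ and $\Tr_\ell$ are mutually inverse and inclusion-preserving) and then deduce the description of $X_{(A,\s)}$, whereas the paper first shows that $\CC_P(\CM^\eta_P(A,\s))$ is a positive cone and that there are exactly two positive cones over each $P\in\wt X_F$, and then observes that the containment for an arbitrary prepositive cone follows; both routes are valid.
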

\begin{proof}
We only prove the second part of the theorem, since the first part follows from
it. We start with the description of $X_{(A,\s)}$. Let $\CP$ be a  positive cone on $(A,\s)$ over $P$.
By Proposition~\ref{nn} we know that $P\in \wt X_F$.
By Proposition~\ref{description}, there are exactly two  positive cones in $(D,\vt)$ over $P$ and therefore, by 
Theorem~\ref{morco}, there are   exactly two  positive cones in $(A,\s)$ over $P$, necessarily $\CP$ and
$-\CP$. Thus  we only need to show that $\CC_P(\CM^\eta_P(A,\s))$ is a positive cone
  in $(A,\s)$ over $P$. Upon identifying 
  $(A,\s)$ with $(M_\ell(D), \ad_\Phi) = (M_\ell(D), \Int(\Phi) \circ \vt^t)  $ (without loss of generality) and
  using the scaling from Proposition~\ref{prop-scaling},
  it suffices to show that $\Phi \CC_P(\CM^\eta_P(A,\s))$ is a  positive
  cone on $(M_\ell(D),\vt^t)$ over $P$. 
  A direct computation shows that $\Phi \CC_P(\CM^\eta_P(A,\s)) =
  \CC_P(\CM^{\Phi\eta}_P(M_\ell(D),\vt^t))$, which is equal to
  $\PSD_\ell(\CM^{g(\Phi\eta)}_P(D,\vt))$ by Lemma \ref{CPSD}, which in turn is
  maximal by Proposition~\ref{description} and Proposition~\ref{correspondences}.
  
Finally, for $\CP\in X_{(A,\s)}$, we show that $\CP \cap A^\x = \ve \CM^\eta_P(A,\s) \setminus \{0\}$
for some $\ve\in\{-1,1\}$. Only the left to right inclusion is not obvious. Let $u \in \CP \cap A^\x$.
  Since $\CP = \pm \CC_P(\CM^\eta_P(A,\s))$ we assume for instance that $u
  \in \CC_P(\CM^\eta_P(A,\s))$.  By Lemma~\ref{bleuarg}, $\qnd{u}_\s$ is
  $\eta$-maximal. But $u$ is invertible, so $\qnd{u}_\s = \qf{u}_\s$ is
  $\eta$-maximal, i.e. $u \in \CM^\eta_P(A,\s)$.   
\end{proof}

An immediate consequence of this theorem is the following result, corresponding to the classical bijection between
orderings on $F$ and signatures of quadratic forms with coefficients in $F$: 

\begin{cor}\label{bij} 
The map 
\begin{align*}
X_{(A,\s)} &\to \{\sign_Q^\mu \mid Q\in \wt X_F,\ \mu \text{ a tuple of reference forms for } (A,\s)\}\\
\ve \CC_P(\CM^\eta_P(A,\s)) &\mapsto \ve \sign_P^\eta =\sign_P^{\ve\eta}
\end{align*}
is a bijection \textup{(}where $\ve \in \{-1,1\}$ and $P\in \wt X_F$\textup{)}. 
\end{cor}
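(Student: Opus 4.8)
The plan is to deduce this bijection almost entirely from Theorem~\ref{positive=max} together with the classification of signatures of hermitian forms from \cite{A-U-prime}. First I would recall the setup: by Theorem~\ref{positive=max}, every positive cone on $(A,\s)$ has the form $\ve\,\CC_P(\CM^\eta_P(A,\s))$ for a uniquely determined pair $(\ve,P)$ with $\ve\in\{-1,1\}$ and $P\in\wt X_F$ — well, uniquely determined once we observe that $P$ is recovered as $(\ve\CC_P(\CM^\eta_P(A,\s)))_F$ and that $\CC_P(\CM^\eta_P(A,\s))\neq -\CC_P(\CM^\eta_P(A,\s))$ since both are proper and neither is $\{0\}$. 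So the map in the statement is well defined on representatives, and the domain $X_{(A,\s)}$ is in canonical bijection with $\{-1,1\}\times\wt X_F$ via $\ve\CC_P(\CM^\eta_P(A,\s))\leftrightarrow(\ve,P)$.

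Next I would identify the codomain. The point is that the set $\{\sign_Q^\mu\mid Q\in\wt X_F,\ \mu\text{ a tuple of reference forms for }(A,\s)\}$ coincides with $\{\ve\sign_P^\eta\mid \ve\in\{-1,1\},\ P\in\wt X_F\}$ for any fixed reference tuple $\eta$. Indeed, changing the reference tuple $\mu$ to $\eta$ multiplies the signature map at each $P$ by a global sign $\delta(\mu,\eta)\in\{-1,1\}$ (this is exactly the content of \cite[Theorem~4.2]{A-U-prime} and is used repeatedly above, e.g. in the proof of Proposition~\ref{m_P=n_P} and Theorem~\ref{main_pos}), so $\sign_Q^\mu=\delta\,\sign_Q^\eta$ for a suitable sign. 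Hence every element of the codomain is $\ve\sign_P^\eta$ for some $\ve,P$, and conversely $\sign_P^{\ve\eta}=\ve\sign_P^\eta$ is realized by the reference tuple $\ve\eta$; note $\sign_P^{\ve\eta}=\sign_P^\eta\circ(\pm)$ and $-\eta$ is again a tuple of reference forms, as recalled in the Preliminaries.

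Now the bijection itself. Under the two identifications above, the map becomes $(\ve,P)\mapsto\ve\sign_P^\eta$. Surjectivity is then immediate from the description of the codomain just given. For injectivity, suppose $\ve\sign_P^\eta=\ve'\sign_{P'}^\eta$ as maps $W(A,\s)\to\Z$. Restricting to the image of $W(F)$ under the $W(F)$-module structure, for $q\in W(F)$ we have $\sign_P^\eta(q\otimes\eta)=\sign_P(q)\cdot\sign_P^\eta(\eta)$ (cf. \cite[Theorem~2.6]{A-U-prime}), and since $P,P'\in\wt X_F$ the values $\sign_P^\eta(\eta)$, $\sign_{P'}^\eta(\eta)$ are nonzero; comparing signs of $\sign_P(q)$ for all $q$ forces $P=P'$, and then $\ve=\ve'$ because $\sign_P^\eta$ is not identically zero on $W(A,\s)$ (again as $P\in\wt X_F$). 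Therefore the map is injective.

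The only genuinely non-routine point — and the step I expect to be the main obstacle — is pinning down precisely that the indicated codomain equals $\{\ve\sign_P^\eta\}$, i.e. being careful about the role of reference tuples: one must check that varying $\mu$ over \emph{all} reference tuples produces exactly the two sign-variants at each ordering and nothing more, which rests on \cite[Theorem~4.2]{A-U-prime}. Everything else is a transcription of Theorem~\ref{positive=max} plus the standard fact that distinct orderings are separated by signs of quadratic forms. I would therefore keep the write-up short: state the two identifications, invoke Theorem~\ref{positive=max} and \cite[Theorem~4.2]{A-U-prime}, and finish with the one-line injectivity/surjectivity argument.
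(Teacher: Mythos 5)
Your proposal is correct and follows essentially the same route the paper intends: the corollary is stated there as an immediate consequence of Theorem~\ref{positive=max}, and your write-up simply makes explicit the identification $X_{(A,\s)}\leftrightarrow\{-1,1\}\times\wt X_F$, the description of the codomain via the sign ambiguity of reference tuples, and the separation of orderings by signatures of quadratic forms. One small caveat: the sign $\delta(\mu,\eta)$ relating $\sign_Q^\mu$ and $\sign_Q^\eta$ need not be ``global'' (it may depend on $Q$), but your argument only uses the statement at a fixed $Q$, where it is correct.
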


As another consequence of Theorem~\ref{positive=max}, 
we can now clarify the hypothesis used in Proposition~\ref{easy}$(1)$:

\begin{cor}\label{1-pos}  
Let $P\in X_F$. The following statements are equivalent: 
\begin{enumerate}[$(1)$]
\item $\s$ is positive at $P$;
\item $|\sign_P^\eta \qf{1}_\s|=n_P$;
\item there exists a prepositive cone $\CP$ on $(A,\s)$ over $P$ such that $1 \in \CP$.
\end{enumerate}
\end{cor}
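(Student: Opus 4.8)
The plan is to establish $(1)\Leftrightarrow(2)$ and $(2)\Leftrightarrow(3)$, after first disposing of the case $P\notin\wt X_F$. Each of the three statements forces $P\in\wt X_F$: if $\s$ is positive at $P$ then $\qf{1}_\s$ has nonzero signature at $P$ by \cite[Corollary~4.6]{A-U-PS}, so $P\notin\Nil[A,\s]$; if $|\sign_P^\eta\qf{1}_\s|=n_P$ then $\sign_P^\eta\qf{1}_\s\neq 0$ since $n_P\geq 1$; and if $(A,\s)$ carries a prepositive cone over $P$ then $P\in\wt X_F$ by Proposition~\ref{nn}. So I may assume $P\in\wt X_F$, which gives $m_P(A,\s)=n_P$ by Proposition~\ref{m_P=n_P}. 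Under this assumption $(1)\Leftrightarrow(2)$ is a direct translation of \cite[Corollary~4.6]{A-U-PS}: that result says $\s$ is positive at $P$ precisely when $\qf{1}_\s$ has maximal signature at $P$, i.e.~when $|\sign_P^\eta\qf{1}_\s|=m_P(A,\s)$, and $m_P(A,\s)=n_P$.

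For $(2)\Rightarrow(3)$ I would note that $(2)$ together with $m_P(A,\s)=n_P$ forces $\sign_P^\eta\qf{a}_\s=n_P$ for $a=1$ or $a=-1$, so $a\in\CM^\eta_P(A,\s)$ by Definition~\ref{mp0} (note $a$ is invertible), hence $a\in\CC_P(\CM^\eta_P(A,\s))$ since $\CC_P(S)\supseteq S$ for every $S$. By Theorem~\ref{positive=max} both $\CC_P(\CM^\eta_P(A,\s))$ and $-\CC_P(\CM^\eta_P(A,\s))$ are positive cones on $(A,\s)$ over $P$, and $1$ lies in the first when $a=1$ and in the second when $a=-1$; so $(3)$ holds. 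For $(3)\Rightarrow(2)$, let $\CP$ be a prepositive cone over $P$ with $1\in\CP$. By the first part of Theorem~\ref{positive=max}, $\CP$ is contained in $\CC_P(\CM^\eta_P(A,\s))$ or in $-\CC_P(\CM^\eta_P(A,\s))$, so $1$ or $-1$ belongs to $\CC_P(\CM^\eta_P(A,\s))$; by Lemma~\ref{bleuarg} that element is $\eta$-maximal at $P$, and being invertible it then satisfies $\sign_P^\eta\qf{\pm 1}_\s=m_P(A,\s)=n_P$, which yields $|\sign_P^\eta\qf{1}_\s|=n_P$, i.e.~$(2)$.

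I do not expect a genuine obstacle here: the weight of the argument has already been carried by Proposition~\ref{m_P=n_P}, Theorem~\ref{positive=max} and \cite[Corollary~4.6]{A-U-PS}, and what remains is to fit these together. The only point needing a little care is the sign $\ve\in\{-1,1\}$ recording whether $\s$ (equivalently $1$) lands in $\CC_P(\CM^\eta_P(A,\s))$ or in its opposite; this is handled in each direction simply by running the argument for both $a=1$ and $a=-1$ (equivalently, by allowing $\eta$ to be replaced by $-\eta$).
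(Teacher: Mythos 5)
Your proposal is correct and follows essentially the same route as the paper, whose proof is simply the two citations you use: $(1)\Leftrightarrow(2)$ by \cite[Corollary~4.6]{A-U-PS} and $(2)\Leftrightarrow(3)$ by Theorem~\ref{positive=max}. Your extra bookkeeping (reducing to $P\in\wt X_F$, invoking Proposition~\ref{m_P=n_P} and Lemma~\ref{bleuarg}, and tracking the sign via $\pm 1$) just spells out the details the paper leaves implicit.
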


\begin{proof} The first two statements are equivalent by \cite[Corollary~4.6]{A-U-PS} and the last two statements are
equivalent by Theorem~\ref{positive=max}.
\end{proof}

\subsection{Formally real algebras with involution}

In this subsection we prove the positive cone analogue of the classical 
Artin-Schreier theorem for fields, which states that
$F$ is formally real if and only if $0$ is not  a nontrivial sum of squares
(see Theorem~\ref{fr} $(1)\Leftrightarrow (3)$).

\begin{lemma}\label{sta}
  The following statements are equivalent:
  \begin{enumerate}[$(1)$]
    \item\label{sta1} There is $d \in \Sym(D,\vt)^\x$ such that
      $\qf{d}_\vt$ is strongly anisotropic;
    \item\label{sta2} There is $a \in \Sym(M_\ell(D),\vt^t)^\x$ such that $\qf{a}_{\vt^t}$
      is strongly anisotropic.
  \end{enumerate}
\end{lemma}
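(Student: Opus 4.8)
The plan is to transport both statements through the collapsing Morita equivalence $g\colon\Herm(M_\ell(D),\vt^t)\to\Herm(D,\vt)$ appearing in diagram~\eqref{diagram} (the case $\Phi=I_\ell$, $\ve=1$), using its two basic features: being an equivalence of categories, $g$ and $g^{-1}$ send anisotropic forms to anisotropic forms and isotropic forms to isotropic forms; and, since the induced map on Witt groups is $W(F)$-linear, $g(\qf{\bar u}\ox h)$ and $\qf{\bar u}\ox g(h)$ are isometric for every quadratic form $\qf{\bar u}$ over $F$ (exactly the argument used in the proof of Theorem~\ref{morco}). Recall also that $g$ maps $\qf{\diag(d_1,\ldots,d_\ell)}_{\vt^t}$ to $\qf{d_1,\ldots,d_\ell}_\vt$. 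Once these facts are available, the lemma reduces to bookkeeping with tuples of elements of an ordering.

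For $\eqref{sta1}\Rightarrow\eqref{sta2}$ I would take $a:=d\cdot I_\ell\in\Sym(M_\ell(D),\vt^t)^\x$, so that $g(\qf{a}_{\vt^t})\simeq\ell\x\qf{d}_\vt$. The key observation is that $\ell\x\qf{d}_\vt$ is again strongly anisotropic: for any $P\in X_F$ and any tuple $\bar u$ of elements of $P$, the form $\qf{\bar u}\ox(\ell\x\qf{d}_\vt)$ is isometric to $\qf{\bar w}\ox\qf{d}_\vt$, where $\bar w$ is the concatenation of $\ell$ copies of $\bar u$ (still a tuple of elements of $P$), and the latter is anisotropic because $\qf{d}_\vt$ is strongly anisotropic. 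Transporting back: for such $\bar u$ we have $g(\qf{\bar u}\ox\qf{a}_{\vt^t})\simeq\qf{\bar u}\ox g(\qf{a}_{\vt^t})\simeq\qf{\bar u}\ox(\ell\x\qf{d}_\vt)$, which is anisotropic, hence so is $\qf{\bar u}\ox\qf{a}_{\vt^t}$; thus $\qf{a}_{\vt^t}$ is strongly anisotropic.

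For $\eqref{sta2}\Rightarrow\eqref{sta1}$ I would apply Lemma~\ref{morita-diag} to $(M_\ell(D),\vt^t)$ and the rank-one form $\qf{a}_{\vt^t}$ to obtain $\ell\x\qf{a}_{\vt^t}\simeq\qf{d\cdot I_\ell}_{\vt^t}$ for some $d\in\Sym(D,\vt)$, necessarily invertible since $a$ is. As in the previous paragraph $\ell\x\qf{a}_{\vt^t}$ is strongly anisotropic, hence so is $\qf{d\cdot I_\ell}_{\vt^t}$, and applying $g$ (again using $W(F)$-linearity to move $\qf{\bar u}\ox-$ past $g$) shows that $g(\qf{d\cdot I_\ell}_{\vt^t})\simeq\ell\x\qf{d}_\vt$ is strongly anisotropic. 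Finally $\qf{d}_\vt\le\ell\x\qf{d}_\vt$, and $\qf{\bar u}\ox\qf{d}_\vt$ is a subform of the anisotropic form $\qf{\bar u}\ox(\ell\x\qf{d}_\vt)$ for every relevant $\bar u$, so $\qf{d}_\vt$ is strongly anisotropic; this $d$ works.

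The one step that is not pure routine is the stability of strong anisotropy under passing to $\ell$-fold orthogonal multiples and to subforms. Ordinary anisotropy is preserved by neither operation in general, but strong anisotropy is: its defining condition — anisotropy of $\qf{\bar u}\ox h$ for all tuples $\bar u$ drawn from an ordering of $F$ — is patently closed under enlarging $\bar u$ and under replacing $h$ by a subform. Everything else is an unwinding of the Morita dictionary set up in Section~\ref{prelims}.
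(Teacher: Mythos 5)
Your overall strategy --- transporting everything through the collapsing equivalence $g$ and using that a Morita equivalence preserves (an)isotropy of forms and of their integer multiples --- is exactly the paper's, and your direction $\eqref{sta1}\Rightarrow\eqref{sta2}$ is correct and essentially identical to the paper's (take $a=d\cdot I_\ell$, use $g(\qf{a}_{\vt^t})=\ell\x\qf{d}_\vt$). However, the first step of your direction $\eqref{sta2}\Rightarrow\eqref{sta1}$ is wrong as stated: the isometry $\ell\x\qf{a}_{\vt^t}\simeq\qf{d\cdot I_\ell}_{\vt^t}$ with a \emph{single} coefficient $d$ cannot hold, since the two sides have different ranks (the left-hand side is a diagonal form with $\ell$ entries, and under $g$ it becomes an $\ell^2$-dimensional form over $D$, whereas the right-hand side is sent to the $\ell$-dimensional form $\ell\x\qf{d}_\vt$). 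In the convention of the Morita dictionary the underlying module of $\qf{a}_{\vt^t}$ is $M_\ell(D)\cong(D^\ell)^\ell$, of rank $\ell$, so Lemma~\ref{morita-diag} produces $\ell$ coefficients, $\ell\x\qf{a}_{\vt^t}\simeq\qf{d_1 I_\ell,\ldots,d_\ell I_\ell}_{\vt^t}$ (this is exactly how it is invoked in Lemma~\ref{isot-univ2}), not one. The slip is easily repaired: keep all the $d_i$ (invertible since $a$ is), note $\qf{d_1 I_\ell}_{\vt^t}\le\ell\x\qf{a}_{\vt^t}$, and run your multiple-and-subform argument with $d=d_1$; or, as the paper does, avoid Lemma~\ref{morita-diag} altogether, diagonalize $g(\qf{a}_{\vt^t})\simeq\qf{d_1,\ldots,d_\ell}_\vt$ directly, observe that this form is strongly anisotropic because multiples correspond to multiples under $g$, and take the subform $\qf{d_1}_\vt$.

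A second, smaller point: your gloss of strong anisotropy (``anisotropy of $\qf{\bar u}\ox h$ for all tuples $\bar u$ drawn from an ordering of $F$'') is not the notion the paper uses; there, as the proof of Theorem~\ref{fr} makes explicit, $h$ is strongly anisotropic when no multiple $n\x h$ is isotropic --- no ordering is involved, and the lemma must also make sense when $F$ has no orderings. Your argument survives this correction, because the only properties you actually use, namely stability of the condition under passing to integer multiples and to subforms, hold verbatim for the correct notion (take $\bar u=(1,\ldots,1)$). Still, as written your proof establishes an equivalence for a different, ordering-dependent condition, so it should be restated with the intended definition.
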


\begin{proof}
 $ \eqref{sta1} \Rightarrow \eqref{sta2}$: Take $a = \diag(d,\ldots, d)$. The result
  follows since $g(\qf{a}_{\vt^t})=\ell \x \qf{d}_\vt$ and Morita equivalence preserves
  (an)isotropy. 

 $ \eqref{sta2} \Rightarrow \eqref{sta1}$:  Let $d_1, \ldots, d_\ell \in \Sym(D,\vt)^\x$
 such that $g(\qf{a}_{\vt^t})=\qf{d_1, \ldots, d_\ell}_\vt$. By hypothesis and Morita theory, 
 $\qf{d_1, \ldots, d_\ell}_\vt$ is strongly anisotropic.  It
  follows that the form $\qf{d_1}_\vt$ is also strongly anisotropic.
\end{proof}

\begin{thm}\label{fr}
  The following statements are equivalent:
  \begin{enumerate}[$(1)$]
    \item\label{fr1} $(A,\s)$ is formally real;
    \item\label{fr3} There is $a \in \Sym(A,\s)^\x$ and $P \in X_F$ such that 
    $\CC_P(a) \cap -\CC_P(a)=\{0\}$; 
    \item\label{fr4} There is $b \in \Sym(A,\s)^\x$ such that $\qf{b}_\s$ is
      strongly anisotropic.
  \end{enumerate}
\end{thm}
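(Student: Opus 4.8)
The plan is to prove the chain of implications $\eqref{fr1} \Rightarrow \eqref{fr4} \Rightarrow \eqref{fr3} \Rightarrow \eqref{fr1}$, using the Morita reductions already established and translating everything to the division algebra $(D,\vt)$, where signatures are easy to control. Throughout I would first use Corollary~\ref{cor:1} and the standing assumption to arrange that $\ve=1$ whenever $(A,\s)$ is formally real, so that $\s$ and $\vt$ have the same type and Theorem~\ref{morco} applies.

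For $\eqref{fr1} \Rightarrow \eqref{fr4}$: suppose $(A,\s)$ is formally real, fix a prepositive cone $\CP$ over some $P \in X_F$. By Lemma~\ref{inv_pos} there is an invertible $a \in \CP$. By Lemma~\ref{isot-univ} (contrapositive), if $\qf{a}_\s$ were not strongly anisotropic then some $\ell \x$-multiple of an isotropic subform would be universal; more directly, I would argue that $\qf{b}_\s$ strongly anisotropic follows because $\das(r \x \qf{a}_\s) \subseteq \CP$ for all $r$ by (P2), (P3), and if $\qf{a}_\s$ were weakly isotropic then $r\x\qf{a}_\s$ would be isotropic for some $r$, hence $\ell r \x \qf{a}_\s$ universal by Lemma~\ref{isot-univ}, contradicting (P5). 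So $b := a$ works. For $\eqref{fr4} \Rightarrow \eqref{fr3}$: given $b \in \Sym(A,\s)^\x$ with $\qf{b}_\s$ strongly anisotropic, pick any $P \in X_F$ (note $X_F \neq \varnothing$ here, since strong anisotropy forces $W(A,\s)$ non-torsion hence, via Pfister and the structure theory, $F$ formally real — alternatively I would observe strong anisotropy of a form over $F$ obtained by Morita transfer forces $X_F \neq \varnothing$). Then $\CC_P(\{b\})$ consists of sums $\sum u_i \s(x_i) b x_i$, each of which lies in $\das$ of some $u$-scaled multiple of $\qf{b}_\s$; since $\qf{b}_\s$ is strongly anisotropic, no such form is isotropic, so $0$ is not a nontrivial sum of a nonzero element and its negative, giving $\CC_P(\{b\}) \cap -\CC_P(\{b\}) = \{0\}$. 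For $\eqref{fr3} \Rightarrow \eqref{fr1}$: if $\CC_P(a) \cap -\CC_P(a) = \{0\}$, then (as noted after Definition~\ref{extension}) $\CC_P(a)$ is automatically a prepositive cone on $(A,\s)$ over $P$, so $(A,\s)$ is formally real by definition.

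The main technical obstacle is the passage in $\eqref{fr4} \Rightarrow \eqref{fr3}$ ensuring $X_F \neq \varnothing$ and controlling when $0 \in \das$ of the relevant forms: one needs that a single nonzero term $u\s(x)bx$ being cancelled by $v\s(y)by$ would make $\qf{b,b}_\s$ (scaled) isotropic, which is where Lemma~\ref{isot-univ2} and the strong anisotropy hypothesis are essential. I expect the cleanest route is to reduce via the Morita diagram~\eqref{diagram} and Lemma~\ref{sta} to $(D,\vt)$: strong anisotropy of $\qf{b}_\s$ over $(A,\s)$ is equivalent, via $g \circ s \circ f_*$ and Lemma~\ref{sta}, to strong anisotropy of some $\qf{d}_\vt$ over $(D,\vt)$, and there the argument that $\CC_P(\{d\})$ is pointed is transparent because $D$ is a division algebra (so no zero divisors interfere with diagonalization), after which Theorem~\ref{morco} transports the resulting prepositive cone back to $(A,\s)$. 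This also handles the subtlety that $\CC_P(a)$ in $\eqref{fr3}$ should be formed over a $P$ that actually exists in $X_F$, which strong anisotropy guarantees since a strongly anisotropic form witnesses that $W(F)$ — and hence $F$ — is formally real.
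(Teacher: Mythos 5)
Your steps $(1)\Rightarrow(3)$ and $(2)\Rightarrow(1)$ are correct ($(2)\Rightarrow(1)$ is exactly the paper's observation, and your $(1)\Rightarrow(3)$ is a valid variant of the paper's argument: you work directly with an invertible element of the cone via Lemma~\ref{inv_pos} and Lemma~\ref{isot-univ}, whereas the paper picks an element of maximal signature in $(D,\vt)$ and compares signatures). The genuine gap is the link $(3)\Rightarrow(2)$, where you take the strongly anisotropic element $b$ itself (or its Morita image $d$ in $(D,\vt)$) and claim that $\CC_P(b)\cap-\CC_P(b)=\{0\}$ for a suitable $P$. The faulty sentence is ``each of which lies in $\das$ of some $u$-scaled multiple of $\qf{b}_\s$; since $\qf{b}_\s$ is strongly anisotropic, no such form is isotropic'': strong anisotropy only says that the forms $r\x\qf{b}_\s$, with all coefficients equal to $b$, are anisotropic, whereas a nonzero element of $\CC_P(b)\cap-\CC_P(b)$ only yields an isotropy of a form of type $\qf{u_1,\ldots,u_k}\ox\qf{b}_\s$ with $u_i\in P\sm\{0\}$, and such $P$-scaled forms can be isotropic even when $\qf{b}_\s$ is strongly anisotropic. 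In fact, if $\CC_P(b)$ were proper it would be a prepositive cone over $P$ containing $b$ (see the comment after Definition~\ref{extension}), and the remark immediately following Theorem~\ref{fr} shows this can fail for \emph{every} $P$: over a formally real field that is not SAP, take $(A,\s)=(M_n(F),t)$ and $b$ the Gram matrix of a strongly anisotropic, totally indefinite quadratic form; then $\qf{b}_t$ is strongly anisotropic but $b$ lies in no prepositive cone, since membership in a (pre)positive cone over $P$ forces $|\sign_P^\eta\qf{b}_t|=m_P(M_n(F),t)=n$, i.e. definiteness at $P$. The reduction to $(D,\vt)$ does not repair this: by Proposition~\ref{description}, $\CC_P(d)$ is proper only when $d\in\pm\CM_P^\eta(D,\vt)$, i.e. when $d$ has maximal signature at $P$, and strong anisotropy of $\qf{d}_\vt$ does not guarantee maximality at any ordering.

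The repair is already contained in your own parenthetical remark: strong anisotropy of $\qf{b}_\s$ implies its Witt class is not torsion, so by Pfister's local-global principle some signature is nonzero, hence $\wt X_F\neq\varnothing$ and $(A,\s)$ is formally real by Proposition~\ref{nn}; this is $(3)\Rightarrow(1)$ and is essentially the paper's proof run forwards (the paper argues by contraposition: $\wt X_F=\varnothing$ makes every form weakly hyperbolic, hence weakly isotropic). Statement $(2)$ should then be derived from $(1)$ rather than from $(3)$: if $\CP$ is a prepositive cone over $P$ and $a\in\CP$ is invertible (Lemma~\ref{inv_pos}), then $\CC_P(a)\subseteq\CP$ by (P2) and (P3), so $\CC_P(a)$ is proper. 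With the implications reorganised as $(1)\Leftrightarrow(2)$ and $(1)\Leftrightarrow(3)$, your remaining arguments go through; as organised, the cycle $(1)\Rightarrow(3)\Rightarrow(2)\Rightarrow(1)$ does not close.
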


\begin{proof}
  $\eqref{fr1} \Leftrightarrow \eqref{fr3}$ is clear, so we prove the equivalence of
  \eqref{fr1} and \eqref{fr4}.

By definition $(A,\s)$ is formally real if and only if $X_{(A,\s)}\not= \varnothing$, which is equivalent to
 $\wt X_F \not = \varnothing$,   by Proposition~\ref{nn}.

  $\eqref{fr1} \Rightarrow \eqref{fr4}$: Let $P \in \wt X_F$, and let $d \in
  \Sym(D,\vt)^\x$ such that $\sign^\eta_P \qf{d}_\vt = m_P(D,\vt)$.
  Assume that $\qf{d}_\vt$ is weakly isotropic, i.e. $0 = \sum_{i=1}^k
  \s(x_i)dx_i$, for some $k \in \N$, $x_i \in D^\x$. Since $D$ is a division
  ring, we have $\s(x_1)dx_1 \not = 0$ and thus $k \ge 2$. Then $-d  =
  \sum_{i=2}^k \s(x_ix_1^{-1})dx_ix_1^{-1}$, and in particular
  $\qf{-d,b_2,\ldots,b_{k-1}}_\vt \simeq \qf{d,\ldots,d}_\vt$ for some
  $b_2,\ldots,b_k \in \Sym(D,\vt)^\x$. We obtain a contradiction by taking
  signatures on both sides, since $d$ has maximal signature.

  Therefore there is $d \in \Sym(D,\vt)^\x$ such that $\qf{d}_\vt$ is
  strongly anisotropic, and the result follows by Lemma~\ref{sta}
  and diagram~\eqref{diagram}.

  $\eqref{fr4} \Rightarrow \eqref{fr1}$: Assume that $(A,\s)$ is not formally real. Then $\wt X_F = \varnothing$ and
by Pfister's local-global principle \cite[Theorem~4.1]{LU1}, every hermitian form over $(A,\s)$ is weakly hyperbolic, and
  in particular weakly isotropic, a contradiction.
\end{proof}

\begin{remark}
  In Theorem~\ref{fr}, the element $a$ in statement \eqref{fr3} obviously belongs
  to a prepositive cone on $(A,\s)$. However, the element $b$ from
  statement \eqref{fr4} may be chosen so that it does not belong to any prepositive cone, as the following
  example shows.   Let $(A,\s) = (M_n(F),t)$ and let $b \in \Sym(M_n(F),t)^\x$ be such that
  $\qf{b}_t$ is strongly anisotropic. Let
  $q$ be the quadratic form over $F$ with Gram matrix  $b$. Assume that $b$
  belongs to some  positive cone $\CP$ over $P \in X_F$. 
   Thus 
   $\CP
  = \varepsilon  \CC_P(\CM^\eta_P(M_n(F),t))$ for some $\varepsilon \in \{-1,1\}$, and $\sign^\eta_P
  \qf{b}_{t} =  \varepsilon m_P(M_n(F),t)$ by Theorem~\ref{positive=max}.  
  Since $\sign_P^\eta \qf{1}_t=n$, it follows that $m_P(M_n(F),t)=n$.
  By definition of $\sign_P^\eta$ there exists $\delta \in \{-1,1\}$ such that   
  $\sign_P q  =\sign_P^\eta \qf{b}_t= \delta n$, i.e. $q$ is definite at $P$.
  If this can be done for every $b \in \Sym(A,\s)^\x$, i.e. for every nonsingular
  quadratic form over
  $F$, we obtain that  every strongly anisotropic nonsingular quadratic form over $F$ is definite, i.e.
  that $F$ is a SAP field (cf. \cite{ELP} and \cite{Prestel73}), but not every formally real field is SAP.
\end{remark}

 The analogy between the field and algebra with involution cases presented in Theorem~\ref{fr}
 can be completed by the following result:

\begin{prop}\label{tor_fr}
 The following statements are equivalent:
  \begin{enumerate}[$(1)$]
    \item  $(A,\s)$ is formally real;
    \item $W(A,\s)$ is not torsion;
    \item $X_F\not=\Nil[A,\s]$.
 \end{enumerate}
\end{prop}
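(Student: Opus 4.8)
The plan is to prove the three-way equivalence by establishing a cycle of implications, leaning heavily on the machinery already developed. The equivalence $(1)\Leftrightarrow(3)$ is essentially already in hand: by Proposition~\ref{nn}, $(A,\s)$ has a prepositive cone over some $P\in X_F$ if and only if that $P$ lies in $\wt X_F = X_F\setminus\Nil[A,\s]$, so $(A,\s)$ is formally real (i.e.\ $Y_{(A,\s)}\neq\varnothing$) if and only if $\wt X_F\neq\varnothing$, which says precisely $X_F\neq\Nil[A,\s]$. So the real content is the link with the torsion condition on $W(A,\s)$.

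For $(1)\Rightarrow(2)$ I would simply invoke Proposition~\ref{not_torsion}, which already states that if $(A,\s)$ is formally real then $W(A,\s)$ is not torsion; nothing new is needed here, and indeed the paper flags this converse as the content to be supplied.

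For $(2)\Rightarrow(3)$ (equivalently $(2)\Rightarrow(1)$), I would argue by contraposition: suppose $X_F=\Nil[A,\s]$, i.e.\ $\wt X_F=\varnothing$. Then for every $P\in X_F$ the signature $\sign_P^\eta$ vanishes on $W(A,\s)$, so by Pfister's local-global principle \cite[Theorem~4.1]{LU1} every element of $W(A,\s)$ is torsion; that is, $W(A,\s)$ is torsion. (If $X_F=\varnothing$ as well, the local-global principle still gives that $W(A,\s)$ is torsion, since there are no obstructing orderings.) This is exactly the style of argument already used in the proof of the implication $\eqref{fr4}\Rightarrow\eqref{fr1}$ in Theorem~\ref{fr}.

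Assembling: $(1)\Rightarrow(2)$ by Proposition~\ref{not_torsion}, $(2)\Rightarrow(3)$ by the contrapositive via Pfister local-global, and $(3)\Rightarrow(1)$ by Proposition~\ref{nn} (pick $P\in\wt X_F$ and use Example~\ref{mp} together with Theorem~\ref{morco} to produce a prepositive cone on $(A,\s)$). I do not expect any serious obstacle here; the only point requiring a moment's care is making sure the non-formally-real case $X_F=\varnothing$ is correctly handled in the step $(2)\Leftrightarrow(3)$, but the local-global principle covers it uniformly. The proof is essentially a bookkeeping exercise knitting together Propositions~\ref{not_torsion} and \ref{nn} with \cite[Theorem~4.1]{LU1}.
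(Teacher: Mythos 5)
Your proposal is correct and follows exactly the same cycle of implications as the paper's proof: $(1)\Rightarrow(2)$ by Proposition~\ref{not_torsion}, $(2)\Rightarrow(3)$ via the contrapositive using Pfister's local-global principle \cite[Theorem~4.1]{LU1}, and $(3)\Rightarrow(1)$ by Proposition~\ref{nn}. The paper's proof is just a terser statement of the same three steps, so nothing further is needed.
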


\begin{proof} $(1)\Rightarrow (2)$: This is Proposition~\ref{not_torsion}. $(2)\Rightarrow (3)$: By 
Pfister's local-global principle \cite[Theorem~4.1]{LU1}. $(3)\Rightarrow (1)$: By Proposition~\ref{nn}.
\end{proof}

\subsection{Intersections of positive cones}

Recall that Hilbert's 17th problem asks if a polynomial that takes only non-negative values over
the reals can be represented as a sum of squares of rational functions. This question was answered
in the affirmative by E.~Artin and a key step in his proof consists of showing that the totally
positive elements in a field are precisely the sums of squares. In \cite{P-S} Procesi and Schacher
considered noncommutative generalizations of Artin's result, and in \cite{A-U-PS} we proved a
general sums-of-hermitian squares version of one of the main results in their paper:

\begin{thm}[{\cite[Theorem~3.6]{A-U-PS}}]    \label{PSthm}
  Let $b_1,\ldots,b_t \in F^\x$ and consider the Harrison set
  $Y = H(b_1,\ldots,b_t)$. Let $a \in \Sym(A,\s)^\x$ be $\eta$-maximal on $Y$. Then the following
  two statements are equivalent for $u \in \Sym(A,\s)$:
  \begin{enumerate}[$(1)$]
    \item $u$ is $\eta$-maximal on $Y$;
    \item There is $s \in \N$ such that $u \in D_{(A,\s)} (s \x
      \pf{b_1,\ldots,b_t}\ox \qf{a}_\s)$.
  \end{enumerate}
\end{thm}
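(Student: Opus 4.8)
The plan is to treat the two implications by entirely different methods: $(2)\Rightarrow(1)$ is a direct computation inside the theory of positive cones developed above, whereas $(1)\Rightarrow(2)$ carries the genuine ``Hilbert~17'' content and will in the end be reduced to the classical Artin--Pfister theorem over $F$. For $(2)\Rightarrow(1)$, write $\tau:=\pf{b_1,\ldots,b_t}$; this Pfister form satisfies $\sign_P\tau=2^t$ for $P\in Y=H(b_1,\ldots,b_t)$ and $\sign_P\tau=0$ otherwise, and its diagonal entries over $F$ are the products $\prod_{i\in S}b_i$ ($S\subseteq\{1,\ldots,t\}$), hence positive at every $P\in Y$. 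Fix $P\in Y$. If $P\in\Nil[A,\s]$ then $\sign_P^\eta\equiv 0$ and every element is trivially $\eta$-maximal at $P$; so assume $P\in\wt X_F$. Since $a$ is invertible and $\eta$-maximal on $Y$, Proposition~\ref{m_P=n_P} gives $\sign_P^\eta\qf{a}_\s=m_P(A,\s)=n_P$, so diagonalizing $2^s\x\tau\qf{a}_\s\simeq\qf{c_1,\ldots,c_N}_\s$ — each $c_j$ of the form $(\prod_{i\in S}b_i)\,a$ and hence invertible — the $W(F)$-module property of $\sign_P^\eta$ (\cite[Theorem~2.6]{A-U-prime}) yields $\sign_P^\eta\qf{c_j}_\s=n_P=m_P(A,\s)$, i.e.\ $c_j\in\CM_P^\eta(A,\s)$. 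Therefore $\das(2^s\x\tau\qf{a}_\s)=\das\qf{c_1,\ldots,c_N}_\s\subseteq\CC_P(\CM_P^\eta(A,\s))$ (take all weights $1$), and by Lemma~\ref{bleuarg} every such element is $\eta$-maximal at $P$. As $P\in Y$ was arbitrary, $(2)\Rightarrow(1)$ follows.

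For $(1)\Rightarrow(2)$, assume $u$ is $\eta$-maximal on $Y$; we may take $u\neq 0$. Put $T:=\bigcup_s\das(2^s\x\tau\qf{a}_\s)$ and suppose, towards a contradiction, that $u\notin T$. One checks that $T$ is closed under sums (absorb into a larger power of $2$) and under $x\mapsto\s(y)xy$, that $a\in T$, and — the step that brings $\tau$ into play — that $b_jT\subseteq T$ for every $j$, since the Pfister form $\tau$ is round, so $\qf{b_j}\ox\tau\simeq\tau$. Hence $T$ is stable under multiplication by the preordering $T_F:=\sum(\sum F^2)\prod_{i\in S}b_i$ of $F$, which satisfies $X_{T_F}=Y$. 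If $Y\cap\wt X_F=\varnothing$ (in particular if $Y=\varnothing$), then $\tau\qf{a}_\s$ has vanishing signature at every ordering of $F$, hence is torsion by Pfister's local--global principle \cite[Theorem~4.1]{LU1}; so $N\x\tau\qf{a}_\s$ is hyperbolic for some $N$, and then $2^s\x\tau\qf{a}_\s$ is nonsingular and isotropic once $2^s\geq N$, hence universal (it contains a hyperbolic plane $\qf{1,-1}_\s$, which is universal by the proof of Lemma~\ref{isot-univ}), so $u\in T$ — a contradiction. So assume $Y\cap\wt X_F\neq\varnothing$; by $(2)\Rightarrow(1)$ together with Lemma~\ref{bleuarg}, $T\subseteq\CC_P(\CM_P^\eta(A,\s))$ for every $P\in Y\cap\wt X_F$, so $T$ is proper.

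The decisive claim is now: \emph{since $u\notin T$, there is $P\in Y$ for which $\CC_P(T\cup\{-u\})$ is proper.} Granting it, $\CC_P(T\cup\{-u\})$ is a prepositive cone over $P$, which extends by Zorn's lemma (unions of chains of prepositive cones remain prepositive, using Lemma~\ref{same-base-ordering} for axiom (P4)) to a positive cone $\CP$ on $(A,\s)$ over $P$ with $a,-u\in\CP$; in particular $P\in\wt X_F$ by Proposition~\ref{nn}. By Theorem~\ref{positive=max}, $\CP=\varepsilon\,\CC_P(\CM_P^\eta(A,\s))$ with $\CP\cap A^\x=\varepsilon\,\CM_P^\eta(A,\s)\sm\{0\}$ for some $\varepsilon\in\{-1,1\}$; since $a\in\CP$ is invertible and $\eta$-maximal at $P$, $\sign_P^\eta\qf{a}_\s=m_P(A,\s)=n_P>0$ forces $\varepsilon=1$, so $\CP=\CC_P(\CM_P^\eta(A,\s))$. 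Then $-u\in\CP$, whence $-u$ is $\eta$-maximal at $P$ by Lemma~\ref{bleuarg}. But $\sign_P^\eta\qnd{-u}_\s=-\sign_P^\eta\qnd{u}_\s$, and $u$ and $-u$ cannot both attain the maximal signature among hermitian forms of rank $\rk\qnd{u}_\s\geq 1$ at $P\in\wt X_F$ (where $n_P\geq 1$). This contradiction shows $u\in T$, and $(1)\Rightarrow(2)$ follows.

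The main obstacle is precisely the decisive claim above — a noncommutative Positivstellensatz step — and it is here that the classical Artin--Pfister theory over $F$ must be invoked, since $T$ is only a $T_F$-module on $\Sym(A,\s)$ and passing from ``$u\notin T$'' to a concrete witnessing ordering in $Y$ is not a formal matter. The route I would take is: reduce, via Morita equivalence (Theorem~\ref{morco} and diagram~\eqref{diagram}) and the scaling of Proposition~\ref{prop-scaling}, to a division algebra $(D,\vt)$ with $\vt$ of the same type as $\s$ and $a=1$; pass to the quadratic trace form $\Tr_{D/F}^{*}\qf{u}_\vt$ over $F$, whose signature at each $P\in Y$ is a fixed positive multiple of $\sign_P^\eta\qf{u}_\vt$, so that $\eta$-maximality of $u$ on $Y$ becomes positive-definiteness of $\Tr_{D/F}^{*}\qf{u}_\vt$ on the Harrison set $Y$; apply the classical Pfister representation theorem (Artin's solution to Hilbert's 17th problem, relative to $Y$) to get a representation of $\Tr_{D/F}^{*}\qf{u}_\vt$ by $2^s\x\pf{b_1,\ldots,b_t}$ tensored with a reference quadratic form; and transport the identity back through the transfer, the scaling, and the Morita equivalence. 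The delicate points of this route are the exact proportionality between the hermitian signature and the trace-form signature together with the bookkeeping of reference forms under Morita and transfer, and verifying that the classical certificate translates back to a representation by precisely $2^s\x\pf{b_1,\ldots,b_t}\qf{a}_\s$ and not merely by a Witt-equivalent form.
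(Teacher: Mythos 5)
Your direction $(2)\Rightarrow(1)$ is fine: diagonalizing $2^s\x\pf{b_1,\ldots,b_t}\qf{a}_\s$ into entries $(\prod_{i\in S}b_i)\,a$, comparing signatures, and invoking Lemma~\ref{bleuarg} (which does not depend on Theorem~\ref{PSthm}, so there is no circularity) is essentially the signature computation that this direction requires.

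The implication $(1)\Rightarrow(2)$, which is the actual content of the theorem, is not proved. Your whole argument hangs on the ``decisive claim'' (if $u\notin T$ then $\CC_P(T\cup\{-u\})$ is proper for some $P\in Y$), which you only grant; this is precisely the direct preordering-extension step that the paper itself warns about in the remark following Corollary~\ref{Artin}: for prepositive cones on algebras with involution ``such a direct approach does not seem to be obvious''. Your fallback sketch (reduce by Morita theory and scaling to a division algebra with $a=1$, pass to the quadratic trace form of $\qf{u}_\vt$, apply the classical relative Artin--Pfister theorem over $F$, and transport back) does not close the gap either: as you yourself concede, the classical certificate lives at the level of the trace form, and no mechanism is given for pulling it back through the Scharlau transfer to an honest representation of $u$ by $2^s\x\pf{b_1,\ldots,b_t}\qf{a}_\s$ rather than a Witt-equivalent statement --- that back-translation is exactly where the difficulty sits. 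The proof the paper relies on (the statement is imported from \cite[Theorem~3.6]{A-U-PS}, with the comment that it is ``in essence the same as the one in the field case based on Pfister's local-global principle'') sidesteps both obstacles: one checks that a comparison form, essentially $\pf{b_1,\ldots,b_t}\ox\bigl(k\x\qf{a}_\s\perp -\qnd{u}_\s\bigr)$ with $k=\rk\qnd{u}_\s$, has zero signature at every ordering of $F$ (zero off $Y$ because the Pfister factor does, zero on $Y$ by the $\eta$-maximality of both $a$ and $u$), hence is torsion by Pfister's local--global principle \cite[Theorem~4.1]{LU1}; a $2$-power multiple is then hyperbolic, Witt decomposition plus equality of ranks upgrades this to an isometry, and since $u$ is represented by $\qf{u}_\s\simeq\qnd{u}_\s\perp 0$ and Pfister forms represent $1$, one reads off $u\in\das\bigl(2^s\x\pf{b_1,\ldots,b_t}\qf{a}_\s\bigr)$. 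An argument of this kind (or a genuine noncommutative Positivstellensatz establishing your decisive claim) is what is missing from your proposal.
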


Our proof uses signatures of hermitian forms and is in essence the same as the one in the field case based on 
Pfister's local-global principle and is straightforward when $A$ is an $F$-division algebra. 

\begin{lemma}\label{max-Q}
 Let $Y = H(b_1,\ldots,b_t)$ and let $a \in \Sym(A,\s)^\x$ be such that
 $a \in \CC_P(\CM^\eta_P(A,\s))$ for every $P \in Y$ \tu{(}i.e.  $a$ is $\eta$-maximal on $Y$ by 
 Lemma~\ref{bleuarg}\tu{)}.
 Let $u \in \Sym(A,\s)$. The following two statements are equivalent:
 \begin{enumerate}[$(1)$]
   \item $u$ is $\eta$-maximal on $Y$;
   \item For every $\CQ \in X_{(A,\s)}$  with $\CQ_F\in Y$, $u \in \CQ$ if and only if $a \in \CQ$.
 \end{enumerate}
\end{lemma}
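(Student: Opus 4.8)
The strategy is to reduce the statement to Theorem~\ref{PSthm} by showing that the set-theoretic condition in (2) of Lemma~\ref{max-Q} is equivalent to the membership condition in Theorem~\ref{PSthm}(2). The key observation is Theorem~\ref{positive=max}: for every $P \in \wt X_F$ the only positive cones over $P$ are $\pm\CC_P(\CM^\eta_P(A,\s))$, and $\CP\cap A^\x = \ve\CM^\eta_P(A,\s)\sm\{0\}$ for the appropriate sign $\ve$. Combined with Lemma~\ref{bleuarg}, which says that for invertible-or-not $u$ and $P\in\wt X_F$ one has $u\in\CC_P(\CM^\eta_P(A,\s))$ iff $u$ is $\eta$-maximal at $P$, this lets us translate statements about positive cones $\CQ$ with $\CQ_F = P$ into statements about $\eta$-maximality at $P$ (or at $-\eta$, i.e. signature $-m_P$, for the opposite cone).

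First I would note that $Y = H(b_1,\dots,b_t) \subseteq \wt X_F$: indeed $a \in \Sym(A,\s)^\x$ is $\eta$-maximal on $Y$, so for each $P\in Y$ we have $\sign_P^\eta\qf{a}_\s = m_P(A,\s)$, and if $P$ were in $\Nil[A,\s]$ then $m_P(A,\s)=0$, forcing $\CC_P(\CM^\eta_P(A,\s))$ to contain the invertible element $a$ with zero signature; but actually the cleanest route is: $\CC_P(\CM^\eta_P(A,\s))$ being a prepositive cone over $P$ forces $P\in\wt X_F$ by Proposition~\ref{nn}, and such a cone exists precisely when $a$ lies in it, which holds by hypothesis. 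So for every $P\in Y$, $P\in\wt X_F$ and the two positive cones over $P$ are $\CC_P(\CM^\eta_P(A,\s))$ and its negative. For the equivalence $(1)\Leftrightarrow(2)$: assume (1), that $u$ is $\eta$-maximal on $Y$. Fix $\CQ\in X_{(A,\s)}$ with $P:=\CQ_F\in Y$. If $\CQ = \CC_P(\CM^\eta_P(A,\s))$, then by Lemma~\ref{bleuarg} both $a\in\CQ$ (hypothesis) and $u\in\CQ$ ($\eta$-maximality of $u$ at $P$), so the biconditional "$u\in\CQ \Leftrightarrow a\in\CQ$" holds (both true). If instead $\CQ = -\CC_P(\CM^\eta_P(A,\s))$, I claim both $u\notin\CQ$ and $a\notin\CQ$: for the invertible $a$ this is immediate since $a\in\CC_P(\CM^\eta_P(A,\s))$ and $\CC_P\cap-\CC_P = \{0\}$ with $a\neq 0$; for $u$, note $u\in-\CC_P(\CM^\eta_P(A,\s))$ would give, via Lemma~\ref{bleuarg} applied with $-\eta$, that $\qnd{u}_\s$ has signature $-m_P(A,\s)$ at $P$ rather than $+m_P(A,\s)$, contradicting $\eta$-maximality of $u$ at $P$ (unless $\qnd u_\s$ has rank $0$, in which case $u\in\CC_P(\CM^\eta_P)\cap(-\CC_P(\CM^\eta_P))$ forces $u=0$ and the biconditional still holds as both sides can be arranged — here one must be slightly careful about the zero/degenerate case, see below). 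So again the biconditional holds (both false). Hence (2).

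For $(2)\Rightarrow(1)$: assume (2) and suppose $u$ is \emph{not} $\eta$-maximal on $Y$, i.e. there is $P\in Y$ with $\sign_P^\eta\qnd u_\s < m_P(A,\s) = \sign_P^\eta\qnd a_\s$ (using rank-matching as in the definition of $\eta$-maximal, and $a$ invertible). By Lemma~\ref{bleuarg}, $u\notin\CC_P(\CM^\eta_P(A,\s))$. Take $\CQ := \CC_P(\CM^\eta_P(A,\s))\in X_{(A,\s)}$; then $\CQ_F = P\in Y$, $a\in\CQ$ but $u\notin\CQ$, contradicting (2). Therefore $u$ is $\eta$-maximal on $Y$, proving (1).

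The main obstacle I anticipate is the bookkeeping around non-invertible $u$ and the degenerate case $\qnd u_\s$ of rank $0$ (i.e. $u$ with $\qf u_\s$ totally isotropic, which includes $u=0$): one must check that "$u\in\CQ$ iff $a\in\CQ$" is correctly evaluated when $u$ lies in \emph{both} $\pm\CC_P(\CM^\eta_P(A,\s))$ — this happens exactly when $u$ contributes the zero element, and one should confirm via Lemma~\ref{bleuarg} (and the description $\CP\cap A^\x$ in Theorem~\ref{positive=max}) that the argument still goes through, since $0$ lies in every prepositive cone and a non-invertible $\eta$-maximal element is handled entirely by Lemma~\ref{bleuarg}. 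A secondary point is making sure $Y\subseteq\wt X_F$ is invoked cleanly (via Proposition~\ref{nn}) so that Theorem~\ref{positive=max}'s dichotomy into exactly two positive cones over each $P\in Y$ is available; everything else is a direct translation through Lemma~\ref{bleuarg} and the sign conventions $\sign_P^{\pm\eta} = \pm\sign_P^\eta$.
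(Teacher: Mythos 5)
Your argument is essentially the paper's: translate $\eta$-maximality into membership in $\CC_P(\CM^\eta_P(A,\s))$ via Lemma~\ref{bleuarg}, use Theorem~\ref{positive=max} to know that the only positive cones over $P$ are $\pm\CC_P(\CM^\eta_P(A,\s))$, and then read off both implications exactly as in the paper's proof (the paper phrases $(1)\Rightarrow(2)$ as the chain $a\in\CQ \Leftrightarrow \CQ=\CC_{\CQ_F}(\CM^\eta_{\CQ_F}(A,\s)) \Leftrightarrow u\in\CQ$, and $(2)\Rightarrow(1)$ by taking $\CQ=\CC_P(\CM^\eta_P(A,\s))$ directly rather than by contradiction). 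One side claim in your write-up is not correct as stated: the hypothesis does \emph{not} force $Y\subseteq\wt X_F$. If $P\in Y\cap\Nil[A,\s]$ then $m_P(A,\s)=0$, so $\CM^\eta_P(A,\s)=\Sym(A,\s)^\x\cup\{0\}$ and $a\in\CC_P(\CM^\eta_P(A,\s))$ holds trivially; your justification (``such a cone exists precisely when $a$ lies in it'') is circular, since membership of $a$ does not make the set proper. This does not damage the proof: in $(1)\Rightarrow(2)$ the needed fact $\CQ_F\in\wt X_F$ comes for free from the existence of the positive cone $\CQ$ (Proposition~\ref{nn}/Theorem~\ref{positive=max}), and in $(2)\Rightarrow(1)$ any $P\in Y$ at which $u$ fails to be $\eta$-maximal automatically lies in $\wt X_F$, because at a nil ordering all signatures vanish and every element is trivially $\eta$-maximal; with that one-line repair your argument goes through. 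Finally, your worry about $u=0$ (which lies in both cones while $a$ lies in only one) is a genuine edge case of the lemma as stated, but it is one the paper's own proof glosses over as well, and it is irrelevant to the application in Theorem~\ref{intersection}, where $u\neq 0$.
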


\begin{proof}
 $(1) \Rightarrow (2)$: By Theorem~\ref{positive=max},
 the only two prepositive cones over any $Q \in \wt X_F$ are are $-\CC_Q(\CM^\eta_Q(A,\s))$ and 
 $\CC_Q(\CM^\eta_Q(A,\s))$.
 Since $a$ is $\eta$-maximal on $Y$, the following are equivalent, for $\CQ \in
 X_{(A,\s)}$ with $\CQ_F \in Y$:
 \begin{align*}
   a \in \CQ &\Leftrightarrow \CQ = \CC_Q(\CM^\eta_Q(A,\s)) \\
       &\Leftrightarrow u \in \CQ \ \ (\text{since $u$ is $\eta$-maximal on } Y).
 \end{align*}

 $(2) \Rightarrow (1)$: Let $P \in Y$. By hypothesis $a \in \CC_P(\CM^\eta_P(A,\s))$ and by (2) we have
 $u\in    \CC_P(\CM^\eta_P(A,\s))$, i.e. $u$ is $\eta$-maximal at $P$ by Lemma~\ref{bleuarg}.
\end{proof}

We can now reformulate Theorem~\ref{PSthm} in terms of intersections of positive cones. Observe that the element $a$ 
in the next theorem plays the role of the element $1$ in the field case.

\begin{thm}\label{intersection}
Let $b_1,\ldots,b_t \in F^\x$,
  let $Y = H(b_1,\ldots,b_t)$ and let $a \in \Sym(A,\s)^\x$ be such that, for
  every $\CP \in X_{(A,\s)}$ with   $\CP_F \in Y$, $a \in \CP\cup -\CP$. Then
  \[\bigcap \{\CP \in X_{(A,\s)} \mid \CP_F \in Y \textrm{ and } a \in \CP\} =
  \bigcup_{s \in \N} D_{(A,\s)} (s \x \pf{b_1,\ldots,b_t} \ox\qf{a}_\s).\]
\end{thm}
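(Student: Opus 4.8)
The strategy is to deduce this theorem from Theorem~\ref{PSthm} together with the dictionary between $\eta$-maximality and membership in positive cones established in Lemma~\ref{max-Q} (and, underlying it, Lemma~\ref{bleuarg} and Theorem~\ref{positive=max}). First I would observe that the hypothesis on $a$ — that $a \in \CP \cup -\CP$ for every $\CP \in X_{(A,\s)}$ with $\CP_F \in Y$ — is, by Theorem~\ref{positive=max}, exactly the statement that for every $P \in Y$ one has $a \in \CC_P(\CM^\eta_P(A,\s))$ or $a \in -\CC_P(\CM^\eta_P(A,\s))$; replacing $a$ by $-a$ on the part of $Y$ where the sign is negative is not available (we must keep a single $a$), but note that $\pf{b_1,\ldots,b_t}\qf{a}_\s$ and $\pf{b_1,\ldots,b_t}\qf{-a}_\s$ have opposite signatures at each $P \in Y$, and $D_{(A,\s)}$ of a form and its negative are related by sign. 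The cleanest route is: since $|\sign_P^\eta \qf{a}_\s|$ is maximal at every $P\in Y$ (this is what the hypothesis gives, via Theorem~\ref{positive=max} and the remark that an invertible element lies in $\pm\CC_P(\CM^\eta_P)$ iff its signature is $\pm m_P$), the element $a$ is $\eta$-maximal on $Y$ in the sense required by Theorem~\ref{PSthm} after possibly noting that $\eta$-maximality as defined only concerns $\sign_P^\eta \qnd{u}_\s \ge \sign_P^\eta h$, i.e. it is about the actual (signed) maximum — so one must be slightly careful and I would restrict attention, via the clopen decomposition of $Y$ into $\{P\in Y: a\in\CC_P(\CM^\eta_P)\}$ and its complement, handling each piece and then recombining. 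Actually the simplest fix: the hypothesis exactly says $a$ is $\eta$-maximal on $Y$ \emph{up to sign at each point}, and Theorem~\ref{PSthm} as quoted requires genuine $\eta$-maximality; I would therefore first reduce to the case where $a$ is genuinely $\eta$-maximal on all of $Y$ by the standard SAP-free argument, or more honestly, invoke that $H(b_1,\ldots,b_t)$ can be refined (the sign of $\sign_P^\eta\qf a_\s$ is continuous hence locally constant on $Y$, so $Y$ splits into finitely many Harrison subsets on each of which $a$ or $-a$ is $\eta$-maximal), prove the identity on each piece, and glue.

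Granting that reduction, the core computation is a double inclusion. For ``$\supseteq$'': let $u \in D_{(A,\s)}(2^s \x \pf{b_1,\ldots,b_t}\qf{a}_\s)$ for some $s$. By Theorem~\ref{PSthm} (direction (2)$\Rightarrow$(1)), $u$ is $\eta$-maximal on $Y$. Then for any $\CP \in X_{(A,\s)}$ with $\CP_F \in Y$ and $a \in \CP$, Lemma~\ref{max-Q} (direction (1)$\Rightarrow$(2), with the roles of $a$ and $u$ as there) gives $u \in \CP$. Hence $u$ lies in the asserted intersection. For ``$\subseteq$'': let $u$ lie in $\bigcap\{\CP : \CP_F\in Y,\ a\in\CP\}$. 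I claim $u$ is $\eta$-maximal on $Y$: fix $P\in Y$; the positive cone $\CP := \CC_P(\CM^\eta_P(A,\s))$ satisfies $\CP_F = P \in Y$ and $a\in\CP$ (after the sign reduction), so $u\in\CP = \CC_P(\CM^\eta_P(A,\s))$, and by Lemma~\ref{bleuarg} this means $u$ is $\eta$-maximal at $P$; as $P\in Y$ was arbitrary, $u$ is $\eta$-maximal on $Y$. Now apply Theorem~\ref{PSthm} (direction (1)$\Rightarrow$(2)) to conclude $u \in D_{(A,\s)}(2^s \x \pf{b_1,\ldots,b_t}\qf{a}_\s)$ for some $s\in\N$, which is exactly membership in the right-hand union.

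\textbf{Main obstacle.} The delicate point is the sign bookkeeping: Theorem~\ref{PSthm} is stated for an $a$ that is \emph{$\eta$-maximal} on $Y$ (a signed condition), whereas the hypothesis of Theorem~\ref{intersection} only furnishes $a \in \CP\cup-\CP$ at each relevant $\CP$ — the sign of $\sign_P^\eta\qf a_\s$ may vary over $Y$. I expect the bulk of the work to be checking that $\{P\in X_F : \sign_P^\eta\qf a_\s = m_P(A,\s)\}$ is a Harrison-clopen subset of $Y$ (using that signatures are continuous, Lemma~\ref{type-of-splitting}-style arguments, and that $m_P = n_P$ is locally constant by Proposition~\ref{m_P=n_P} together with Lemma~\ref{type-of-splitting}), so that the theorem reduces to finitely many applications of Theorem~\ref{PSthm} on Harrison subsets where $a$ is genuinely $\eta$-maximal, and then verifying that the right-hand side behaves correctly under this partition — namely that $\bigcup_s D_{(A,\s)}(2^s\x\pf{b_1,\ldots,b_t}\qf a_\s)$ decomposes compatibly, which follows because a sum over a partition of $Y$ into Harrison sets $H(\bar c^{(j)})$ can be realized by a single Pfister form computation via the identity $\pf{b_1,\ldots,b_t} \simeq \sum_j \pf{\bar c^{(j)}}$ in the Witt ring up to the usual $2^s$-scaling. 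Everything else is a formal consequence of Lemmas~\ref{bleuarg} and \ref{max-Q} and Theorem~\ref{positive=max}.
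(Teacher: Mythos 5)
Your core double inclusion is essentially the paper's own argument: the paper first observes that $u$ lies in the intersection if and only if ($u \in \CQ \Leftrightarrow a \in \CQ$) for every $\CQ \in X_{(A,\s)}$ over $Y$ (using the hypothesis $a \in \CQ \cup -\CQ$ and properness), then converts this into $\eta$-maximality of $u$ on $Y$ via Lemma~\ref{max-Q} (your route through Lemma~\ref{bleuarg} is the same mechanism), and concludes by Theorem~\ref{PSthm}. The genuine gap is in the reduction you yourself call the bulk of the work. In your partition-and-glue scheme, after splitting $Y$ into basic clopen pieces $Y_j = H(\bar c^{(j)})$ on which $a$ or $-a$ is $\eta$-maximal and applying Theorem~\ref{PSthm} piecewise, the left-hand side becomes $\bigcap_j \bigcup_s D_{(A,\s)}(2^s \x \pf{\bar c^{(j)}}\qf{a}_\s)$, and you must show this is contained in $\bigcup_s D_{(A,\s)}(2^s \x \pf{b_1,\ldots,b_t}\qf{a}_\s)$. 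The justification you offer — an identity $2^N\pf{b_1,\ldots,b_t} \equiv \sum_j 2^{N_j}\pf{\bar c^{(j)}}$ in the Witt ring up to $2$-power scaling — does not deliver this: Witt-class identities do not control represented elements, and since the entries of $\pf{\bar c^{(j)}}$ change sign off $Y_j$, the orthogonal sum of the piecewise forms is not a subform of any $2^s \x \pf{b_1,\ldots,b_t}\qf{a}_\s$, so there is no formal transfer of representation. Closing this hole amounts to re-running the local-global argument behind Theorem~\ref{PSthm} on all of $Y$ — which is circular, since the inability to apply that theorem on $Y$ directly was your reason for partitioning.

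The sign issue you identified is real at the level of bookkeeping, but its resolution is much simpler and requires no partition: the displayed identity in Theorem~\ref{intersection} does not involve $\eta$, and Lemma~\ref{max-Q} and Theorem~\ref{PSthm} hold for an arbitrary tuple of reference forms. By Theorem~\ref{positive=max} the hypothesis gives $|\sign_P^\eta \qf{a}_\s| = m_P(A,\s)$ for every $P \in Y \cap \wt X_F$, so one may replace $\eta$ by a tuple of reference forms with respect to which $a$ is genuinely $\eta$-maximal on $Y$ (for instance one whose leading entry is $\qf{a}_\s$); at the nil orderings in $Y$ maximality is automatic. This is in effect what the paper does, since its proof simply applies Lemma~\ref{max-Q} with the given $a$. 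With that one adjustment your two inclusions go through verbatim and your proof collapses to the paper's short argument: the observation about the intersection, Lemma~\ref{max-Q}, and Theorem~\ref{PSthm}.
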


\begin{proof}
  Let $u \in \Sym(A,\s)\setminus \{0\}$. Observe that
  \begin{equation}\label{equ}
u \in \bigcap \{\CP \in X_{(A,\s)} \mid \CP_F \in Y \textrm{ and } a
    \in \CP\}
  \end{equation}  
    if and only if for every $\CQ \in X_{(A,\s)}$ such that $\CQ_F \in Y$, ($u \in \CQ \Leftrightarrow
    a \in \CQ$). Indeed, assume that \eqref{equ} holds and
let $\CQ \in X_{(A,\s)}$ be such that $\CQ_F \in Y$. If $u \in \CQ$ but $a
  \not \in \CQ$, then $a \in -\CQ$ and thus $u \in -\CQ$, contradicting that
  $\CQ$ is proper. If $a \in \CQ$ then $u \in \CQ$ by choice of $u$. The converse is
immediate.

  Using this observation and Lemma~\ref{max-Q}, we obtain that $u \in \bigcap \{\CP
  \in X_{(A,\s)} \mid \CP_F \in Y \textrm{ and } a \in \CP\}$ if and only if $u$
  is $\eta$-maximal on $Y$, and the result follows by Theorem~\ref{PSthm}.
\end{proof}

Under stronger conditions, we obtain the following characterization of sums of hermitian squares in $(A,\s)$,
which reduces to Artin's theorem in the case of fields:

\begin{cor}\label{Artin}
Assume that for every $\CP \in X_{(A,\s)}$, $1\in \CP\cup -\CP$. Then
  \[\bigcap \{\CP \in X_{(A,\s)} \mid 1\in \CP\} =
  \Bigl\{\sum_{i=1}^s \s(x_i)x_i \,\Big|\, s\in \N,\ x_i \in A\Bigr\}.\]
\end{cor}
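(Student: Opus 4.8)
The strategy is to deduce Corollary~\ref{Artin} from Theorem~\ref{intersection} by a careful choice of parameters. Apply Theorem~\ref{intersection} with $t=1$ and $b_1=1$, so that the Harrison set $Y=H(1)=X_F$, and with $a=1$. The hypothesis of Theorem~\ref{intersection} then requires that for every $\CP\in X_{(A,\s)}$ with $\CP_F\in X_F$ (i.e.\ for every positive cone $\CP$, since $\CP_F\in X_F$ always), we have $1\in\CP\cup-\CP$; this is exactly the standing hypothesis of Corollary~\ref{Artin}. So Theorem~\ref{intersection} gives
\[
\bigcap\{\CP\in X_{(A,\s)}\mid 1\in\CP\}=\bigcup_{s\in\N}D_{(A,\s)}\bigl(2^s\x\pf{1}\qf{1}_\s\bigr),
\]
where I have dropped the condition $\CP_F\in X_F$ as vacuous.

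\textbf{Identifying the right-hand side.} It remains to check that $\bigcup_{s\in\N}D_{(A,\s)}(2^s\x\pf{1}\qf{1}_\s)$ equals $\bigl\{\sum_{i=1}^s\s(x_i)x_i\mid s\in\N,\ x_i\in A\bigr\}$. The Pfister form $\pf{1}=\qf{1,1}$ is just the two-dimensional form $\qf{1,1}$ (up to the usual normalization; if the paper's convention makes $\pf{1}$ hyperbolic one must instead take $b_1$ a sum of squares that is a square, or simply note $\pf{1}\qf{1}_\s$ is Witt-equivalent to a suitable multiple of $\qf{1}_\s$ — I would use the convention under which $\pf{b}=\qf{1,-b}$ so that $\pf{1}$ is hyperbolic is the \emph{wrong} sign; the correct reading here is $\pf{b_1,\ldots,b_t}=\qf{1}\ox\bigotimes\qf{1,b_i}$, making $\pf{1}=\qf{1,1}$). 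Then $2^s\x\pf{1}\qf{1}_\s$ is isometric to $\qf{1,\ldots,1}_\s$ with $2^{s+1}$ entries equal to $1$, and an element of $D_{(A,\s)}(\qf{1,\ldots,1}_\s)$ is precisely a sum $\sum_{i=1}^{2^{s+1}}\s(x_i)x_i$. Taking the union over $s$ yields all finite sums $\sum_{i=1}^m\s(x_i)x_i$: padding with zero summands lets one reach any $m\le 2^{s+1}$, and every $m$ is bounded by some $2^{s+1}$. Hence the right-hand side is exactly the set of sums of hermitian squares, as claimed.

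\textbf{Main obstacle.} The only real subtlety is bookkeeping around the normalization of the Pfister form $\pf{\cdot}$ and making sure that $\bigcup_s D_{(A,\s)}(2^s\x\pf{1}\qf{1}_\s)$ literally unwinds to the stated set of sums of hermitian squares rather than to some Witt-equivalent but not literally equal description; this is handled by using the diagonal representation $2^s\x\pf{1}\qf{1}_\s\simeq\qf{1,\dots,1}_\s$ and closure under inserting zeros. A secondary point to state cleanly is that the side condition ``$\CP_F\in Y$'' in Theorem~\ref{intersection} is automatically satisfied when $Y=X_F$, so the indexing set of the intersection really is $\{\CP\in X_{(A,\s)}\mid 1\in\CP\}$. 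Both are routine, so the corollary follows directly.

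\begin{proof}
Apply Theorem~\ref{intersection} with $t=1$, $b_1=1$ and $a=1$. Then $Y=H(1)=X_F$, and the condition ``$\CP_F\in Y$'' is satisfied by every $\CP\in X_{(A,\s)}$. The hypothesis on $a$ becomes: for every $\CP\in X_{(A,\s)}$, $1\in\CP\cup-\CP$, which holds by assumption. Theorem~\ref{intersection} therefore yields
\[
\bigcap\{\CP\in X_{(A,\s)}\mid 1\in\CP\}=\bigcup_{s\in\N}D_{(A,\s)}\bigl(2^s\x\pf{1}\qf{1}_\s\bigr).
\]
Now $\pf{1}\qf{1}_\s\simeq\qf{1,1}_\s$, so $2^s\x\pf{1}\qf{1}_\s\simeq\qf{1,\ldots,1}_\s$ with $2^{s+1}$ entries, and hence
\[
D_{(A,\s)}\bigl(2^s\x\pf{1}\qf{1}_\s\bigr)=\Bigl\{\sum_{i=1}^{2^{s+1}}\s(x_i)x_i\,\Big|\,x_i\in A\Bigr\}.
\]
Taking the union over $s\in\N$ and noting that summands equal to $\s(0)\cdot 0=0$ may be inserted or deleted freely, we obtain
\[
\bigcup_{s\in\N}D_{(A,\s)}\bigl(2^s\x\pf{1}\qf{1}_\s\bigr)=\Bigl\{\sum_{i=1}^{s}\s(x_i)x_i\,\Big|\,s\in\N,\ x_i\in A\Bigr\},
\]
which proves the claim.
\end{proof}
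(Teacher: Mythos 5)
Your proof is correct and is exactly the derivation the paper intends: Corollary~\ref{Artin} is the specialization of Theorem~\ref{intersection} to $Y=H(1)=X_F$ and $a=1$, with $2^s\x\pf{1}\qf{1}_\s\simeq 2^{s+1}\x\qf{1}_\s$ so that the represented elements are precisely the finite sums of hermitian squares (padding with zeros). Your side remarks resolve the only potential pitfall correctly — in this context $\pf{b_1,\ldots,b_t}=\bigotimes_i\qf{1,b_i}$, so $\pf{1}=\qf{1,1}$ rather than a hyperbolic form — and the observation that the condition $\CP_F\in Y$ is vacuous for $Y=X_F$ is exactly right.
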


\begin{remark} The hypothesis of Corollary~\ref{Artin} is exactly $X_\s = \wt X_F$ in the terminology of 
\cite{A-U-PS}. More precisely, this property characterizes the algebras with involution for which there is a positive answer 
to the (sums of hermitian squares version of the) 
question formulated by Procesi and Schacher in \cite[p.~404]{P-S}, cf. \cite[Section~4.2]{A-U-PS}.
\end{remark}

\begin{remark} In the field case, results such as Theorem~\ref{intersection} are a direct consequence of the fact
that if $c\in F$ does not belong to some preordering $T$, then $T[-c]$ is a proper preordering. In our setting of prepositive cones on $F$-algebras with involution such a direct approach does not seem to be obvious. 
\end{remark}

\section{Signatures at positive cones}\label{sec:signp}

  Recall from \cite{A-U-Kneb} that to define a signature map from $W(A,\s)$ to $\Z$ at 
	an ordering of $X_F$, we had to introduce a reference form $\eta$ in order to resolve a sign 
	choice at each $P \in X_F$. This sign choice is reflected in the structure of
  $X_{(A,\s)}$ in the fact that there are exactly two  positive cones over $P \in
  \wt X_F$, $\CP$ and $-\CP$ (if $\CP$ denotes one of them).
	
	In this section we define, directly out of $\CP\in X_{(A,\s)}$ over $P\in \wt X_F$, 
	the signature at $\CP$ of  hermitian forms, which we denote by $\sign_\CP$,  
	and show that it coincides with $\sign_P^\eta$ or $-\sign_P^\eta$, cf. 
	Proposition~\ref{equal_sign}. 
	In contrast to $\sign_P^\eta$, the
  definition of $\sign_\CP$ does not require a sign
  choice since $\CP$ and $-\CP$ are different points in $X_{(A,\s)}$ and
  thus both sign choices  occur, one for each of the two positive cones over 
  $P$.

\subsection{Real splitting and maximal symmetric subfields}

Let $E$ be a division algebra. It is well-known that any maximal subfield of $E$ is a splitting field of $E$. 
We are interested in particular in the situation where $E$ is equipped with a positive involution $\tau$. 
In this case,
any maximal symmetric subfield of $(E, \tau)$ is a \emph{real} splitting field of $(E,\tau)$, cf. 
Theorem~\ref{real-splitting} and Definition~\ref{def:rs}.

\begin{lemma}\label{quat}
Let $E$ be a division algebra with centre $K$ and let 
$K\subseteq L \subseteq M$ be subfields of $E$,
where $M$ is maximal.   Then $E\ox_K L$ is Brauer equivalent to a quaternion
division algebra over $L$ if and only if $[M:L]=2$.
\end{lemma}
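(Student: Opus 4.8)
The plan is to use the fundamental dimension count for subfields of a division algebra together with the structure theory of central simple algebras over $L$. Recall that $E$ is a division algebra with centre $K$, say $\dim_K E = d^2$ where $d$ is the index, and $M$ is a maximal subfield so that $[M:K] = d$. Since $K \subseteq L \subseteq M$, we have $[L:K] = e$ and $[M:L] = d/e$ for some divisor $e$ of $d$. The scalar extension $E \ox_K L$ is a central simple algebra over $L$; write $E \ox_K L \sim B$ for a division algebra $B$ over $L$ of index $m$, so $E \ox_K L \cong M_r(B)$ with $rm = d/[L:K] \cdot [L:K] \cdot \dots$ — more precisely $\dim_L(E \ox_K L) = d^2$, so $r^2 m^2 = d^2$, giving $rm = d$ and hence $m \mid d$. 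Wait, that needs care: $\dim_L(E\ox_K L) = \dim_K E = d^2$ only if $[L:K]$ divides appropriately; in fact $\dim_L(E \ox_K L) = (\dim_K E)/[L:K] \cdot [L:K]$... no: $\dim_L(E \ox_K L) = \dim_K E = d^2$ is wrong. Correctly, $\dim_L(E \ox_K L) = \dim_K(E \ox_K L)/[L:K] = (d^2 \cdot [L:K])/[L:K] = d^2$. So indeed $\dim_L(E \ox_K L) = d^2$ and the index $m$ of $E \ox_K L$ satisfies $m^2 \mid d^2$, i.e. $m \mid d$.

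The key point I would use is the standard fact (a consequence of the double centralizer theorem, or of the theory of maximal subfields) that for a subfield $L$ with $K \subseteq L \subseteq M \subseteq E$ and $M$ maximal, the field $M$ is also a maximal subfield of the division algebra $B$ Brauer-equivalent to $E \ox_K L$, and more precisely the index of $E \ox_K L$ equals $[M:L]$. Indeed, $M$ splits $E$, hence splits $E \ox_K L$, and $[M:L] = d/[L:K]$; since $M$ embeds in (the division algebra underlying) $E \ox_K L$ as a maximal subfield — this is where I would invoke that a maximal subfield of $E$ containing $L$ remains maximal after base change, which follows because $C_E(L)$ has the right dimensions and $M \subseteq C_E(L)$ — one gets $\operatorname{ind}(E \ox_K L) = [M:L]$. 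Then $E \ox_K L$ is Brauer equivalent to a quaternion division algebra over $L$ precisely when its index is $2$, i.e. precisely when $[M:L] = 2$.

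Concretely, the steps I would carry out are: first, note that $M$ splits $E$ and hence $E \ox_K L$ for any intermediate $L$, so the index of $E \ox_K L$ divides $[M:L]$. Second, establish the reverse divisibility: the division algebra $B$ Brauer-equivalent to $E \ox_K L$ has a maximal subfield of degree $\operatorname{ind}(B)$ over $L$; I would show $M$ embeds into $B$ over $L$ using the double centralizer theorem applied to $L \subseteq E$ (the centralizer $C_E(L)$ is a central simple $L$-algebra Brauer-equivalent to $E \ox_K L$, and $M \subseteq C_E(L)$ since $L \subseteq M$ and $M$ is commutative), and that $M$ is a maximal subfield of $C_E(L)$ by a dimension count: $\dim_L C_E(L) = \dim_K E/[L:K]^2 \cdot [L:K] = d^2/[L:K]$, wait — $\dim_K C_E(L) \cdot [L:K] = \dim_K E$ is not right either; the correct formula is $\dim_K C_E(L) = \dim_K E / [L:K] = d^2/[L:K]$, so $\dim_L C_E(L) = d^2/[L:K]^2$, and its index times its matrix size gives square root $d/[L:K]$, matching $[M:L] = d/[L:K]$, confirming $M$ is maximal in $C_E(L)$. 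Third, conclude $\operatorname{ind}(E \ox_K L) = [M:L]$, so $E \ox_K L$ is Brauer-equivalent to a quaternion \emph{division} algebra over $L$ iff $[M:L] = 2$, noting that when $[M:L]=2$ the index is exactly $2$ (not $1$), so $B$ is genuinely a quaternion division algebra.

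The main obstacle I anticipate is handling the dimension bookkeeping of $C_E(L)$ carefully and making rigorous the claim that a maximal subfield $M$ of $E$ containing $L$ remains a maximal subfield of the (division algebra underlying the) centralizer $C_E(L)$; this requires the double centralizer theorem in the form $C_E(C_E(L)) = L$ and the product formula $\dim_K L \cdot \dim_K C_E(L) = \dim_K E$, together with the fact that $C_E(L)$ is itself a division algebra (being a subalgebra of the division algebra $E$) with centre $L$. Once this is in place, everything reduces to the elementary observation that a central division algebra over $L$ is Brauer-equivalent to a quaternion division algebra if and only if it has index $2$.
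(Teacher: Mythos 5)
Your proposal is correct and takes essentially the same route as the paper: both identify the Brauer class of $E\ox_K L$ with that of the centralizer $C_E(L)$ (a division algebra with centre $L$, by the double centralizer theorem) and then run a dimension count giving $\dim_L C_E(L)=[M:L]^2$, whence $E\ox_K L$ is equivalent to a quaternion division algebra iff $[M:L]=2$. The paper organizes the count via $E\ox_K L\simeq C_E(L)\ox_L M_k(L)$ and $E\ox_K M\simeq M_{k\ell}(M)$, while you use $\dim_K C_E(L)=\dim_K E/[L:K]$ together with $[M:K]=\deg E$; these are the same centralizer facts, so the arguments coincide in substance.
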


\begin{proof} Let $k=[L:K]$ and $\ell=[M:L]$, then by \cite[Chapter~1, \S2.9, Prop.,
p.~139]{Plat-Yan},
\[E\ox_K L \simeq C_E(L) \ox_{L} M_k(L),\]
where $C_E(L)$ denotes the centralizer of $L$ in $E$.
Thus,
$\dim_{K} E = \dim_{L} C_E(L) \cdot k^2$.
Also,
$E\ox_K M \simeq  M_{k\ell}(M)$
since $M$ is a splitting field of $E$, cf. \cite[Theorem~2, p.~139]{Plat-Yan}.
Thus,
$\dim_{K} E = (\ell k)^2$.
It follows that
$\dim_{L} C_E(L) =\ell^2$
and thus $C_E(L)$ is a quaternion algebra over $L$ if and only if $\ell=2$.
Since $E\ox_K L$ is Brauer equivalent to $C_E(L)$, the result follows.
\end{proof}

\begin{lemma}\label{almost}
   Let $(E,\tau)$ be an $F$-division algebra with involution and
  let $L$ be a subfield of $\Sym(E,\tau)$, maximal for inclusion and containing $F$. Then either $L$
  is a maximal subfield of $E$ or there is $u \in \Skew(E,\tau)$ such that
  $u^2 \in L$ and $L(u)$ is a maximal subfield of $E$.
\end{lemma}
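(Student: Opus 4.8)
The plan is to argue by contradiction, so suppose $L$ is \emph{not} a maximal subfield of $E$. Since $L$ is a subfield of $E$ containing $F$ it is contained in some maximal subfield $M$ of $E$, and by assumption $[M:L]\ge 2$. I would first record the key feature we get for free: by maximality of $L$ \emph{among subfields of $\Sym(E,\tau)$}, there is no element of $\Sym(E,\tau)\setminus L$ commuting with all of $L$; equivalently, $C_E(L)\cap\Sym(E,\tau)=L$. The goal is then to produce a skew element $u$ with $u^2\in L$ such that $L(u)$ is again a field, of degree $2$ over $L$, and to show this $L(u)$ is forced to be a maximal subfield of $E$.

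The main step is to analyse the centralizer $C:=C_E(L)$, which is a division algebra with centre $L$. I would apply the restriction of $\tau$: since $L\subseteq\Sym(E,\tau)$, $\tau$ maps $C$ to itself, so $(C,\tau|_C)$ is an $L$-division algebra with involution of the first kind (its centre $L$ is fixed by $\tau$). The observation above says $\Sym(C,\tau|_C)=L$, i.e. \emph{all} symmetric elements of $C$ lie in its centre. For a central division algebra this is extremely restrictive: writing $\dim_L C=d^2$, an involution of the first kind has $\dim_L\Sym$ equal to $\binom{d+1}{2}$ (orthogonal type) or $\binom{d}{2}$ (symplectic type), and forcing this to equal $1=\dim_L L$ leaves only $d=1$ (then $C=L$, contradicting $[M:L]\ge2$ since any maximal subfield containing $L$ would then equal $L$) or $d=2$ with $\tau|_C$ symplectic — i.e. $C$ is a quaternion division algebra over $L$ and $\tau|_C$ is the canonical (conjugation) involution, whose symmetric part is exactly $L$. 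This also recovers, consistently, that $[M:L]=2$ via Lemma~\ref{quat}. Now in a quaternion algebra with canonical involution, $\Skew(C,\tau|_C)$ is the space of pure quaternions, and any nonzero pure quaternion $u$ satisfies $u^2\in L^\times$; picking $u\ne0$ in $\Skew(C,\tau|_C)$ gives $u\in\Skew(E,\tau)$, $u^2\in L$, and $L(u)$ is a quadratic field extension of $L$ sitting inside $C\subseteq E$.

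Finally I would check $L(u)$ is a maximal subfield of $E$: it is a subfield of the quaternion algebra $C=C_E(L)$ strictly larger than its centre $L$, hence a maximal subfield of $C$, so $C_C(L(u))=L(u)$; combined with $C_E(L)=C$ and the double-centralizer theorem, $C_E(L(u))=L(u)$, which for a division algebra means $L(u)$ is a maximal subfield of $E$. (Alternatively: $[L(u):F]=2[L:F]=2[M:L][L:F]/[M:L]$ — cleaner is to note $\dim_F E=[L:F]^2\dim_L C=[L:F]^2\cdot4$ while a maximal subfield has $F$-degree $\sqrt{\dim_F E}=2[L:F]=[L(u):F]$.)

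The hard part is pinning down that $\Sym(C,\tau|_C)=L$ forces $C$ to be a quaternion algebra with canonical involution; this rests on the dimension count for $\Sym$ under an involution of the first kind together with the fact that a central division algebra with an involution of the first kind has degree a power of $2$, and then ruling out degree $\ge4$. Everything else — extending $\tau$ to the centralizer, extracting a pure quaternion, and the double-centralizer bookkeeping — is routine. One subtlety to handle carefully is the case $\mathrm{char}\,F=2$ is excluded by hypothesis, so the symplectic/orthogonal dichotomy and the pure-quaternion description are available without characteristic issues.
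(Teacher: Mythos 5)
Your argument has a genuine gap: it silently assumes that $\tau$ is of the first kind. The step where you pass to $C:=C_E(L)$ and declare that $(C,\tau|_C)$ is a central division $L$-algebra with an involution of the first kind fails when $\tau$ is of the second kind, a case the lemma must cover (it is exactly the case used in part (2b) of Theorem~\ref{real-splitting}). If $Z(E)=K=F(\sqrt{-d})$ is quadratic over $F$, then $K$ is central, so $K\subseteq C_E(L)$, while $K\cap\Sym(E,\tau)=F$; hence $Z(C_E(L))=LK\supsetneq L$ and $\tau$ restricts to a \emph{unitary} involution on $C_E(L)$. Your dimension count $\dim_L\Sym\in\bigl\{\binom{d+1}{2},\binom{d}{2}\bigr\}$ then does not apply, and the dichotomy ``$C=L$ or $C$ is a quaternion algebra over $L$ with its canonical involution'' is simply false: for $(E,\tau)=(F(\sqrt{-1}),\text{conjugation})$ and $L=F$ one has $C_E(L)=E$, commutative and strictly larger than $L$, yet the lemma's conclusion holds with $u=\sqrt{-1}$. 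The repair is a separate unitary case: there $\Sym(C,\tau|_C)=L$ together with $\dim_L\Sym(C,\tau|_C)=(\deg_{LK}C)^2$ forces $C=LK=L(\sqrt{-d})$, and $u=\sqrt{-d}$ is skew with $u^2\in F\subseteq L$ and $L(u)=LK$ self-centralizing, hence maximal. With that case added, your proof is complete.

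For involutions of the first kind your route is correct and genuinely different from the paper's: the preliminary observation $C_E(L)\cap\Sym(E,\tau)=L$ (via $L[s]$ being a symmetric subfield), the identification of $C_E(L)$ as $L$ or a quaternion division algebra with canonical involution, the choice of a pure quaternion, and the double-centralizer argument for maximality of $L(u)$ are all sound. The paper avoids structure theory altogether: it takes any $v\in C_E(L)\setminus L$, splits it as $v=w+u$ with $w$ symmetric and $u$ skew, uses maximality of $L$ to force $w\in L$ (hence $u\notin L$) and $u^2\in L$, and proves maximality of $L(u)$ by repeating the symmetric/skew splitting inside $C_E(L(u))$, noting that $uz$ is symmetric when $z$ is skew and commutes with $u$. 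That argument is shorter, needs only the definition of an involution, and treats both kinds uniformly --- which is precisely where your version stumbles.
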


\begin{proof} 
  We assume that $L$ is not a maximal subfield of $E$. Then $L \subsetneqq C_E(L)$. Let
  $v \in C_E(L) \setminus L$,  $w = (v+\tau(v))/2$ and $u = (v-\tau(v))/2$.
  Since $L \subseteq \Sym(E,\tau)$ we have $\tau(C_E(L)) \subseteq C_E(L)$ and so
  $w,u \in C_E(L)$. Since $w+u = v \not \in L$ we have $w \not \in L$
  or $u \not \in L$. If $w \not \in L$, we get a contradiction since $L(w)
  \subseteq \Sym(E,\tau)$. So $u \not \in L$, but $u^2 \in L$ (because $u^2 \in
  C_E(L)$ and is symmetric, so $L(u^2)$ is a subfield containing $L$ and included
  in $\Sym(E,\tau)$, so equal to $L$).

  We now check that $L(u)$ is a maximal subfield of $E$. Assume that this is not the
  case. As above there is $x \in C_E(L(u)) \setminus L(u)$, and since $\tau(L(u))
  = L(u)$ we have $\tau(x) \in C_E(L(u))$ and thus $x$ can be
  written as $y+z$ with $\tau(y) = y$, $\tau(z)=-z$ and $y,z \in C_E(L(u))$. Since $x
  \not \in L(u)$ we have either $y \not \in L(u)$ or $z \not \in L(u)$. If $y
  \not \in L(u)$ then $L \subsetneqq L(y) \subseteq \Sym(E,\tau)$, impossible. So
  $z \not \in L(u)$. Then $uz=zu$ (since $z \in C_E(L(u))$), $uz \not \in L$
  (since $z \not \in L(u)$) and $uz$ is symmetric. Therefore $L \subsetneqq L(uz)
  \subseteq \Sym(E,\tau)$, a contradiction.
\end{proof}

\begin{lemma}\label{pos_ext}
  Let $P \in X_F$ and let $(B,\tau)$ be an $F$-algebra with involution such that  $\tau$  is positive at $P$. 
  Let $L$ be a subfield of $\Sym(B,\tau)$, maximal
  for inclusion and containing $F$. Then $P$ extends to an ordering on $L$.
\end{lemma}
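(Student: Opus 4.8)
The plan is to show that the quadratic form $q_L\colon L\times L\to F$, $x\mapsto \Trd_A(\tau(x)x) = \Trd_A(x^2)$ (restricted to $L$, using $F\subseteq L\subseteq \Sym(B,\tau)$) is positive semidefinite at $P$, and then to read off from this that $P$ extends to $L$. More precisely, since $\tau$ is positive at $P$, the form $(x,y)\mapsto \Trd_B(\tau(x)y)$ on $B$ is positive semidefinite at $P$; restricting it to the subspace $L$ of $\Sym(B,\tau)$ gives a positive semidefinite quadratic form on the $F$-vector space $L$, and on $L$ it is nothing but a scalar multiple of the trace form $\Tr_{L/F}$ of the field extension $L/F$ (because the reduced trace $\Trd_B$ restricted to the subfield $L$ agrees, up to the factor $[B:L]^{1/2}$ coming from how the reduced trace relates to the field trace, with $\Tr_{L/F}$). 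So $\Tr_{L/F}\qf{1}$, hence also $\Tr^*_{L/F}\qf{1}$, is positive semidefinite — in fact positive definite, being nonsingular — at $P$.

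Once we know $\sign_P(\Tr^*_{L/F}\qf{1}) = [L:F] > 0$, we invoke \cite[Chapter~3, Theorem~4.4]{Sch} (already used in the proof of Lemma~\ref{type-of-splitting}), which says precisely that an ordering $P\in X_F$ extends to the field extension $L/F$ if and only if $\sign_P(\Tr^*_{L/F}\qf{1}) > 0$. This gives the conclusion immediately. So the proof is short: the content is entirely in identifying the restriction of the reduced trace form of $(B,\tau)$ to $L$ with (a positive multiple of) the trace form of $L/F$, and in recalling the positivity of the latter.

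The step I expect to be the main obstacle is the identification of $\Trd_B(x^2)$ for $x\in L$ with a positive multiple of $\Tr_{L/F}(x^2)$, i.e. making precise the relation between the reduced trace of $B$ and the field trace on a subfield — one has to be a little careful about the centre $K$ of $B$ when $\tau$ is of the second kind, since then $L$ need not contain $K$ and the reduced trace lands in $K$ rather than in $F$. In the unitary case, though, $F = K\cap\Sym(B,\tau)$, and since $L\subseteq\Sym(B,\tau)$ the relevant values $\Trd_B(x^2)$ that matter for positivity at $P$ can be handled by first composing with $\Tr_{K/F}$ (which is exactly what $\tau$ being positive at $P$ refers to via the form $B\times B\to K$), landing us back with a genuine $F$-valued positive semidefinite form on $L$; then a standard computation expresses this as a positive multiple of $\Tr_{L/F}\qf{1}$. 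The remaining verifications — that this restricted form is nonsingular, hence positive definite, and the citation of \cite[Chapter~3, Theorem~4.4]{Sch} — are routine.

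\begin{proof}
Since $\tau$ is positive at $P$, the bilinear form $\beta\colon B\times B\to F$, $\beta(x,y) := \Tr_{K/F}\Trd_B(\tau(x)y)$ (where $\Tr_{K/F} = \id$ if $\s$, and hence $\tau$, is of the first kind) is positive semidefinite at $P$; here $K = Z(B)$. Restricting $\beta$ to the $F$-subspace $L\subseteq \Sym(B,\tau)$ yields a positive semidefinite quadratic form $x\mapsto \Tr_{K/F}\Trd_B(x^2)$ on $L$. A standard computation identifies this restricted form, up to multiplication by the positive integer $[B:L]^{1/2}\cdot[K\cap L: \text{--}]$-type factor (concretely: $\Trd_B$ restricted to the field $L\cdot K$ is the field trace $\Tr_{LK/K}$ multiplied by $[B : LK]^{1/2}$, and then $\Tr_{K/F}\circ\Tr_{LK/K} = \Tr_{LK/F}$, which restricted to $L$ is a positive multiple of $\Tr_{L/F}$), with a positive scalar multiple of the trace form $\Tr_{L/F}\qf{1}$. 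Hence $\Tr_{L/F}\qf{1}$ is positive semidefinite at $P$, and being nonsingular it is positive definite at $P$, so $\sign_P(\Tr^*_{L/F}\qf{1}) = [L:F] > 0$. By \cite[Chapter~3, Theorem~4.4]{Sch}, $P$ extends to an ordering on $L$.
\end{proof}
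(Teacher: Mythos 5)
Your route is genuinely different from the paper's. The paper disposes of the lemma in two lines using positive cones: positivity of $\tau$ at $P$ yields a positive cone $\CP$ on $(B,\tau)$ over $P$ with $1\in\CP$ (cf.\ Corollary~\ref{1-pos}), hence $P\subseteq\CP$; since $p x^2=\tau(x)\,p\,x\in\CP$ for all $p\in P$ and $x\in L$, the preordering of $L$ generated by $P$ is contained in $\CP$, so it is proper and therefore contained in an ordering of $L$. Your argument instead identifies the restriction to $L$ of the trace form of $(B,\tau)$ with a positive multiple of the trace form of $L/F$ and invokes \cite[Chapter~3, Theorem~4.4]{Sch}. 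In the first-kind case this is sound: for any subfield $M$ of $B$ containing $K=Z(B)$ one has $\Trd_B|_M=\tfrac{\deg B}{[M:K]}\Tr_{M/K}$, so for $x\in L$, $\Trd_B(\tau(x)x)=\Trd_B(x^2)=\tfrac{\deg B}{[L:F]}\Tr_{L/F}(x^2)\in P$; the trace form is nonsingular (char $F=0$ since $X_F\neq\varnothing$), hence positive definite at $P$, and the transfer criterion applies. (Your normalising constants $[B:L]^{1/2}$, $[B:LK]^{1/2}$ are not the correct ones -- the factor is $\deg B/[M:K]$ -- but only their positivity matters, so this is harmless.)

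The gap is in the second-kind case, and it sits exactly where you anticipated trouble. First, your computation presupposes that $L\cdot K$ is a subfield of $B$, i.e.\ that $-d$ (where $K=F(\sqrt{-d})$) is not a square in $L$; since $(B,\tau)$ is an arbitrary $F$-algebra with involution, not assumed division, this is not automatic, and if $L\ox_F K$ embeds in $B$ with zero divisors, the restriction of $\Trd_B$ to $LK$ is no longer a multiple of a field trace. Second, the very first step -- replacing the $K$-valued form $(x,y)\mapsto\Trd_B(\tau(x)y)$ by the $F$-valued form $\Tr_{K/F}\Trd_B(\tau(x)y)$ and asserting that it is positive semidefinite at $P$ -- already uses $d\in P$, since the transfer of $\qf{a}$ over $(K,\iota)$ is $\qf{2a,2ad}$. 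Both points are repairable, but need an argument you do not give: positivity of $\tau$ at $P$ forces $P\notin\Nil[B,\tau]$ (\cite[Corollary~4.6]{A-U-PS}), which for involutions of the second kind means precisely $d\in P$ (cf.\ Proposition~\ref{Z}); and if some $c\in L$ satisfied $c^2=-d$, then positive semidefiniteness of the trace form would give $\Trd_B(\tau(c)c)=-d\deg B\in P$, i.e.\ $-d\in P$, contradicting $d\in P$ -- so $L$ and $K$ are linearly disjoint and $LK$ is a field. With these two observations inserted your proof goes through; as written, the unitary case rests on assertions rather than proofs at exactly these points.
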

\begin{proof}
Since $\tau$ is positive at $P$,  by Corollary~\ref{1-pos} there is $\CP \in X_{(B,\tau)}$ over $P$ such that $1\in \CP$
and it follows that $P \subseteq \CP$ by Proposition~\ref{easy}. Since, for
  $p \in P$ and $x \in L$, $px^2 = \tau(x)px$, the preordering generated by $P$
  in $L$ is contained in $\CP$, so is proper, and is therefore included in an
  ordering on $L$. 
\end{proof}

\begin{prop}\label{Z}
Let $(E, \tau)$ be an $F$-division algebra with involution of the second kind. Let $d\in F^\x$ be
such that $Z(E)=F(\sqrt{-d})$.
Then 
\[\Nil[E,\tau]= H(-d)=\{P\in X_F \mid P\text{ extends to } Z(E)\},\]
where $H(-d)$ denotes the usual Harrison set. 
\end{prop}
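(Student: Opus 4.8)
The plan is to compute, for each $P\in X_F$, the scalar extension $(E\ox_F F_P,\tau\ox\id_{F_P})$, to identify the associated data $D_P,\vt_P,\ve_P$ appearing in \eqref{def:np} and Remark~\ref{nil-and-non}, and then to decide whether $P$ is nil directly from the classification recalled in Remark~\ref{nil-and-non}. The dichotomy $P\in H(-d)$ versus $P\notin H(-d)$ corresponds exactly to whether the centre $Z(E)=F(\sqrt{-d})$ splits over $F_P$.

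First I would dispose of the equality $H(-d)=\{P\in X_F\mid P\text{ extends to }Z(E)\}$, which is elementary: $P$ extends to $F(\sqrt{-d})$ if and only if $-d$ is a square in the real closure $F_P$, i.e. $-d>_P 0$, i.e. $P\in H(-d)$. (Equivalently, the transfer $\Tr_{Z(E)/F}^* \qf{1}\simeq\qf{2,-2d}$ has positive signature at $P$ precisely when $-d>_P 0$, and one invokes \cite[Chapter~3, Theorem~4.4]{Sch} exactly as in the proof of Lemma~\ref{type-of-splitting}.) So it remains to prove $\Nil[E,\tau]=H(-d)$.

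Fix $P\in X_F$. Since $\Nil[E,\tau]$ depends only on the Brauer class of $E$ and the type of $\tau$, and $\sign^\eta_P$ is defined by scalar extension to $F_P$, it suffices to identify $(D_P,\vt_P)$ together with an admissible value of $\ve_P$, and then to apply Remark~\ref{nil-and-non}. If $P\in H(-d)$, then $-d$ is a square in $F_P$, so $Z(E)\ox_F F_P\cong F_P\x F_P$ and, as $\tau$ is of the second kind, $(E\ox_F F_P,\tau\ox\id_{F_P})$ is of exchange type, i.e. isomorphic to $(B\x B^{\op},\text{exchange})$ for a central simple $F_P$-algebra $B$; over the real closed field $F_P$ one has $B\cong M_k(F_P)$ or $B\cong M_k((-1,-1)_{F_P})$, so $(D_P,\vt_P)$ is $(F_P\x F_P,\swap)$ or $((-1,-1)_{F_P}\x(-1,-1)_{F_P},\swap)$, and $\ve_P=-1$ is an admissible presentation; by Remark~\ref{nil-and-non}, $P\in\Nil[E,\tau]$. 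If instead $P\notin H(-d)$, then $-d<_P 0$, so $Z(E)\ox_F F_P\cong F_P(\sqrt{-d})=F_P(\sqrt{-1})$ is algebraically closed, whence $E\ox_F F_P$ is central simple over an algebraically closed field, so $E\ox_F F_P\cong M_{n_P}(F_P(\sqrt{-1}))$ with $\tau\ox\id_{F_P}$ unitary; thus $(D_P,\vt_P)=(F_P(\sqrt{-1}),\bbar)$, and scaling the Gram matrix by $\sqrt{-1}$ shows $\ve_P=1$ is admissible. Since this pair lies in the list \eqref{non-nilcases}, we get $P\in\wt X_F$, i.e. $P\notin\Nil[E,\tau]$. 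Combining the two cases gives $\Nil[E,\tau]=H(-d)$.

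The step I expect to cost the most effort is the structural analysis when $P\in H(-d)$: one must verify that the split centre forces $\tau\ox\id_{F_P}$ to be of exchange type and that $\ve_P$ may be normalised to $-1$, so that Remark~\ref{nil-and-non} applies verbatim; both are standard consequences of the structure theory of algebras with involution of the second kind (cf. \cite{BOI}, \cite[\S2]{A-U-PS}), but this is where the real content lies. As an alternative to Remark~\ref{nil-and-non} one could instead argue directly that the Witt group of an algebra with involution of exchange type vanishes, whereas $W(M_{n_P}(F_P(\sqrt{-1})),\tau\ox\id_{F_P})\cong W(F_P(\sqrt{-1})/F_P)\cong\Z$ is not torsion, and then invoke Pfister's local--global principle \cite[Theorem~4.1]{LU1}; quoting the remark just keeps the argument internal to the paper.
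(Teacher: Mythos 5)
Your proposal is correct and takes essentially the same route as the paper: both arguments hinge on the computation of $E\ox_F F_P$ according to the sign of $d$ at $P$ (split centre and exchange type when $-d\in P$, a matrix algebra over $F_P(\sqrt{-1})$ with unitary involution when $d\in P$) and then conclude from previously established facts about nil orderings. The only real difference is which prior results are invoked at the last step: you read off both directions from the classification recalled in Remark~\ref{nil-and-non}, whereas the paper cites \cite[Lemma~2.1(iv)]{A-U-Kneb} (vanishing of the Witt group of the non-simple extended algebra) for the nil direction and results of \cite{A-U-stab} for the converse; your closing aside about deducing the converse purely from $W(E\ox_F F_P,\tau_P)\cong\Z$ plus Pfister's local--global principle would need the extra input you already have in Remark~\ref{nil-and-non}, but since it is only offered as an alternative it does not affect the proof.
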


\begin{proof} 
 Observe that for $P\in X_F$,
 \begin{equation}\label{s-ns}
E\ox_F F_P \cong E\ox_{Z(E)} (Z(E)\ox_F F_P) \cong \begin{cases}
E\ox_{Z(E)} F_P(\sqrt{-d}) \sim F_P(\sqrt{-1}) & \text{if } d\in P\\
E\ox_{Z(E)} (F_P\x F_P)  & \text{if } d\in -P 
\end{cases}.
\end{equation}
Assume first that $P$ extends to $Z(E)$, i.e. that $-d\in P$. By \eqref{s-ns}, $E \ox_F F_P$ is an
$F_P$-algebra which is not
simple. By \cite[Lemma~2.1$(iv)$]{A-U-Kneb} $W(E\ox_F F_P, \tau_P)=0$, implying that $P \in
\Nil[E,\tau]$. 
Conversely, assume that $P\in \Nil[E,\tau]$, but that $d\in P$. Then $E\ox_F F_P \sim
F_P(\sqrt{-1})$. Using \cite[(2.1)]{A-U-stab} this
contradicts  \cite[Proposition~2.2(1)]{A-U-stab}. 
\end{proof}

\begin{thm}\label{real-splitting}
  Let $(E,\tau)$ be an $F$-division algebra with involution, and let $L$ be a
subfield of $\Sym(E,\tau)$, maximal for inclusion and containing $F$. Then one of the following
holds:
  \begin{enumerate}[$(1)$]
\item $L$ is a maximal subfield of $E$ and so $E \ox_F L \sim L$. This can only occur when $\tau$ is
of the first kind.
\item There exists $u\in \Skew(E,\tau)^\x$ such that $u^2 \in L$ and $L(u)$ is a maximal subfield of
$E$.
    	\begin{enumerate}[$(a)$]
	\item If $\tau$ is of the first kind, $E \ox_F L$ is Brauer equivalent
     to a quaternion division algebra $(u^2,c)_L$ for some $c\in L$.
\item If $\tau$ is of the second kind and $Z=F(\sqrt{-d})$, we may take $u=\sqrt{-d}$ and $E \ox_F
L\sim L(\sqrt{-d})$.
	\end{enumerate}
  \end{enumerate}
  Furthermore, if $\tau$ is positive at $P\in X_F$, then there is an ordering $Q$ on
$L$ that extends $P$, $\tau$ is orthogonal in $(1)$ and $\tau$ is symplectic in $(2a)$. In addition, for every
ordering $Q$ on $L$ extending $P$, we have $u^2, c\in -Q$ in $(2a)$, and $d\in Q$
  in $(2b)$.
\end{thm}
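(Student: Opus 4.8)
The plan is to build the maximal subfield from $L$ using Lemma~\ref{almost}, then identify the Brauer class via Lemma~\ref{quat}, and finally feed in positivity of $\tau$ through Lemma~\ref{pos_ext} and Proposition~\ref{Z}.

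First I would invoke Lemma~\ref{almost}: either $L$ is itself a maximal subfield of $E$ (case (1)), or there is $u\in\Skew(E,\tau)$ with $u^2\in L$ and $L(u)$ maximal (case (2)); note $u\ne 0$, so $u\in\Skew(E,\tau)^\x$. In case (1), maximality gives $E\ox_F L\sim L$ since a maximal subfield splits $E$. To see this forces $\tau$ of the first kind: if $\tau$ were of the second kind with $Z=Z(E)\not\subseteq\Sym(E,\tau)$, then $L\subseteq\Sym(E,\tau)$ cannot contain $Z$, so $L$ is not maximal among subfields of $E$ (any maximal subfield contains $Z$) --- contradiction. For case (2a), $\tau$ of the first kind means $Z(E)=F$; since $[L(u):L]=2$, Lemma~\ref{quat} (with $K=F$, and taking $M=L(u)$) shows $E\ox_F L$ is Brauer equivalent to a quaternion division algebra over $L$, and since $u\in L(u)\sm L$ with $u^2\in L$, that quaternion algebra contains $L(u)=L(\sqrt{u^2})$ as a subfield, hence is of the form $(u^2,c)_L$ for some $c\in L^\x$. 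For case (2b), $\tau$ of the second kind: here $Z=F(\sqrt{-d})$ is a subfield of $E$ not contained in $\Sym(E,\tau)$, but $L$ is maximal among subfields of $\Sym(E,\tau)$ containing $F$; one checks $L\cap Z=F$ and that $\sqrt{-d}$ plays the role of $u$ (it is skew, its square is in $F\subseteq L$, and $L(\sqrt{-d})$ is maximal), and then $E\ox_F L\sim L(\sqrt{-d})$ because $L(\sqrt{-d})$ is a maximal subfield of $E$.

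Now assume $\tau$ is positive at $P\in X_F$. By Lemma~\ref{pos_ext}, $P$ extends to an ordering $Q$ on $L$; fix any such $Q$. For the type of $\tau$: a positive involution of the first kind is orthogonal (this is classical --- the trace form $x\mapsto\Trd(\tau(x)x)$ positive definite at $P$ is incompatible with $\tau$ symplectic, since for symplectic $\tau$ of the first kind the form $\qf{1}_\tau$ has signature $0$; cf.\ the references to \cite{A-U-PS} and \cite{BOI} already used in the paper, e.g.\ via $m_P$), which handles the claim that $\tau$ is orthogonal in case (1). In case (2a), $E\ox_F L\sim(u^2,c)_L$ is a \emph{division} quaternion algebra (as $L(u)$ is a field), hence not split, so $(u^2,c)_L$ cannot be definite at $Q$: a quaternion algebra $(p,q)_L$ with $p,q>_Q 0$ splits over $L_Q$. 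Thus $u^2$ and $c$ cannot both be positive at $Q$; combined with the fact that $u^2=\tau(u)\cdot(-u)\cdot u^{-1}\cdot u$... more directly, since $u\in\Skew(E,\tau)$ and $\tau$ is positive, $\Trd(\tau(u)u)=\Trd(-u^2)>_P 0$, which after passing to $L$ forces $u^2<_Q 0$; then non-splitting forces $c<_Q 0$ as well, giving $u^2,c\in -Q$. The symplectic assertion in (2a) then follows: $\tau$ of the first kind with $E\ox_F L$ Brauer-equivalent to a division quaternion algebra over the real-closed-at-$Q$ field $L_Q$ means $\tau\ox\id$ becomes an orthogonal involution on $M_n((-1,-1)_{L_Q})$ only if $\tau$ was symplectic --- more precisely one uses that an orthogonal involution of the first kind stays orthogonal under scalar extension and a positive orthogonal involution on $(-1,-1)$ does not exist, so $\tau$ must be symplectic. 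Finally, case (2b): by Proposition~\ref{Z}, $P\in\Nil[E,\tau]$ iff $-d\in P$ iff $P$ extends to $Z(E)$; but $\tau$ positive at $P$ forces $P\notin\Nil[E,\tau]$ (a positive involution has $\qf{1}_\tau$ of signature $n_P\ne 0$, so $P\in\wt X_F$), hence $d\in P\subseteq Q$.

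The main obstacle I anticipate is pinning down the \emph{exact} orthogonal/symplectic dichotomy in cases (1) and (2a) cleanly, i.e.\ correctly deducing the type of $\tau$ from positivity together with the Brauer class of $E\ox_F L$ over the real closure $L_Q$; this requires care with how the type of an involution behaves under the scalar extension and Morita/Brauer-equivalence steps (for which the paper's running setup with $(D_P,\vt_P,\ve_P)$ and Remark~\ref{nil-and-non} is the right tool), rather than any hard new idea. The field-extension facts ($L\cap Z=F$, maximality of $L(u)$, the quaternion splitting criterion over a real closure) are routine, as is the trace-form computation showing $u^2<_Q 0$.
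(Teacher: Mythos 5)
Your handling of the structural dichotomy (cases (1), (2a), (2b)) follows the paper's own route---Lemma~\ref{almost} to produce $u$, Lemma~\ref{quat} to get a quaternion division algebra over $L$, and the chain $E \ox_F L \cong E\ox_{Z(E)}L(\sqrt{-d}) \sim L(\sqrt{-d})$ in the second-kind case---and is essentially correct; the only small imprecision is that the quaternion algebra of Lemma~\ref{quat} is given up to Brauer equivalence, so to say it contains $L(u)=L(\sqrt{u^2})$ you should identify it with the centralizer $C_E(L)$ (as in the proof of Lemma~\ref{quat}), or argue as the paper does that $L(u)$ splits it and invoke the standard subfield criterion.

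The positivity part, however, has genuine gaps. First, the ``classical fact'' you invoke for case (1)---that a positive involution of the first kind is orthogonal because $\qf{1}_\tau$ has signature $0$ for symplectic $\tau$---is false: quaternion conjugation on $(-1,-1)_F$ is symplectic and positive at every ordering, and the theorem itself asserts that $\tau$ is symplectic \emph{and} positive in case (2a), so your stated reason contradicts the very statement you are proving. The orthogonal/symplectic alternative for a positive involution is governed by the local Brauer class (split versus quaternion at the relevant real closure, cf.\ Remark~\ref{nil-and-non}); in case (1) the correct input is that $E\ox_F L\sim L$, hence the algebra is locally split, which is what forces orthogonality. Second, in case (2a) your derivation of $u^2,c\in-Q$ fails: that $(u^2,c)_L$ is division over $L$ does not prevent it from splitting over the real closure $L_Q$, so ``not split, hence not definite at $Q$'' is a non sequitur; and the trace computation $\Trd_E(\tau(u)u)=-\Trd_E(u^2)>_P 0$ only controls the sign of an element of $F$ (a reduced trace), not the sign of $u^2\in L$ at each individual ordering $Q$ over $P$, so ``after passing to $L$ forces $u^2<_Q 0$'' is unjustified. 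Since your symplecticity argument then presupposes $E\ox_F L_Q\sim(-1,-1)_{L_Q}$, which was never established, it is circular---you correctly flagged this dichotomy as the delicate point, but the proposal does not close it. The paper argues in the opposite order: positivity gives $Q\notin\Nil[E\ox_F L,\tau\ox\id_L]$ (via Lemma~\ref{pos_ext}), the classification of nil orderings from \cite{A-U-Kneb} together with the computed Brauer class of $E\ox_F L$ pins down the type ($\tau$ orthogonal in (1), symplectic in (2a)), and then non-nilness forces $(u^2,c)_L\ox_L L_Q$ to be division, which yields $u^2,c\in-Q$ in one stroke. Your argument for $d\in Q$ in case (2b), via Proposition~\ref{Z} and $P\notin\Nil[E,\tau]$ (Corollary~\ref{1-pos}), is fine and matches the paper.
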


\begin{proof} If $L$ is a maximal subfield of $E$, we must have $Z(E)=F$ since $L\subseteq
\Sym(E,\tau)$, and so
$\tau$ is of the first kind and  $E\ox_F L \sim L$. 

If $L$ is not a maximal subfield of $E$, there exists $u\in \Skew(E,\tau)^\x$ such that $u^2 \in L$
and $L(u)$ is a maximal
subfield  of $E$ by Lemma~\ref{almost}.  

Suppose that $\tau$ is of the first kind, so that $Z(E)=F$. 
Since $[L(u):L]=2$, $E\ox_F L$ is Brauer equivalent to a quaternion division algebra $B$ over $L$ by
Lemma~\ref{quat}.
Since $L(u)$ is a maximal subfield of $E$, we have $(E\ox_F L) \ox_L L(u)\cong E\ox_F L(u)\sim L(u)$
and so
$B\ox_L L(u)\sim L(u)$. By \cite[Chapter~III, Theorem~4.1]{Lam}, there exists $c\in L$ such that
$B \cong (u^2, c)_L$.

Suppose that $\tau$ is of the second kind. Then $Z(E)=F(\sqrt{-d})$ for some $d\in F^\x$ and
$\tau(\sqrt{-d})= - \sqrt{-d}$.
Since $\sqrt{-d} \in L(u)\setminus L $ and $[L(u):L]=2$, we have $L(u)=L(\sqrt{-d})$ and we take
$u=\sqrt{-d}$.
Since $L(\sqrt{-d})$ is a maximal subfield of $E$, it follows that
\[E\ox_F L \cong  E\ox_{Z(E)} (Z(E)\ox_F L)\cong E\ox_{Z(E)} (F(\sqrt{-d})\ox_F L)\cong E\ox_{Z(E)} L(\sqrt{-d})
\sim L(\sqrt{-d}).\]

Finally, assume that $\tau$ is positive at $P$. In particular, $P\not\in \Nil[E,\tau]$.
By Lemma~\ref{pos_ext} there exists an ordering $Q$ on $L$ that extends 
$P$. In particular, $Q\not\in \Nil[E\ox_F L, \tau\ox\id_L]$. By
Remark~\ref{nil-and-non}  it follows that
$\tau$ must be orthogonal in
$(1)$ and symplectic in $(2a)$. Furthermore, for any ordering $Q$ on $L$ that extends $P$ we have:
in $(2a)$, $(u^2,c)_L \ox_L L_Q$ must be division and
so $u^2,c \in -Q$;
in $(2b)$, if $-d\in Q$, then $P$ extends to $Z(E)$ and so $P\in \Nil[E,\tau]$ by Proposition~\ref{Z},
contradiction.
\end{proof}

\begin{defi}\label{def:rs} 
Let $(B,\tau)$ be an $F$-algebra with involution and let $P\in X_F$. An 
ordered
extension $(L,Q)$ of $(F,P)$ is called a \emph{real splitting field} of $(B,\tau)$ and we say that $(L,Q)$ \emph{real
splits} $(B,\tau)$  if $B\ox_F L$ is Brauer
equivalent
to one of the $L$-algebras $L$, $L(\sqrt{-d})$ or $(-a,-b)_L$, where $a,b,d\in Q$ and the involution 
$\tau\ox\id$ is positive at $Q$. 
\end{defi}

\begin{remark} The condition that $\tau\ox\id$ be positive at $Q$ is equivalent to $\tau$ being positive at $P$ and is also
equivalent to $\tau\ox\id$ being adjoint to a quadratic or hermitian form that is positive definite at $Q$.
\end{remark}

\begin{prop}\label{Galois-1} 
Let $(E,\tau)$ be an $F$-division algebra with involution and let $P\in X_F$. Assume that $\tau$ is positive at $P$.
Let $L$ be a subfield of $\Sym(E,\tau)$, maximal for inclusion and containing $F$. 
Then there is a Galois
extension $N$ of $F$ such that $L\subseteq N$,
$P$ extends to $N$ and for every extension $Q$ of $P$ to $N$,
we have that $(N,Q)$ is a real splitting field of $(E, \tau)$. In addition, $[N:F] \leq (\deg E)!$.
\end{prop}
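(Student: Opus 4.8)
The plan is to build the Galois extension $N$ by first invoking Theorem~\ref{real-splitting} to obtain a maximal symmetric subfield $L$ of $(E,\tau)$ together with an explicit description of how $E$ splits over $L$, and then taking $N$ to be the Galois closure of $L$ over $F$. Since $\tau$ is positive at $P$, Theorem~\ref{real-splitting} gives us an ordering $Q_0$ on $L$ extending $P$, and tells us which of the three cases $(1)$, $(2a)$, $(2b)$ we are in; in every case $E\ox_F L$ is Brauer equivalent to one of $L$, $L(\sqrt{-d})$, or $(-a,-b)_L$ with $a,b,d\in Q_0$, and moreover these membership facts are witnessed by the structure of $(E,\tau)$ over $L$. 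The first thing I would do is record that $[L:F]\le \deg D$: indeed $L$ is a subfield of the division algebra $E$ (and $\deg D = \deg E$ here since $E$ is division), so $[L:F]$ divides $\deg E=\deg D$, hence $[N:F]$ divides $(\deg D)!$ — this is where the bound $[N:F]\le(\deg D)!$ comes from, as $N$ is the Galois closure of a degree-$\le\deg D$ extension.

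Next I would check that $P$ extends to $N$. This is the key point and requires a small argument: $P$ extends to $L$ (by Theorem~\ref{real-splitting} or Lemma~\ref{pos_ext}), but extending to the Galois closure is not automatic. Here I would use that the splitting behaviour forces $L$ (or the relevant subextension) to be real-compatible with $P$ in a robust way. Concretely, in case $(1)$, $L$ is a maximal subfield of $E$ with $E\ox_F L\sim L$ and $\tau$ orthogonal; the Galois closure $N$ of $L$ then also real splits $E$ for any extension $Q$ of $P$ to $N$ once we know $P$ extends to $N$. To get the extension to $N$ I would argue that the quadratic/hermitian form to which $\tau$ is adjoint is positive definite at $P$ (by the Remark following Definition~\ref{def:rs}), so $\Trd$-type trace forms over $L$, and over its conjugates, remain positive — more precisely I would invoke that $\tau\ox\id_N$ is still positive at any $Q$ over $P$ because positivity is detected by signatures which are preserved under the relevant scalar extensions, and then cite the criterion (as in the proof of Lemma~\ref{type-of-splitting}, via \cite[Chapter~3, Theorem~4.4]{Sch}) that $P$ extends to $N$ iff $\sign_P(\Tr^*_{N/F}\qf 1)>0$, verifying this positivity using that each embedding of $L$ into a real closure $F_P$ is compatible with $Q_0$.

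Then, for every extension $Q$ of $P$ to $N$, I would verify the real-splitting conditions of Definition~\ref{def:rs}: $E\ox_F N\cong (E\ox_F L)\ox_L N$ is Brauer equivalent to $N$, $N(\sqrt{-d})$, or $(-a,-b)_N$ according to the case, and $a,b,d\in Q$ because $a,b,d\in Q_0\subseteq$ (the image of $Q_0$ in) $N$ and $Q$ restricts to an ordering on each conjugate of $L$ — here I would use that all extensions of $P$ to $N$ are conjugate under $\Gal(N/F)$, so it suffices to treat one, and the Galois action permutes the conjugates of $a,b,d$ which are again positive. Finally $\tau\ox\id_N$ is positive at $Q$: this follows from $\tau$ being positive at $P$ together with the Remark after Definition~\ref{def:rs} (positivity of $\tau\ox\id$ at $Q$ is equivalent to positivity of $\tau$ at $P$). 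The main obstacle I expect is precisely the passage from "$P$ extends to $L$" to "$P$ extends to the Galois closure $N$" while keeping the positivity of the involution intact; the cleanest route is probably to observe that each $F$-embedding $\sigma\colon L\hookrightarrow F_P$ makes $E\ox_F\sigma(L)\ox_{\sigma(L)}F_P$ split in the prescribed way, so the compositum of all $\sigma(L)$ inside $F_P$ is a real extension of $(F,P)$ containing (an isomorphic copy of) $N$, which yields an ordering $Q$ on $N$ over $P$ with the desired properties.
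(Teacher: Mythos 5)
Your overall architecture matches the paper's (take $N$ to be the Galois closure of $L$ over $F$, note $[L:F]\le\deg E$ so $[N:F]\le(\deg D)!$, and deduce the real-splitting property at any $Q$ over $P$ from Theorem~\ref{real-splitting} applied to $Q\cap L$, with positivity of $\tau\ox\id$ handled by the Remark after Definition~\ref{def:rs}). But the step you yourself flag as the key point --- that $P$ extends to $N$ --- is not actually proved in your proposal, and both of the arguments you sketch for it are circular. Saying ``$\tau\ox\id_N$ is still positive at any $Q$ over $P$'' presupposes that such a $Q$ exists, which is exactly what is to be shown; and the criterion $\sign_P(\Tr^*_{N/F}\qf{1})>0$ is of no use unless you can compute that signature, i.e.\ produce an $F$-embedding of $N$ into $F_P$. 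Likewise your ``cleanest route'' (the compositum of all images $\sigma(L)\subseteq F_P$ contains a copy of $N$) only works if \emph{all} $[L:F]$ of the $F$-embeddings of $L$ into $\overline{F}$ land inside $F_P$; Lemma~\ref{pos_ext} only guarantees one such embedding, and for a general field extension with one real embedding the Galois closure need not be real at $P$ at all (think of $\Q(\sqrt[3]{2})/\Q$). So the heart of the proposition --- why positivity of $\tau$ forces \emph{every} conjugate of $L$ to be real at $P$ --- is missing.

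The paper closes this gap as follows: write $L=F(a)$ with $a\in\Sym(E,\tau)$. Since $\tau$ is positive at $P$, $(E\ox_F F_P,\tau\ox\id)$ is a matrix algebra over $F_P$, $F_P(\sqrt{-1})$ or $(-1,-1)_{F_P}$ with the (conjugate) transpose involution, and under this isomorphism $a\ox1$ becomes a matrix symmetric for that involution, hence diagonalizable with diagonal entries in $F_P$ (Lee's theorem in the quaternionic case). Therefore all roots of the reduced characteristic polynomial of $a$, and so all roots of $\min_a$ over $F$, lie in $F_P$; one then takes $N$ to be the splitting field of $\min_a$ \emph{inside} $F_P$, which is Galois over $F$, contains $L$, and visibly carries an extension of $P$. (An equivalent repair in the spirit of your trace-form remark would be to restrict the positive definite form $(x,y)\mapsto\Trd_E(\tau(x)y)$ to $L$, where it is a positive multiple of the trace form of $L/F$, and conclude via the Knebusch trace formula that the number of orderings of $L$ over $P$ is exactly $[L:F]$, so every embedding of $L$ lands in $F_P$; but some such argument must actually be carried out, and your proposal does not do so.)
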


\begin{proof}
  Observe that, under the hypothesis for $L$ and $\tau$ and by
  Theorem~\ref{real-splitting}, $E \ox_F L \cong M_t(E_0)$ where $E_0$ is one of
  $L$, $L(\sqrt{-d})$, $(-a,-b)_L$, and for every ordering $P'$ on $L$ extending
  $P$, $d,a,b \in P'$. Moreover $P$ extends to $L$, so we fix a real closure
  $F_P$ of $F$ at $P$ such that $L \subseteq F_P$.

  Write $L=F(a)$ for some $a \in \Sym(E,\tau)$.  Since $\tau$ is positive at $P$,
  $(E\ox_F F_P, \tau\ox\id)$ is isomorphic to a matrix algebra over one of $F_P$,
  $F_P(\sqrt{-1})$ or $(-1,-1)_{F_P}$, equipped with the (conjugate) transpose
  involution. We denote this isomorphism by $\lambda$. Then the matrix
  $\lambda(a\ox 1)$ is symmetric with respect to conjugate transposition. 
  Thus $\lambda(a\ox 1)$ is congruent to a diagonal matrix with entries in $F_P$,
  cf. \cite[Theorem~9]{Lee-1949} and all the roots of the reduced
  characteristic polynomial of $a$ (which has coefficients in $F$) are in $F_P$.
  Hence all the roots of the minimal polynomial $\min_a$ of $a$ over $F$ are in
  $F_P$. Let $N$ be the splitting field of $\min_a$ in $F_P$. Then $L \subseteq
  N$ and  $N/F$ is a Galois extension. Since $N \subseteq F_P$, the ordering $P$
  extends to $N$.

  Now let $Q$ be any ordering on $N$ extending $P$. Then $P':=Q \cap L$ is an
  ordering on $L$ extending $P$ and, as mentioned above, $E \ox_F L \cong M_t(E_0)$ with  
  $d,a,b \in P'$. Therefore $E \ox_F N \cong M_t(E_0)$ with $d,a,b \in
  Q$.
  
  Concerning $[N:F]$, observe that $F(a)$ is a subfield of $E$ and therefore $[F(a):F]\leq \deg E$ by
  the Centralizer Theorem, cf. \cite[\S2.9]{Plat-Yan}. The conclusion follows. 
\end{proof}

\subsection{The signature at a positive cone}\label{subsec:signp}
Let $\CP \in X_{(A,\s)}$ be a positive cone over $P \in X_F$. Recall that $P \in \wt X_F$ 
by Theorem~\ref{positive=max}.
Our first objective is to show the following:

\begin{thm}\label{sylvester-decomp}
  Let $h$ be a nonsingular 
  hermitian form over $(A,\s)$. Then there are $c \in \CP\cap A^\x$ 
  and $\beta_1, \ldots, \beta_t \in P^\x$ such that
  \[n_P^2 \x \qf{\beta_1, \ldots, \beta_t}\ox h \simeq (\qf{a_1,\ldots,a_r} \perp
  \qf{b_1,\ldots,b_s}) \ox
  \qf{c}_\s\]
  with $a_1, \ldots, a_r \in P^\x$, $b_1, \ldots, b_s \in -P^\x$, $t\leq 2^{n_P}(\deg D)!$  and where $n_P$
  is the matrix size of $A\ox_F F_P$. 
\end{thm}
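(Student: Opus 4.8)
The strategy is to pass to the real closure $F_P$, where the structure of $(A,\s)$ becomes almost split, perform a Sylvester-type diagonalisation there, and then descend back to $F$ using a real splitting field of bounded degree. More precisely, first I would fix, by Theorem~\ref{positive=max}, an invertible element $c_0 \in \CP \cap A^\x$ (so $\qf{c_0}_\s$ has signature $\pm n_P$ at $P$ by Proposition~\ref{m_P=n_P}); replacing $c_0$ by a scalar multiple if necessary we may assume $\sign_P^\eta \qf{c_0}_\s = n_P$. Scaling the involution via $\tau := \Int(c_0)\circ\s$ (Proposition~\ref{prop-scaling}), we may assume $1 \in \CP$, i.e. $\s$ is positive at $P$ (Corollary~\ref{1-pos}); the element $c$ in the statement will then be $c_0$ times the element produced for $\tau$. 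Now use Proposition~\ref{Galois-1}: since $\s$ is positive at $P$ there is a Galois real splitting field $(N,Q)$ of $A$ with $[N:F] \le (\deg D)!$ and $Q \supseteq P$, over which $A\ox_F N$ becomes Brauer equivalent to $N$, $N(\sqrt{-d})$, or $(-a,-b)_N$ with $a,b,d\in Q$, with a positive involution.

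The next step is the Sylvester decomposition over $N$. Over $(N,Q)$ — which is a real splitting situation covered by Proposition~\ref{split-positive} — the form $h\ox_F N$, after applying the Morita equivalence of diagram~\eqref{diagram} and scaling by $\Phi^{-1}$, becomes a diagonal $\ve$-hermitian form over $(D_0,\vt_0)$ (one of the three listed algebras with its canonical positive involution), whose entries can be sorted into those positive at $Q$ and those negative at $Q$ via Lemma~\ref{subreal2} and Fact~\ref{subreal0}. Transporting back through the Morita equivalence and undoing the scaling, this yields an isometry of the shape $n_P \x (h\ox_F N) \simeq (\qf{a_1,\ldots,a_r}\perp\qf{b_1,\ldots,b_s})\qf{\tilde c}_\s \ox N$ with $a_i\in Q^\x$, $b_j\in -Q^\x$, and $\tilde c$ an invertible element of the (scaled) positive cone over $Q$; here the factor $n_P$ (matrix size) appears exactly as the $\ell$ in Lemma~\ref{morita-diag} applied to $A\ox_F N \cong M_{n_P}(D_0)$.

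To descend from $N$ to $F$ I would use a transfer (Scharlau transfer $\Tr_{N/F}^*$ along a suitable $F$-linear functional $N\to F$, or rather the trace-form scaling argument of \cite[Chapter~3]{Sch}) applied to the isometry over $N$. Since $[N:F]\le (\deg D)!$ and each $\qf{\beta_i}$-scaling introduced in diagonalising the transferred trace form contributes a factor of at most $2$, iterating over the $\le n_P$ diagonal blocks produces the bound $t \le 2^{n_P}(\deg D)!$ on the number of one-dimensional weights $\beta_1,\ldots,\beta_t\in P^\x$, and the extra factor $n_P$ (so $n_P^2$ in total) comes from combining the transfer with the matrix-size factor already present. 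Undoing the scaling $\tau\rightsquigarrow\s$ converts $\qf{\tilde c}_\tau$ back to $\qf{c}_\s$ with $c = c_0\,\tilde c \in \CP\cap A^\x$ (using that $\CP$ is closed under the relevant operations and stays proper), and the signs of the $a_i,b_j$ are preserved since $P = Q\cap F$.

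The main obstacle I expect is \textbf{controlling the degree bound $t\le 2^{n_P}(\deg D)!$ precisely} while carrying the isometry through the transfer: one must check that transferring a form of the shape $(\text{diagonal})\cdot\qf{c}_\s\ox N$ from $N$ down to $F$ again has the shape $(\text{diagonal over }P)\cdot\qf{c}_\s$, i.e. that the transfer interacts correctly with the hermitian structure over $(A,\s)$ and does not disturb the single ``$\qf{c}_\s$'' factor — this requires choosing the $F$-linear functional $N\to F$ so that its associated trace form has all weights in $P$ (possible since $Q$ extends $P$, using \cite[Chapter~3, Theorem~4.4]{Sch}), and then diagonalising that trace form, which is where the $2^{n_P}$ and $(\deg D)!$ factors must be bookkept against the rank $n_P$ of the relevant $D_0$-module and the field degree $[N:F]$.
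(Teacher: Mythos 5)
Your overall strategy is the same as the paper's: scale by an invertible element of $\CP$ to reduce to an involution positive at $P$, pass to a Galois real splitting field $(N,Q)$ of degree at most $(\deg D)!$ via Proposition~\ref{Galois-1}, diagonalise in the (almost) split situation, and descend by Scharlau transfer, Frobenius reciprocity and the Knebusch trace formula. However, there is a genuine gap at the descent step. Over $N$ the involution $\s\ox\id$ is only \emph{adjoint} to a positive definite form $\vf\simeq\qf{d_1,\ldots,d_{n_P}}$ with $d_i\in Q^\x$; it is not the conjugate-transpose involution. So after applying the split-case diagonalisation (Lemma~\ref{pre-sylvester}) and undoing the scaling by the Gram matrix $\Phi$ of $\vf$, the right-hand factor you obtain is $\qf{\Phi}_{\s\ox\id}$, where $\Phi$ lives in $A\ox_F N$ and is in general not of the form $c'\ox 1$ with $c'\in\Sym(A,\s)$, nor isometric to an extended form $\qf{c'}_\s\ox N$. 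Frobenius reciprocity, $\Tr_*(q\,(h'\ox N))\simeq \Tr_*(q)\,h'$, only applies when the hermitian factor is extended from $(A,\s)$, so your transfer from $N$ to $F$ cannot be carried out in the shape you claim; choosing a different $F$-linear functional $N\to F$ (your proposed fix in the ``main obstacle'' paragraph) does not address this, since the obstruction is that the factor $\qf{\tilde c}$ is not defined over $F$, not the choice of transfer functional.

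The paper resolves exactly this point by a second field extension which your sketch omits: after arranging the involution to be positive, one adjoins square roots of the diagonal entries $d_1,\ldots,d_k$ of $\vf$, i.e.\ passes from $N$ to the multiquadratic extension $L=N(\sqrt{d_1},\ldots,\sqrt{d_k})$ with $[L:N]=2^k$, $k\le n_P$, over which $\s\ox\id$ becomes the conjugate-transpose involution, so that Lemma~\ref{pre-sylvester} yields a right-hand factor $\qf{1}$, which \emph{is} extended from $F$; one then transfers twice ($L\to N$ inside Lemma~\ref{sylvester-split}, then $N\to F$ in Lemma~\ref{sylvester-positive}). This is also where the bound $t\le 2^{n_P}(\deg D)!$ really comes from: $2^k=[L:N]\le 2^{n_P}$ times $[N:F]\le(\deg D)!$, not from ``factors of $2$ per diagonal block in diagonalising the transferred trace form'' as you suggest (similarly, the factor $n_P^2$ arises entirely in the split-case Lemma~\ref{pre-sylvester}, not from the transfer). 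Finally, to get the $\beta_i$ in $P^\x$ one needs the intermediate weights to be positive at \emph{every} extension of $P$ to $N$ (using that $N/F$ is Galois and that every such extension real splits, Proposition~\ref{Galois-1}), combined with the Knebusch trace formula; this totally-positive requirement is glossed over in your outline and should be made explicit.
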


Observe that the hypothesis that $h$ is nonsingular in Theorem~\ref{sylvester-decomp} is not
restrictive, cf.~Proposition~\ref{ns-again}. 
In order to prove Theorem~\ref{sylvester-decomp} we need the following three lemmas.

\begin{lemma}\label{pre-sylvester}
  Assume $(D, \vt) \in \{(F,\id), (F(\sqrt{-d}),-), ((-a,-b)_F,-)\}$ where $a,b,d \in P^\x$
  and let  
  $h$ be a nonsingular hermitian form over $(M_\ell (D), \vt^t)$. Then
  \[\ell^2 \x h \simeq (\qf{a_1,\ldots,a_r} \perp \qf{b_1,\ldots,b_s} )\ox \qf{I_\ell}_{\vt^t}\]
  for some $a_1, \ldots, a_r \in P^\x$ and $b_1, \ldots, b_s \in -P^\x$.
\end{lemma}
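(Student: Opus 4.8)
The crucial point, common to all three cases, is that $\Sym(D,\vt)=F$: this is trivial for $(F,\id)$, and for $(F(\sqrt{-d}),-)$ and $((-a,-b)_F,-)$ it holds because the elements fixed by conjugation, resp. by quaternion conjugation, are exactly the scalars. Hence any diagonalisation of a hermitian form coming from $(D,\vt)$ automatically has its entries in $F$, and over the ordered field $(F,P)$ those entries can then be sorted according to their sign at $P$.

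The plan is as follows. First, apply Lemma~\ref{morita-diag} to the nonsingular form $h$ over $(M_\ell(D),\vt^t)$: it produces an isometry $\ell\x h\simeq\qf{d_1 I_\ell,\ldots,d_k I_\ell}_{\vt^t}$ with $d_1,\ldots,d_k\in\Sym(D,\vt)$, and since $h$, hence $\ell\x h$, is nonsingular, none of the $d_i I_\ell$ can be zero, so $d_i\in\Sym(D,\vt)^\x=F^\x$. As each $d_i$ is central, $\qf{d_i I_\ell}_{\vt^t}\simeq\qf{d_i}\qf{I_\ell}_{\vt^t}$, and therefore
\[\ell\x h\simeq\qf{d_1,\ldots,d_k}\qf{I_\ell}_{\vt^t}.\]
Next, since $P$ is an ordering of $F$ we have $F^\x=P^\x\cup(-P^\x)$; reorder the $d_i$ so that $d_1,\ldots,d_r\in P^\x$ and $d_{r+1},\ldots,d_k\in-P^\x$, and relabel the former as $a_1,\ldots,a_r$ and the latter as $b_1,\ldots,b_s$ (with $s=k-r$), obtaining
\[\ell\x h\simeq(\qf{a_1,\ldots,a_r}\perp\qf{b_1,\ldots,b_s})\qf{I_\ell}_{\vt^t}.\]
Finally, take $\ell$ orthogonal copies of this isometry; since $\perp$ distributes over the $W(F)$-module action, the right-hand side becomes $(\ell\x(\qf{a_1,\ldots,a_r}\perp\qf{b_1,\ldots,b_s}))\qf{I_\ell}_{\vt^t}$, whose coefficient lists are just $\ell$-fold repetitions of the old ones and hence still lie in $P^\x$, resp. $-P^\x$. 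This gives the asserted form for $\ell^2\x h$. (Note that the statement in fact already holds for $\ell\x h$; the extra factor of $\ell$ is retained only for compatibility with Theorem~\ref{sylvester-decomp}.)

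I do not expect a genuine obstacle here. Modulo the observation that $\Sym(D,\vt)=F$ in these three cases, the argument is just the collapsing Morita equivalence of diagram~\eqref{diagram} — conveniently already packaged inside Lemma~\ref{morita-diag} — combined with the comparability axiom for orderings of a field. The only mildly delicate point is the Morita-rank bookkeeping that explains why it is $\ell\x h$, and not $h$ itself, that diagonalises into scalar matrices $d_i I_\ell$; but this is exactly what Lemma~\ref{morita-diag} records, so it may simply be quoted.
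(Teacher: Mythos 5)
Your argument is correct and is essentially the paper's: both rest on Lemma~\ref{morita-diag}, the fact that $\Sym(D,\vt)=F$ in the three listed cases, and sorting the resulting scalars in $F^\x$ by their sign at $P$; the paper merely reduces first to $h=\qf{a}_{\vt^t}$ and then redoes the collapsing-equivalence computation by hand, which is where its second factor of $\ell$ comes from. Your observation that the scalar-matrix clause of Lemma~\ref{morita-diag} already gives the conclusion for $\ell\x h$, after which one passes to $\ell^2\x h$ by taking $\ell$ orthogonal copies, is accurate.
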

\begin{proof}
  Recall that by  Lemma~\ref{morita-diag}, the hermitian form $\ell \x h$ is diagonal
with coefficients in $\Sym(M_\ell (D), \vt^t)^\x$, so we can assume without loss of generality that
$h = \qf{\alpha}_{\vt^t}$ with
  $\alpha \in \Sym(M_\ell (D), \vt^t)^\x$. 
  Observe that
  \[X_{(D,\vt )} = \{R,-R \mid R \in X_F\}\]
  since $\Sym(D,\vt)=F$
  and thus, by  Proposition~\ref{correspondences},
  \[X_{(M_\ell (D), \vt^t)} = \{\PSD_\ell (R), -\PSD_\ell (R) \mid R \in X_F\}.\]
  Without loss of generality we can assume that $\CP = \PSD_\ell(P)$.
   Diagonalizing the matrix $\alpha$ by congruences, we obtain 
   $h \simeq \qf{\alpha'}_{\vt^t}$ where $\alpha' = \diag (u_1,\ldots,u_\ell)$ with $u_1, \ldots,
  u_\ell \in F^\x$. Let $g$ be the collapsing Morita equivalence from  \eqref{diagram}, then
  $g(h) =   \qf{u_1,\ldots,u_\ell}_\vt$ and
  \[\ell \x g(h) \simeq \qf{u_1,\ldots,u_1}_\vt \perp \cdots \perp \qf{u_\ell,\ldots,
  u_\ell}_\vt.\]
  Applying $g^{-1}$ we obtain
\[\ell \x h \simeq \qf{u_1 I_\ell, \ldots, u_\ell I_\ell}_{\vt^t} = \qf{u_1,\ldots, u_\ell}\ox
\qf{I_\ell}_{\vt^t}.\qedhere\]
 \end{proof}

Let $L/F$ be a finite field extension. The trace map $\Tr_{L/F}$
induces an $A$-linear homomorphism $\Tr_{A\ox_F L} =\id_A\ox \Tr_{L/F}: A\ox_F L \to A$. Let $(M,h)$ be a 
hermitian form over $(A\ox_F L, \s\ox\id)$. The \emph{Scharlau transfer} $\Tr_*(M,h)$ is defined to be the hermitian
form   $(M,\Tr_{A\ox_F L}\circ h)$ over $(A,\s)$. Let $q$ be a quadratic form over $L$, then \emph{Frobenius reciprocity}, i.e.
\[\Tr_*(q\ox (h\ox L)) \simeq \Tr_*(q)\ox h\]
(where  $\Tr_*(q)$ is the usual Scharlau transfer of $q$ with respect to $\Tr_{L/F}$)
 can be proved just as for quadratic forms, 
  cf. \cite[VII, Theorem~1.3]{Lam}.

For $L/F$ as above and $P\in X_F$, we denote the set of orderings on $L$ that extend $P$ by
$X_L/P$.

\begin{lemma}\label{sylvester-split}
  Assume that $\s$ is positive at $P$ and that $D \in \{F , F(\sqrt{-d}) ,
  (-a,-b)_F\}$ where $a,b,d \in P^\x$. Let $h$ be a nonsingular hermitian form
  over $(A,\s)$. Then there are $a_1, \ldots, a_r \in P^\x$,  $b_1,
  \ldots, b_s \in -P^\x$ and $\alpha_1,\ldots, \alpha_{2^k} \in F^\x$ such that
  \[\ell^2 \x \qf{\alpha_1, \ldots, \alpha_{2^k}}\ox h \simeq (\qf{a_1,\ldots,a_r}
  \perp \qf{b_1,\ldots,b_s})\ox \qf{1}_{\s}, \]
  where $k\leq \ell$.
  Furthermore,  for every ordering $Q \in X_F$
  such that $\s$ is positive at $Q$ and either $d\in Q$ in case $D=F(\sqrt{-d})$, or $a,b\in Q$ in case 
  $D=(-a,-b)_F$, we have $\alpha_1,\ldots, \alpha_{2^k} \in Q$.
\end{lemma}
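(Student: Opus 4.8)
The plan is to absorb the Gram matrix of $\s$ into a finite $2$-extension of $F$, invoke Lemma~\ref{pre-sylvester} there, and come back down by a Scharlau transfer together with Frobenius reciprocity.

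\emph{Reduction.} First I would use $f$ to replace $(A,\s)$ by $(M_\ell(D),\ad_\Phi)$; since $(A,\s)$ is formally real (as $\s$ is positive at $P$), $\Phi$ is the Gram matrix of a hermitian form over $(D,\vt)$, and as $D$ is a division algebra with $\Sym(D,\vt)=F$ in each of the three cases, I may diagonalise it and—using $\ad_{\vt(G)^t\Phi G}\cong\ad_\Phi$—assume $\Phi=\diag(\phi_1,\ldots,\phi_\ell)$ with $\phi_1,\ldots,\phi_\ell\in F^\x$. Over $F_P$, the algebra with involution $(A\ox_F F_P,\s\ox\id)$ is a matrix algebra over one of $F_P$, $F_P(\sqrt{-1})$, $(-1,-1)_{F_P}$ (the one dictated by the type of $D$, using $d\in P$, resp. $a,b\in P$) carrying the adjoint involution of $\qf{\phi_1,\ldots,\phi_\ell}$; by the characterisation of positive involutions via scalar extension (\cite[Section~4]{A-U-PS}, cf. the remark after Definition~\ref{def:rs}), $\s$ is positive at $P$ exactly when this form is positive definite at $P$, i.e. when $\phi_1,\ldots,\phi_\ell\in P^\x$. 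After replacing $\Phi$ by $-\Phi$ if needed (which leaves $\ad_\Phi$ unchanged), I assume $\phi_1,\ldots,\phi_\ell\in P^\x$; the same computation over $F_Q$ shows $\phi_1,\ldots,\phi_\ell\in Q$ for every $Q\in X_F$ as in the ``furthermore'' part.

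\emph{Base change.} Put $L:=F(\sqrt{\phi_1},\ldots,\sqrt{\phi_\ell})$. Since each $\phi_i\in P^\x$, $L$ embeds into $F_P$, so $[L:F]=2^k$ for some $k\le\ell$, $P$ extends to an ordering on $L$, and—as $-d<0$, resp. $a,b>0$, in $F_P\supseteq L$—the algebra $D\ox_F L$ is still division, with $(D\ox_F L,\vt\ox\id)$ one of the three cases of Lemma~\ref{pre-sylvester} over $L$ (same parameters, positive at every ordering of $L$ over $P$). Writing $T:=\diag(\sqrt{\phi_1},\ldots,\sqrt{\phi_\ell})$, whose entries lie in $L\subseteq\Sym(D\ox_F L,\vt\ox\id)$, we have $\Phi=T^2$, so conjugation by $T$ is an isomorphism $(M_\ell(D\ox_F L),\ad_\Phi)\simtoo(M_\ell(D\ox_F L),\vt^t)$ carrying $\qf{1}_{\s\ox\id}=\qf{I_\ell}_{\ad_\Phi}$ to $\qf{I_\ell}_{\vt^t}$. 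Applying Lemma~\ref{pre-sylvester} to the form over $(M_\ell(D\ox_F L),\vt^t)$ corresponding to $h\ox L$ and pulling the resulting isometry back along this isomorphism yields a nonsingular quadratic form $\rho$ over $L$ with $\ell^2\x(h\ox L)\simeq\rho\cdot(\qf{1}_\s\ox L)$ over $(A\ox_F L,\s\ox\id)$.

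\emph{Descent.} Finally I would apply the Scharlau transfer $\Tr_*$ (with respect to $\Tr_{L/F}$) to this isometry. By Frobenius reciprocity the left-hand side becomes $\ell^2\x(\Tr_*(\qf{1}_L)\,h)$, and the trace form $\Tr_*(\qf{1}_L)$ of $L/F$ is isometric to $\bigotimes_{i=1}^k\qf{2,2\psi_i}$, where $\psi_1,\ldots,\psi_k$ is a subtuple of $\phi_1,\ldots,\phi_\ell$ generating the tower (proved by induction on the quadratic steps, using Frobenius reciprocity each time); diagonalising gives $\Tr_*(\qf{1}_L)\simeq\qf{\alpha_1,\ldots,\alpha_{2^k}}$ with, say, $\alpha_i=2^k\psi_i$ for $i=1,\ldots,k$, so that $\alpha_i\in Q$ whenever $\psi_i\in Q$, which is the ``furthermore'' part. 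The right-hand side becomes $\Tr_*(\rho)\cdot\qf{1}_\s$ (Frobenius reciprocity again), and $\Tr_*(\rho)$ is a nonsingular quadratic form over $F$; diagonalising it and sorting by sign at $P$ as $\qf{a_1,\ldots,a_r}\perp\qf{b_1,\ldots,b_s}$ with $a_i\in P^\x$, $b_j\in -P^\x$ gives the required isometry. The step I expect to need the most care is the base change: checking that $(D\ox_F L,\vt\ox\id)$ stays within Lemma~\ref{pre-sylvester}'s list and that $\qf{1}_\s$ is tracked correctly through $f$, the conjugation by $T$, and the transfer.
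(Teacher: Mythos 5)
Your proof is correct and takes essentially the same route as the paper's: write $\s$ as adjoint to a diagonal form with entries in $P^\x$ (after a sign change), pass to the multiquadratic ordered extension $L$ generated by the square roots of these entries, where $\s\ox\id$ becomes $\vt^t$, apply Lemma~\ref{pre-sylvester} over $L$, and descend via the Scharlau transfer and Frobenius reciprocity, sorting signs at $P$ at the end. The only deviations are cosmetic: you realise the isomorphism over $L$ by explicit conjugation with $\diag(\sqrt{\phi_1},\ldots,\sqrt{\phi_\ell})$ instead of using $\vf\ox L\simeq\qf{1,\ldots,1}$, and for the ``furthermore'' clause you compute $\Tr_*(\qf{1})\simeq\bigotimes_{i=1}^k\qf{2,2\psi_i}$ explicitly and read off its coefficients, where the paper instead invokes the quadratic Knebusch trace formula to conclude that all $2^k$ coefficients lie in $Q$.
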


\begin{proof}
  Let $\s=\ad_\vf$ for some form $\vf$ over $(D, \vt)$, where $\vt$ is as in
  Lemma~\ref{pre-sylvester}. Since $\s$ is positive at $P$, we may assume that
  $\vf=\qf{d_1,\ldots, d_\ell}$ with $d_1,\ldots, d_\ell \in P^\x$. Up to
  renumbering, let $\{d_1,\ldots, d_k\}$ be a minimal subset of $\{d_1,\ldots,
  d_\ell\}$ such that $F(\sqrt{d_1},\ldots, \sqrt{d_k})= F(\sqrt{d_1},\ldots,
  \sqrt{d_\ell})=:L$. Since the extension $L/F$ is obtained by successive proper
  quadratic extensions by elements of $P$ we have $|X_L/P| =2^k$.
  
  After scalar extension to $L$ we obtain
  \[(A\ox_F L, \s\ox\id) \cong (M_\ell(D\ox_F L), \ad_{\vf\ox L}) \cong
  (M_\ell(D\ox_F L), \ad_{\qf{1,\ldots, 1}_\vt})  =(M_\ell(D\ox_F L), \vt^t).\] 
  Observe that $D\ox_F L$ is an element of $\{L , L(\sqrt{-d}) , (-a,-b)_L\}$ and is an
  $L$-division algebra since $a,b,d \in P^\x$. Thus, by
  Lemma~\ref{pre-sylvester} (with $F$ replaced by $L$) there are $u_1,\ldots,
  u_t \in L^\x$ such that
  \[\ell^2 \x (h\ox L) \simeq \qf{u_1,\ldots, u_t} \ox\qf{1}_{\vt^t}.\] 
  Applying the Scharlau transfer induced by the trace map $\Tr_{L/F}$ 
    gives
  \[\ell^2 \x \Tr_*(h\ox L) \simeq \Tr_*( \qf{u_1,\ldots, u_t}\ox \qf{1}_{\vt^t}).\]
  Thus, by Frobenius reciprocity  
  and since $\s\ox\id \cong \vt^t$, we obtain
  \[\ell^2\x \Tr_*(\qf{1})\ox h \simeq \Tr_*( \qf{u_1,\ldots, u_t})\ox \qf{1}_\s.\]
  Let $\qf{\alpha_1,\ldots,\alpha_{2^k}}$ be a diagonalization of the form
  $\Tr_*(\qf{1})$. Since $L/F$ is separable, $\Tr_{L/F}$ is nonzero and it follows from
  \cite[VII, Proposition~1.1]{Lam} that $\Tr_*( \qf{u_1,\ldots, u_t})$ is a nonsingular quadratic form
  over $F$. Therefore we can separate the coefficients of a diagonalization of  $\Tr_*( \qf{u_1,\ldots, u_t})$
  into elements
  of $P^\x$ and elements of $-P^\x$.

  Finally, let $Q \in X_F$ be such that $\s$ is positive at $Q$ and either 
  $d\in Q$ in case $D=F(\sqrt{-d})$, or $a,b\in Q$ in case 
  $D=(-a,-b)_F$. We have $d_1,\ldots,d_\ell \in Q$ and, as above, $|X_L/Q|
  = 2^k$. By the quadratic Knebusch trace formula 
  \cite[Chapter~3,   Theorem~4.5]{Sch}, $\sign_Q(\Tr_*(\qf{1}))=2^k=[L:F]=
  \dim \Tr_*(\qf{1})$, proving that $\alpha_1, \ldots, \alpha_{2^k} \in Q$.
\end{proof}

\begin{lemma}\label{sylvester-positive}
  Let $h$ be a nonsingular hermitian form over $(A,\s)$ where $\s$ is a positive involution
  at $P$. Then there are $\beta_1, \ldots, \beta_t \in P^\x$ such that
  \[n_P^2 \x \qf{\beta_1, \ldots, \beta_t} \ox h \simeq (\qf{a_1,\ldots,a_r} 
  \perp
  \qf{b_1,\ldots,b_s} ) \ox
  \qf{1}_\s\]
  with $a_1, \ldots, a_r \in P^\x$, $b_1, \ldots, b_s \in -P^\x$ and $t\leq 2^{n_P} (\deg D)!$.
\end{lemma}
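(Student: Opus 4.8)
The plan is to reduce the general algebra $(A,\s)$ with $\s$ positive at $P$ to the split-type situation handled in Lemma~\ref{sylvester-split}, by finding a suitable real splitting field of $(A,\s)$ at $P$ and pushing the decomposition down via a Scharlau transfer. First I would write $A\cong M_\ell(D)$ and recall that since $\s$ is positive at $P$ we have $P\in\wt X_F$ and, after replacing $\vt$ by an involution of the same type (allowed by Corollary~\ref{cor:1}), we may take $(A,\s)=(M_\ell(D),\ad_\vf)$ for a hermitian form $\vf$ over $(D,\vt)$ that is positive definite at $P$; in particular $\Sym(D,\vt)$ contains a maximal subfield $L_0$ of $D$ (in the orthogonal first-kind case) or $D\ox_F L$ becomes one of $L$, $L(\sqrt{-d})$, $(-a,-b)_L$ with parameters in the positive cone, after a quadratic-type step. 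Concretely, I would invoke Theorem~\ref{real-splitting} and Proposition~\ref{Galois-1} applied to $(D,\vt)$: these produce a finite (Galois, of degree $\le(\deg D)!$) extension $N/F$ to which $P$ extends, such that for every extension $Q$ of $P$ to $N$, $(N,Q)$ real splits $(D,\vt)$, i.e. $D\ox_F N$ is Brauer equivalent to $N$, $N(\sqrt{-d})$, or $(-a,-b)_N$ with $a,b,d\in Q$, and $\vt\ox\id$ positive at $Q$.

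Next I would scalar-extend the given nonsingular hermitian form $h$ over $(A,\s)$ to $(A\ox_F N,\s\ox\id)\cong (M_\ell(D\ox_F N),\ad_{\vf\ox N})$. Since $\vf\ox N$ is positive definite at each $Q\mid P$, Lemma~\ref{sylvester-split} (applied over the base field $N$, whose division algebra $D\ox_F N$ is one of the three allowed split types with parameters positive at $Q$) yields
\[\ell^2\x\qf{\alpha_1,\ldots,\alpha_{2^k}}(h\ox N)\simeq(\qf{a_1',\ldots}\perp\qf{b_1',\ldots})\qf{1}_{\s\ox\id},\]
with $\alpha_i\in N^\x$, the $a_j'\in$ (the ordering at $Q$), $b_j'\in$ (minus it), $k\le\ell$, and crucially $\alpha_1,\ldots,\alpha_k\in Q$ for every such $Q$. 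Now apply the Scharlau transfer $\Tr_*$ induced by $\Tr_{N/F}$ and Frobenius reciprocity, exactly as in the proof of Lemma~\ref{sylvester-split}, to get
\[\ell^2\x\Tr_*(\qf{\alpha_1,\ldots,\alpha_{2^k}})h\simeq\Tr_*\bigl((\qf{a_1',\ldots}\perp\qf{b_1',\ldots})\qf{1}_{\vt^t}\bigr)\]
over $(A,\s)$; separating a diagonalization of the right side into coefficients in $P^\x$ and $-P^\x$ (the transferred form is nonsingular since $N/F$ is separable, by \cite[VII, Prop.~1.1]{Lam}) gives a decomposition of the required shape after absorbing $\ell^2$ and noting $n_P^2$ divides a suitable multiple. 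To control the multiplier $\qf{\beta_1,\ldots,\beta_t}:=\Tr_*(\qf{\alpha_1,\ldots,\alpha_{2^k}})$ I would argue that each $\beta_i\in P^\x$ and that $t\le 2^{n_P}(\deg D)!$: the $\alpha_i$ with $i\le k$ lie in $Q$ for all $Q\mid P$, so by the Knebusch trace formula \cite[Chap.~3, Thm.~4.5]{Sch} their transfers are totally positive at $P$; the remaining $\alpha_i$ come in from the $2^k$-fold splitting of the reference form and can be arranged (as in Lemma~\ref{sylvester-split}, replacing $\ell$ by $n_P$ after also splitting $D$) to contribute only totally positive transferred coefficients, while $\dim\Tr_*(\qf{\alpha_1,\ldots,\alpha_{2^k}})=2^k[N:F]\le 2^{n_P}(\deg D)!$.

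The main obstacle I anticipate is the bookkeeping that simultaneously (i) keeps $\ell$ versus $n_P$ straight — one must split $D$ as well, so that the final ``matrix size'' appearing is $n_P$, not $\ell$ — and (ii) guarantees that \emph{all} the $\beta_i$ land in $P^\x$ rather than merely nonsingular. Point (ii) is exactly the place where the ``for every ordering $Q$ with $\s$ positive at $Q$'' clause of Lemma~\ref{sylvester-split} and the Knebusch trace formula must be combined: positivity of $\vf\ox N$ at every $Q\mid P$ forces $\sign_Q\Tr_*(\qf{\alpha_1,\ldots,\alpha_{2^k}})=[N:F]=\dim$, i.e. the transfer is positive definite at $P$. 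Once that is in hand, the rest is the routine transfer-and-diagonalize argument already used for Lemma~\ref{sylvester-split}.
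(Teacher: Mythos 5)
Your plan follows the paper's proof essentially step for step: pass to a Galois real splitting field $N$ with $[N:F]\le(\deg D)!$ via Proposition~\ref{Galois-1}, apply Lemma~\ref{sylvester-split} over $N$ (where the matrix size of $A\ox_F N$ is $n_P$, exactly the $\ell$-versus-$n_P$ point you flag), transfer down by $\Tr_{N/F}$ with Frobenius reciprocity, and combine the ``for every ordering $Q$'' clause of Lemma~\ref{sylvester-split} with the Knebusch trace formula and $|X_N/P|=[N:F]$ to see that $\Tr_*(\qf{\alpha_1,\ldots,\alpha_{2^k}})\simeq\qf{\beta_1,\ldots,\beta_t}$ is positive definite at $P$ with $t=2^k[N:F]\le 2^{n_P}(\deg D)!$. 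The only detail you gloss over is that Proposition~\ref{Galois-1} requires an involution on $D$ positive at $P$ (your chosen $\vt$ need not be), which the paper supplies via Theorem~\ref{main_pos}; this is a one-line fix and otherwise your argument coincides with the paper's.
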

\begin{proof}
  By  Theorem~\ref{main_pos} there exists a  positive involution on $D$.  Thus, 
  by Proposition~\ref{Galois-1} there exists a finite Galois extension $N$ of  
  $F$ such that $P$ extends to an
ordering $Q$ of $N$ and such that $(N,Q)$ real splits $(A,\s)$. In particular we can take $N\subseteq F_P$. 
Therefore we can apply Lemma~\ref{sylvester-split} to $(A \ox_F N, \s \ox \id)$ and we obtain
  \[M^2 \x \qf{\alpha_1, \ldots, \alpha_{2^k}}\ox (h \ox N) \simeq (\qf{u_1,\ldots,u_r} \perp
  \qf{v_1,\ldots,v_s})\ox \qf{1}_{\s\ox\id},\]
  where $M$ is the matrix size of $A\ox_F N$,
  $u_1, \ldots, u_r
  \in Q^\x$, $v_1,\ldots, v_s \in -Q^\x$, $\alpha_1,\ldots, \alpha_{2^k}$ are as given in Lemma~\ref{sylvester-split}
  and $k\leq M$. 
  Observe that in fact $M=n_P$ (as defined in \eqref{def:np})
  since $(N,Q)$ real splits $(A,\s)$, so that
   the passage from $A\ox_F N$ to 
  $A\ox_F F_P$ does not change the degree of the underlying division algebra.  
  Furthermore, since $\s$ is positive at $P$, $\s$ is positive at every $R \in X_N/P$
  and by Proposition~\ref{Galois-1}, $(N,R)$ is a real splitting field of $(A,\s)$. Therefore, by Lemma~\ref{sylvester-split},
  $\alpha_1,\ldots, \alpha_{2^k} \in R$ for every $R\in X_N/P$.
  
  Applying the Scharlau transfer induced by the trace map $\Tr_{N/F}$ and Frobenius reciprocity 
  give 
  \[n_P^2 \x \Tr_*( \qf{\alpha_1, \ldots, \alpha_{2^k}})\ox h \simeq \Tr_*(\qf{u_1, \ldots, u_r, v_1,\ldots,  v_s})\ox
  \qf{1}_\s.\]
  Observe now that $\dim \Tr_*(\qf{\alpha_1, \ldots, \alpha_{2^k}}) = 2^k [N:F]$ 
  and that, since $N$ is
  an ordered Galois extension of $(F,P)$, the number $|X_N/P|$  of extensions of $P$ to $N$
  is $[N:F]$ (this is a direct consequence of \cite[Corollary~1.3.19]{PD2001}   and the fact that the extension is Galois).
  In particular, and using that $\alpha_1, \ldots, \alpha_{2^k}$ belong to every $R \in X_N/P$, we obtain,
  using the quadratic Knebusch trace formula \cite[Chapter~3, Theorem~4.5]{Sch}, that
  \[\sign_P \Tr_*(\qf{\alpha_1, \ldots, \alpha_{2^k}}) = \sum_{R \in X_N/P} \sign_P \qf{\alpha_1, \ldots, \alpha_{2^k}} = 
  2^k [N:F].\]
  It follows that if we write $\Tr_* (\qf{\alpha_1, \ldots, \alpha_{2^k}}) \simeq \qf{\beta_1, \ldots, \beta_t}$
  with $\beta_1, \ldots, \beta_t \in F^\x$, we must have 
  $\beta_1, \ldots, \beta_t \in P^\x$ since $t=2^k[N:F]$.
  Therefore
  \[n_P^2 \x \qf{\beta_1, \ldots, \beta_t} \ox h \simeq \Tr_*(\qf{u_1, \ldots, u_r, v_1,\ldots,  v_s})\ox \qf{1}_\s.\]
 The result follows and the bound on $t$ is obtained from Proposition~\ref{Galois-1}.
\end{proof}

\begin{proof}[Proof of Theorem~\ref{sylvester-decomp}]
  Let
  $\gamma$ be an involution on $A$, positive at $P$, and let
  $c \in \Sym(A,\s)^\x$ be such that $\gamma = \Int(c^{-1}) \circ \s$. It follows from \cite[Remark~4.3 and Proposition~4.4]{A-U-PS}
  and  Theorem~\ref{positive=max}  that $c\in \CP\cap A^\x$ or $c \in -(\CP\cap A^\x)$.
  
  By  Proposition~\ref{prop-scaling}, $c^{-1} \CP$ is a
  positive cone on $(A,\gamma)$ over $P$. Furthermore, $c^{-1} h$ is a hermitian form
  over $(A,\gamma)$ by scaling. So by
  Lemma~\ref{sylvester-positive}, there are  $\beta_1, \ldots, \beta_t \in P^\x$ such
  that
  \[n_P^2 \x \qf{\beta_1, \ldots, \beta_t} \ox c^{-1} h \simeq (\qf{a_1,\ldots,a_r} \perp
  \qf{b_1,\ldots,b_s} )\ox \qf{1}_\gamma\]
  with $a_1, \ldots, a_r \in P^\x$ and $b_1, \ldots, b_s \in -P^\x$. 
  Scaling by $c$ gives
  \[n_P^2 \x \qf{\beta_1, \ldots, \beta_t} \ox h \simeq (\qf{a_1,\ldots,a_r} \perp
  \qf{b_1,\ldots,b_s}) \ox \qf{c}_\s\]
  and the result follows since  $c\in \CP\cap A^\x$ or $c \in -(\CP\cap A^\x)$.
\end{proof}

The following corollary is an immediate weaker version of
Theorem~\ref{sylvester-decomp}.
\begin{cor}\label{sylvester-simple}
  Let $h$ be a nonsingular hermitian form over $(A,\s)$. Then there exist 
  $\beta_1, \ldots, \beta_t \in P^\x$ (with $t\leq 2^{n_P} (\deg D)!$) ,  $a_1, \ldots, a_r \in \CP\cap A^\x$ and $b_1, \ldots,
  b_s \in -(\CP\cap A^\x)$ such that
  \[n_P^2 \x \qf{\beta_1, \ldots, \beta_t}\ox  h \simeq \qf{a_1, \ldots, a_r}_\s \perp
  \qf{b_1, \ldots, b_s}_\s.\]
\end{cor}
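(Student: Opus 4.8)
The plan is to deduce Corollary~\ref{sylvester-simple} directly from Theorem~\ref{sylvester-decomp}; the only work is to absorb the scalar hermitian form $\qf{c}_\s$ on the right-hand side of that theorem into the diagonal hermitian form and to identify the signs of the new coefficients. Concretely, Theorem~\ref{sylvester-decomp} already supplies $c\in\CP\cap A^\x$ and $\beta_1,\ldots,\beta_t\in P^\x$ with $t\le 2^{n_P}(\deg D)!$ such that
\[n_P^2\x\qf{\beta_1,\ldots,\beta_t}h\simeq(\qf{a_1,\ldots,a_r}\perp\qf{b_1,\ldots,b_s})\qf{c}_\s\]
with $a_1,\ldots,a_r\in P^\x$ and $b_1,\ldots,b_s\in-P^\x$, so the scalars $\beta_i$ and the bound on $t$ are inherited verbatim and need no further discussion.

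First I would rewrite the right-hand side diagonally. Since $F\subseteq K=Z(A)$, for any $u\in F^\x$ the form $\qf{u}\ox\qf{c}_\s$ is literally $\qf{uc}_\s$ (the scalar $u$ being central and $\s$-fixed), and $uc\in\Sym(A,\s)^\x$; applying this to each coefficient gives
\[(\qf{a_1,\ldots,a_r}\perp\qf{b_1,\ldots,b_s})\qf{c}_\s=\qf{a_1c,\ldots,a_rc}_\s\perp\qf{b_1c,\ldots,b_sc}_\s.\]
Second I would check the membership of the new coefficients using $c\in\CP$: each $a_i$ lies in $P^\x=\CP_F\setminus\{0\}$, so by Proposition~\ref{easy}(2) we have $a_i\CP=\CP$, hence $a_ic\in\CP$, and since $a_i$ and $c$ are invertible, $a_ic\in\CP\cap A^\x$; likewise $-b_j\in P^\x$ gives $(-b_j)c\in\CP\cap A^\x$, i.e. $b_jc\in-(\CP\cap A^\x)$. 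Taking the lists $(a_1c,\ldots,a_rc)$ and $(b_1c,\ldots,b_sc)$ completes the argument.

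I do not anticipate a genuine obstacle, the corollary being stated as an immediate weakening of Theorem~\ref{sylvester-decomp}; the one point requiring a moment's care is precisely that multiplying a symmetric element of $\CP$ by a scalar from $P^\x$ keeps it inside $\CP$, which is exactly Proposition~\ref{easy}(2). (Should one prefer to invoke the more symmetric form of Theorem~\ref{sylvester-decomp} used in its own proof, where $c$ may instead lie in $-(\CP\cap A^\x)$, the only change is to interchange the two lists and relabel.)
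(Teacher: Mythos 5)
Your derivation is correct and matches the paper's intent: the paper gives no separate proof, calling Corollary~\ref{sylvester-simple} an immediate weaker version of Theorem~\ref{sylvester-decomp}, and your argument (absorbing $\qf{c}_\s$ into the diagonal via $\qf{u}\ox\qf{c}_\s\simeq\qf{uc}_\s$ and using Proposition~\ref{easy}(2) to see $a_ic\in\CP\cap A^\x$, $b_jc\in-(\CP\cap A^\x)$) is exactly the intended routine verification. The scalars $\beta_i$ and the bound on $t$ indeed carry over verbatim, so nothing is missing.
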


A result in the style of Sylvester inertia is immediate:
\begin{lemma}\label{inertia}
  Let $h$ be a  hermitian form over $(A,\s)$ and suppose
  \begin{align*}
    h & \simeq \qf{a_1, \ldots, a_r}_\s \perp \qf{b_1, \ldots, b_s}_\s \\
      & \simeq \qf{a'_1, \ldots, a'_p}_\s \perp \qf{b'_1, \ldots, b'_q}_\s
  \end{align*}
  with $a_1,\ldots,a_r,a'_1,\ldots,a'_p \in \CP \cap A^\x$ and $b_1, \ldots,b_s, b'_1,
  \ldots, b'_q \in -(\CP\cap A^\x)$. Then $r=p$ and $s=q$.
\end{lemma}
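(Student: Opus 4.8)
The plan is to extract from each of the two given diagonalizations the only two isometry invariants we need — the rank and the signature $\sign_P^\eta$ at the underlying ordering — and then to compare. First I would fix a tuple of reference forms $\eta$ for $(A,\s)$ and invoke Theorem~\ref{positive=max}: since $\CP$ is a positive (hence prepositive) cone over $P$, we have $P\in\wt X_F$, and there is $\ve\in\{-1,1\}$ with $\CP\cap A^\x=\ve\,\CM^\eta_P(A,\s)\setminus\{0\}$. Recalling the remark preceding Lemma~\ref{bleuarg} that an invertible $u$ lies in $\CM^\eta_P(A,\s)$ exactly when $\sign_P^\eta\qf{u}_\s=m_P(A,\s)$, and that $\sign_P^\eta\qf{-x}_\s=-\sign_P^\eta\qf{x}_\s$ (because $\qf{x,-x}_\s\simeq\qf{1,-1}_\s$ is hyperbolic), it follows that every element of $\CP\cap A^\x$ has $\sign_P^\eta\qf{\cdot}_\s=\ve\,m_P(A,\s)$ and every element of $-(\CP\cap A^\x)$ has $\sign_P^\eta\qf{\cdot}_\s=-\ve\,m_P(A,\s)$.

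Next I would use that $\sign_P^\eta\colon W(A,\s)\to\Z$ is additive on orthogonal sums: the first decomposition gives $\sign_P^\eta h=\ve\,m_P(A,\s)\,(r-s)$ and the second gives $\sign_P^\eta h=\ve\,m_P(A,\s)\,(p-q)$. At the same time, $\qf{a_1,\ldots,a_r}_\s\perp\qf{b_1,\ldots,b_s}_\s$ has rank $r+s$ and $\qf{a'_1,\ldots,a'_p}_\s\perp\qf{b'_1,\ldots,b'_q}_\s$ has rank $p+q$, and rank is an isometry invariant, so $r+s=\rk h=p+q$. Because $P\in\wt X_F$, Proposition~\ref{m_P=n_P} gives $m_P(A,\s)=n_P>0$, so $\ve\,m_P(A,\s)\neq 0$ and we may cancel it to get $r-s=p-q$. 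Together with $r+s=p+q$ this yields $r=p$ and $s=q$.

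The argument is essentially immediate once Theorem~\ref{positive=max} is available, so there is no real obstacle here; the only points needing (minor) attention are the sign $\ve$, which is harmless since it cancels in both equations, and the strict positivity of $m_P(A,\s)$, which is precisely why one needs $P\in\wt X_F$ — guaranteed in this subsection because $\CP$ is a positive cone over $P$.
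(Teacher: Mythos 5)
Your proof is correct, but it follows a genuinely different route from the paper's. The paper argues intrinsically: from $r+s=p+q$ it reduces to showing $r+q=p+s$, passes to the Witt-group equality $\qf{a_1,\ldots,a_r,-b'_1,\ldots,-b'_q}_\s = \qf{a'_1,\ldots,a'_p,-b_1,\ldots,-b_s}_\s$, relabels so that both sides have all entries in $\CP\cap A^\x$, and uses Witt cancellation to conclude that if $r+q>p+s$ then some $-\alpha_1$ with $\alpha_1\in\CP\cap A^\x$ is represented by a diagonal form with entries in $\CP$, hence lies in $\CP$ by (P2) and (P3), contradicting (P5). That argument uses only the (pre)positive cone axioms and Witt cancellation; it would work verbatim for any prepositive cone and, importantly for the architecture of this subsection, it keeps the definition of $\sign_\CP$ independent of the signature machinery, the comparison with $\sign^\eta_P$ being deferred to Proposition~\ref{equal_sign}. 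You instead invoke Theorem~\ref{positive=max} (so maximality of $\CP$ is genuinely used) together with Proposition~\ref{m_P=n_P} to pin down $\sign^\eta_P\qf{\cdot}_\s$ on $\CP\cap A^\x$ and $-(\CP\cap A^\x)$ as $\pm\ve\,m_P(A,\s)$ with $m_P(A,\s)=n_P>0$, and then compare rank and signature of the two diagonalizations. This is logically sound — there is no circularity, since Theorem~\ref{positive=max}, Proposition~\ref{m_P=n_P} and the additivity and Witt-invariance of $\sign^\eta_P$ (including $\sign^\eta_P\qf{-x}_\s=-\sign^\eta_P\qf{x}_\s$ via hyperbolicity of $\qf{x,-x}_\s$ for invertible $x$) are all available at this point — and it yields a shorter computation, at the cost of resting on the heavier classification results and essentially anticipating Proposition~\ref{equal_sign}, whereas the paper's proof is elementary and more in the self-contained ``Sylvester inertia'' spirit.
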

\begin{proof}
  For dimension reasons we have $r+s = p+q$. The result will follow if we show
  $r-s = p-q$, i.e. $r+q = p+s$. Assume for instance that $r+q > p+s$. We have
  the following equality in $W(A,\s)$:
  \[\qf{a_1,\ldots,a_r,-b'_1,\ldots,-b'_q}_\s = \qf{a'_1,\ldots, a'_p, -b_1,
  \ldots, -b_s}_\s.\]
  Relabelling the entries gives 
  $\qf{\alpha_1, \ldots, \alpha_{r+q}}_\s = \qf{\beta_1, \ldots,
  \beta_{p+s}}_\s$,
  with $\alpha_i, \beta_j \in \CP\cap A^\x$. Since $r+q > p+s$ there
  is $i \in \{1, \ldots, r+q\}$ such that
  \[\qf{\alpha_{i+1}, \ldots, \alpha_{r+q}}_\s \simeq \qf{-\alpha_1, \ldots,
  -\alpha_i, \beta_1, \ldots, \beta_{p+s}}_\s.\]
If follows that $-\alpha_1 \in D_{(A,\s)} (\qf{\alpha_{i+1}, \ldots, \alpha_{r+q}}_\s)\subseteq \CP$,
contradicting that $\CP$ is
proper.
\end{proof}

We use Corollary~\ref{sylvester-simple} to define the signature at $\CP$.

\begin{defi}\label{def_sign}
  Let $h$ be a nonsingular hermitian form over $(A,\s)$, $\CP\in X_{(A,\s)}$ 
   and let $t$, $\beta_i$, $a_j$ and $b_k$ be as
  in Corollary~\ref{sylvester-simple}. We define the \emph{signature of $h$ at $\CP$} by
  \[\sign_\CP h := \frac{r-s}{n_Pt}.\]
 \end{defi}

Corollary~\ref{sylvester-simple} suggests that $\sign_\CP h$ should be a multiple of $r-s$ by some constant.
The reason for the particular normalization applied to $r-s$ in the definition above is given by:

\begin{prop}\label{equal_sign}
  Let $\eta$ be a reference form for $(A,\s)$. Then there is $\varepsilon_P \in
  \{-1,1\}$ such that $\sign_\CP = \varepsilon_P \sign^\eta_P$.
\end{prop}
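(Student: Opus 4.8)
The plan is to show that the map $\sign_\CP$ defined via Corollary~\ref{sylvester-simple} agrees, up to a global sign, with the signature $\sign_P^\eta$ attached to the underlying ordering $P = \CP_F$. First I would use Theorem~\ref{sylvester-decomp} rather than its weak form: for a nonsingular hermitian form $h$ over $(A,\s)$ we have
\[
n_P^2 \x \qf{\beta_1,\ldots,\beta_t}\, h \simeq (\qf{a_1,\ldots,a_r}\perp\qf{b_1,\ldots,b_s})\,\qf{c}_\s
\]
with $c\in\CP\cap A^\x$, $\beta_i\in P^\x$, $a_j\in P^\x$ and $b_k\in -P^\x$. The point of having $\qf{c}_\s$ with a single $c$ on the right is that I can now apply the signature $\sign_P^\eta$, which is a morphism of $W(F)$-modules: $\sign_P^\eta$ of the right-hand side equals $\bigl(\sum_j \sign_P\qf{a_j} + \sum_k \sign_P\qf{b_k}\bigr)\cdot \sign_P^\eta\qf{c}_\s = (r - s)\,\sign_P^\eta\qf{c}_\s$, using $\sign_P\qf{a_j}=1$, $\sign_P\qf{b_k}=-1$. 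On the left-hand side, $\sign_P^\eta(n_P^2 \x \qf{\beta_1,\ldots,\beta_t}\,h) = n_P^2 \cdot t \cdot \sign_P^\eta h$ since each $\beta_i$ is positive at $P$.

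Next I would evaluate $\sign_P^\eta\qf{c}_\s$ for $c\in\CP\cap A^\x$. By Theorem~\ref{positive=max}, $\CP\cap A^\x = \ve\,\CM_P^\eta(A,\s)\setminus\{0\}$ for some $\ve\in\{-1,1\}$ depending only on $\CP$ (and $\eta$); hence $\sign_P^\eta\qf{c}_\s = \ve\, m_P(A,\s) = \ve\, n_P$, the last equality by Proposition~\ref{m_P=n_P} (recall $P\in\wt X_F$ by Theorem~\ref{positive=max}). Substituting back,
\[
n_P^2\, t\, \sign_P^\eta h = (r-s)\,\ve\, n_P,
\]
so $\sign_P^\eta h = \ve\,\dfrac{r-s}{n_P\, t} = \ve\,\sign_\CP h$ by Definition~\ref{def_sign}. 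Setting $\varepsilon_P := \ve$ gives $\sign_\CP = \varepsilon_P\,\sign_P^\eta$ on nonsingular forms, which is the claim.

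Two small points need care. First, I should check that $\sign_\CP h$ as computed from any valid choice of data in Corollary~\ref{sylvester-simple} is well-defined: this is not strictly needed for the proposition if Definition~\ref{def_sign} is read as "for some choice", but it is cleaner to note that the identity $\sign_P^\eta h = \ve (r-s)/(n_P t)$ forces $(r-s)/(n_P t)$ to be independent of the choice, since the left side is. (Alternatively one can invoke Lemma~\ref{inertia} together with a comparison of the $\beta_i$ data.) Second, I must make sure the $\ve$ in Theorem~\ref{positive=max} is the same $\ve$ governing all invertible elements of $\CP$ simultaneously — it is, being defined by $\CP\cap A^\x = \ve\CM_P^\eta(A,\s)\setminus\{0\}$ — so that the sign $\varepsilon_P$ depends only on $\CP$ and $\eta$, not on $h$. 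The main obstacle is really just bookkeeping: keeping straight that $P\in\wt X_F$ so that Proposition~\ref{m_P=n_P} applies and $m_P(A,\s)=n_P$, and that Theorem~\ref{sylvester-decomp} (not just Corollary~\ref{sylvester-simple}) is what lets the $W(F)$-module morphism property of $\sign_P^\eta$ factor the computation cleanly through the single element $c$.
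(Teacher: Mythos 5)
Your argument is correct and is essentially the paper's proof: both apply $\sign_P^\eta$ to the decomposition from Theorem~\ref{sylvester-decomp}/Corollary~\ref{sylvester-simple} and use Theorem~\ref{positive=max} together with $m_P(A,\s)=n_P$ (Proposition~\ref{m_P=n_P}) to obtain $n_P^2 t\,\sign_P^\eta h=\ve_P n_P(r-s)$, hence $\sign_\CP=\ve_P\sign_P^\eta$. The only cosmetic difference is that the paper evaluates the signature termwise on the Corollary~\ref{sylvester-simple} data, using $\sign_P^\eta\qf{a_i}_\s=\ve_P n_P$ and $\sign_P^\eta\qf{b_j}_\s=-\ve_P n_P$, which covers an arbitrary choice of data in Definition~\ref{def_sign} at once, whereas you factor through the single element $c$ of Theorem~\ref{sylvester-decomp} and then handle well-definedness separately.
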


\begin{proof}
  Let $h$ be a nonsingular hermitian form over $(A,\s)$ and consider
   \[n_P^2 \x \qf{\beta_1, \ldots, \beta_t} \ox h \simeq \qf{a_1, \ldots, a_r}_\s \perp
  \qf{b_1, \ldots, b_s}_\s\]
  as in Corollary~\ref{sylvester-simple}. By  Theorem~\ref{positive=max} there exists $\ve_P \in \{-1,1\}$
  such that $\CP\cap A^\x = \ve_P \CM^\eta_P (A,\s)\sm \{0\}$.
  Applying the map $\sign_P^\eta$ to this isometry, and since $\sign^\eta_P \qf{a_i}_\s=\ve_P n_P$, 
  $\sign^\eta_P \qf{b_j}_\s=-\ve_P n_P$
     we obtain
  $n_P^2 t \sign_P^\eta h = \ve_P n_P (r-s)$, which proves the result.
\end{proof}

\begin{remark} Observe that for $\CP \in X_{(A,\s)}$, $\sign_{-\CP} =-\sign_\CP$. Furthermore, 
the following are immediate consequences of Proposition~\ref{equal_sign} and \cite[Theorem~2.6]{A-U-prime}:
let $h_1, h_2$ be hermitian forms over $(A,\s)$, let $q$ be a quadratic
      form over $F$ and let $\CP \in X_{(A,\s)}$ be over $P \in X_F$, then 
      \begin{align*}
      \sign_\CP(h_1)&=0 \text{ if $h_1$ is hyperbolic}, \\
      \sign_\CP(h_1 \perp h_2) &= \sign_\CP h_1 + \sign_\CP h_2,\\
      \sign_\CP( q\ox h_1) &= (\sign_P q) (\sign_\CP h_1).
      \end{align*}
In particular, the map $\sign_\CP$ is well-defined on $W(A,\s)$. 
\end{remark}

After introducing the topology $\CT_\s$ on $X_{(A,\s)}$ we will show in the final section that the total signature map 
$\sign h: X_{(A,\s)}\to \Z,\ \CP \mapsto \sign_\CP h$ is 
continuous, cf. Theorem~\ref{thm:signp}.

\section{The topology of  $X_{(A,\s)}$}\label{sec:top}

One of the main objectives of this section is to show that the natural topology
on $X_{(A,\s)}$ is spectral, cf. Theorem~\ref{thm:spectral}. 
Spectral topologies were studied in great detail by Hochster \cite{Hochster} in order to completely describe the topology on $\mathrm{Spec} (A)$ for 
any commutative ring $A$. We also refer to \cite[Section~6.3]{Marshall_real_spectra} 
and the recent monograph on spectral spaces \cite{DST}.

Let $(X, \CT)$ be a topological space. We denote by $\cB(\CT)$ the set of all
subsets of $X$ that are compact open in $\CT$. (Note that for us compact means
quasicompact.) 

The space $(X, \CT)$ is \emph{spectral} if it is $T_0$ and compact, 
$\cB(\CT)$ is closed under finite intersections and forms an open basis of $\CT$, and every nonempty
irreducible closed subset has a generic point, cf. \cite[Section~0]{Hochster}.

We define
the \emph{patch topology} on $X$, denoted $\CT_\patch$, as the topology with
subbasis
\[\{U, X\sm V \mid U, V \in \cB(\CT)\},\]
cf. \cite[Sections~2 and 8]{Hochster}.
Observe that if  $X$ is compact, this subbasis can be replaced by the subbasis
\[\{U\sm V \mid U, V \in \cB(\CT)\},\]
which is actually a basis of $\CT_\patch$ if $\cB(\CT)$ is in addition closed
under finite unions and finite intersections.
The interesting aspects of a spectral topology derive from the properties of $\CT_\patch$ and its
links with $\CT$.

We define, for $a_1, \ldots, a_k \in \Sym(A,\s)$,
\[H_\s(a_1, \ldots, a_k) := \{\CP \in X_{(A,\s)} \mid a_1, \ldots, a_k \in
\CP\}.\]
We denote by $\CT_\s$ the topology on $X_{(A,\s)}$ generated by the sets
$H_\s(a_1, \ldots, a_k)$, for $a_1, \ldots, a_k \in \Sym(A,\s)$, and by
$\CT_\s^\x$ the topology on $X_{(A,\s)}$ generated by the sets
$H_\s(a_1, \ldots, a_k)$, for $a_1, \ldots, a_k \in \Sym(A,\s)^\x$.
Recall that we denote the usual Harrison topology on $X_F$ or $\wt X_F$ by $\CT_H$.
We first show that the topologies   $\CT_\s$ and $\CT_\s^\x$ are equal.

\begin{lemma}\label{homeomorphisms}\mbox{}
  \begin{enumerate}[$(1)$]
    \item Let $a \in \Sym(A,\s)^\x$. The scaling map $X_{(A,\s)} \rightarrow X_{(A,\Int(a)
      \circ \s)}$, $\CP \mapsto a\CP$ is a homeomorphism in the following two
      cases, where 
      \begin{enumerate}[$(a)$]
      \item $X_{(A,\s)}$ is equipped with $\CT_\s$ and $X_{(A, \Int(a) \circ \s)}$ with $\CT_{\Int(a) \circ \s}$;
      \item $X_{(A,\s)}$ is equipped with $\CT_\s^\x$ and $X_{(A,\Int(a) \circ \s)}$ with $\CT_{\Int(a) \circ \s}^\x$.
      \end{enumerate}

\item The topologies $\CT_{\vt^t}$ and $\CT_{\vt^t}^\x$ on $X_{(M_\ell(D), \vt^t)}$ are equal.

  \end{enumerate}
\end{lemma}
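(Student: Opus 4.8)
The plan is to treat part (1) first, since part (2) will follow from it by combining with the going-up/going-down correspondences. For part (1), fix $a \in \Sym(A,\s)^\x$ and recall from Proposition~\ref{prop-scaling} that $\CP \mapsto a\CP$ is an inclusion-preserving bijection from $X_{(A,\s)}$ to $X_{(A,\Int(a)\circ\s)}$, with inverse $\CQ \mapsto a^{-1}\CQ$ (note $\Int(a^{-1}) \circ \Int(a)\circ \s = \s$, so the inverse scaling map is the scaling map for the involution $\Int(a)\circ\s$ and the element $a^{-1}$). So it suffices to show that this bijection and its inverse are continuous in the relevant topologies. The key computational observation is: for $b \in \Sym(A,\s)$ (resp. $b \in \Sym(A,\s)^\x$) and $\CP \in X_{(A,\s)}$, we have $b \in \CP$ if and only if $ab \in a\CP$ — indeed if $b \in \CP$ then $ab = (\Int(a)\circ\s)(1)\cdot a \cdot b \in a\CP$ trivially, wait, more directly: $a\CP = \{ax \mid x \in \CP\}$, so $ab \in a\CP \Leftrightarrow b \in \CP$ is immediate from injectivity of left multiplication by the invertible element $a$. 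Moreover $ab \in \Sym(A,\Int(a)\circ\s)$ when $b \in \Sym(A,\s)$: indeed $(\Int(a)\circ\s)(ab) = a\s(ab)a^{-1} = a\s(b)\s(a)a^{-1} = a\s(b) \cdot a^{-1}a \cdot a \cdot a^{-1}$; since $\s(a) = \pm a$ depending on type, one checks $a\s(b)\s(a)a^{-1} = ab$ using $\s(b)=b$ — I would verify this cleanly, the point being that $b \mapsto ab$ maps $\Sym(A,\s)$ bijectively onto $\Sym(A,\Int(a)\circ\s)$ and restricts to a bijection on the invertible symmetric elements.

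Granting this, the subbasic open set $H_{\Int(a)\circ\s}(b_1,\ldots,b_k)$ of $\CT_{\Int(a)\circ\s}$ pulls back under $\CP \mapsto a\CP$ to $\{\CP \mid ab_1',\ldots \text{ wait}\}$ — more precisely, the preimage of $H_{\Int(a)\circ\s}(c_1,\ldots,c_k)$ is $\{\CP \in X_{(A,\s)} \mid c_1,\ldots,c_k \in a\CP\} = \{\CP \mid a^{-1}c_1,\ldots,a^{-1}c_k \in \CP\} = H_\s(a^{-1}c_1,\ldots,a^{-1}c_k)$, and each $a^{-1}c_i \in \Sym(A,\s)$ (resp.\ $\Sym(A,\s)^\x$ if $c_i$ is invertible). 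So the scaling map is continuous for both $(\CT_\s,\CT_{\Int(a)\circ\s})$ and $(\CT_\s^\x, \CT_{\Int(a)\circ\s}^\x)$; by symmetry the inverse is continuous too. This proves (1)(a) and (1)(b) simultaneously.

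For part (2), I would argue as follows. Since $\CT_{\vt^t}^\x \subseteq \CT_{\vt^t}$ trivially (fewer generators), it remains to show every $H_{\vt^t}(a_1,\ldots,a_k)$ with $a_i \in \Sym(M_\ell(D),\vt^t)$ is open in $\CT_{\vt^t}^\x$. It is enough to do this for a single $a = a_1 \in \Sym(M_\ell(D),\vt^t)$, possibly non-invertible, and show $H_{\vt^t}(a)$ is $\CT_{\vt^t}^\x$-open (then intersect). Using that $a$ is the Gram matrix of a hermitian form over $(D,\vt)$, diagonalize: there is $G \in \GL_\ell(D)$ with $\vt(G)^t a G = \diag(d_1,\ldots,d_\ell)$, and then by Lemma~\ref{equiv-up} (applied to $\Tr_\ell$, or directly) and the going-down description, $a \in \CP$ iff $\diag(d_i,0,\ldots,0) \in \CP$ for all $i$ iff each $d_i \in \Tr_\ell(\CP)$. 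The hard part — and the main obstacle — is expressing the condition ``$d_i \in \Tr_\ell(\CP)$'' (which a priori involves the non-invertible element $d_i$, or the non-invertible matrix $\diag(d_i,0,\ldots,0)$) as a $\CT_{\vt^t}^\x$-open condition, i.e.\ in terms of membership of \emph{invertible} symmetric elements. Here I would use Proposition~\ref{description}/Theorem~\ref{positive=max}: $\Tr_\ell(\CP) = \CC_P(\CM^\mu_P(D,\vt))$ for the appropriate $\mu$, $P$, so $d_i \in \Tr_\ell(\CP)$ iff $\qf{d_i}_\vt$ is $\mu$-maximal at $P$, iff (by the argument in Lemma~\ref{bleuarg}) there exist invertible $\mu$-maximal $e_1,\ldots,e_k$ — which live in $\CP$ via invertible matrices — realizing $\qf{d_i}_\vt \perp 0$; equivalently, tensoring with a suitable Pfister form and invoking that $\qf{d_i, -d_i}_\vt$-type combinations land in $\CP$, one reduces to conditions of the form $c \in \CP$ with $c \in \Sym(M_\ell(D),\vt^t)^\x$. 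I expect the cleanest route is: $a \in \CP$ iff for the diagonalization above, $\ell \x \diag(d_i,0,\ldots,0)$'s invertible ``completion'' lies in $\CP$, which by Lemma~\ref{isot-univ2}-style arguments is detectable by invertible witnesses; I would spell this out carefully, as it is where all the work sits.
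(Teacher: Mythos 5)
Your part (1) is fine and is the paper's own argument: the preimage of $H_{\Int(a)\circ\s}(c_1,\ldots,c_k)$ under $\CP\mapsto a\CP$ is $H_\s(a^{-1}c_1,\ldots,a^{-1}c_k)$, with $a^{-1}c_i$ invertible exactly when $c_i$ is, and the inverse map is a scaling of the same type, so continuity in both directions follows for both pairs of topologies.

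The gap is in part (2), at precisely the step you yourself describe as ``where all the work sits'' and then do not carry out. After diagonalizing, what must be shown is that the condition $\diag(d_1,\ldots,d_r,0,\ldots,0)\in\CP$, with $d_1,\ldots,d_r\in\Sym(D,\vt)\sm\{0\}$, can be detected by membership of finitely many \emph{invertible} symmetric matrices in $\CP$; your sketch routes this through $\Tr_\ell$, Proposition~\ref{description}, Theorem~\ref{positive=max} and Lemma~\ref{bleuarg}, but never produces the required $\CP$-independent witness condition, so as written there is no proof that $H_{\vt^t}(a)$ is open in $\CT_{\vt^t}^\x$. The missing idea is elementary and needs none of the signature machinery: using only (P2) and (P3) one has, for every $\CP\in X_{(M_\ell(D),\vt^t)}$,
\[\diag(d_1,\ldots,d_r,0,\ldots,0)\in\CP \;\Longleftrightarrow\; d_1I_\ell,\ldots,d_rI_\ell\in\CP .\]
Indeed, for the implication from right to left, conjugate $d_iI_\ell$ by the idempotent matrix $e_{ii}$ (a single $1$ in slot $(i,i)$) to get $\diag(0,\ldots,0,d_i,0,\ldots,0)\in\CP$ and sum over $i$; for the converse, extract $\diag(0,\ldots,0,d_i,0,\ldots,0)$ by conjugating with $e_{ii}$, move $d_i$ into each diagonal slot by conjugating with the matrices $e_{ij}$, and sum to obtain $d_iI_\ell\in\CP$. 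Since any $B\in\Sym(M_\ell(D),\vt^t)$ is congruent to such a diagonal matrix and congruence by an invertible $G$ preserves membership in $\CP$ in both directions (apply (P3) with $G$ and $G^{-1}$), this gives $H_{\vt^t}(B)=H_{\vt^t}(d_1I_\ell)\cap\cdots\cap H_{\vt^t}(d_rI_\ell)$ with each $d_iI_\ell$ invertible and symmetric, hence $\CT_{\vt^t}\subseteq\CT_{\vt^t}^\x$, the reverse inclusion being trivial. Your heavier route via $\eta$-maximality is not obviously unworkable, but completing it would require exactly the kind of uniform invertible-witness statement above, which is the content you left unproven.
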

\begin{proof}
(1) Since the inverse of this map is of the same type, it suffices to
      prove that it is continuous. The result is clear in both cases since the
      inverse image of $H_{\Int(a) \circ \s}(a_1,\ldots,a_k)$ is
      $H_\s(a^{-1}a_1,\ldots, a^{-1}a_k)$, and $a^{-1}a_i$ is invertible if and
      only if $a_i$ is.
 
(2) We consider the set $H_{\vt^t}(a)$ for some $a\in \Sym(M_\ell(D), \vt^t)$. By (P3) we may
assume that $a$ is diagonal. Therefore, let
$r\in \N$, let $d_1,\ldots, d_r \in D^\x$ and let $\CP \in X_{(M_\ell(D),\vt^t)}$. Then, using (P2) and (P3), we obtain 
\[\diag(d_1,\ldots, d_r,0,\ldots,0) \in \CP \Leftrightarrow d_1 I_\ell,\ldots, d_r I_\ell \in \CP.\]
Thus, 
\[H_{\vt^t}(\diag(d_1,\ldots, d_r,0,\ldots,0)) = H_{\vt^t} (d_1 I_\ell) \cap \cdots \cap H_{\vt^t} (d_r I_\ell)\]
and the result follows.
\end{proof}

\begin{prop}\label{top_equal}
  The topologies $\CT_\s$ and $\CT_\s^\x$ on $X_{(A,\s)}$ are equal.
\end{prop}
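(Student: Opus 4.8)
The inclusion $\CT_\s^\x \subseteq \CT_\s$ is immediate, since every set $H_\s(a_1,\dots,a_k)$ with invertible $a_i$ is already among the generators of $\CT_\s$. So the plan is to prove the reverse inclusion $\CT_\s \subseteq \CT_\s^\x$. Since the sets $H_\s(a)$ with $a \in \Sym(A,\s)$ form a subbasis of $\CT_\s$ (the generator $H_\s(a_1,\dots,a_k)$ is just $\bigcap_{i} H_\s(a_i)$), it is enough to show that each $H_\s(a)$, $a \in \Sym(A,\s)$, is open in $\CT_\s^\x$.

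The heart of the matter is to rewrite membership of a possibly non-invertible symmetric element in a prepositive cone in terms of invertible (or zero) elements. Fix $a \in \Sym(A,\s)$. By Lemma~\ref{morita-diag} applied to $\qf{a}_\s$, there are $a_1,\dots,a_\ell \in \Sym(A,\s)^\x \cup \{0\}$ with $\ell \x \qf{a}_\s \simeq \qf{a_1,\dots,a_\ell}_\s$. I claim that for every prepositive cone $\CP$ on $(A,\s)$,
\[ a \in \CP \iff a_1,\dots,a_\ell \in \CP. \]
For ``$\Rightarrow$'': if $a \in \CP$, then $\das(\ell \x \qf{a}_\s) = \Bigl\{\sum_{i=1}^\ell \s(x_i)ax_i \,\Big|\, x_i \in A\Bigr\} \subseteq \CP$ by (P2) and (P3); since isometric hermitian forms represent the same elements, $a_1,\dots,a_\ell \in \das(\qf{a_1,\dots,a_\ell}_\s) = \das(\ell \x \qf{a}_\s) \subseteq \CP$. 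For ``$\Leftarrow$'': if $a_1,\dots,a_\ell \in \CP$, then $\das(\qf{a_1,\dots,a_\ell}_\s) \subseteq \CP$ by (P2) and (P3) (a zero coefficient only ever contributes $0 \in \CP$), and $a \in \das(\ell \x \qf{a}_\s) = \das(\qf{a_1,\dots,a_\ell}_\s) \subseteq \CP$.

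Granting the equivalence, $H_\s(a) = \{\CP \in X_{(A,\s)} \mid a_1,\dots,a_\ell \in \CP\} = \bigcap_{i=1}^\ell H_\s(a_i)$; the indices $i$ with $a_i = 0$ may be dropped since $H_\s(0) = X_{(A,\s)}$ (every prepositive cone contains $0$), and should all $a_i$ vanish we simply have $a = 0$ and $H_\s(a) = X_{(A,\s)}$. Each surviving $H_\s(a_i)$ is a subbasic $\CT_\s^\x$-open set, so $H_\s(a)$ is a finite intersection of $\CT_\s^\x$-open sets, hence $\CT_\s^\x$-open; this yields $\CT_\s \subseteq \CT_\s^\x$ and completes the argument. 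There is no real obstacle here: the only points needing care are the bookkeeping around the zero coefficients and the (routine) fact that isometric hermitian forms have equal sets of represented elements. Alternatively, one can reduce to $(M_\ell(D),\vt^t)$ — the statement is vacuous when $(A,\s)$ is not formally real, and otherwise $\ve = 1$, so $f$ followed by the scaling $\CP \mapsto \Phi^{-1}\CP$ furnishes homeomorphisms $X_{(A,\s)} \to X_{(M_\ell(D),\vt^t)}$ compatible with both $(\CT_\s,\CT_{\vt^t})$ and $(\CT_\s^\x, \CT_{\vt^t}^\x)$ by Lemma~\ref{homeomorphisms}(1), whereupon Lemma~\ref{homeomorphisms}(2) finishes the proof.
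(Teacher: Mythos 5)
Your proof is correct, and your main argument takes a genuinely different (more intrinsic) route than the paper, while your closing alternative is in fact exactly the paper's proof: the paper transports both topologies to $X_{(M_\ell(D),\vt^t)}$ via the isomorphism $f$ and the scaling homeomorphisms of Lemma~\ref{homeomorphisms}(1), and proves equality there (Lemma~\ref{homeomorphisms}(2)) from the observation that $\diag(d_1,\ldots,d_r,0,\ldots,0)\in\CP$ if and only if $d_1I_\ell,\ldots,d_rI_\ell\in\CP$. Your primary argument stays inside $(A,\s)$: Lemma~\ref{morita-diag} gives $\ell\x\qf{a}_\s\simeq\qf{a_1,\ldots,a_\ell}_\s$ with $a_i\in\Sym(A,\s)^\x\cup\{0\}$ (the same usage as in Lemma~\ref{isot-univ2}), and the (P2)/(P3) representation argument yields $a\in\CP\Leftrightarrow a_1,\ldots,a_\ell\in\CP$, hence $H_\s(a)=\bigcap_i H_\s(a_i)$, a finite intersection of $\CT_\s^\x$-open sets; your bookkeeping for the zero entries (and the fact that $0\in\CP$ always, so $H_\s(0)=X_{(A,\s)}$, and that all $a_i=0$ forces $a=0$) is correct. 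Both proofs rest on the same mechanism --- diagonalize and use (P2), (P3) to replace a non-invertible coefficient by invertible ones --- but yours packages the diagonalization through Lemma~\ref{morita-diag} instead of through the Morita/scaling homeomorphisms. What this buys: a self-contained argument with no transfer step (in particular no implicit appeal to the choice of $\vt$ with $\ve=1$, which the scaling by $\Phi^{-1}$ quietly requires, nor to the trivial case $X_{(A,\s)}=\varnothing$), plus the explicit refinement that every basic set $H_\s(a_1,\ldots,a_k)$ is a finite intersection of sets $H_\s(b)$ with $b\in\Sym(A,\s)^\x$. What the paper's route buys is brevity, since Lemma~\ref{homeomorphisms} is needed elsewhere anyway and the diagonal computation is done once, in the matrix algebra where it is literal.
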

\begin{proof}
  Use the homeomorphisms from Lemma~\ref{homeomorphisms}(1) to bring both topologies
  to $X_{(M_\ell(D), \vt^t)}$ on which they are equal by   Lemma~\ref{homeomorphisms}(2). 
\end{proof}

As a consequence of this proposition, we may use the sets $H_\s(a_1,\ldots a_k)$ as a basis of open sets for $\CT_\s=\CT_\s^\x$
 with $a_1,\ldots, a_k$   
in $\Sym(A,\s)$ or in $\Sym(A,\s)^\x$, whichever is more convenient for the problem at hand.

From now on, and unless
specified otherwise, we assume that $X_{(A,\s)}$ is equipped with the topology $\CT_\s$ and
that $\wt X_F$ is equipped with the topology $\CT_H$.

\begin{lemma}\label{m_P-cont}
  The map $X_F \rightarrow \Z$, $P \mapsto m_P(A,\s)$  \tu{(}cf. Definition~\ref{mp0}\tu{)} 
	is continuous.
\end{lemma}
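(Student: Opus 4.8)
The plan is to use that $\Z$ carries the discrete topology, so it suffices to prove that $P \mapsto m_P(A,\s)$ is locally constant on $X_F$. First I would invoke Lemma~\ref{type-of-splitting} together with the observation recorded just after \eqref{def:subsets}: $X_F$ is the disjoint union of the five clopen sets $X_{\textup{rcf}}$, $X_{\textup{quat}}$, $X_{\textup{acf}}$, $X_{\textup{d-rcf}}$, $X_{\textup{d-quat}}$ of \eqref{def:subsets}, and the integer $n_P$ is constant on each of them. It therefore suffices to show that $P \mapsto m_P(A,\s)$ is constant on each of these five pieces.

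Next I would argue that each of the five sets lies entirely inside $\Nil[A,\s]$ or entirely inside $\wt X_F$. This is the content of Remark~\ref{nil-and-non}: membership of $P$ in $\Nil[A,\s]$ versus $\wt X_F$ is governed by which admissible triples $(D_P,\vt_P,\ve_P)$ can be chosen, and the collection of such triples depends only on the isomorphism class of $D_P$ --- hence only on which of the five sets $P$ belongs to --- together with the type of $\s$, which is an invariant of $(A,\s)$ and does not vary with $P$. Thus none of the five clopen pieces straddles the partition $X_F = \Nil[A,\s] \,\dot\cup\, \wt X_F$.

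Finally I would treat the two cases. On a piece contained in $\Nil[A,\s]$ we have $\sign_P^\eta = 0$ by definition of $\Nil[A,\s]$, so $m_P(A,\s) = \max\{\sign_P^\eta\qf{a}_\s \mid a \in \Sym(A,\s)^\x\} = 0$ there (the maximum is over a nonempty set since $1 \in \Sym(A,\s)^\x$). On a piece contained in $\wt X_F$, Proposition~\ref{m_P=n_P} gives $m_P(A,\s) = n_P$, which is constant on that piece by the first step. In either case $P \mapsto m_P(A,\s)$ is constant on the piece, and continuity follows.

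The only genuinely delicate point is the middle step: ensuring that no single one of the five clopen sets meets both $\Nil[A,\s]$ and $\wt X_F$. This is exactly where one must use that the type of $\s$ (the extra datum, besides $D_P$, entering Remark~\ref{nil-and-non}) does not depend on the chosen ordering; once that is noted, everything else is bookkeeping.
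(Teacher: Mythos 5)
Your proof is correct and follows essentially the same route as the paper's: decompose $X_F$ into the five clopen sets of \eqref{def:subsets}, note that $n_P$ is constant on each, and invoke Proposition~\ref{m_P=n_P}. Your middle step --- checking that each piece lies entirely in $\Nil[A,\s]$ or entirely in $\wt X_F$, with $m_P(A,\s)=0$ on the former --- is a point the paper's one-line argument glosses over (Proposition~\ref{m_P=n_P} gives $m_P(A,\s)=n_P$ only for $P\in\wt X_F$), so your extra care is a refinement of, not a departure from, the same approach.
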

\begin{proof}
  Since the five clopen sets defined in \eqref{def:subsets} cover
  $X_F$, it suffices to show that the map $P \mapsto m_P(A,\s)$ is
  continuous on each of them. Let $U$ be one of these clopen sets. We know from 
  Proposition~\ref{m_P=n_P} that $m_P(A,\s) = n_P$, and from the observation after \eqref{def:subsets}
   that the value of $n_P$ is constant on $U$.  The map is then constant on $U$ and
  therefore continuous.
\end{proof}

\begin{lemma}\label{ref-max}
  There are $c_1, \ldots, c_t \in \Sym(A,\s)^\x$ such that
  $X_{(A,\s)} = H_\s(c_1) \cup \cdots \cup H_\s(c_t)$.
\end{lemma}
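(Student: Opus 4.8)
The plan is to exploit the fact that $X_{(A,\s)}$ is, up to homeomorphism, the space of positive cones on the division algebra $(D,\vt)$, and over each $P \in \wt X_F$ there are exactly two positive cones, $\pm\CM^\eta_P(D,\vt)$, by Proposition~\ref{description}. So we need finitely many invertible symmetric elements whose ``$H_\s$-sets'' cover every such cone. First I would reduce to $(D,\vt)$: by Proposition~\ref{prop-scaling} and the collapsing/going-down correspondence (Proposition~\ref{correspondences}, together with Lemma~\ref{equiv-down}$(1)\Leftrightarrow(4)$), a cone $\CP$ on $(M_\ell(D),\vt^t)$ contains $\diag(d,0,\ldots,0)$ iff $d \in \Tr_\ell(\CP)$, and more usefully $\CP$ contains $dI_\ell$ iff $d \in \Tr_\ell(\CP)$; so it suffices to produce $d_1,\ldots,d_k \in \Sym(D,\vt)^\x$ with $X_{(D,\vt)} = \bigcup_i H_\vt(d_i)$, and then pull back along the scaling isomorphism $(A,\s)\cong(M_\ell(D),\ad_\Phi)$, replacing $d_i I_\ell$ by $\Phi(d_i I_\ell) \in \Sym(A,\s)^\x$ (invertibility is preserved).

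Next I would work on $(D,\vt)$ directly. Fix a tuple of reference forms $\eta$. For $d \in \Sym(D,\vt)^\x$ and $\CP = \ve\CM^\eta_P(D,\vt)$ (with $\ve \in \{-1,1\}$, $P = \CP_F$), we have $d \in \CP$ iff $\sign_P^\eta\qf{d}_\vt = \ve\, m_P(D,\vt)$, by the description of positive cones in Proposition~\ref{description} and the fact that $m_P(D,\vt) = n_P$ (Proposition~\ref{m_P=n_P}). So $H_\vt(d) = \{\CP \in X_{(D,\vt)} \mid \sign_{\CP_F}^\eta\qf{d}_\vt = \ve_\CP\, m_{\CP_F}(D,\vt)\}$, where $\ve_\CP$ is the sign attached to $\CP$. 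The task is therefore: find finitely many $d_i$ such that for every $P \in \wt X_F$ and every sign $\ve$, some $d_i$ has $\sign_P^\eta\qf{d_i}_\vt = \ve\, m_P(D,\vt)$.

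To produce these $d_i$ I would use a compactness argument on $\wt X_F$ (which is clopen in the Boolean space $X_F$, hence itself Boolean compact). For each $P \in \wt X_F$, pick $d_P \in \Sym(D,\vt)^\x$ with $\sign_P^\eta\qf{d_P}_\vt = m_P(D,\vt)$ (possible since $P \notin \Nil[D,\vt]$); then automatically $\sign_P^\eta\qf{-d_P}_\vt = -m_P(D,\vt)$, so the pair $\{d_P, -d_P\}$ handles a neighbourhood of $P$: the set $U_P := \{Q \in \wt X_F \mid \sign_Q^\eta\qf{d_P}_\vt = m_Q(D,\vt)\}$ is open, by continuity of $Q \mapsto \sign_Q^\eta\qf{d_P}_\vt$ and of $Q \mapsto m_Q(D,\vt)$ (Lemma~\ref{m_P-cont}), and on $U_P$ the cone $\CM_Q^\eta(D,\vt)$ lies in $H_\vt(d_P)$ and $-\CM_Q^\eta(D,\vt)$ in $H_\vt(-d_P)$. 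By compactness finitely many $U_{P_1},\ldots,U_{P_k}$ cover $\wt X_F$, and then $\{d_{P_1},-d_{P_1},\ldots,d_{P_k},-d_{P_k}\}$ is the desired finite family for $(D,\vt)$; transporting back gives $X_{(A,\s)} = \bigcup_{i=1}^k\bigl(H_\s(\Phi d_{P_i}I_\ell) \cup H_\s(-\Phi d_{P_i}I_\ell)\bigr)$.

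The main obstacle is bookkeeping rather than conceptual: one must be careful that the going-down/scaling correspondences genuinely translate membership of a diagonal (or scalar) matrix into membership of a symmetric element of $(A,\s)$ — i.e. that $H_\s$-sets on $(A,\s)$ correspond exactly to $H_\vt$-sets on $(D,\vt)$ under the composite bijection on positive cones — and that the continuity of $Q \mapsto \sign_Q^\eta\qf{d}_\vt$ (which is standard for signatures of hermitian forms over $(D,\vt)$, and is used e.g. in Lemma~\ref{pi-open}) is available uniformly over the relevant clopen pieces of $X_F$. Once the correspondences are set up cleanly, the compactness step is routine.
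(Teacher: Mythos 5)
Your argument is correct and is essentially the paper's proof: cover $\wt X_F$ by the clopen sets where a chosen invertible symmetric element has maximal signature (using continuity of the signature and of $P \mapsto m_P$, plus compactness of $\wt X_F$), extract a finite subcover, and take both signs $\pm a_i$ to catch both positive cones over each ordering. The only difference is your preliminary reduction to $(D,\vt)$ and transport back via scaling and going-down; the paper skips this detour by invoking Theorem~\ref{positive=max} to run the same compactness argument directly on $(A,\s)$, which spares the Morita bookkeeping you flag.
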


\begin{proof}
  For $c \in \Sym(A,\s)^\x$ we define $H_\mm(c) := \{Q \in \wt X_F \mid
  \sign^\eta_Q \qf{c}_\s = m_Q(A,\s)\}$. By Lemma~\ref{m_P-cont} and \cite[Theorem~7.2]{A-U-Kneb}
	  the set $H_\mm(c)$ is
  clopen in $\wt X_F$.

  Let $P \in \wt X_F$. By definition there exists $c_P \in \Sym(A,\s)^\x$ such that $\sign^\eta_P
  \qf{c_P}_\s = m_P(A,\s)$, i.e., $P \in H_\mm(c_P)$. Therefore $\wt X_F = \bigcup_{c \in
  \Sym(A,\s)^\x} H_\mm(c)$ and by compactness of $\wt X_F$ we get $\wt X_F =
  H_\mm(c_1) \cup \cdots \cup H_\mm(c_r)$ for some $c_1, \ldots, c_r \in
  \Sym(A,\s)^\x$. It follows from  Theorem~\ref{positive=max} that 
  $X_{(A,\s)} = H_\s(c_1) \cup \cdots \cup H_\s(c_r) \cup H_\s(-c_1) \cup \cdots \cup H_\s(-c_r)$.
\end{proof}

\begin{remark}\label{rem:indep}
	It will be convenient in several proofs in this section to express the property $u \in
  \CP_F$ in terms of some element belonging to $\CP$. Observe that if $c \in
  \CP \setminus \{0\}$ and $u \in F \setminus \{0\}$, then $u \in \CP_F$ implies
  $uc \in \CP$, and $u \in -\CP_F$ implies $uc \in -\CP$. Therefore $u \in \CP_F$
  is equivalent to $uc \in \CP$. Moreover, the choice of $c$ is
  essentially independent of $\CP$. Indeed, 
	by Lemma~\ref{ref-max} we have $X_{(A,\s)} = H_\s(c_1) \cup \cdots \cup
  H_\s(c_t)$ for some $c_1, \ldots, c_t \in \Sym(A,\s)^\x$. 
  Thus, for $\CP \in H_\s(c_j)$ and $u
  \in F \setminus \{0\}$ we have $u \in \CP_F$ if and only if $uc_j \in \CP$.	
\end{remark}

\begin{thm}\label{thm:signp}
  Let $h$ be a nonsingular hermitian form over $(A,\s)$. Then the total signature map
	$\sign h : X_{(A,\s)} \rightarrow \Z,\ \CP\mapsto \sign_\CP h$  \tu{(}cf.
	 Definition~\ref{def_sign}\tu{)} 
	is continuous, where  $\Z$ is equipped with the discrete
  topology. 
\end{thm}

\begin{proof}
  By Lemma~\ref{morita-diag}, $\ell \x h \simeq \qf{a_1,\ldots, a_k}_\s$ for
  some $a_1, \ldots, a_k \in \Sym(A,\s)^\x$. Therefore $(\sign h)^{-1}(i) = (\sign
  \qf{a_1}_\s + \cdots + \sign \qf{a_k}_\s)^{-1}(\ell i)$ for any $i\in \Z$ and it suffices to
  show that $\sign \qf{a}_\s$ is continuous, for $a \in \Sym(A,\s)^\x$.

	By Lemma~\ref{ref-max} we have $X_{(A,\s)} = H_\s(c_1) \cup \cdots \cup
  H_\s(c_t)$ for some $c_1, \ldots, c_t \in \Sym(A,\s)^\x$ and so it is sufficient
	to show that the map $\sign \qf{a}_\s$ is continuous on
each $H_\s(c_j)$.  By Definition~\ref{def_sign}  we have $\sign_\CP \qf{a}_\s = i$ if and only if there exist $r,s,t\in \N$,
  $\beta_1,\ldots, \beta_t \in \CP_F\sm \{0\}$ and $a_1,\ldots, a_r, b_1,\ldots, b_s \in   \CP \cap 
	A^\x$ such that
  $({r-s})/({n_{\CP_F}t}) = i $ 
  and
  \[(n_{\CP_F})^2 \x \qf{\beta_1, \ldots, \beta_t}\ox \qf{a}_\s \simeq
  \qf{a_1,\ldots,a_r}_\s \perp \qf{-b_1,\ldots,-b_s}_\s.\]
  Therefore
  \begin{align*}
    (\sign & \qf{a}_\s)^{-1}(i) \cap H_\s(c_j)\\ 
    &= \{ \CP \in H_\s(c_j) \mid \exists 
    r,s,t \in \N,\ a_1,\ldots,a_r,b_1,\ldots,b_s \in \CP\cap A^\x, \\
    &\qquad \beta_1, \ldots, \beta_t \in \CP_F\sm\{0\} \text{ such that } (r-s)/n_{\CP_F}t = i,
		\text{ and } \\
    &\qquad (n_{\CP_F})^2 \x \qf{\beta_1, \ldots, \beta_t} \ox \qf{a}_\s \simeq
      \qf{a_1,\ldots,a_r}_\s \perp \qf{-b_1,\ldots,-b_s}_\s\} \\
    &=  \bigcup \{ H_\s(c_j,\beta_1c_j,\ldots,\beta_t c_j,a_1,\ldots,a_r,b_1,\ldots,b_s) \mid r,s,t
    \in \N,\  a_1,\ldots,a_r,\\ &\qquad b_1,\ldots,b_s \in \Sym(A,\s)^\x, \ 
     \beta_1, \ldots, \beta_t \in F^\x \text{ such that }
    (r-s)/n_{\CP_F}t = i, \\
    & \qquad \text{and } (n_{\CP_F})^2 \x \qf{\beta_1, \ldots, \beta_t} \ox \qf{a}_\s \simeq
      \qf{a_1,\ldots,a_r}_\s \perp \qf{-b_1,\ldots,-b_s}_\s\}
  \end{align*}
	is open for $\CT_\s$ (note that the second equality follows from Remark~\ref{rem:indep}, 
	since for $\CP \in H_\s(c_j)$ and $u \in F \setminus \{0\}$ 
	we have $u \in \CP_F$ if and only if $uc_j \in \CP$).	
\end{proof}

We now introduce the maps
\[
	\begin{aligned}
		\pi &: X_{(A,\s)}  \rightarrow \wt X_F, \ \CP \mapsto \CP_F,\\[5pt]
		 \xi &: \wt X_F \rightarrow X_{(A,\s)},\ P \mapsto \CC_P(\CM^\eta_P(A,\s)),
	\end{aligned}
\]
with reference to Theorem~\ref{positive=max} for the definition of $\xi$.

\begin{prop}\mbox{}\label{prop:pi-nice}
	\begin{enumerate}[$(1)$]
		\item  The map $\pi$ is open.
		\item  The map $\pi$ is continuous.
	\end{enumerate}
\end{prop}

\begin{proof}
	(1)  Let $a_1,\ldots, a_k \in \Sym(A,\s)^\x$. Then, using Theorem~\ref{positive=max},
  \[\pi(H_\s(a_1,\ldots, a_k)) = \{P \in \wt X_F \mid |\sign^\eta_P \qf{a_i}_\s| = m_P(A,\s),\
	 i=1,\ldots,k\},\]
  which is open since $P \mapsto m_P(A,\s)$  and  
  $P\mapsto  \sign^\eta_P \qf{a}_\s$ are continuous by Lemma~\ref{m_P-cont} and
	\cite[Theorem~7.2]{A-U-Kneb}.
	
	(2)   Let $u \in F$. We show that $\pi^{-1}(H(u))$ is open. Observe that
  \[\pi^{-1}(H(u)) = \{\CP \in X_{(A,\s)} \mid u \in \CP_F\}.\]
	Using Remark~\ref{rem:indep} we have
	for $\CP \in H_\s(c_j)$ that $u\in \CP_F$ if and only if  $\CP \in H_\s(uc_j)$.
		Therefore,
	\[
		\pi^{-1} (H(u)) = \bigcup_{j=1}^t \pi^{-1}(H(u)) \cap H_\s(c_j) = 
		\bigcup_{j=1}^t  H_\s(c_j, uc_j),
	\]
	which is in $\CT_\s$.
\end{proof}

\begin{prop}\mbox{}\label{prop:xi-nice}
	\begin{enumerate}[$(1)$]
		\item The maps $\xi$ and $-\xi$ are open.
		\item Let $b \in \Sym(A,\s)^\x$. Then $\xi^{-1}(H_\s(b))$ is
  clopen in $\CT_H$. In particular $\xi$ is continuous.
	\end{enumerate}
	
\end{prop}

\begin{proof}
	(1) Let $u_1, \ldots, u_k \in F$ and let $\ve \in \{-1,1\}$. We show that
  $\ve\xi(H(u_1, \ldots, u_k))$ is open for $\CT_\s$. By definition
  \[\ve\xi(H(u_1, \ldots, u_k)) = \{\ve\CC_P(\CM^\eta_P(A,\s)) \mid P \in H(u_1,
  \ldots, u_k)\}.\]
	By Lemma~\ref{ref-max}
	it suffices to show that $\ve \xi (H(u_1,\ldots, u_k)) \cap H_\s(c_i)$ is open for 
	$i=1,\ldots, t$.
	We have
	\[
		\begin{aligned}
			\ve \xi (H&(u_1,\ldots, u_k)) \cap H_\s(c_i) \\
			&=\{\CP \in X_{(A,\s)} \mid \CP= \ve \CC_P (\CM_P^\eta (A,\s)),\ P \in H(u_1,\ldots, u_k),\ 
			c_i\in \CP\}\\
			&=\{\CP \in X_{(A,\s)} \mid \sign_{\CP_F}^\eta \qf{c_i}_\s = \ve m_{\CP_F}(A,\s),\
			\CP \in H_\s(u_1c_i,\ldots, u_kc_i), \\ 
			& \hspace{25em} \CP \in H_\s(c_i)\}\\
			&\hspace{16.5em}\text{[by Remark~\ref{rem:indep} and Theorem~\ref{positive=max}]}\\
			&=\{\CP \in X_{(A,\s)} \mid \sign_{\CP_F}^\eta \qf{c_i}_\s = \ve m_{\CP_F}(A,\s)\}
			\cap H_\s(c_i,u_1c_i,\ldots, u_kc_i),
		\end{aligned}
	\]
	which is open. Indeed, the second set in the intersection is open by definition and the first
	set is open since the maps $\pi$ and 
	$P\mapsto \sign_P^\eta \qf{c_i}_\s$ are both continous, by Proposition~\ref{prop:pi-nice}(2)
	and \cite[Theorem~7.2]{A-U-Kneb}.

(2)  Let $b \in \Sym(A,\s)^\x$. Then $\xi^{-1}(H_\s(b)) = \{P \in \wt X_F \mid
  \sign^\eta_P \qf{b}_\s = m_P(A,\s)\}$, which is clopen since the map $X_F
  \rightarrow \Z$, $P \mapsto m_P(A,\s)$ is continuous by Lemma~\ref{m_P-cont}
  and the signature map is also continuous by \cite[Theorem~7.2]{A-U-Kneb}.
\end{proof}

\begin{prop}\label{pi-homeom}
  For every open set $U$ of $\wt X_F$, 
	\[
	\pi^{-1}(U) = (\pi^{-1}(U) \cap \im \xi)
  \cup (\pi^{-1}(U) \cap -\im \xi),
	\] 
	and $\pi \vert_{\pi^{-1}(U) \cap \ve
  \im \xi}$ is a homeomorphism from $\pi^{-1}(U) \cap \ve \im \xi$ to $U$,
  for $\ve \in \{-1,1\}$. 
  In particular, $\pi \vert_{\im\xi}$ and $\pi \vert_{-\im\xi}$ are homeomorphisms onto $\wt X_F$
	and 	$\pi$ is a covering map. 
\end{prop}

\begin{proof}
  The first statement is obvious since $X_{(A,\s)} = \im \xi \cup -\im \xi$. 
	That $\pi \vert_{\pi^{-1}(U) \cap \ve   \im \xi}$ is a homeomorphism follows
	from the fact that it is continous by Proposition~\ref{prop:pi-nice}(2) 
	and has inverse $\ve \xi$, which is also
  continuous by Proposition~\ref{prop:xi-nice}(2). The remaining statements follow immediately.
\end{proof}

\begin{cor}\label{double}
	$X_{(A,\s)}$ is homeomorphic to a disjoint union of two copies 	of $\wt X_F$.
\end{cor}

\begin{proof}
	The statement follows from Propositions~\ref{prop:xi-nice}(1) and  \ref{pi-homeom}.	
\end{proof}

\begin{prop}\mbox{}\label{H-compact}
  \begin{enumerate}[$(1)$]
    \item The sets $H_\s(a_1,\ldots, a_k)$ are compact  for any $k\in\N$ and  $a_1, \ldots, a_k \in
      \Sym(A,\s)$. 
    \item The compact open subsets of $\CT_\s$ are precisely the finite unions of sets of
      the form $H_\s(a_1, \ldots, a_k)$, for $k \in \N$ and $a_1, \ldots, a_k
      \in \Sym(A,\s)^\x$ \tu{(}or $a_1, \ldots, a_k \in \Sym(A,\s)$\tu{)}.
      \item $X_{(A,\s)}$ is compact.
  \end{enumerate}
\end{prop}

\begin{proof}
(1)      Assume that $H_\s(a_1, \ldots, a_k) = \bigcup_{i \in I} H_\s(\bar b_i)$ for
      some finite tuples $\bar b_i$ of elements of $\Sym(A,\s)$. Then $H_\s(a_1,
      \ldots, a_k) \cap \im \xi = \bigcup_{i \in I} H_\s(\bar b_i) \cap \im \xi$, and
      applying $\pi$ we get 
      \[\pi(H_\s(a_1, \ldots, a_k) \cap \im \xi) = \bigcup_{i \in I} \pi(H_\s(\bar
      b_i) \cap \im \xi),\]
      where $\pi(H_\s(a_1, \ldots, a_k) \cap \im \xi) = \xi^{-1}(H_\s(a_1, \ldots,
      a_k))$ and  $\pi(H_\s(\bar b_i) \cap \im \xi) = \xi^{-1}(H_\s(\bar b_i))$. 
			Furthermore, these two sets
      are clopen by Proposition~\ref{prop:xi-nice}(2), and since $\CT_H$ is compact there is a
      finite subset $I_1$ of $I$ such that
      \[\pi(H_\s(a_1, \ldots, a_k) \cap \im \xi )= \bigcup_{i \in I_1} \pi(H_\s(\bar
      b_i) \cap \im \xi).\]
      Similarly, with $-\xi$ instead of $\xi$, there is a finite subset $I_2$ of $I$
      such that
      \[\pi(H_\s(a_1, \ldots, a_k) \cap -\im \xi )= \bigcup_{i \in I_2} \pi(H_\s(\bar
      b_i) \cap -\im \xi).\]
      It follows that
      \[H_\s(a_1, \ldots, a_k) = \bigcup_{i \in I_1 \cup I_2} H_\s(\bar b_i),\]
      proving the result.
			
(2) If $U$ is open in $\CT_\s$, then $U$ is a union of open sets of the
      form $H_\s(a_1,\ldots,a_k)$, and if $U$ is compact, this union can be taken
      finite (with $a_1, \ldots, a_k$ in $\Sym(A,\s)^\x$ or $\Sym(A,\s)$, cf.
      Proposition~\ref{top_equal}). The other part of the statement is immediate
      since the sets $H_\s(a_1, \ldots, a_k)$ are compact in $\CT_\s$ by (1).
			
(3) The statement follows immediately from (1) and Lemma~\ref{ref-max}.
\end{proof}

\begin{thm}\label{thm:spectral}
	$\CT_\s$ is spectral.	
\end{thm}
\begin{proof}
  By \cite[Proposition~4~(i)$\Leftrightarrow$(v)]{Hochster}, 
	and in view of Proposition~\ref{H-compact}, to
  show that $\CT_\s$ is spectral it suffices to show that $\CT_\s$ is $T_0$ and
  that $(\CT_\s)_\patch$ is compact.

  $\CT_\s$ is $T_0$: Let $\CP_1 \not = \CP_2 \in X_{(A,\s)}$. Without loss of generality we may
	assume that there exists $a \in \CP_1 \setminus \CP_2$. Then $\CP_1 \in H_\s(a)$ and
	$\CP_2 \not\in H_\s(a)$. 

  $(\CT_\s)_\patch$ is compact:
	Since $\CT_\s$ is compact, the sets of the form $U \setminus V$ with $U,V$ compact
  open in $\CT_\s$, form a subbasis of open sets of $(\CT_\s)_\patch$. Assume
  \[X_{(A,\s)} = \bigcup_{i \in I} (\bigcap_{j \in J_i} U_{i,j} \setminus V_{i,j}),\]
  where each set $J_i$ is finite, and the sets $U_{i,j}$, $V_{i,j}$ are compact open in
  $\CT_\s$. Then
  \[\im \xi =  \bigcup_{i \in I} (\bigcap_{j \in J_i} (U_{i,j} \cap \im \xi)
  \setminus (V_{i,j} \cap \im \xi)),\]
  where each $U_{i,j} \cap \im \xi$, $V_{i,j} \cap \im \xi$ is compact open in $\im \xi$
	since both $\im\xi$ and $-\im\xi$ are clopen by Proposition~\ref{prop:xi-nice}(1). 
	Applying $\pi$, we get 
  \[\wt X_F = \bigcup_{i \in I} (\bigcap_{j \in J_i} \pi(U_{i,j} \cap \im \xi)
  \setminus \pi(V_{i,j} \cap \im \xi)),\]
  with each $\pi(U_{i,j} \cap \im \xi)$, $\pi(V_{i,j} \cap \im \xi)$ compact open in
  $\wt X_F$ by Proposition~\ref{pi-homeom}, 
	so in particular clopen, and thus each $\bigcap_{j \in J_i}
  \pi(U_{i,j} \cap \im \xi) \setminus \pi(V_{i,j} \cap \im \xi))$ is open in
  $\wt X_F$. By compactness there is a finite subset $I_1$ of $I$ such that
  \[\wt X_F = \bigcup_{i \in I_1} ( \bigcap_{j \in J_i}
  \pi(U_{i,j} \cap \im \xi) \setminus \pi(V_{i,j} \cap \im \xi)).\]
  Reasoning similarly with $-\xi$, we obtain a finite subset $I_2$ of $I$ such
  that
  \[\wt X_F = \bigcup_{i \in I_2}( \bigcap_{j \in J_i}
  \pi(U_{i,j} \cap -\im \xi) \setminus \pi(V_{i,j} \cap -\im \xi)).\]
  Therefore
  \[X_{(A,\s)} = \bigcup_{i \in I_1 \cup I_2} (\bigcap_{j \in J_i}
  U_{i,j} \setminus V_{i,j}),\]
  proving that $(\CT_\s)_\patch$ is compact.
\end{proof}

\begin{prop}\label{MMMMorita}
  Let $(A,\s)$ and $(B,\tau)$ be two Morita equivalent 
  $F$-algebras with involution. 
  With notation as in Theorem~\ref{morco}, the map 
  $\mor_*$ is a homeomorphism from $(X_{(A,\s)}, \CT_\s)$ to $(X_{(B,\tau)}, \CT_\tau)$.
\end{prop}

\begin{proof} 
Let $b_1, \ldots, b_k \in
\Sym(B,\tau)^\x$. We have $\mor_*(\CP) \in H_\tau(b_1,\ldots, b_k)$ if and
only if $b_1,\ldots, b_k \in \mor_*(\CP)$ if and only if there are $a_1, \ldots,
a_r \in \CP$ (see Theorem~\ref{morco}) such that $b_1, \ldots, b_k \in
D_{(B,\tau)}\mor( \qf{a_1,\ldots, a_r}_\s)$. Therefore
\begin{align*}
  (\mor_*)^{-1}&(H_\tau(b_1,\ldots,b_k)) \\
  &= \{\CP \in X_{(A,\s)} \mid \exists r \in   \N \ \exists a_1, \ldots, a_r \in \CP
	\text{ such that } \\ 
   &\hspace{15.8em} b_1, \ldots, b_k \in D_{(B,\tau)} \mor( \qf{a_1,\ldots, a_r}_\s)\}\\
  &= \{\CP \in X_{(A,\s)} \mid \exists r \in \N \ \exists a_1, \ldots, a_r \in
    \Sym(A,\s)\text { such that} \\
  &\qquad\qquad\qquad \CP \in H_\s(a_1,\ldots, a_r) \text{ and } b_1, \ldots, b_k \in
    D_{(B,\tau)}\mor(\qf{a_1,\ldots, a_r}_\s)\}\\ 
  &= \bigcup \{H_\s(a_1,\ldots, a_r) \mid r \in \N, \ a_1, \ldots, a_r
    \in \Sym(A,\s),\\
  &\hspace{15.8em} b_1, \ldots, b_k \in D_{(B,\tau)}\mor( \qf{a_1,\ldots, a_r}_\s)\},
\end{align*}
which is a union of open sets in the topology $\CT_\s$. The fact that
$(\mor_*)^{-1}$ is also continuous is obtained as above, using that $(\mor_*)^{-1} = (\mor^{-1})_*$ as observed 
in the proof of Theorem~\ref{morco}.
\end{proof}

\section*{Acknowledgement}

We are very grateful to the referee, in particular for some suggestions that led to a significantly
improved final section.


\def\cprime{$'$}

\end{document}